\documentclass[a4paper,10.5pt]{article}
\usepackage{amsmath,amssymb,amsthm}
\oddsidemargin=0pt
\evensidemargin=0pt
\topmargin=-7mm
\headsep=18pt
\textheight=230mm
\textwidth=159.3mm
\pagestyle{plain}
\theoremstyle{definition}
 \newtheorem{dfn}{Definition}
 \newtheorem{remark}[dfn]{Remark}

\theoremstyle{plain}
 \newtheorem{thm}[dfn]{Theorem}
 \newtheorem{prop}[dfn]{Proposition}
 \newtheorem{lem}[dfn]{Lemma}
 
 \newcommand{\eq}[1]{\begin{equation}
\begin{split}
#1
\end{split}
\end{equation}}
\newcommand{\eqh}[1]{\begin{equation*}
\begin{split}
#1
\end{split}
\end{equation*}}

\numberwithin{equation}{section}
\newcommand{\bn}{{\bold n}}
\newcommand{\ba}{{\bold a}}
\newcommand{\bb}{{\bold b}}
\newcommand{\bk}{{\bold k}}
\newcommand{\bu}{{\bold u}}
\newcommand{\bv}{{\bold v}}
\newcommand{\bw}{{\bold w}}
\newcommand{\bff}{{\bold f}}

\newcommand{\bg}{{\bold g}}
\newcommand{\bh}{{\bold h}}

\newcommand{\bC}{{\bold C}}
\newcommand{\bD}{{\bold D}}

\newcommand{\bF}{{\bold F}}

\newcommand{\bI}{{\bold I}}

\newcommand{\bS}{{\bold S}}

\newcommand{\bU}{{\bold U}}
\newcommand{\bV}{{\bold V}}

\newcommand{\dv}{{\rm div}\,}

\newcommand{\BR}{{\Bbb R}}
\newcommand{\BC}{{\Bbb C}}

\newcommand{\BN}{{\Bbb N}}

\newcommand{\CA}{{\mathcal A}}
\newcommand{\CB}{{\mathcal B}}
\newcommand{\CC}{{\mathcal C}}
\newcommand{\CD}{{\mathcal D}}
\newcommand{\CE}{{\mathcal E}}
\newcommand{\CF}{{\mathcal F}}
\newcommand{\CI}{{\mathcal I}}

\newcommand{\CL}{{\mathcal L}}

\newcommand{\CR}{{\mathcal R}}
\newcommand{\CS}{{\mathcal S}}
\newcommand{\CT}{{\mathcal T}}
\newcommand{\CH}{{\mathcal H}}

\newcommand{\CP}{{\mathcal P}}

\newcommand{\CU}{{\mathcal U}}
\newcommand{\CV}{{\mathcal V}}

\newcommand{\CX}{{\mathcal X}}
\newcommand{\CY}{{\mathcal Y}}

\newcommand{\fp}{{\frak p}}

\newcommand{\pd}{\partial}

\newcommand{\Hol}{{\rm Hol}\,}

\newcommand{\kv}{\bk_\bv}
\newcommand{\vt}{\vartheta}
\newcommand{\Grad}{\nabla}
\newcommand{\lr}[1]{\left( #1 \right)}
\newcommand{\bd}{{\bf d}}
\newenvironment{cases*}%
{%
\left\{
\begin{array}{@{}r@{\;}l@{\quad}l@{}}
}%
{\end{array}\right.}
\begin{document}
\title{On strong dynamics of compressible two-component mixture flow}
\author{Tomasz PIASECKI
\thanks{Institute of Applied Mathematics and Mechanics, University of Warsaw, Banacha 2, 02-097 Warsaw, Poland.  
\endgraf
E-mail address: tpiasecki@mimuw.edu.pl
\endgraf
Partially supported by Top Global University Project and polish National Science Centre Harmonia project UMO-2014/14/M/ST1/00108.},
\quad 
Yoshihiro SHIBATA
\thanks{Department of Mathematics and Research Institute of 
Science and Engineering, 
Waseda University, 
Ohkubo 3-4-1, 
\endgraf
Shinjuku-ku, Tokyo 169-8555, Japan. 
\quad 
E-mail address: yshibata@waseda.jp 
\endgraf
Adjunct faculty member in the Department of Mechanical 
Engineering and Materias Science, University of Pittsburgh.
\endgraf 
Partially supported by  
JSPS Grant-in-aid for Scientific Research (A) 17H0109
and Top Global University Project.},
\enskip 
 and \enskip Ewelina ZATORSKA
\thanks{Department of Mathematics, University College London, Gower Street,  London WC1E 6BT, United Kingdom. 
\endgraf 
E-mail address: e.zatorska@ucl.ac.uk 
\endgraf E.Z. has been supported by the Polish Government MNiSW research grant 2016-2019 "Iuventus Plus"  no. IP2015 088874.}}
\date{}
\maketitle
\begin{abstract}
We investigate a system describing the flow of a compressible two-component mixture.
The system is composed of the compressible Navier-Stokes equations coupled with non-symmetric reaction-diffusion equations
describing the evolution of fractional masses. 
We show the local existence and, under certain smallness assumptions, also the global existence of unique strong 
solutions in $L_p-L_q$ framework. 
Our approach is based on so called entropic variables which enable to rewrite the system  
in a symmetric form. Then, applying Lagrangian coordinates, we show the local existence of solutions 
applying the $L_p$-$L_q$ maximal regularity estimate. Next, applying exponential decay estimate we 
show that the solution exists globally in time provided the initial data is sufficiently 
close to some constants. The nonlinear estimates impose restrictions $2<p<\infty,\   3<q<\infty$.
However, for the purpose of generality we show the linear estimates for wider range of $p$ and $q$.

\vskip5mm

\noindent{MSC Classification:} 76N10, 35Q30

\smallskip

\noindent{Keywords:} {\it compressible Navier-Stokes equations, Maxwell-Stefan equations, gaseous mixtures, regular solutions,
maximal regularity, decay estimates}
\end{abstract}

\section{Introduction}
The Navier-Stokes-Maxwell-Stefan equations provide a description of the multicomponent reactive flows. The system consists of compressible Navier-Stokes equations for the barycentric velocity and total density as well as the convection-diffusion equations for the constituents of the mixture. 
The two subsystems are coupled by the form of the pressure in the momentum equation and the form of the fluxes in the species equations. The relation between the diffusion deriving forces for the constituents and the diffusion fluxes is called the Maxwell-Stefan equations. 

In this paper we are interested in analysis of a simple two-component mixture model with neglect of the heat-conduction and reactivity. The associated system of PDEs reads as follows
\begin{equation}\label{mf:1}\left\{
\begin{aligned}
\pd_t\rho +\dv(\rho\bu) = 0 \qquad
&\text{in $\Omega\times(0, T)$}, \\
\pd_t(\rho\bu) + \dv(\rho\bu\otimes\bu)-\dv \bS
+\nabla\fp=0 \qquad&\text{in $\Omega\times(0, T)$}, \\
\pd_t\rho_k + \dv(\rho_k\bu)
+ \dv \bF_k = 0\qquad
&\text{in $\Omega\times(0, T)$}, 
\end{aligned}
\right. \end{equation}
where $\rho$ denotes the total density of the flow and is a sum of partial densities of the species $\rho=\rho_1+\rho_2$, $\bu$ denotes 
the velocity vector field, $\fp$ denotes the pressure, 
$\bF_1,\ \bF_2$ the diffusion fluxes for both species and
$\bS$ denotes the stress tensor given by  
\begin{equation}\label{st:1} \bS=\mu\bD(\bu) +(\nu-\mu)\dv\bu\bI
\end{equation}
where $\bD(\bu) = \nabla\bu + {}^\top\nabla\bu$ is the doubled deformation 
tensor.
We assume the system \eqref{mf:1} is supplied with the initial and boundary conditions 
\begin{equation}\label{icbc}\left\{
\begin{aligned}
\bu=0, \quad \bF_1\cdot\bn = \bF_2\cdot\bn=0
\qquad&\text{on $\Gamma\times(0, T)$}, \\
(\bu, \rho_1, \rho_2)|_{t=0}=(\bu_0, \rho_{10}, 
\rho_{20})
\qquad&\text{in $\Omega$}.
\end{aligned}
\right. \end{equation}
Note that assuming the constraint on the diffusion fluxes $\bF_1+\bF_2=0$, the species equation, when summed, give the continuity equation. Therefore wenhave $\rho_1=\rho-\rho_2$, and so,
the unknowns of the system are $\rho,\ \bu$, and one of the partial densities $\rho_1$ or $\rho_2$. For the derivation of system \eqref{mf:1} from the kinetic theory of gases in the general multi-component, heat-conducting and reactive case we refer to the monograph of Giovangigli \cite{VG}. 

In this paper we consider the mixture of ideal gases, therefore the  internal pressure of the mixture is determined through the Boyle law
\begin{equation} \label{mf:2}
\fp = \frac{\rho_1}{m_1} + \frac{\rho_2}{m_2}. 
\end{equation}
Above, $m_{k}$ denotes the molar mass of the species $k$ and for simplicity, we set the gaseous constant  equal to 1. We are interested in the case when the pressure essentially depends on the densities of different species, therefore we assume
$$m_1\neq m_2.$$
The simplest form of the diffusion fluxes widely used in particular applications is the Fick approximation $\bF_k\approx-c\Grad\lr{\frac{\rho_k}{\rho}}$, see \cite{FTP}. The Fick law states that the flux of a species is proportional to the
gradient of the concentration of this species, and does not take into account the presence of all other components. However, in the real-word applications the cross-diffusion effects cannot be neglected, see for example \cite{BGS12, Wal62,T2000, BE}.
This issue can be solved by considering the so-called Maxwell-Stefan equations for multicomponent diffusion. These equations relate  the diffusion velocities ${\bold V}_i$ defined as $\bF_i=\rho_i{\bold V}_i$ and the  molar and the mass fractions respectively
$$X_i=\frac{\fp_i}{\fp},\quad Y_i=\frac{\rho_i}{\rho},$$
where $\fp_k=\frac{\rho_k}{m_k}$,  in the implicit way:
\begin{equation}\label{SM}
\underbrace{\Grad X_i-{\left(Y_i-X_i\right)\Grad \log \fp}}_{:={\bold d}_i}=\sum_{{j=1} \atop {j\neq i}}^{n}\left(\frac{X_i X_j}{{\mathcal D}_{ij}}\right)\left({\bold V}_j-{\bold V}_i\right),
\end{equation}
where  ${\mathcal D}_{ij}> 0$  denotes the binary diffusion coefficient, ${\mathcal D}_{ij}={\mathcal D}_{ji}$. 
The Maxwell-Stefan system \eqref{SM} was first treated by  Giovangigli \cite{Gio90, Gio91}, who used iterative methods to solve these equations, i.e. to find the inverse matrix that allows to characterize the fluxes as the functions of gradients of concentrations. It was proven that for positive concentrations Maxwell-Stefan relations lead to the following form of the fluxes
	\begin{equation}\label{eq:diff}
	\bF_{k}=-\sum_{l=1}^{n} C_{kl}{\bd}_l, \quad k=1,...n,
	\end{equation}
where $C_{kl}$ are multicomponent flux diffusion coefficients and $\bd_l=(d_{l}^{1},d_{l}^{2},d_{l}^{3})$ is the species $l$ diffusion force 
	\begin{equation}\label{eq:}
	d_{l}^{i}=\nabla_{x_{i}}\left({\fp_{l}\over \fp}\right)+\left({\fp_{l}\over \fp}-{\rho_{k}\over \rho}\right)\nabla_{x_{i}} \log{\fp}= \frac{1}{\fp}\lr{\nabla_{x_i} \fp_l-\frac{\rho_l}{\rho}\nabla_{x_i}\fp},
	\end{equation}
appearing in the Maxwell-Stefan equations \eqref{SM}. The main properties of the flux diffusion matrix $C$ discussed in \cite[Chapter 7]{VG}  are
\begin{equation} \label{prop_C}
C{\cal Y}={\cal Y}C^{T},\quad
   N(C)=\mbox{lin}\{\vec Y\},\quad
  R(C)={U}^{\bot},
\end{equation}
where ${\cal Y}=\mbox{diag}(Y_{1},\ldots,Y_{N})$, $\vec Y=(Y_1,\ldots,Y_n)^t$, 
$N(C)$ is the nullspace of $C$,  $R(C)$ is the range of $C$, 
$\vec U=(1,\ldots,1)^{T},$ and  ${U}^{\bot}$ is the orthogonal complement of $\mbox{lin}\{\vec U\}$.

In this paper we will use the explicit form \eqref{eq:diff}. In case of two components it reduces to  
\begin{equation} \label{mf:3}\
F_1= -\frac{1}{\fp}\left(\frac{\rho_2}{\rho}\nabla\left(\frac{\rho_1}{m_1}\right)
- \frac{\rho_1}{\rho}\nabla\left(\frac{\rho_2}{m_2}\right)\right),
\quad F_2 = -F_1. 
\end{equation}

Under the assumption \eqref{eq:diff}, global in time strong (unique) solutions around the constant
equilibrium for the Cauchy problem was proven by Giovangigli in \cite{VG}. 
He introduced the entropic and normal variables to symmetrize the system \eqref{mf:1} and applied the 
Kawashima and Shizuta theory \cite{K84, KS88} for symmetric hyperbolic-parabolic systems of conservation laws.
For the local in time existence result to the species mass balances equations in the isobaric, isothermal case we refer to \cite{B2010}, see also \cite{HMPW13}.  
Later on, J\"ungel and Stelzer generalized this result and combined it with the  entropy dissipation method to prove the global in time existence of weak solutions \cite{JS13}, still in the case of constant pressure and temperature. The detailed description of the method and its applicability for a range of models we refer to \cite{Jungel}.
For the qualitative and quantitative analysis of the ternary gaseous system together with numerical simulations we refer to \cite{BGS12}.
One should note that constant pressure assumption in \eqref{SM} not only significantly simplifies the cross-diffusion equations but basically decouples the fluid and the reaction-diffusion parts of the system \eqref{mf:1}. 
Stationary problems for compressible mixtures were considered in \cite{EZ} under the assumption of Fick law 
and later in \cite{GPZ, PP1, PP2} with cross diffusion, however for different molar masses.  
Existence of weak solutions for the mixture of non-newtonian fluids has been shown in \cite{BH15}.  
Let us also mention results on multi-phase systems \cite{FGM05b, KMP12} and incompressible mixtures \cite{MT13,CJ13, BP2017}. 
We would also like to mention the theoretical results for the systems describing the compressible reacting electrolytes \cite{DDGG}, where the authors prove the existence of global in time weak solutions to the Nernst--Planck--Poisson model originating from the modelling approach developed by Bothe and Dreyer in the previous paper \cite{BD2015}. 
The classical mixture models in the sense of \cite{VG} were studied in the series of papers  \cite{EZ2, EZ3, MPZ, MPZ1,MPZ2}, where the global in time existence of weak solutions was proved without any simplification of \eqref{eq:}. This was possible thanks to postulate of the so-called Bresch-Desjardins condition for the viscosity coefficients, which provides an extra estimate of the density gradient and a special form of the pressure.  The last restriction was recently removed by Xi and Xie \cite{XiXie}.

The global well-posedness in the framework of strong solutions for the compressible Navier-Stokes(-Fourier)
system under smallness assuptions on the data is already well investigated, see among others \cite{MaNi} in $L_2$ framework, 
\cite{WZ} in $L_p$ setting with slip boundary condition or \cite{S16, EBS} for a free boudary problem. 
However, for the system coupled with reaction-diffusion equations admitting cross-diffusion the issue of global well posedness of 
initial-boundary value problems has remained open. 

The purpose of this work is to prove the global in time existence of strong solutions to the system \eqref{mf:1}. 
Our basic observation is that this system enjoys some smoothing effect when written in terms of entropic 
variables \cite{VG}. Its symmetric structure enables us apply $L_p$-$L_q$ maximal regularity estimate to 
show the local well posedness and exponential decay estimate to show the global well-posedness under additional 
smallness assumptions. The linear estimates are based on the theory of $\CR$-bounded operators 
(see for instance \cite{DHP},\cite{Murata},\cite{SSchade}).  
The symmetrized system is derived in the next section. Afterwards we formulate our main 
results and discuss the structure of the remaining sections.

\section{Symmetrization and main Results.}
Since $\bF_1$ and $\bF_2$ are not independent, we  reduce
two diffusion equations to one diffusion equation introducing the normal form, see \cite[Chapter 8]{VG}.  Let 
\begin{equation} \label{trans:1}
(h,\rho) = \left(\frac{1}{m_2}\log \rho_2 - \frac{1}{m_1}\log \rho_1~,~~\rho_1 + \rho_2\right):=\Psi(\rho_1,\rho_2).
\end{equation}
Notice that $\Psi:\BR_+\times \BR_+ \to \BR\times \BR_+$ is a bijection, let us denote its inverse by $\Phi$.
Computing $\nabla h, \nabla \rho$ from \eqref{trans:1} and solving the resulting linear system for $\nabla \rho_1, \nabla \rho_2$
we get
\begin{equation}\label{norm:2}\begin{split}
\nabla\rho_1 & = \frac{m_1\rho_1}{m_1\rho_1 + m_2\rho_2}\nabla\rho
-\frac{m_1\rho_1m_2\rho_2}{m_1\rho_1 + m_2\rho_2}\nabla h,
\quad 
\nabla\rho_2  = \frac{m_2\rho_2}{m_1\rho_1 + m_2\rho_2}\nabla\rho
+\frac{m_1\rho_1m_2\rho_2}{m_1\rho_1 + m_2\rho_2}\nabla h.
\end{split}\end{equation}
From \eqref{norm:2} and the third equations in Eq. \eqref{mf:1},
we have
\begin{align*}
\pd_t h + \bu\cdot\nabla h 
& = \frac{1}{m_2\rho_2}\pd_t \rho_2
-\frac{1}{m_1\rho_1}\pd_t \rho_1
+ \frac{1}{m_2\rho_2}\bu\cdot\nabla\rho_2 - 
\frac{1}{m_1\rho_1}\bu\cdot\nabla\rho_1\\
& = \frac{1}{m_2\rho_2}\lr{-\rho_2\dv\bu - \dv\bF_2}
- \frac{1}{m_1\rho_1}\lr{-\rho_1\dv\bu - \dv\bF_1} \\
& = -\Bigl(\frac{1}{m_2}- \frac{1}{m_1}\Bigr)\dv\bu
- \frac{1}{m_2\rho_2}\dv \bF_2 + 
\frac{1}{m_1\rho_1}\dv\bF_1.
\end{align*}
Since $\bF_1 =- \bF_2 $, we have 
$$\pd_t h + \bu\cdot\nabla h
= -\Bigl(\frac{1}{m_1\rho_1} + \frac{1}{m_2\rho_2}\Bigr)
\dv\bF_2 - \left( \frac{1}{m_2}-\frac{1}{m_1} \right)\dv\bu,
$$
which leads to 
\begin{equation} \label{norm:3a}
\frac{m_1m_2\rho_1\rho_2}{m_1\rho_1 + m_2\rho_2}
(\pd_th + \bu\cdot\nabla h)
+\frac{(m_1-m_2)\rho_1\rho_2}{m_1\rho_1 + m_2\rho_2}
= -\dv \bF_2.
\end{equation}
Moreover, noting that $m_1$ and $m_2$ are positive constants, 
by \eqref{mf:3} and \eqref{norm:2} we have
\eq{\label{norm:3b}
-\bF_2 = \bF_1 = &\frac{1}{\fp}\Bigl(\frac{\rho_1}{\rho m_2}\nabla\rho_2
-\frac{\rho_2}{\rho m_1}\nabla\rho_1\Bigr)\\
=& \frac{1}{\fp}\Bigl\{\Bigl(\frac{\rho_1\rho_2}{\rho(m_1\rho_1 + m_2\rho_2)}
-\frac{\rho_1\rho_2}{\rho(m_1\rho_1 + m_2\rho_2)}\Bigr)\nabla\rho
+ \frac{m_1\rho_1^2\rho_2 + m_2\rho_1\rho_2^2}{\rho(m_1\rho_1 + m_2\rho_2)}
\nabla h\Bigr\} \\
=& \frac{\rho_1\rho_2}{\fp\rho}\nabla h.
}
Combining \eqref{norm:3a} and \eqref{norm:3b} formulas gives 
\begin{equation}\label{norm:3}
\frac{m_1m_2\rho_1\rho_2}{m_1\rho_1 + m_2\rho_2}
(\pd_t h + \bu\cdot\nabla h) + 
\frac{(m_1-m_2)\rho_1\rho_2}{m_1\rho_1 + m_2\rho_2}\dv\bu 
= \dv\Bigl(\frac{\rho_1\rho_2}{\fp\rho}\nabla h\Bigr).
\end{equation}
By \eqref{mf:2} and \eqref{norm:2}, we have
\begin{equation*}
\nabla \fp = \frac{1}{m_1}\nabla\rho_1 + \frac{1}{m_2}\nabla\rho_2 
= \frac{\rho}{m_1\rho_1 + m_2\rho_2}\nabla\rho
+ \frac{\rho_1\rho_2(m_1-m_2)}{m_1\rho_1 + m_2\rho_2}\nabla h,
\end{equation*}
Inserting this formula into the second equation in Eq. \eqref{mf:1},
we obtain
\begin{equation}\label{norm:4}
\rho(\pd_t\bu + \bu\cdot\nabla\bu) -\dv\bS 
+ \frac{\rho}{m_1\rho_1 + m_2\rho_2}\nabla\rho 
+ \frac{\rho_1\rho_2(m_1-m_2)}{m_1\rho_1+m_2\rho_2}\nabla h = 0.
\end{equation}
Concerning the boundary conditions, 
by \eqref{norm:3b} the condition $\nabla\cdot\bF_1 = 0$ is transformed to
$(\nabla h)\cdot\bn = 0$.
Thus, setting 
$$\Sigma_\rho = m_1\rho_1 + m_2\rho_2,
\quad\rho_0 = \rho_{10} + \rho_{20},
\quad h_0 = \frac{1}{m_2}\log\rho_{20} 
-\frac{1}{m_1}\log\rho_{10},
$$
by \eqref{norm:3} and \eqref{norm:4} 
we have the following equations for $\rho$, $\bu$ and $h$: 
\begin{equation}\label{neweq:1}\left\{
\begin{aligned}
\pd_t\rho + \dv(\rho\bu) &= 0&\quad&\text{in $\Omega\times(0, T)$}, \\
\rho(\pd_t\bu + \bu\cdot\nabla\bu) -\dv\bS 
+\frac{\rho}{\Sigma_\rho}\nabla\rho + 
\frac{(m_1-m_2)\rho_1\rho_2}{\Sigma_\rho}\nabla h &= 0
&\quad&\text{in $\Omega\times(0, T)$}, \\
\frac{m_1m_2\rho_1\rho_2}{\Sigma_\rho}
(\pd_t h + \bu\cdot\nabla h) + 
\frac{(m_1-m_2)\rho_1\rho_2}{\Sigma_\rho}\dv\bu 
&= \dv\Bigl(\frac{\rho_1\rho_2}{p\rho}\nabla h\Bigr)
&\quad&\text{in $\Omega\times(0, T)$}, \\
\bu=0, \quad(\nabla h)\cdot\bn & = 0
&\quad&\text{on $\Gamma \times(0, T)$}, \\
(\rho, \bu, h)|_{t=0} & =(\rho_0, \bu_0, h_0)
&\quad&\text{in $\Omega$}.
\end{aligned}\right.\end{equation}
To solve Eq. \eqref{neweq:1} in the maximal $L_p$-$L_q$ regularity class, 
we introduce Lagrange coordinates $\{y\}$. 
Let $\bv(y, t)$ be the 
velocity field in the Lagrange coordinates and we consider the transformation:
\begin{equation}\label{lag:1}
x = y + \int^t_0\bv(y, s)\,ds.
\end{equation}
Then for any differentiable function $f$ we have 
\begin{equation} \label{dt_lag}
\pd_t f(t,\phi(t,y))=\pd_t f+\bv \cdot \nabla_x f.
\end{equation}
Moreover, since
\begin{equation}\label{lag:2}
\frac{\pd x_i}{\pd y_j} = \delta_{ij} 
+ \int^t_0\frac{\pd v_i}{\pd y_j}(y, s)\,ds
\end{equation}
where $\delta_{ij}$ are Kronecker's delta symbols, assuming
\begin{equation}\label{assump:1}
\sup_{t \in (0,T)}\int^t_0\|\nabla\bv(\cdot, s)\|_{L_\infty(\Omega)}\,ds
\leq \delta
\end{equation}
with some small positive constant $\delta$, the $N\times N$ matrix
$\pd x/\pd y = (\pd x_i/\pd y_j)$ has the inverse 
\begin{equation}\label{lag:3}
\Bigr(\frac{\pd x_i}{\pd y_j}\Bigr)^{-1} = \bI + \bV^0(\bk_\bv)
\end{equation}
where $\bk_\bv = \int^t_0\nabla\bv(y, s)\,ds$, $\bI$ is the $N\times N$
identity matrix, and $\bV^0(\bk)$ is the $N\times N$ matrix of 
smooth functions with respect to $\bk=(k_{ij} \mid i,j=1\ldots, N)
\in \BR^{N^2}$ defined on $|\bk| <\delta$ with $\bV^0(0) = 0$,
where $\bk$ is independent variables corresponding to 
$\bk_\bv$. 
Let $V^0_{ij}(\bk)$ be $(i, j)^{\rm th}$ components of $\bV^0(\bk)$, 
and then $V^0_{ij}(\bk)$ are smooth functions with respect to 
$\bk \in B_{\delta, N^2}$ with $V^0_{ij}(0) = 0$, where
$B_{\delta, N^2}$ denotes the ball of radius $\delta$ centered at the origin
in $\BR^{N^2}$. We have
\begin{equation}\label{lag:4}
\nabla_x = (\bI + \bV^0(\bk_\bv))\nabla_y, 
\quad \frac{\pd}{\pd x_i} = \sum_{j=1}^N (\delta_{ij} + V^0_{ij}(\bk_\bv))
\frac{\pd}{\pd y_j}.
\end{equation}
Moreover, as was seen in Str\"omer \cite{St1}, the map:
$x = \Phi(y, t)$ is bijection from $\Omega$ onto
$\Omega$, and so setting
\begin{equation}\label{lag:5}
\bv(y, t) = \bu(x, t),
\quad \eta(y, t) = \rho(x, t), \quad
\vartheta(y, t) = h(x, t)
\end{equation}
we see that Eq. \eqref{neweq:1} is transformed to the following equations:
\begin{equation}\label{neweq:2}\left\{
\begin{aligned}
\pd_t\eta + \eta\dv\bv &= R_1(U)&\quad&\text{in $\Omega\times(0, T)$}, \\
\eta\pd_t\bv - \mu\Delta\bv - \nu\nabla\dv\bv 
+\frac{\eta}{\Sigma_\rho}\nabla\eta + 
\frac{(m_1-m_2)\rho_1\rho_2}{\Sigma_\rho}\nabla \vartheta &= R_2(U)
&\quad&\text{in $\Omega\times(0, T)$}, \\
\frac{m_1m_2\rho_1\rho_2}{\Sigma_\rho}\pd_t \vartheta + 
\frac{(m_1-m_2)\rho_1\rho_2}{\Sigma_\rho}\dv\bv
-\dv\Bigl(\frac{\rho_1\rho_2}{\fp\rho}\nabla \vartheta\Bigr)
&=R_3(U)
&\quad&\text{in $\Omega\times(0, T)$}, \\
\bv=0, \quad(\nabla \vartheta)\cdot\bn & = R_4(U)
&\quad&\text{on $\Gamma \times(0, T)$}, \\
(\eta, \bv, \vartheta)|_{t=0} & =(\rho_0, \bu_0, h_0)
&\quad&\text{in $\Omega$}.
\end{aligned}\right. \end{equation}
Here, $R_1(U)$, $R_2(U)$, $R_3(U)$ and $R_4(U)$ are nonlinear functions
with respect to $U = (\eta, \bv, \vartheta)$, which are  
given in Sect. \ref{sec:2} below. 

Our main results are the following two theorems. The first one concerns the local well-posedness.
\begin{thm}\label{thm:main1}
Let $2 < p < \infty$, $3 < q < \infty$ and $L > 0$. Assume that 
$2/p + 3/q < 1$ and that $\Omega$ is a uniform $C^3$ domain in
$\BR^N$ ($N \geq 2$).  Let $\rho_{10}(x)$, $\rho_{20}(x)$, and 
$\bu_0(x)$ be initial data for Eq. \eqref{mf:1}. Assume that there exist
positive numbers $a_1$ and $a_2$ for which
\begin{equation}\label{initial:0}
a_1 \leq \rho_{10}(x), \enskip \rho_{20}(x) \leq a_2
\quad\text{for any $x \in \overline{\Omega}$}.
\end{equation}
Let $(h_0(x), \rho_0(x)) = \Psi(\rho_{10}(x), \rho_{20}(x))$. 
 Then, there exists a time $T>0$ depending on
$a_1$, $a_2$ and $L$ such that if 
$\rho_{10}$, $\rho_{20}$, $\bu_0$ and $h_0$ satisfy the condition:
\begin{equation}\label{initial:1}
\|\nabla(\rho_{10}, \rho_{20})\|_{L_q(\Omega)}
+ \|\bu_0\|_{B^{2(1-1/p)}_{q,p}(\Omega)} 
+ \|h_0\|_{B^{2(1-1/p)}_{q,p}(\Omega)}
\leq L
\end{equation}
and the compatibility condition:
\begin{equation}\label{initial:2}
\bu_0|_\Gamma=0, \quad (\nabla h_0)\cdot\bn|_\Gamma = 0,
\end{equation}
then problem \eqref{neweq:2} admits a unique solution 
$(\eta, \bv, \vartheta)$ with
\begin{gather*}
\eta - \rho_0 \in H^1_p((0, T), H^1_q(\Omega)),
\quad \bv \in H^1_p((0, T), L_q(\Omega)^3) \cap L_p((0, T), H^2_q(\Omega)^3),\\
\vartheta \in H^1_p((0, T), L_q(\Omega)) \cap L_p((0, T), H^2_q(\Omega))
\end{gather*}
possessing the estimates:
\begin{gather*}
\|\eta-\rho_0\|_{H^1_p((0, T), H^1_q(\Omega))}
+ \|\pd_t(\bv, \vartheta)\|_{L_p((0, T), L_q(\Omega))}
+ \|(\bv, \vartheta)\|_{L_p((0, T), H^2_q(\Omega))}
\leq CL, \\ a_1 \leq \rho(x,t) \leq 2a_2+a_1
\quad\text{for $(x, t) \in \Omega\times(0, T)$}, \quad 
 \int^T_0\|\nabla\bv(\cdot, s)\|_{L_\infty(\Omega)}
\leq \delta. 
\end{gather*}
Here, $C$ is some constant independent of $L$. 
\end{thm}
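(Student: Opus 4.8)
The plan is to prove Theorem~\ref{thm:main1} by linearizing \eqref{neweq:2} around the constant-coefficient operator obtained by freezing the coefficients at the initial state, invoking maximal $L_p$-$L_q$ regularity for that linear problem, and then closing a contraction-mapping argument for the nonlinear terms $R_j(U)$. More precisely, I would first rewrite \eqref{neweq:2} in the form $\CL U = \CN(U)$, where $\CL$ is the linear operator with coefficients $\mu,\nu$ constant and the density/temperature coefficients frozen at $(\rho_0, \bu_0, h_0)$ (or at the constants $a_1,a_2$-compatible reference values), and where $\CN(U)$ collects both the genuinely nonlinear right-hand sides $R_1(U),\ldots,R_4(U)$ and the coefficient-difference terms $(\text{frozen coeff}-\text{actual coeff})\cdot(\text{derivatives of }U)$. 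The first equation for $\eta$ is a transport-type ODE in $t$ with values in $H^1_q(\Omega)$; it is solved explicitly in terms of $\bv$, giving $\eta - \rho_0 \in H^1_p((0,T),H^1_q(\Omega))$ together with the pointwise bounds $a_1 \le \rho \le 2a_2+a_1$ for $T$ small, by a straightforward Gronwall/integration estimate using the embedding $H^1_p((0,T),L_q)\cap L_p((0,T),H^2_q) \hookrightarrow L_1((0,T),W^1_\infty(\Omega))$ (valid precisely because $2/p+3/q<1$).

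The heart of the argument is the linear maximal regularity statement for the $(\bv,\vartheta)$-block of $\CL$: a parabolic system (Lamé-type operator for $\bv$ coupled to a heat-type operator for $\vartheta$ through lower-order gradient/divergence terms) with the Neumann-type boundary condition $\bv=0$, $(\nabla\vartheta)\cdot\bn = R_4$. I would cite the $\CR$-boundedness / operator-valued Fourier multiplier machinery referenced in the introduction (\cite{DHP}, \cite{Murata}, \cite{SSchade}) — which the paper develops for wider $p,q$ — to obtain a solution operator bounded from the data space (right-hand sides in $L_p((0,T),L_q)$, boundary data in the appropriate anisotropic trace space, initial data in $B^{2(1-1/p)}_{q,p}$) into the solution space $H^1_p((0,T),L_q)\cap L_p((0,T),H^2_q)$, with norm controlled independently of $T\le 1$ after absorbing the exponential shift. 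The coupling between $\bv$ and $\vartheta$ is only through first-order terms with bounded coefficients, so it is a lower-order perturbation and does not destroy maximal regularity.

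Next I would set up the fixed-point scheme on the ball $\CB_L = \{U : \|U\|_{\mathbb{X}_T} \le CL,\ \eta|_{t=0}=\rho_0,\ \bv|_{t=0}=\bu_0,\ \vartheta|_{t=0}=h_0,\ a_1\le\eta\le 2a_2+a_1,\ \int_0^T\|\nabla\bv\|_{L_\infty}\,ds\le\delta\}$ in the solution space $\mathbb{X}_T$, defining $\Phi(U)$ to be the solution of the linear problem $\CL W = \CN(U)$ with the prescribed initial data. The key estimates are: (i) $\CN$ maps $\CB_L$ into the data space with norm $o(1)$ as $T\to 0$ (for the quadratic/higher terms, a factor $T^{\alpha}$ comes out by Hölder in time together with the embeddings above; for the frozen-coefficient differences, a factor like $\sup_t\|\text{coeff}(U(t))-\text{coeff}(U(0))\|_{L_\infty}$ which is small because $U(t)-U(0)$ is small in sup-norm for small $T$, using $\|U(t)-U(0)\|\lesssim \int_0^t\|\partial_s U\|\,ds$); and (ii) analogous Lipschitz-type estimates for $\CN(U_1)-\CN(U_2)$ giving a contraction constant $<1$ for $T$ small. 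Combining with the linear bound, $\Phi$ is a contraction of $\CB_L$ into itself, and its unique fixed point is the desired solution; the time $T$ depends only on $a_1,a_2,L$ as asserted.

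The main obstacle I expect is the careful bookkeeping of the nonlinear and coefficient-difference terms $R_j(U)$ — in particular verifying that every product of low-regularity factors (products of $\eta-\rho_0$, $\nabla\bv$, $\bk_\bv$, rational functions of $\eta$, the matrix $\bV^0(\bk_\bv)$, etc.) lands in $L_p((0,T),L_q)$ (and the boundary term $R_4$ in the correct trace class) with a constant that degenerates like a positive power of $T$ as $T\to0$. This requires repeated use of the product/embedding estimate $\|fg\|_{L_q}\lesssim\|f\|_{H^1_q}\|g\|_{H^1_q}$ for $q>3$, the time-trace and mixed-derivative interpolation inequalities for the space $H^1_p((0,T),L_q)\cap L_p((0,T),H^2_q)$, and the smoothness of $\bk\mapsto\bV^0(\bk)$ on $|\bk|<\delta$ — all of which are clean but numerous. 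The boundary condition's inhomogeneous term $R_4(U)=-\bV^0(\bk_\bv)\nabla\vartheta\cdot\bn$ needs a compatible extension so that maximal regularity applies; one reduces to homogeneous boundary data by subtracting a suitable lift, which is where the compatibility condition \eqref{initial:2} is consumed.
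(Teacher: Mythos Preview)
Your strategy is the paper's strategy: linearize \eqref{neweq:2} at the initial state (the paper freezes at the \emph{variable} data $(\rho_{10}(x),\rho_{20}(x),0)$, giving the $x$-dependent coefficients $\gamma_i(x)$ in \eqref{sub:1}), invoke maximal $L_p$--$L_q$ regularity for that linear system (Theorem~\ref{thm:linear:1}), and run a Banach fixed-point argument on a ball $\CH_{T,M}$ in which every nonlinear and coefficient-difference term picks up a positive power of $T$. The paper keeps $\zeta=\eta-\rho_0$ as an unknown of the linear problem rather than solving the transport equation first, but this is cosmetic.

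The one place where the paper's execution differs from your sketch is the boundary term $R_4(U)$. You propose to subtract a lift and reduce to homogeneous Neumann data. The paper instead keeps $g=R_4(U)$ as inhomogeneous boundary data in Theorem~\ref{thm:linear:1}, which requires $e^{-\gamma t}g\in H^{1/2}_p(\BR,L_q)\cap L_p(\BR,H^1_q)$ on the \emph{whole} time line; it therefore builds explicit extensions $e_T$, $\tilde e_T$ of $R_4=\CR_\bv\nabla\vartheta$ to $t\in\BR$ and proves product estimates in the fractional-time norm $H^{1/2}_p$ (Lemmas~\ref{lem:5.1} and~\ref{lem:5.2}), obtaining the small factor $T^{(q-3)/(pq)}$. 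Your lifting approach would work too, but it runs into the same issue---the lift must carry half a time derivative and match the compatibility condition at $t=0$---so either way this is the step that needs the most care, not merely ``bookkeeping.''
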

The second main result gives the global well-posedness:
\begin{thm}\label{thm:main2}
Let $2 < p < \infty$, $3 < q < \infty$ and $L > 0$. Assume that 
$2/p + 3/q < 1$ and that $\Omega$ is a bounded domain whose
boundary $\Gamma$ is a  compact $C^3$ hypersurface.  
Let $\rho_{1*}$ and $\rho_{2*}$ be
any positive numbers and set $(h_*, \rho_*) = \Psi(\rho_{1*}, \rho_{2*})
\in \BR\times \BR_+$.  Then, there exists a small number $\epsilon>0$
 depending on
$\rho_{1*}$, $\rho_{2*}$  such that if the initial data
$(\rho_0, \bu_0, h_0)$ satisfy the smallness condition:
\begin{equation}\label{initial:3}
\|(\rho_{10}-\rho_{1*}, \rho_{20}-\rho_{2*})\|_{H^1_q(\Omega)}
+ \|\bu_0\|_{B^{2(1-1/p)}_{q,p}(\Omega)} 
+ \|h_0-h_*\|_{B^{2(1-1/p)}_{q,p}(\Omega)}
\leq \epsilon
\end{equation}
and the compatibility condition \eqref{initial:2}
then problem \eqref{neweq:2} with $T = \infty$ admits a unique solution 
$(\eta, \bv, \vartheta)$ with
\begin{gather*}
\eta \in H^1_p((0, \infty), H^1_q(\Omega)),
\quad \bv \in H^1_p((0, T), L_q(\Omega)^N) 
\cap L_p((0, \infty), H^2_q(\Omega)^N),\\
\vartheta \in H^1_p((0, \infty), L_q(\Omega)) \cap L_p((0, T), H^2_q(\Omega))
\end{gather*}
possessing the estimates:
\begin{align*}
&\|e^{\gamma t}\nabla\eta\|_{L_p((0, \infty), L_q(\Omega))}
+ \|e^{\gamma t}\pd_t\eta\|_{L_p((0, \infty), H^1_q(\Omega))}
+ \|e^{\gamma t}\pd_t(\bv, \vartheta)\|_{L_p((0, \infty), L_q(\Omega))} 
+ \|e^{\gamma t}\bv\|_{L_p((0, \infty), H^2_q(\Omega))}\\
&\quad
+ \|e^{\gamma t}\nabla\vartheta\|_{L_p((0, \infty), H^1_q(\Omega))}
+ \|(\rho_1, \rho_2) - (\rho_{1*}, \rho_{2*})
\|_{L_\infty((0, \infty), H^1_q(\Omega))}
\leq C\epsilon, \\
&\rho_{i*}/4\leq \rho_i(x, t) \leq 4\rho_{i*}
\quad\text{in $(x, t) \in \Omega\times(0, \infty)$
for $i =1,2$}, \quad 
 \int^T_0\|\nabla\bv(\cdot, s)\|_{L_\infty(\Omega)}
\leq \delta
\end{align*}
for some constant $C > 0$ independent of $\epsilon$. 
\end{thm}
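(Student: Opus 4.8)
\emph{Sketch of the intended proof.}
The plan is to combine the local solvability of Theorem~\ref{thm:main1} with a uniform-in-time a priori estimate in the exponentially weighted maximal regularity norm appearing in the statement, and then to conclude by a continuation argument.

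\textbf{Step 1 (reduction to a perturbation).}
Write $\eta=\rho_*+\sigma$, $\vartheta=h_*+\theta$, and observe that $\rho_1,\rho_2=\Phi(\rho_*+\sigma,\,h_*+\theta)$ depend smoothly on $(\sigma,\theta)$. Freezing the coefficients of \eqref{neweq:2} at $(\sigma,\theta)=(0,0)$ produces the constant-coefficient linear part
\[
\pd_t\sigma+\rho_*\dv\bv,\qquad
\rho_*\pd_t\bv-\mu\Delta\bv-\nu\nabla\dv\bv+\alpha\nabla\sigma+\beta\nabla\theta,\qquad
\kappa_1\pd_t\theta+\beta\dv\bv-\kappa_2\Delta\theta,
\]
supplemented by $\bv|_\Gamma=0$ and $(\nabla\theta)\cdot\bn|_\Gamma=0$, with positive constants $\alpha,\beta,\kappa_1,\kappa_2$ depending on $(\rho_*,h_*)$ (note that the same constant $\beta$ occurs in the momentum and the $\theta$ equation, reflecting the symmetric structure of \eqref{neweq:1}); the terms $R_i(U)$ from \eqref{neweq:2} together with the differences between the true and the frozen coefficients are collected into nonlinearities $G_i(U)$, $U=(\sigma,\bv,\theta)$, each of which is at least quadratic in $(U,\bk_\bv)$.

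\textbf{Step 2 (linear theory with exponential decay).}
This is the core step. Since $\Omega$ is bounded, one first establishes the $\CR$-boundedness of the resolvent of the associated operator matrix on a shifted sector, exactly along the lines of \cite{DHP},\cite{Murata},\cite{SSchade}, which gives $L_p$-$L_q$ maximal regularity; because the resolvent is compact, the spectrum is discrete, and one checks (via the natural symmetrized energy identity, which kills the viscous and the $\theta$-dissipation for a purely oscillatory solution) that the only spectral point on the imaginary axis is $\lambda=0$, with a finite-dimensional eigenspace reflecting the conservation of the partial masses $\int_\Omega\rho_1$, $\int_\Omega\rho_2$. On the complement of this mode there is a spectral gap, hence exponential decay of the semigroup, and combined with maximal regularity this yields the weighted linear estimate with some rate $\gamma>0$. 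The density is only transported, so the decay of $\nabla\sigma$ is not a parabolic effect: it is recovered from the hyperbolic--parabolic coupling in the spirit of Kawashima--Shizuta (already used for this model by Giovangigli \cite{VG}), namely the pressure terms $\alpha\nabla\sigma+\beta\nabla\theta$ feed the parabolic velocity equation, which forces $\nabla\sigma$ --- but not $\sigma$ itself --- to decay. The component of the solution along the kernel solves a small ordinary differential equation driven by the spatial averages of the $G_i$, which are integrable in time, so it merely stays $O(\epsilon)$; this is exactly what produces the non-decaying bound $\|(\rho_1,\rho_2)-(\rho_{1*},\rho_{2*})\|_{L_\infty((0,\infty),H^1_q(\Omega))}\le C\epsilon$ in the statement.

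\textbf{Step 3 (nonlinear estimates, fixed point, conclusion).}
Because $2<p<\infty$, $3<q<\infty$ and $2/p+3/q<1$, one has $B^{2(1-1/p)}_{q,p}(\Omega)\hookrightarrow C^1(\overline{\Omega})$ and the maximal regularity class embeds continuously into $C([0,\infty),C^1(\overline{\Omega}))$; moreover $\bk_\bv(t)=\int_0^t\nabla\bv\,ds$ is small in $L_\infty((0,\infty),H^1_q(\Omega))$ as soon as the weighted norm of $\bv$ is small, by H\"older's inequality in time. Hence the smooth matrix $\bV^0(\bk_\bv)$ and the coefficient corrections are small in $L_\infty((0,\infty),H^1_q(\Omega))$, and the weighted norm of $(G_1,\dots,G_4)$ is bounded by $C$ times the square of the weighted norm of $U$; here it is crucial that the only non-decaying factor, $\eta-\rho_*\in L_\infty((0,\infty),L_\infty(\Omega))$, always multiplies a genuinely decaying quantity such as $\pd_t\bv$ or $\nabla\bv$, so no loss of decay occurs. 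Feeding this into Step~2 and using the smallness $\le\epsilon$ of the data, a contraction mapping argument in the weighted solution space produces, for $\epsilon$ small enough, a unique global solution satisfying the asserted estimates; the pointwise bounds $\rho_{i*}/4\le\rho_i\le4\rho_{i*}$ then follow from $\|(\sigma,\theta)\|_{L_\infty((0,\infty),L_\infty(\Omega))}\le C\epsilon$ and the continuity of $\Phi$. (Equivalently, the very same a priori estimate together with Theorem~\ref{thm:main1} gives global existence by a routine continuation argument.)

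\textbf{Main obstacle.}
The delicate point is Step~2: obtaining exponential decay for the linearized system despite the complete absence of smoothing in the density equation, while at the same time isolating and controlling the kernel associated with conservation of the partial masses, on which no decay can hold. The $\CR$-boundedness and the weighted nonlinear bounds, though technically lengthy, follow well-established patterns.
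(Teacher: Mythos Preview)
Your proposal is correct and follows essentially the same strategy as the paper: linearize at the constant state, prove an exponential decay estimate for the linearized problem (Theorem~\ref{thm:decay1}) via $\CR$-bounded solution operators together with a spectral analysis on $\overline{\BC_+}$ in which the energy identity kills imaginary eigenvalues and the kernel at $\lambda=0$ is handled by passing to the zero-mean subspace $\hat\CH_q(\Omega)$, then close with quadratic nonlinear estimates and a continuation argument. The only minor difference is that the paper carries out the closing step by the a~priori inequality $<e^{\gamma t}V>_T\le C(\CI+<e^{\gamma t}V>_T^2)$ combined with Theorem~\ref{cor:1} and prolongation (your parenthetical alternative), rather than by a direct contraction mapping in the weighted space.
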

The rest of the paper is organized as follows.  In Sect.3, we derive
the formulas $R_i(U)$ ($i=1, \ldots, 4$) in the right side of Eq. 
\eqref{neweq:2}. In Sect.4, assuming the maximal $L_p$-$L_q$ theory for the 
linearized equations, we prove Theorem \ref{thm:main1}. In Sect.5, assuming 
the decay properties of solutions of the linearized equations, we prove
Theorem \ref{thm:main2}.  In Sect.6, we prove the maximal $L_p$-$L_q$
regularity for the linearized equations and in Sect.7 we prove the decay theorem
for the linearized equations.

\begin{center}
{\bf Notation} 
\end{center}
We conclude this section by summarizing the symbols used throughout
the paper. 
We denote the sets of all complex numbers, real numbers
 and natural numbers by
$\BC$, $\BR$, and $\BN$, respectively, and $\BN_0 = \BN \cup\{0\}.$
$\bI$ stands for the $N\times N$ identity matrix or the identity
operator.
For any multi-index $\alpha = (\alpha_1, \ldots, \alpha_N) 
\in \BN_0^N$ we set 
$\partial_x^\alpha h = 
\partial_1^{\alpha_1}\cdots \partial_N^{\alpha_N} h$
with $\partial_i = \partial/\partial x_i$.  In particular, 
for scalar functions $\theta$, vector functions  
$\bu = {}^\top(u_1, \ldots, u_N)$ and 
$n \in \BN_0$ we set  
$\nabla^n\theta = (\partial_x^\alpha\theta \mid |\alpha|=n)$ and 
$\nabla^n \bu = (\nabla^n u_j \mid j = 1, \ldots, N)$.
In particular, $\nabla^0 = \bI$, 
$\nabla^1 = \nabla$.
 For any $N$-vectors $\ba={}^\top(a_1, \ldots, a_N)$ and $\bb
={}^\top(b_1, \ldots, b_N)$, 
let $\ba\cdot \bb =<\ba, \bb>= \sum_{j=1}^N\ba_j\bb_j$.  
For any domain $G$ in $\BR^N$, let $L_q(G)$, $H^m_q(G)$,  and 
$B^s_{q,p}(G)$  
be the standard Lebesgue, Sobolev, and 
Besov spaces on $G$, 
and let  $\|\cdot\|_{L_q(G)}$, 
$\|\cdot\|_{H^m_q(G)}$, and  
$\|\cdot\|_{B^s_{q,p}(G)}$ denote their respective norms.
Let $(\cdot, \cdot)_{\theta, p}$ and $(\cdot, \cdot)_{[\theta]}$ denote
the real interpolation functor and complex interpolation functor,
respectively.  Note that $B^{m+\theta}_{q,p}(G) = (
H^m_q(G), H^{m+1}_q(G))_{\theta, p}$. 
For a Banach space $X$ with norm $\|\cdot\|_X$, 
let $X^d = \{(f_1, \ldots, f_d) \mid f_i \in X \enskip
 (i=1, \ldots, d)\}$, 
and write the norm of $X^d$ as simply  $\|\cdot\|_X$, which is 
defined by    
$\|f\|_X = \sum_{j=1}^d \|f_j\|_X$ for 
$f = (f_1, \ldots, f_d)
\in X^d$.  
Let 
\begin{align*}
&\CH_q(G) = \{F = (f_1, \bff_2, f_3) \mid f_1 \in H^1_q(G), \enskip 
\bff_2 \in L_q(G)^N, \enskip f_3 \in L_q(G)\}, \\
&\|F\|_{\CH_q(G)} = \|f_1\|_{H^1_q(G)} + \|(\bff_2, f_3)\|_{L_q(G)}
\quad\text{for $F = (f_1, \bff_2, f_3) \in \CH_q(G)$}, \\
&D_q(G)  = \{U = (\zeta, \bv, \vartheta) \mid \zeta \in H^1_q(G), \enskip
\bv \in H^2_q(G)^N, \enskip \vartheta \in H^2_q(G)\}, \\ 
&\|U\|_{D_q(\Omega)}  = \|\zeta\|_{H^1_q(G)} + \|(\bv, \vartheta)\|_{H^2_q(G)}
\quad\text{for $U=(\zeta, \bv, \vartheta) \in D_q(G)$}, \\
&D_{p,q}(G) = \{U_0 = (\zeta_0, \bv_0, \vartheta_0) \mid 
\zeta_0 \in H^1_q(G), \enskip \bv_0 \in B^{2(1-1/p)}_{q,p}(G)^N, \enskip 
\vartheta_0 \in B^{2(1-1/p)}_{q,p}(G)\}, \\ 
&\|U_0\|_{D_{p,q}(G)}  = \|\zeta_0\|_{H^1_q(G)} +  
\|(\bv_0, \vartheta_0)\|_{B^{2(1-1/p)}_{q,p}(G)}
\quad\text{for $U_0 = (\zeta_0, \bv_0, \theta_0) \in D_{p,q}(G)$}, \\
&Y_q(G) =\{(h_1, h_2) \mid h_1 \in L_q(G), 
\quad h_2 \in H^1_q(G)\}, \quad 
\|(h_1, h_2)\|_{Y_q(G)} = \|h_1\|_{L_q(G)} \\
&\CY_q(G)  = \{(F_1, F_2, F_3) \mid F_1, F_2 \in L_q(\Omega),
\quad F_3 \in H^1_q(\Omega)\},  \\
&\|(F_1, F_2, F_3)\|_{\CY_q(G)} = \|(F_1, F_2)\|_{L_q(G)} + \|F_3\|_{H^1_q(G)}.
\end{align*}
Let 
$(\bu, \bv)_G = \int_G\bu\cdot\bv\,dx$ and  let 
$(\bu,\bv)_{\partial G} = \int_{\partial G} \bu\cdot \bv\,d\omega$, 
where $d\omega$ denotes the surface element on $\partial G$.  
For $1 \leq p \leq \infty$, $L_p((a, b), X)$ and 
$H^m_p((a, b), X)$ denote the standard  Lebesgue and Sobolev spaces of
$X$-valued functions defined on an interval $(a, b)$, 
and $\|\cdot\|_{L_p((a, b), X)}$,  
$\|\cdot\|_{H^m_p((a, b), X)}$ denote their respective norms. 
Let $H^s_p(\BR, X)$ be the standard $X$-valued Bessel potential space and
$\|\cdot\|_{H^s_p(\BR, X)}$ its norm. 
Let $C^\infty_0(G)$ be the set of all $C^\infty$ functions
whose supports are compact and contained in $G$. 
For two Banach spaces $X$ and $Y$, $X+Y = \{x + y \mid x \in X, y\in Y\}$,
$\CL(X, Y)$ denotes the 
set of all bounded linear operators from $X$ into $Y$ and 
$\CL(X, X)$ is written simply as $\CL(X)$. 
For a domain $U$ in $\BC$, $\Hol(U, \CL(X, Y))$
 denotes the set of all $\CL(X, Y)$-valued holomorphic 
functions defined on $U$. 
Let $\Sigma_\epsilon = \{\lambda \in \BC\setminus\{0\} \mid
|\arg\lambda| \leq \pi-\epsilon\}$ and $\Sigma_{\epsilon, \lambda_0}
= \{\lambda \in \Sigma_\epsilon \mid |\lambda| \geq \lambda_0\}$.
Moreover, 
the letter $C$ denotes a
generic constant and $C_{a,b,c,\cdots}$ denotes that the 
constant $C_{a,b,c,\cdots}$ depends 
on $a$, $b$, $c, \cdots$.  The value of 
$C$ and $C_{a,b,c,\cdots}$ may change from line to line.

\section{Lagrange transformation}\label{sec:2}
In this section we rewrite all necessary differential operators under the
Lagrange transformation \eqref{lag:1} under the assumption \eqref{assump:1}. 
This way e obtain exact form of the right hand side of \eqref{neweq:2}.
We have  
\begin{equation}\label{lag:div}
\dv_x = \dv_y + \sum_{i,j=1}^n\bV^0_{ij}(\kv)\frac{\pd v_i}{\pd y_j},
\end{equation} 
therefore by \eqref{dt_lag},\eqref{lag:4} and \eqref{lag:5}, we obtain \eqref{neweq:2}$_1$ with
\begin{equation}\label{lag:6}
R_1(U) = -\eta\sum_{i,j=1}^N V^0_{ij}(\bk_\bv)\frac{\pd v_i}{\pd y_j}.
\end{equation}
Here and in the following, we set $U = (\eta, \bv, \vartheta)$.
Now we have to transform second order operators.
By \eqref{lag:4}, we have
$$
\Delta \bu = \sum_{k=1}^3\frac{\pd}{\pd x_k}\lr{\frac{\pd \bu}{\pd x_k}}
= \sum_{k,\ell,m=1}^3\lr{\delta_{k\ell} + V^0_{kl}(\bk_\bv)}
\frac{\pd}{\pd y_\ell}
\lr{(\delta_{km} + V^0_{km}(\bk_\bv))\frac{\pd \bv}{\pd y_m}},
$$
and so 
setting 
\begin{align*}
A_{2\Delta}(\bk)\nabla^2\bv &= 2\sum_{\ell, m=1}V^0_{k\ell}(\bk)
\frac{\pd^2\bv}{\pd y_\ell\pd y_m}
+ \sum_{k,\ell, m=1}^N
V^0_{k\ell}(\bk)V^0_{km}(\bk)
\frac{\pd^2\bv}{\pd y_\ell \pd y_m}, \\
A_{1\Delta}(\bk)\nabla\bv & = \sum_{\ell, m=1}^3(\nabla_\bk V^0_{\ell m})(\bk)
\int^t_0(\pd_l\nabla\bv)\,ds \frac{\pd \bv}{\pd y_m}
+ \sum_{k, \ell, m=1}^3
V^0_{k\ell}(\bk) (\nabla_\bk V^0_{km})(\bk)
\int^t_0\pd_\ell\nabla\bv\,ds\frac{\pd \bv}{\pd y_m}
\end{align*}
we have
$$\Delta \bu = \Delta \bv + A_{2\Delta}(\bk_\bv)\nabla^2\bv
+ A_{1\Delta}(\bk_\bv)\nabla\bv.
$$
And also, by \eqref{lag:4}, we have
$$\frac{\pd}{\pd x_j}\dv\bu 
= \sum_{k=1}^3(\delta_{jk} + V^0_{jk}(\bk_\bv))\frac{\pd}{\pd y_k}
\lr{\dv\bv + \sum_{\ell, m=1}^3 V^0_{\ell m}(\bk_\bv)\frac{\pd v_\ell}{\pd y_m}},
$$
and so setting
\begin{align*}
A_{2\dv,j}(\bk)\nabla^2\bv
& = \sum_{\ell, m=1}^3V^0_{\ell m}(\bk)\frac{\pd^2 v_\ell}{\pd y_m\pd y_j}
+ \sum_{k=1}^3 V^0_{jk}(\bk)\frac{\pd}{\pd y_k}\dv\bv
+ \sum_{k, \ell=1}^3V^0_{jk}(\bk)V^0_{\ell m}(\bk)
\frac{\pd^2v_\ell}{\pd y_k\pd y_m}, \\
A_{1\dv, j}(\bk)\nabla\bv
& = \sum_{\ell, m=1}^3(\nabla_\bk V^0_{\ell m})(\bk)
\int^t_0\pd_j\nabla\bv\,ds\frac{\pd v_\ell}{\pd y_m} 
+ \sum_{k,\ell, m=1}^3V^0_{jk}(\bk)(\nabla_\bk V^0_{\ell m})(\bk)
\int^t_0\pd_k\nabla\bv\,ds\frac{\pd v_\ell}{\pd y_m},
\end{align*}
we have
$$\frac{\pd}{\pd x_j}\dv\bu
= \frac{\pd}{\pd y_j}\dv\bv + A_{2\dv, j}(\bk_\bv)\nabla^2\bv
+ A_{1\dv, j}(\bk_\bv)\nabla\bv.
$$
By \eqref{lag:4}, we have
\begin{align*}
\frac{\rho}{\Sigma_\rho}\nabla\rho+ 
\frac{(m_1-m_2)\rho_1\rho_2}{\Sigma_\rho}\nabla h
= \frac{\eta}{\Sigma_\rho}(\nabla\eta + V^0(\bk_\bv)\nabla\eta)+
\frac{(m_1-m_2)\rho_1\rho_2}{\Sigma_\rho}\lr{\nabla \vartheta
+ V^0(\bk_\bv)\nabla\vartheta}.
\end{align*}
Thus, noting that $\pd_t\bu + \bu\cdot\nabla\bu = \pd_t\bv$
and setting 
\begin{equation}\label{lag:7}\begin{split}
R_2(U) &= \mu A_{2\Delta}(\bk_\bv)\nabla^2\bv 
+ \mu A_{1\Delta}(\bk_\bv)\nabla\bv
+ \nu A_{2\dv}(\bk_\bv)\nabla^2\bv + \nu A_{1\dv}(\bk_\bv)\nabla\bv \\
&- \frac{\eta}{\Sigma_\rho}V^0(\bk_\bv)\nabla\eta
- \frac{(m_1-m_2)\rho_1\rho_2}{\Sigma_\rho}V^0(\bk_\bv)\nabla\vartheta,
\end{split}\end{equation}
where $A_{i\dv}(\bk)\nabla^i\bv = {}^\top(A_{i\dv,1}(\bk)\nabla^i\bv,
\ldots, A_{i\dv,N}(\bk)\nabla^i\bv)$ ($\nabla^1 = \nabla$), we have
$$\eta\pd_t\bv - \mu\Delta\bv - \nu\nabla\bv 
+ \frac{\eta}{\Sigma_\rho}\nabla\eta 
+ \frac{(m_1-m_2)\rho_1\rho_2}{\Sigma_\rho}\nabla\vt =R_2(U)
\quad\text{in $\Omega\times(0, T)$}.
$$
By \eqref{lag:4}, we have
\begin{align*}
\dv_x\Bigl(\frac{\rho_1\rho_2}{\fp\rho}\nabla h\Bigr)
&= \frac{\rho_1\rho_2}{\fp\rho}(\Delta \vartheta 
+ A_{2\Delta}\nabla^2(\bk_\bv)\vartheta + A_{1\Delta}(\bk_\bv)\nabla\vartheta)
+ \nabla_x\Bigl(\frac{\rho_1\rho_2}{\fp\rho}\Bigr)\cdot(\nabla\vartheta
+ V^0(\bk_\bv)\nabla\vartheta) \\
&= \dv_y\Bigl(\frac{\rho_1\rho_2}{\fp\rho}\nabla \vartheta\Bigr)
+ \frac{\rho_1\rho_2}{\fp\rho}
(A_{2\Delta}(\bk_\bv)\nabla^2 \vartheta + A_{1\Delta}(\bk_\bv)\nabla\vartheta)\\
&\quad+  \lr{2V^0(\bk_\bv)+(V^0(\bk_\bv))^2} \nabla_y\Bigl(\frac{\rho_1\rho_2}{\fp\rho}\Bigr)\nabla\vartheta.
\end{align*}
Thus, noting that $\pd_th + \bu\cdot\nabla h = \pd_t\vartheta$ and 
setting
\eq{\label{lag:8}
R_3(U) = &\frac{\rho_1\rho_2}{\fp\rho}
(A_{2\Delta}(\bk_\bv)\nabla^2 \vartheta + A_{1\Delta}(\bk_\bv)\nabla\vartheta)
+ \nabla\Bigl(\frac{\rho_1\rho_2}{\fp\rho}\Bigr)V^0(\bk_\bv)\nabla\vartheta\\
&-\frac{(m_1-m_2)\rho_1\rho_2}{\Sigma_\rho}
\sum_{j,k=1}^3V^0_{jk}(\bk_\bv)\frac{\pd v_j}{\pd y_k},
}
we obtain \eqref{neweq:2}$_3$.  

Finally, by the Taylor formula we have 
$$\bn(x) = \bn\lr{y + \int^t_0\bv(y, s)\,ds}
= \bn(y) + \int^1_0(\nabla\bn)
\lr{y + \tau\int^t_0\bv(y, s)\,ds}\,d\tau
\int^t_0\bv(y, s)\,ds,
$$
and so setting
\eq{\label{lag:9}
R_4(U)=& -\bn\lr{y + \int^t_0\bv(y, s)\,ds)\cdot (\bV^0(\bk_\bv)\nabla \vartheta}\\
&- \lr{\int^1_0(\nabla\bn)
(y + \tau\int^t_0\bv(y, s)\,ds)\,d\tau
\int^t_0\bv(y, s)\,ds} \cdot\nabla\vartheta,
}
we obtain \eqref{neweq:2}.

\section{Linearized problem for the local well-posedness}\label{sec:3}

Let $\rho_{10}(x)$, $\rho_{20}(x)$ and $\bu_0(x)$ be initial 
data for Eq. \eqref{mf:1}. Let $\alpha_1$ and $\alpha_2$ be postive numbers 
for which we assume that
\begin{equation}\label{3.1} 
\alpha_1 \leq \rho_{10}(x), \enskip \rho_{20}(x) \leq \alpha_2
 \enskip\text{for any $x \in \overline{\Omega}$},
\quad \|\nabla(\rho_{10}, \rho_{20})\|_{L_r(\Omega)} \leq \alpha_2
\end{equation}
where $\alpha_1$ and $\alpha_2$  are some positive constants and 
$3< r < \infty$. 
Let $(h_0(x), \rho_0(x)) 
= \Psi(\rho_{10}(x),
\rho_{20}(x))$, where $\Psi$ is defined in \eqref{trans:1}. 
Obviously, since $\rho_0(x)=\rho_{10}(x)+\rho_{20}(x), $we have 
\begin{equation}\label{3.3} 2\alpha_1 \leq \rho_0(x) \leq 2\alpha_2,
\quad |h_0(x)| \leq \alpha_3
\end{equation}
where $\alpha_3= (\frac{1}{m_1}+\frac{1}{m_2})(|\log \alpha_1|+|\log \alpha_2|)$. 
We linearize Eq. \eqref{neweq:2} at $(\rho_{10}(x), \rho_{20}(x), 0)$.
Let  
\begin{equation}\label{sub:1}\begin{split}
&\rho = \rho_0(x) + \zeta, \quad \Sigma_\rho^0(x) 
= m_1\rho_{10}(x) + m_2\rho_{20}(x), 
\quad \gamma_1(x)= \frac{\rho_0(x)}{\Sigma_\rho^0(x)}, \\
&\gamma_2(x) = \frac{(m_1-m_2)\rho_{10}(x)\rho_{20}(x)}{\Sigma_\rho^0(x)},
\quad 
\gamma_3(x) = \frac{m_1m_2\rho_{10}(x)\rho_{20}(x)}{\Sigma^0_\rho(x)}, \\
&\gamma_4(x) = \frac{\rho_{10}(x)\rho_{20}(x)}{\fp_0(x)\rho_0(x)},
\quad \fp_0(x) = \frac{\rho_{10}(x)}{m_1} + \frac{\rho_{20}(x)}{m_2}.
\end{split}\end{equation}
We then write Eq. \eqref{neweq:2} as
\begin{equation}\label{neweq:3}\left\{\begin{aligned}
\pd_t\zeta + \rho_0\dv\bv = f_1(U)& &\quad&\text{in $\Omega\times(0, T)$}, \\
\rho_0\pd_t\bv - \mu\Delta\bv - \nu\nabla\dv\bv
+\gamma_1\nabla\zeta+ \gamma_2\nabla\vartheta  = \bff_2(U)&
&\quad&\text{in $\Omega\times(0, T)$}, \\
\gamma_3\pd_t\vartheta +\gamma_2\dv\bv- \dv(\gamma_4\nabla\vartheta)  = f_3(U)&
&\quad&\text{in $\Omega\times(0, T)$}, \\
\bv=0, \quad (\nabla \vartheta)\cdot\bn  = g(U)&
&\quad&\text{on $\Gamma\times(0, T)$}, \\
(\zeta, \bv, \vartheta)|_{t=0}  = (0, \bu_0, h_0)&
&\quad&\text{in $\Omega$},
\end{aligned}\right.\end{equation}
where we have set $U=(\rho, \bv, \vartheta)$, $\rho = \rho_0(x) + \zeta$, 
and 
\allowdisplaybreaks{
\begin{align}
f_1(U) & = R_1(U) - \zeta\dv\bv, \nonumber \\
\bff_2(U) & = R_2(U) -\zeta\pd_t\bv 
- (\rho_0+\zeta)\lr{\frac{1}{\Sigma_\rho}-
\frac{1}{\Sigma_\rho^0}}\nabla(\rho_0+\zeta)
- \frac{\rho_0+\zeta}{\Sigma_\rho^0}\nabla(\rho_0) 
\\ 
&\quad -\frac{\zeta}{\Sigma_\rho^0}\nabla\zeta \nonumber 
-(m_1-m_2)\lr{\frac{\rho_1\rho_2}{\Sigma_\rho} -
 \frac{\rho_{10}\rho_{20}}{\Sigma^0_\rho}
}\nabla\vartheta, \nonumber \\
f_3(U) & = R_3(U) -m_1m_2\lr{\frac{\rho_1\rho_2}{\Sigma_\rho}
-\frac{\rho_{10}\rho_{20}}{\Sigma_\rho^0}}\pd_t\vartheta
-(m_1-m_2)\lr{\frac{\rho_1\rho_2}{\Sigma_\rho}
-\frac{\rho_{10}\rho_{20}}{\Sigma_\rho^0}}\dv\bv\nonumber \\
&\quad+\dv\lr{\lr{\frac{\rho_1\rho_2}{\fp\rho}
-\frac{\rho_{10}\rho_{20}}{\fp_0\rho_0}}\nabla\vartheta},
\nonumber\\
g(U) & = R_4(U). \label{neweq:4}
\end{align}
}
In the next section we solve the system \eqref{neweq:3} by the Banach fixed point theorem. 
\section{Local well-posedness -- proof of Theorem \ref{thm:main1}}
\label{sec:5} 

To prove the local well-posedness, we use the maximal regularity result for
the following equations:
\begin{equation}\label{linear:1}\left\{
\begin{aligned}
\pd_t\zeta + \rho_0(x)\dv\bv  = f_1& &\quad&\text{in $\Omega\times(0, T)$}, \\
\rho_0(x)\pd_t\bv - \mu\Delta\bv - \nu\nabla\dv\bv + \gamma_1(x)\nabla\zeta
+ \gamma_2(x)\nabla\vartheta
 = \bff_2&&\quad&\text{in $\Omega\times(0, T)$}, \\
\gamma_3(x)\pd_t\vartheta 
+ \gamma_2(x)\dv\bv-\dv(\gamma_4(x)\nabla\vartheta) = f_3&
&\quad&\text{in $\Omega\times(0, T)$}, \\
\bv|_\Gamma = 0, \quad (\nabla\vartheta)\cdot\bn = g&
&\quad&\text{on $\Gamma\times(0, T)$}, \\
(\zeta, \bv, \vartheta)|_{t=0} = (\zeta_0, \bv_0, \vartheta_0)&
&\quad&\text{in $\Omega$}.
\end{aligned}\right. 
\end{equation}
Here $\gamma_1(x)$, 
$\gamma_2(x)$, $\gamma_3(x)$ and $\gamma_4(x)$ have been given in 
\eqref{sub:1}. 
We assume that $\rho_{10}(x)$, $\rho_{20}(x)$ are
uniformly continous functions defined on $\overline{\Omega}$
satisfying \eqref{3.1}. Then we see immediately that
there exist positive constants $\alpha_3 < \alpha_4$ depending on 
$\alpha_1$ and $\alpha_2$ for which 
\begin{gather}
\alpha_3 \leq \rho_0(x), \gamma_1(x), 
\gamma_3(x), \gamma_4(x) 
\leq \alpha_4
\enskip \text{for $x \in \overline{\Omega}$}, \nonumber \\
\|\nabla(\rho_0, \gamma_1, \gamma_2, \gamma_3, \gamma_4)\|_{L_r(\Omega)}
\leq \alpha_4. \label{cond:4*}
\end{gather}
For a Banach space $X$ with norm $\|\cdot\|_X$, 
let $H^s_p(\BR, X)$ be a $X$ valued Bessel potential space of 
order $s \in (0, 1)$ defined by 
$$H^s_p(\BR, X) = \{f \in L_p(\BR, X) \mid \|f\|_{H^s_p(\BR, X)} < \infty\},
\quad 
\|f\|_{H^s_p(\BR, X)}
 = \|\CF^{-1}[(1+\tau^2)^{s/2}\CF[f](\tau)]\|_{L_p(\BR, X)},
$$
where $\CF$ and $\CF^{-1}$ denote the Fourier transform and its inverse formula.
The following theorems gives maximal $L_p-L_q$ regularity estimate for the system \eqref{linear:1}.
\begin{thm}\label{thm:linear:1}
Let $1 < p, q < \infty$, $2/p + 1/q \not=2$ and $2/p + 1/q \not=1$. 
Assume that $\Omega$ is a uniformly $C^2$ domain. Then, 
there exists a constant $\gamma_0$ for which the following
assertion holds. 
Let 
\begin{gather*}
\zeta_0 \in H^1_q(\Omega), \enskip \bv_0 \in B^{2(1-1/p)}_{q,p}(\Omega)^3, 
\enskip \vartheta_0 \in B^{2(1-1/p)}_{q,p}(\Omega), \\
\enskip  
f _1 \in L_p((0, T), H^1_q(\Omega)), \enskip \bff_2 \in 
L_p((0, T), L_q(\Omega)^3), \quad f_3 \in L_p((0, T), L_q(\Omega)), \\
e^{-\gamma t}g \in L_p(\BR, H^1_q(\Omega)) \cap H^{1/2}_p(\BR, L_q(\Omega))
\end{gather*}
for any $\gamma \geq \gamma_0$. 
Assume that $\bv_0$ and $\vartheta_0$ satisfy the 
compatibility conditions: 
$$\bv_0|_\Gamma = 0 \enskip \text{on $\Gamma$ for 
$2/p + 1/q < 2$}, \quad 
(\nabla \vartheta_0)\cdot\bn= g|_{t=0}\enskip\text{on $\Gamma$ 
 for 
$2/p + 1/q < 1$}.
$$
 Then, problem \eqref{linear:1} admits unique solutions
$\zeta$, $\bv$ and $\vartheta$ with
\begin{gather*}
\zeta \in H^1_p((0, T), H^1_q(\Omega)),
\quad \bv \in H^1_p((0, T), L_q(\Omega)^3) \cap L_p((0, T), H^2_q(\Omega)^3), \\\vartheta \in H^1_p((0, T), L_q(\Omega)) \cap L_p((0, T), H^2_q(\Omega))
\end{gather*}
possessing the estimate:
\begin{align*}
&\|\zeta\|_{H^1_p((0, T), H^1_q(\Omega))} 
+ \|\pd_t(\bv, \vartheta)\|_{L_p((0, T), L_q(\Omega)^N)} 
+ \|(\bv, \vartheta)\|_{L_p((0, T), H^2_q(\Omega))} \\
&\quad \leq C_\gamma e^{\gamma T}\{\|\rho_0\|_{H^1_q(\Omega)} 
+ \|(\bv_0, \vartheta_0)\|_{B^{2(1-1/p)}_{q,p}(\Omega)}
+ \|(f_1, \bff_2, f_3)\|_{L_p((0, T), L_q(\Omega))}\\
&\phantom{\quad \leq Ce^{\gamma T}\{\|\rho_0\|_{H^1_q(\Omega)}}\,\,
+ \|e^{-\gamma t}g\|_{L_p(\BR, H^1_q(\Omega))}
+ \|e^{-\gamma t}g\|_{H^{1/2}_p(\BR, L_q(\Omega))}\} 
\end{align*}
for any $\gamma \geq \gamma_0$, where $C$ is a constant 
depending on $\gamma$.
\end{thm}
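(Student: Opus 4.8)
The plan is to combine $\CR$-boundedness of the resolvent for the associated whole-space and bent-half-space model problems with the Weis operator-valued Fourier multiplier theorem, together with a localization (partition of unity) argument adapted to the uniformly $C^2$ domain $\Omega$, exactly in the style of \cite{DHP}, \cite{Murata}, \cite{SSchade}. The key structural point that makes this feasible is the symmetric structure obtained in Section~\ref{sec:2}: after multiplying the equations by suitable positive weights the principal part becomes a symmetric elliptic system, so the abstract machinery for systems of parabolic type applies. The continuity equation $\pd_t\zeta + \rho_0\dv\bv = f_1$ is of ODE type in $\zeta$ for each fixed $x$, so $\zeta$ is recovered from $\bv$ by integration once the $(\bv,\vartheta)$-block has been solved; the coupling terms $\gamma_1\nabla\zeta$ feeding back into the momentum equation are lower order and handled perturbatively, again as in the compressible Navier--Stokes analyses \cite{WZ,S16,EBS}.

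First I would treat the generated resolvent problem: replace $\pd_t$ by $\lambda$ (with $\lambda \in \Sigma_{\epsilon,\lambda_0}$), eliminate $\zeta = \lambda^{-1}(f_1 - \rho_0\dv\bv)$ from the momentum equation, and study the resulting $\CL(\mathcal{H}_q(\Omega))$-valued resolvent for the $(\bv,\vartheta)$-system. Establish $\CR$-boundedness of the solution operator family $\{\lambda^{j/2}\nabla^k \mathcal{A}(\lambda) : |\lambda|\ge \lambda_0\}$ for the relevant $j,k$, first for constant-coefficient model problems in $\BR^N$ and in $\HS$ with the boundary condition $\bv=0$, $(\nabla\vartheta)\cdot\bn = g$, then for the bent half-space by a perturbation argument, and finally patch these together with a partition of unity subordinate to the uniform $C^2$ structure. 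The remainder terms produced by commuting the cut-offs with the differential operators are of lower order and absorbed by taking $\lambda_0$ large. This yields the maximal $L_p$ regularity on the time interval $\BR$ (or $(0,\infty)$) with an exponential weight $e^{-\gamma t}$, via the Weis theorem, using the fact that $\CR$-boundedness of $\{\lambda \mathcal{A}(\lambda)\}$ and $\{\mathcal{A}(\lambda)\}$ on a sector gives the $H^\infty$-calculus / maximal regularity.

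Next I would pass from the half-line/line problem to the finite interval $(0,T)$ and incorporate the inhomogeneous initial data and the inhomogeneous boundary datum $g$. For the initial data one extends $(\zeta_0,\bv_0,\vartheta_0)$ and solves an auxiliary problem whose solution lies in the right trace spaces — here $\bv_0,\vartheta_0 \in B^{2(1-1/p)}_{q,p}$ is exactly the trace space of $H^1_p(L_q)\cap L_p(H^2_q)$, and $\zeta_0\in H^1_q$ matches $H^1_p(H^1_q)$; the compatibility conditions $\bv_0|_\Gamma = 0$ (needed when $2/p+1/q<2$) and $(\nabla\vartheta_0)\cdot\bn = g|_{t=0}$ (needed when $2/p+1/q<1$) are precisely what make the trace of the boundary operator well defined. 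For the boundary term $g$ one uses the assumed regularity $e^{-\gamma t}g \in L_p(\BR,H^1_q)\cap H^{1/2}_p(\BR,L_q)$, which is the natural class for a Neumann-type datum of a second-order parabolic equation, and solves the corresponding boundary value problem by the same multiplier/localization scheme; the $H^{1/2}_p$ in time reflects the half-order-in-time, first-order-in-space character of the conormal trace. Subtracting these reductions leaves a problem with homogeneous data to which the resolvent estimate from the previous step applies, and the factor $e^{\gamma T}$ in the estimate comes from converting the exponentially weighted global estimate back to the unweighted estimate on $(0,T)$.

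The main obstacle I expect is the bent-half-space and localization step for the \emph{coupled} system with its mixed boundary condition (Dirichlet for $\bv$, oblique/conormal for $\vartheta$): one must verify the Lopatinskii--Shapiro condition for the model problem and control the perturbation from the curved boundary uniformly in the uniform $C^2$ charts, while simultaneously managing the off-diagonal first-order couplings $\gamma_2\nabla\vartheta$ and $\gamma_2\dv\bv$ and the elimination-induced term $\rho_0\gamma_1\lambda^{-1}\nabla\dv\bv$ in the momentum block. The saving grace is that all problematic couplings are of strictly lower order than the principal $-\mu\Delta\bv-\nu\nabla\dv\bv$ and $-\dv(\gamma_4\nabla\vartheta)$ parts, so for $|\lambda|$ large they are dominated; proving this requires the standard but delicate bookkeeping of $\CR$-bounds under composition and the Neumann-series argument for the perturbed resolvent, which I would cite from \cite{DHP,Murata,SSchade} and carry out in detail in Section~6.
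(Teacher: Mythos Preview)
Your proposal is correct and follows essentially the same route as the paper: eliminate $\zeta$ via $\zeta=\lambda^{-1}(f_1-\rho_0\dv\bv)$, build $\CR$-bounded solution operators for the decoupled $(\bv,\vartheta)$-resolvent problems in $\BR^N$, $\BR^N_+$, and bent half-spaces, treat the cross-couplings $\gamma_2\nabla\vartheta$, $\gamma_2\dv\bv$ as lower-order perturbations absorbed for large $\lambda_0$, localize by partition of unity, and then pass to maximal regularity via Weis' multiplier theorem, handling initial data through the semigroup/trace-space argument and the boundary datum $g$ through the Laplace transform with the $H^{1/2}_p$-weight. The paper carries this out in Section~\ref{sec:7}, with the model problems in Theorems~\ref{thm:7.3}--\ref{thm:7.5}, the localization in Subsection~\ref{subsec:7.3}, and the final assembly (semigroup generation, $V_1+V_2$ splitting) in Subsection~\ref{subsec:7.5}.
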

\begin{remark} All the constants appearing in Theorem \ref{thm:linear:1}
depend on $\alpha_1$ and $\alpha_2$.
\end{remark}

Postponing the proof of Theorem \ref{thm:linear:1}, we prove Theorem 
\ref{thm:main1}.  Let $\CH_{T,M}$ be the underlying space for
our fixed point argument, which is defined by 
\begin{equation}\label{5.1}\begin{split}
\CH_{T,M} & = \{(\zeta, \bv, \vartheta) \mid 
\zeta \in H^1_p((0, T), H^1_q(\Omega)), \quad
\bv \in H^1_p((0, T), L_q(\Omega)^3) \cap L_p((0, T), H^2_q(\Omega)^3),
\\
& \vartheta \in H^1_p((0, T), L_q(\Omega)) \cap L_p((0, T), H^2_q(\Omega)),
\quad (\zeta, \bv, \vartheta)|_{t=0} = (0, \bu_0, h_0) \quad\text{in $\Omega$}, \\
&[\zeta,\bv,\vartheta]_T =
\|\zeta\|_{H^1_p((0, T), H^1_q(\Omega))}
+ \|\pd_t(\bv, \vartheta)\|_{L_p((0, T), L_q(\Omega))}
+ \|(\bv, \vartheta)\|_{L_p((0, T), H^2_q(\Omega))}
\leq M\}.
\end{split}\end{equation}
Here, $T$ and $M$ are positive constants determined later.  Since $T$ will be 
chosen positive small number eventually, we may assume that $0 < T \leq 1$. 
Moreover, by Sobolev's inequality and H\"older's inequality, we have 
\begin{align*}
\int^T_0\|\nabla\bv(\cdot, t)\|_{L_\infty(\Omega)}\,dt
\leq C\int^T_0\|\bv(\cdot, t)\|_{H^2_q(\Omega)}\,dt
\leq T^{1/{p'}}
\Bigl(\int^T_0\|\bv(\cdot, t)\|_{H^2_q(\Omega)}^p\,dt\Bigr)^{1/p}
\leq MT^{1/p'}.
\end{align*}
Thus, choosing $T > 0$ so small that $MT^{1/p'}
\leq \delta$, we may assume that the condition \eqref{assump:1} holds
for any $(\zeta, \bv, \vartheta) \in \CH_{T,M}$. 
Let 
$$\CI = \|\nabla\rho_0\|_{H^1_q(\Omega)} 
+ \|(\bv, h_0)\|_{B^{2(1-1/p)}_{q,p}(\Omega)},
$$
and then by \eqref{initial:1} we have 
\begin{equation}\label{5.2}
\CI \leq L
\end{equation}
because $\rho_0(x) = \rho_{10}(x) + \rho_{20}(x)$. Let $\Psi$
 be the map defined in \eqref{trans:1}, which is $C^\infty$ diffeomorphism
from $\BR_+\times\BR_+$ onto $\BR\times \BR_+$.  Let $\Phi$ be its
inverse map.  Let $(\omega, \bw, \varphi) \in \CH_{T, M}$,
let $U = (\rho_0(x)+\omega, \bw, \varphi)$, and 
let $(\rho_1, \rho_2) = \Phi(\varphi, \rho_0+\omega)$. Since $(\omega, \bw, \varphi)|_{t=0} = 
(0, \bu_0, h_0)$, we have
\begin{equation}\label{5.3}
(\rho_{10}(x), \rho_{20}(x))=\Phi(\varphi, \rho_0(x) + \omega)|_{t=0}.
\end{equation}
Let  $R_i(U)$ be functions given in \eqref{lag:6}, \eqref{lag:7},
\eqref{lag:8}, and \eqref{lag:9}, where  $\eta$, 
$\bv={}^\top(v_1, \ldots, v_N)$, and $\vartheta$ are replaced  by 
$\rho_0 + \omega$, $\bw = {}^\top(w_1, \ldots, w_N)$, and 
$\varphi$. 
Let $(\zeta, \bv, \vartheta)$ be a solution of Eq. \eqref{linear:1} with 
$\zeta_0=0, \bv_0 = \bu_0$, $\vartheta_0=h_0$, 
$f_1 = f_1(U)$,
$\bff=\bff_2(U)$, $f_3=f_3(U)$ and $g=g(U)$,
where $\zeta$, $\bv$ and $\vartheta$ are replaced by
$\omega$, $\bw$ and $\varphi$, respectively. 

First, we estimate $f_1 = f_1(U)$,
$\bff=\bff_2(U)$, $f_3=f_3(U)$ and $g=g(U)$.
Notice that
\begin{equation}\label{5.4}\begin{split}
\sup_{t \in (0, T)} \|\omega(\cdot, t)\|_{H^1_q(\Omega)}
&\leq T^{1/{p'}}M \leq M, \\
\sup_{t \in (0, T)} \|\varphi(\cdot, t) - h_0\|_{B^{2(1-1/p)}_{q,p}(\Omega)}
+\sup_{t \in (0, T)} \|\bw(\cdot, t) -\bu_0\|_{B^{2(1-1/p)}_{q,p}(\Omega)}
&\leq C(M+L).
\end{split}\end{equation}
In fact, since $\omega(\cdot, 0) = 0$, we have
$$\|\omega(\cdot, t)\|_{H^1_q(\Omega)}
\leq \int^t_0\|\pd_t\omega)(\cdot, s)\|_{H^1_q(\Omega)}\,ds
\leq T^{1/{p'}}\|\pd_t\omega\|_{L_p((0, T), H^1_q(\Omega))}
\leq T^{1/{p'}}M \leq M,
$$
where we have used the fact that $T\leq 1$ in the last step. 
To prove the bound for the second term in \eqref{5.4}, we use the extension map
$e_T$ defined by 
\begin{equation}\label{ext:1} e_T[f](\cdot, t)
= \begin{cases}
0 \quad &t < 0, \\ f(\cdot, t) \quad &0 < t < T, \\
f(\cdot, 2T-t)\quad & T < t < 2T, \\
0 \quad& t > 2T.
\end{cases}
\end{equation}
Obviously, $e_T[f](\cdot, t) = f(\cdot, t)$ for $t \in (0, T)$.  If
$f|_{t=0}$,  then we have
\begin{equation}\label{ext:2} \pd_te_T[f](\cdot, t)
= \begin{cases}
0 \quad &t < 0, \\ (\pd_tf)(\cdot, t) \quad &0 < t < T, \\
-(\pd_tf)(\cdot, 2T-t)\quad & T < t < 2T, \\
0 \quad& t > 2T.
\end{cases}
\end{equation}
Let $X$ and $Y$ be two Banach spaces such that
$X$ is a dense subset of $Y$ and $X\subset Y$ is continuous,
and then we know (cf. \cite[p.10]{Tanabe}) that 
\begin{equation} \label{real-int:1}
H^1_p((0, \infty), Y) \cap L_p((0, \infty), X) 
\subset C([0, \infty), (X, Y)_{1/p,p})
\end{equation}
and
\begin{equation} \label{real-int:2}
\sup_{t \in (0, \infty)}\|u(t)\|_{(X, Y)_{1/p,p}}
\leq (\|u\|_{L_p((0, \infty),X)}^p
+ \|u\|_{H^1_p((0, \infty), Y)}^p)^{1/p}
\end{equation}
for each $p \in (1, \infty)$,  Applying this fact and using 
\eqref{ext:1} and \eqref{ext:2}, we have
\begin{align*}
&\sup_{t \in (0, T)}\|\varphi(\cdot, t)-h_0\|_{B^{2(1-1/p)}_{q,p}(\Omega)}
\leq \sup_{t \in (0, \infty)}\|e_T[\varphi-h_0]\|_{B^{2(1-1/p)}_{q,p}(\Omega)}\\&\quad 
= (\|e_T[\varphi-h_0]\|_{L_p((0, \infty), H^2_q(\Omega))}^p
+ \|e_T[\varphi-h_0]\|_{H^1_p((0, \infty), L_q(\Omega))}^p)^{1/p}\\
&\quad \leq C(\|\varphi-h_0\|_{L_p((0, \infty), H^2_q(\Omega))}
+ \|\pd_t\varphi\|_{L_p((0, T), L_q(\Omega))})
\leq C(M + T^{1/p}L) \leq C(M+L).
\end{align*}
 Here and in the following, $C$ denotes a generic constant independent 
 of $M$, $L$, and $T$.  $C$ depends at most on $a_1$ and $a_2$,  
for which \eqref{initial:0} holds.
Analogously, we have the
third inequality in \eqref{5.4}.

Since $2/p + 3/q < 1$, we have $1 + 3/q < 2(1-1/p)$, and so by Sobolev's 
imbedding theorem and \eqref{5.4} we have
\begin{equation}\label{5.4*}
\|(\varphi, \bw)\|_{L_\infty((0, T), H^1_\infty(\Omega))}
\leq CM.
\end{equation}

Since $\rho_0(x) = \rho_{10}(x) + \rho_{20}(x)$, by \eqref{initial:0} we have 
\begin{equation}\label{3.3*}
2a_1 \leq \rho_0(x) \leq 2a_2\quad\text{for $x \in \Omega$}.
\end{equation}
If we choose $T>0$ so small that $T^{1/{p'}}M \leq a_1$, by \eqref{3.3*} and 
\eqref{5.4}, we have
\begin{equation}\label{5.5}
a_1 \leq \rho_0(x) + \omega \leq 2a_2+a_1
\end{equation}
for all $(x,t) \in \Omega\times(0,T)$. 
Since $\Phi$ is a $C^\infty$ diffeomorphism from 
$\BR\times\BR_+$ onto $\BR_+\times\BR_+$, for any
compact set $A \subset \BR\times\BR_+$
$\Phi(A)$ is a compact set in $\BR_+\times \BR_+$,
and so by \eqref{5.5} and \eqref{5.4*}, there exist
positive constants $a_4$ and $a_5$ depending on
$a_1$, $a_2$, and $M$  for which
\begin{equation}\label{5.7}
a_4 \leq \rho_1(x,t), \rho_2(x,t) \leq a_5
\quad\text{for $(x, t) \in \Omega\times(0, T)$}.
\end{equation}

We now prove that 
\begin{equation}\label{diff-est:1}
\|(\rho_1, \rho_2) - (\rho_{10}, \rho_{20})\|_{L_\infty((0, T), H^1_q(\Omega))}
\leq C(L+M)T^{\theta/{p'}}
\end{equation}
for some $\theta \in (0, 1)$.  By \eqref{5.3} we have
\begin{equation}\label{5.9}\begin{split}
&\sup_{t \in (0, T)}\|(\rho_1(\cdot, t), \rho_2(\cdot, t))-(\rho_{10}(\cdot),
\rho_{20}(\cdot))\|_{L_q(\Omega)}
\leq \int^T_0\|\pd_t\Phi(\varphi(\cdot, t), 
\rho_0(\cdot)+\omega(\cdot, t))\|_{L_q(\Omega)}\,dt\\
&\quad \leq \int^T_0
\|\Phi'(\varphi(\cdot, t), \rho_0(\cdot) + \omega(\cdot, t))
\|_{L_\infty(\Omega)}
\|(\pd_t\varphi(\cdot, t), \pd_t\omega(\cdot, t))\|_{L_q(\Omega)}\,dt. 
\end{split}\end{equation}
By \eqref{5.4*} and \eqref{5.5}, we have
\begin{equation}\label{5.8}
\sup_{t \in (0, T)}\|\Phi'(\varphi(\cdot, t), 
\rho_0(\cdot) + \omega(\cdot, t))\|_{L_\infty(\Omega)}
\leq a_6
\end{equation}
for some positive constant $a_6$ depending on 
$a_1$, $a_2$, $M$  but independent of $T$.  Thus, by \eqref{5.9} 
we have
\begin{equation}\label{5.10}\begin{split}
\sup_{t \in (0, T)}\|(\rho_1(\cdot, t), \rho_2(\cdot, t))
-(\rho_{10}(\cdot), \rho_{20}(\cdot))\|_{L_q(\Omega)} 
&\leq a_6\int^T_0\|(\pd_t\varphi(\cdot, t), 
\pd_t\omega(\cdot, t))\|_{L_q(\Omega)}\,dt \\
& \leq a_6T^{1/{p'}}\|\pd_t(\varphi, \omega)\|_{L_p((0, T), L_q(\Omega))} 
\leq a_6MT^{1/{p'}}.
\end{split}\end{equation}
Moreover, by \eqref{initial:1} and  \eqref{5.2} we have
\begin{align*}
&\|\nabla(\rho_1(\cdot, t), \rho_2(\cdot, t))-(\rho_{10}(\cdot),
\rho_{20}(\cdot))\|_{L_q(\Omega)} \\
&\quad
\leq \|\Phi'(\varphi(\cdot, t), \rho_0(\cdot)+\omega(\cdot, t)
\|_{L_\infty(\Omega)}
\|(\nabla\varphi(\cdot, t), \nabla\rho_0(\cdot)
+\nabla\omega(\cdot, t))\|_{L_q(\Omega)}
+ \|\nabla(\rho_{10}, \rho_{20})\|_{L_q(\Omega)} \\
&\quad\leq a_6(\|\nabla\varphi(\cdot, t)\|_{L_q(\Omega)}
+ \|\nabla\omega(\cdot, t)\|_{L_q(\Omega)})
+a_6\|\nabla\rho_0\|_{L_q(\Omega)}
+ \|\nabla(\rho_{10}, \rho_{20})\|_{L_q(\Omega)}.
\end{align*}
Thus, by \eqref{5.4} 
\begin{equation}\label{5.11}
\sup_{t \in (0, T)}\|\nabla\{(\rho_1(\cdot, t), \rho_2(\cdot, t))
-(\rho_{10}(\cdot), \rho_{20}(\cdot))\}\|_{L_q(\Omega)}
\leq C(L+M).
\end{equation}
Since $W^{3/q+\epsilon}_q(\Omega) \subset L_\infty(\Omega)$ 
with some small $\epsilon$ for which $3/q  + \epsilon < 1$ and 
this inclusion is continuous as follows from Sobolev's imbedding theorem,
by real interpolation theorem
\begin{equation}\label{5.12}\begin{split}
&\sup_{t \in (0, T)}\|(\rho_1(\cdot, t), \rho_2(\cdot, t))
-(\rho_{10}(\cdot), \rho_{20}(\cdot))\|_{L_\infty(\Omega)}\\
&\quad \leq (\sup_{0 \in (0, T)}
\|(\rho_1(\cdot, t), \rho_2(\cdot, t))
-(\rho_{10}(\cdot), \rho_{20}(\cdot))\|_{L_q(\Omega)})^{\theta}\\
&\qquad\times
(\sup_{0 \in (0, T)}
\|(\rho_1(\cdot, t), \rho_2(\cdot, t))
-(\rho_{10}(\cdot), \rho_{20}(\cdot))\|_{H^1_q(\Omega)})^{1-\theta}
\leq C(M+L)T^{\theta/{p'}}
\end{split}\end{equation}
with $\theta = 1-(3/q+\epsilon) \in (0, 1)$.  
By \eqref{5.12}, \eqref{5.7}, and
\eqref{initial:0}, we have
\eq{\label{5.13}
\Bigl\|\frac{1}{\Sigma_\rho}- \frac{1}{\Sigma_\rho^0}
\Bigr\|_{L_\infty((0, T), L_\infty(\Omega))}
+\Bigl\|\frac{\rho_1\rho_2}{\Sigma_\rho}
-\frac{\rho_{10}\rho_{20}}{\Sigma_\rho^0}
\Bigr\|_{L_\infty((0, T), L_\infty(\Omega))}
+\Bigl\|\frac{\rho_1\rho_2}{\fp\rho}
-\frac{\rho_{10}\rho_{20}}{\fp_0\rho_0}
\Bigr\|_{L_\infty((0, T), L_\infty(\Omega))}\\
\leq C(M+L)T^{\theta/{p'}}.
}
Moreover, by \eqref{5.11} we have
$$\sup_{t \in (0, T)}\|\nabla(\rho_1(\cdot,t), 
\rho_2(\cdot, t))\|_{L_q(\Omega)} \leq C(L+M),
$$
and so by \eqref{5.7} and \eqref{initial:0} we get
\begin{equation}\label{5.14}
\Bigl\|\nabla\Bigl(\frac{\rho_1\rho_2}{\fp\rho} - 
\frac{\rho_{10}\rho_{20}}{\fp_0\rho_0}\Bigr)
\Bigr\|_{L_\infty((0, T), L_q(\Omega))}
\leq C(M+L).
\end{equation}
Using \eqref{5.4*}, \eqref{5.4}, \eqref{5.13}, and \eqref{5.14},
we conclude
\allowdisplaybreaks{
\begin{align}
&\Bigl\|(\rho_0+\omega)\Bigl(\frac{1}{\Sigma_\rho}
-\frac{1}{\Sigma_\rho^0}\Bigr)\nabla(\rho_0+\omega)
\Bigr\|_{L_p((0, T), L_q(\Omega))} \nonumber\\
&\quad  \leq C\|\rho_0+\omega\|_{L_\infty((0, T), H^1_q(\Omega))}^2
T^{1/p}(M+L)T^{\theta/{p'}}
 \leq C(M+L)^3T^{(1/p+\theta/{p'})}; \nonumber\\
&\Bigl\|\frac{\rho_0+\omega}{\Sigma_\rho^0}\nabla\rho_0
\Bigr\|_{L_p((0, \infty), L_q(\Omega))}
 \leq C(M+L)LT^{1/p};\nonumber\\
&\Bigl\|\frac{\omega}{\Sigma^0_\rho}\nabla\omega
\Bigr\|_{L_p((0, T), L_q(\Omega))}
 \leq C\|\omega\|_{L_\infty((0, T), H^1_q(\Omega))}T^{1/p}
\leq CL^2T^{1/p};\nonumber\\
&\Bigl\|\Bigl(\frac{\rho_1\rho_2}{\Sigma_\rho}
- \frac{\rho_{10}\rho_{20}}{\Sigma^0_\rho}\Bigr)\nabla\varphi
\Bigr\|_{L_p((0, T), L_q(\Omega))}
 \leq C(M+L)LT^{(\theta/{p'} + 1/p)};\nonumber\\
&\Bigl\|\Bigl(\frac{\rho_1\rho_2}{\Sigma_\rho}
- \frac{\rho_{10}\rho_{20}}{\Sigma^0_\rho}\Bigr)\pd_t\omega
\Bigr\|_{L_p((0, T), L_q(\Omega))}
\leq C(M+L)T^{\theta/{p'}}\|\pd_t\omega\|_{L_p((0, T), L_q(\Omega))}
\leq CM(M+L)T^{\theta/{p'}}; \nonumber\\
&\Bigl\|\Bigl(\frac{\rho_1\rho_2}{\Sigma_\rho}
- \frac{\rho_{10}\rho_{20}}{\Sigma^0_\rho}\Bigr)\dv\bw
\Bigr\|_{L_p((0, T), L_q(\Omega))} \nonumber\\
&\quad
\leq C(M+L)T^{\theta/{p'}}T^{1/p}\|\bw\|_{L_\infty((0, T), H^1_q(\Omega))}
\leq C(M+L)^2T^{(\theta/{p'} + 1/p)};\nonumber \\
&\Bigl\|\dv\Bigl(\Bigl(\frac{\rho_1\rho_2}{\fp\rho}
-\frac{\rho_{10}\rho_{20}}{\fp_0\rho_0}\Bigr)\nabla\varphi\Bigr)
\Bigr\|_{L_p((0, T), L_q(\Omega))} \nonumber\\
&\quad \leq C(M+L)T^{\theta/{p'}}\|\varphi\|_{L_p((0, T), H^2_q(\Omega))} 
\nonumber\\
&\phantom{aaaaaaaaaaaaaaaaaaa} + 
\Bigl\|\nabla\Bigl(\frac{\rho_1\rho_2}{\fp\rho} - 
\frac{\rho_{10}\rho_{20}}{\fp_0\rho_0}\Bigr)
\Bigr\|_{L_\infty((0, T), L_q(\Omega))}
\|\nabla\varphi\|_{L_\infty((0, T), L_\infty(\Omega))}T^{1/p}
\nonumber\\
&\quad \leq C(M(M+L)T^{\theta/{p'}}
+ (M+L)^2T^{1/p}).
\label{5.15}\end{align}

Next, we estimate nonlinear terms from the Lagrange transformation. 
In \eqref{neweq:3}, we set $U = (\omega, \bw, \varphi)$. 
Recall that $3 < q < \infty$. 
By Sobolev's inequality and  \eqref{5.4}, we have
$$\|\omega\dv\bw\|_{H^1_q(\Omega)} \leq C\|\omega\|_{H^1_q(\Omega)}
\|\bw\|_{H^2_q(\Omega)} \leq CT^{1/p'}M\|\bw\|_{H^2_q(\Omega)},
$$
and so, we have
$$\|\omega\dv\bw\|_{L_p((0, T), H^1_q(\Omega))}
\leq CT^{1/{p'}}M\|\bw\|_{L_p((0, T), H^2_q(\Omega))}
\leq CT^{1/{p'}}M^2.
$$
Replacing $\bv$ by $\bw$ in \eqref{lag:6}, 
by Sobolev's inequality and \eqref{5.4}, we have
\begin{align*}
\|R_1\|_{H^1_q(\Omega)} 
& \leq C(\|\rho_0\|_{H^1_q(\Omega)} 
+ \|\omega\|_{H^1_q(\Omega)})
\int^t_0\|\bw(\cdot, s)\|_{H^2_q(\Omega)}\,ds
\|\bw(\cdot, t)\|_{H^2_q(\Omega)}\\
& \leq C(L+M)T^{1/{p'}}\|\bw\|_{L_p((0, T), H^2_q(\Omega))}
\|\bw(\cdot, t)\|_{H^2_q(\Omega)},
\end{align*}
and so, we have
$$\|R_1\|_{L_p((0, T), H^1_q(\Omega))}
\leq C(L+M)M^2T^{1/{p'}}.
$$
Thus, we obtain
\begin{equation}\label{lagest:1}
\|f_1(U)\|_{L_p((0, T), H^1_q(\Omega))} \leq 
C(M^2 + (L+M)M^2)T^{1/{p'}}.
\end{equation}
Next, we consider $\bff_2(U)$. By  \eqref{5.4*}, we have
\begin{align*}
\|\int^t_0\nabla\bw(\cdot, s)\,ds \nabla^2 \bw(\cdot, t)\|_{L_q(\Omega)}
&\leq T\|\nabla\bw\|_{L_\infty(0, T), L_\infty(\Omega)}\|\nabla^2\bw(\cdot, 
t)\|_{L_q(\Omega)}\\
&\leq CMT\|\nabla^2\bw(\cdot, t)\|_{L_q(\Omega)},
\end{align*}
and therefore
$$\|\int^t_0\nabla\bw(\cdot, s)\,ds \nabla^2 \bw(\cdot, t)\|_{L_p((0, T),
L_q(\Omega))} \leq CTML.
$$
By H\"older's inequality and \eqref{5.4*}, we also get
\begin{align*}
\|\int^t_0\nabla^2\bw(\cdot, s)\,ds \nabla \bw(\cdot, t)\|_{L_q(\Omega)}
&\leq T^{1/{p'}}\Bigl(\int^T_0\|\nabla^2\bw(\cdot, t)\|_{L_q(\Omega)}^p
\Bigr)^{1/p}\|\nabla\bw(\cdot, t)\|_{L_\infty(\Omega)} \\
& \leq CMT^{1/{p'}}\|\bw\|_{L_p((0, T), H^2_q(\Omega))}\leq CTML.
\end{align*}
In this way, setting $\bk_\bw = \int^t_0\nabla\bw\,ds$, we have
$$
\|(A_{2\Delta}(\bk_\bw)\nabla^2\bw, A_{1\Delta}(\bk_\bw)\nabla\bw,
A_{2\dv}(\bk_\bw)\nabla^2\bw, A_{1\dv}(\bk_\bw)\nabla\bw)
\|_{L_p((0, T), L_q(\Omega))} \leq CTLM.
$$
By \eqref{5.4},\eqref{5.5}, \eqref{5.7}, and Sobolev's inequality we obtain
\begin{align*}
\|\frac{\rho_0+\omega}{\Sigma_\rho}\bV^0(\bk_\bw)
\nabla(\rho_0+\omega)\|_{L_q(\Omega)}
&\leq C\int^T_0\|\nabla\bw(\cdot, s)\|_{H^1_q(\Omega)}\,ds
(\|\nabla\rho_0\|_{L_q(\Omega)} + \|\nabla\omega(\cdot, t)\|_{L_q(\Omega)}\\
&\leq CT^{1/{p'}}\|\bw\|_{L_p((0, T), H^2_q(\Omega))}
(L + \|\nabla\omega(\cdot, t)\|_{L_q(\Omega)})\leq CT(L+M)M.
\end{align*}
Analogously, \eqref{5.5}, \eqref{5.7} and Sobolev's inequality give
\begin{align*}
\|\frac{(m_1-m_2)\rho_1\rho_2}{\Sigma_\rho}\bV^0(\bk_\bw)
\nabla\varphi\|_{L_q(\Omega)}
&\leq CT^{1/{p'}}\|\bw\|_{L_p((0, T), H^2_q(\Omega))}
\|\nabla\varphi(\cdot, t)\|_{L_q(\Omega)} \leq CTM^2.
\end{align*}
Putting  the estimates above and the
estimates obtained in \eqref{5.15} together
gives 
\begin{equation}\label{lagest:2}\begin{split}
\|\bff_2(U)\|_{L_p((0, T), L_q(\Omega))} 
&\leq C\Big\{(LM +M^2+L^2)T  + (M+L)^3T^{(\theta/{p'} + 1/p)} \\
&\quad+(M+L)T^{1/p} + L^2T^{1/p}+(M+L)LT^{(\theta/{p'} + 1/p)}\Big\}.
\end{split}\end{equation}
Next, we consider $R_3$ defined in \eqref{lag:8}
replacing $\vartheta$ and $\bv$ by $\varphi$ and $\bw$. 
By \eqref{5.14},\eqref{5.4*}, Sobolev's inequality and by H\"older's inequality 
\eqh{
\|\nabla\Bigl(\frac{\rho_1\rho_2}{\fp\rho}\Bigr) \lr{2\bV^0(\bk_\bw)+(\bV^0(\bk_\bw))^2}
\nabla\varphi\|_{L_q(\Omega)}
\leq C(M+L)\int^T_0\|\bw(\cdot, s)\|_{H^2_q(\Omega)}\,ds
\|\nabla\varphi(\cdot, t)\|_{L_q(\Omega)}\\
\leq C(M+L)M^2T.
}
Other terms in $R_3$ can be estimated in the similar manner to
the estimate of $R_2$, hence we obtain
$$\|R_3\|_{L_p((0, T), L_q(\Omega))}
\leq C(M+L)LT,$$
which, combined with the estimates obtained in 
\eqref{5.15}, leads to 
\begin{equation}\label{lagest:3}\begin{split}
\|f_3(U)\|_{L_p((0, T), L_q(\Omega))}
&\leq C((LM +M^2+L^2)T + M(M+L)T^{\theta/{p'}} \\
&\quad+ (M+L)^2T^{(\theta/{p'} + 1/p)} 
+ M(M+L)T^{\theta/{p'}} + (M+L)^2T^{1/p}).
\end{split}\end{equation}
Finally, we estimate $R_4$ defined in \eqref{lag:9}
replacing $\bv$ and $\vartheta$ by $\bw$ and 
$\varphi$. For this purpose, we have to 
extend $R_4$ to the whole time interval $\BR$. Let $e_T$ be
the extension operator defined in \eqref{ext:1}. Let
$\tilde h_0$ be a function in $B^{2(1-1/p)}_{q,p}(\BR^N)$
such that $\tilde h_0 = h_0$ in $\Omega$ and 
$$\|\tilde
h_0\|_{B^{2(1-1/p)}_{q,p}(\BR^N)} 
\leq C\|h_0\|_{B^{2(1-1/p)}_{q,p}(\Omega)}.$$
Let 
$$T(t)h_0 = e^{(\Delta-2)t}\tilde h_0
= \CF^{-1}[e^{-(|\xi|^2+2)t}\CF[\tilde h_0](\xi)]
$$
where $\CF$ and $\CF^{-1}$ denote the Fourier transform
on $\BR^N$ and its inverse transform. We know that
\begin{equation}\label{heat:1}
\|e^tT(\cdot)h\|_{L_p((0, \infty), H^2_q(\BR^N))}
+ \|e^t\pd_tT(\cdot)h\|_{L_p((0, \infty), L_q(\BR^N))}
\leq C\|h\|_{B^{2(1-1/p)}_{q,p}(\Omega)}.
\end{equation}
Let $\psi(t) \in C^\infty(\BR)$ be one for $t > -1$ and 
zero for $t < -2$. Since $\omega|_{t=0} - T(t)h|_{t=0}
= h-h = 0$ in $\Omega$, we set  
$$\tilde e_T[\omega] = e_T[\omega - T(\cdot)h] + 
\psi(t)T(|t|)h.
$$
Then, by \eqref{ext:1}, \eqref{ext:2} and 
\eqref{heat:1}, we have
\begin{equation}\label{ext:3}
\|e^{-\gamma t}\tilde e_T[\omega]\|_{L_p(\BR, H^2_q(\Omega))}
+ \|e^{-\gamma t}\pd_t \tilde e_T[\omega]
\|_{L_p(\BR, L_q(\Omega))} 
\leq C(e^{2\gamma}L+M)
\end{equation}
for any $\gamma \geq 0$, where $C$ is a constant independent of $\gamma$, $T$, $L$, and 
$M$. To treat $R_4$, setting 
$$\CR_\bw = -\Big\{\bn\lr{y+\int^t_0\bw(y, s)\,ds}\bV^0(\bk_\bw)
+ \int^1_0(\nabla\bn)\lr{y + \tau\int^t_0\bw(y, s)\,ds}\,d\tau
\int^t_0\bw(y, s)\,ds\Big\},$$
we write it as $R_4 = \CR_\bw\nabla\varphi$.  Here, we may assume that 
$\bn$ is defined in $\BR^N$ and $\|\bn\|_{H^2_\infty(\BR^N)} \leq C$.  
Notice that $\CR_\bw|_{t=0} = 0$.  
We then define $\tilde R_4$ by 
$$\tilde R_4 = e_T[\CR_\bw]\nabla(\tilde e_T[\varphi]).$$
$\tilde R_4$ is an extension of $R_4$ to the whole time interval
$\BR$.  Obviously, $\tilde R_4=R_4$ in $(0, T)$.

To estimate $\tilde R_4$, we use the following 
lemma due to Shibata and Shimizu \cite{SS1}.
\begin{lem}\label{lem:5.1}
Let $1 < p < \infty$, $3 < q < \infty$ and $0 < T \leq 1$.  Assume that 
$\Omega$ is a uniformly $C^2$ domain. Let 
\begin{align*}
f \in H^1_\infty(\BR, L_q(\Omega)) \cap L_\infty(\BR, H^1_q(\Omega)), 
\quad
g \in L_p(\BR, H^1_q(\Omega)) \cap H^{1/2}_p(\BR, L_q(\Omega)).
\end{align*} 
If we assume that $f \in L_p(\BR, H^1_q(\Omega))$ and 
that $f$ vanishes for $t \in [0, 2T]$ in addition, then we have
\begin{align*}
&\|fg\|_{L_p(\BR, H^1_q(\Omega))} + \|fg\|_{H^{1/2}_p(\BR, L_q(\Omega))}\\
&\quad\leq C(\|f\|_{L_\infty(\BR, H^1_q(\Omega))}
+T^{(q-3)/(pq)}\|\pd_tf\|_{L_\infty(\BR, L_q(\Omega))}^{(1-3/(2q))}
\|\pd_tf\|_{L_p((\BR, H^1_q(\Omega))}^{3/(2q)})
(\|g\|_{_p(\BR, H^1_q(\Omega))} + \|g\|_{H^{1/2}_p(\BR, L_q(\Omega))}).
\end{align*}
\end{lem}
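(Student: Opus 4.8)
The plan is to estimate the two pieces on the left separately. The bound for $\|fg\|_{L_p(\mathbb{R},H^1_q(\Omega))}$ is routine: since $q$ exceeds the space dimension, $H^1_q(\Omega)$ embeds continuously into $L_\infty(\Omega)$ and is a multiplication algebra, so $\|f(t)g(t)\|_{H^1_q(\Omega)}\le C\|f(t)\|_{H^1_q(\Omega)}\|g(t)\|_{H^1_q(\Omega)}$ for a.e.\ $t$; taking the $L_p$-norm in $t$ and pulling $\|f\|_{L_\infty(\mathbb{R},H^1_q(\Omega))}$ out of the integral gives a bound of the announced form, with no power of $T$ and without using the vanishing hypothesis, the factor $\|g\|_{L_p(\mathbb{R},H^1_q(\Omega))}$ being absorbed into $\|g\|_{L_p(\mathbb{R},H^1_q(\Omega))}+\|g\|_{H^{1/2}_p(\mathbb{R},L_q(\Omega))}$. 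All the work is in the $H^{1/2}_p(\mathbb{R},L_q(\Omega))$-bound.

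For that I would fix a bilinear-friendly description of the norm, e.g.\ the first-difference (Triebel--Lizorkin $F^{1/2}_{p,2}$) square function
\[
\|u\|_{H^{1/2}_p(\mathbb{R},L_q(\Omega))}\sim\|u\|_{L_p(\mathbb{R},L_q(\Omega))}+\Big\|\,\Big(\int_0^\infty\big(r^{-1/2}\sup_{|h|\le r}\|u(\cdot+h)-u(\cdot)\|_{L_q(\Omega)}\big)^2\tfrac{dr}{r}\Big)^{1/2}\,\Big\|_{L_p(\mathbb{R})},
\]
or the equivalent dyadic Littlewood--Paley decomposition in time. Writing $\Delta_h u(t)=u(t+h)-u(t)$ and using the Leibniz identity $\Delta_h(fg)=f(\cdot+h)\,\Delta_h g+(\Delta_h f)\,g$, the term $f(\cdot+h)\,\Delta_h g$ is dominated by $\|f\|_{L_\infty(\mathbb{R},L_\infty(\Omega))}\le C\|f\|_{L_\infty(\mathbb{R},H^1_q(\Omega))}$ times the square function of $g$, and hence contributes $C\|f\|_{L_\infty(\mathbb{R},H^1_q(\Omega))}\|g\|_{H^{1/2}_p(\mathbb{R},L_q(\Omega))}$, again of the asserted form.

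The substance is the term $(\Delta_h f)\,g$. I would write $(\Delta_h f)(t)=\int_t^{t+h}\partial_t f(\sigma)\,d\sigma$, bound $\|(\Delta_h f)(t)g(t)\|_{L_q(\Omega)}\le\|(\Delta_h f)(t)\|_{L_\infty(\Omega)}\|g(t)\|_{L_q(\Omega)}$, and control $\|(\Delta_h f)(t)\|_{L_\infty(\Omega)}$ via the Gagliardo--Nirenberg--Sobolev inequality $\|v\|_{L_\infty(\Omega)}\le C\|v\|_{H^1_q(\Omega)}^{3/q}\|v\|_{L_q(\Omega)}^{1-3/q}$ applied to $v=\partial_t f(\sigma)$, followed by H\"older in $\sigma$ over the interval of length $\le|h|$ between $t$ and $t+h$, against $\partial_t f\in L_p(\mathbb{R},H^1_q(\Omega))$ and $\partial_t f\in L_\infty(\mathbb{R},L_q(\Omega))$. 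Feeding this into the square function, the $r$-integration is organised by splitting the scale at $r\sim T$: on $r\lesssim T$ one keeps the localized $L_p$-in-time mass of $\partial_t f$, whereas on $r\gtrsim T$ one uses that $f$ is supported in a time interval of length $\mathcal{O}(T)$ (the role of the vanishing hypothesis), so that $\Delta_h f(t)=f(t+h)$ already vanishes unless $|h|$ exceeds $\mathrm{dist}(t,\mathrm{supp}\,f)$ and the tail of the integral converges; a concluding H\"older in time against $\|g(\cdot)\|_{L_q(\Omega)}\in L_p(\mathbb{R})$ produces the gain $T^{(q-3)/(pq)}$, and reassembling the two scales recombines the two norms of $\partial_t f$ into the convex combination $\|\partial_t f\|_{L_\infty(\mathbb{R},L_q(\Omega))}^{1-3/(2q)}\|\partial_t f\|_{L_p(\mathbb{R},H^1_q(\Omega))}^{3/(2q)}$.

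The main obstacle is exactly this matching of exponents: one must choose the Sobolev-interpolation parameter in the bound for $\|\partial_t f\|_{L_\infty(\Omega)}$ and the cut-off scale in $r$ so that every power of $r$ and of $|h|$ entering the square-function integral is integrable, while the residual powers of $T$ and of the two norms of $\partial_t f$ assemble precisely into $T^{(q-3)/(pq)}$ and the weights $1-3/(2q)$ and $3/(2q)$ — a bookkeeping coupling (i) the difference scale $r$, (ii) the short time support of $f$, and (iii) the splitting of $\|\partial_t f\|_{L_\infty(\Omega)}$ between its $H^1_q$- and $L_q$-norms. Everything else — the embedding $H^1_q(\Omega)\hookrightarrow L_\infty(\Omega)$, the algebra property, the Leibniz splitting, and the handling of the $f(\cdot+h)\,\Delta_h g$ term — is soft.
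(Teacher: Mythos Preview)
Your treatment of the $L_p(\BR,H^1_q(\Omega))$ piece coincides with the paper: the remark following the lemma says exactly that this half is disposed of by the Sobolev algebra property of $H^1_q(\Omega)$ for $q>3$, and that the substance is the $H^{1/2}_p(\BR,L_q(\Omega))$ bound.

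For that bound, however, the paper takes a different route. It does not give a self-contained argument but refers to Shibata--Shimizu \cite{SS1}, stating that the proof uses Sobolev's inequality together with the \emph{complex interpolation} identification $H^{1/2}_p(\BR,L_q)=(L_p(\BR,L_q),H^1_p(\BR,L_q))_{[1/2]}$. The mechanism is visible in the proof of Lemma~\ref{lem:6.1} later in the paper: one bounds the multiplication operator $M_f:g\mapsto fg$ at the two endpoints $L_p(L_q)$ and $H^1_p(L_q)$ and interpolates. The $\tfrac12$ in the exponents $1-3/(2q)$ and $3/(2q)$ is then precisely the interpolation parameter halving the Gagliardo--Nirenberg exponent $3/q$ that enters at the $H^1_p$ endpoint through $\|(\pd_tf)g\|_{L_p(L_q)}$; the power of $T$ comes from H\"older in time over the support of $f$ at that same endpoint. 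In this scheme the exponents fall out automatically and there is essentially no bookkeeping.

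Your real-variable approach --- square-function characterisation, Leibniz splitting of first differences, Gagliardo--Nirenberg on $\pd_tf$, scale splitting at $r\sim T$ --- is a legitimate alternative, and the soft parts (the $f(\cdot+h)\,\Delta_hg$ term, the algebra bound) are handled correctly. But the step you flag as the ``main obstacle'' is in fact unresolved rather than merely tedious: applying GN to $\pd_tf(\sigma)$ and then H\"older over $[t,t+h]$ as you describe produces powers $3/q$ and $1-3/q$ on $\pd_tf$ and a factor $|h|^{1-3/(pq)}$, which after the square-function integration does \emph{not} reassemble into $3/(2q)$, $1-3/(2q)$, $T^{(q-3)/(pq)}$ for general $p$ (it does only for $p=2$). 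The missing halving of the exponents is exactly what the complex interpolation at $\theta=1/2$ supplies for free in the paper's route; in your scheme one has to go back and redesign the H\"older/GN split so that the order $1/2$ of the time norm enters the spatial exponents, and your outline does not indicate how. So the proposal is a reasonable strategy sketch but not yet a proof of the stated inequality.
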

\begin{remark} \thetag1~ The boundary of $\Omega$ 
was assumed to be bounded in Shibata-Shimizu \cite{SS1}. 
But, Lemma \ref{lem:5.1} can be proved with the help of Sobolev's 
inequality and complex interpolation theorem, and so 
employing the same argument as that in the proof of
\cite[Lemma 2.7]{SS1}, 
we can prove Lemma \ref{lem:5.1}. \\
\thetag2~ By Sobolev's inequality, $\|fg\|_{H^1_q(\Omega)}
 \leq C\|f\|_{H^1_q(\Omega)}\|g\|_{L_q(\Omega)}$, and so 
the essential part of Lemma \ref{lem:5.1} is the estimate of
$\|fg\|_{H^{1/2}_p(\BR, L_q(\Omega))}$. 
\end{remark}
Since $\Omega$ is a uniformly $C^3$ domain, we may assume that 
$\bn$ is defined on the whole $\BR^N$ and $\|\bn\|_{H^2_\infty(\BR^N)}
< \infty$.  We then have
\begin{align*}
\|e_T[\CR_\bw](\cdot, t)\|_{H^1_q(\Omega)}
\leq C\Bigl\{\int^T_0\|\bw(\cdot, s)\|_{H^2_q(\Omega)}\,ds
+ \Bigl(\int^T_0\|\bw(\cdot, s)\|_{H^1_q(\Omega)}\,ds\Bigr)^2
\Bigr\} 
\leq C(T^{1/{p'}}M + T^{2/{p'}}M^2),
\end{align*}
and so
\begin{equation}
\label{halfest:1}
\|e_T[\CR_\bw]\|_{L_\infty(\BR, H^1_q(\Omega))} 
\leq C(T^{1/{p'}}M + T^{2/{p'}}M^2). 
\end{equation}
Choosing $T > $ so small that $T^{1/{p'}}M \leq 1$,
by \eqref{ext:2} we have
$$\|\pd_t e_T[\CR_\bw](\cdot, t)\|_{H^1_q(\Omega)}
\leq C\begin{cases} 0 \quad&\text{for $t < 0$}, \\
\|\bw(\cdot, t)\|_{H^2_q(\Omega)} 
\quad &\text{for $0 < t < T$}, \\
\|\bw(\cdot, 2T-t)\|_{H^2_q(\Omega)} 
\quad &\text{for $T < t < 2T$}, \\
0 \quad &\text{for $t > 2T$}, \end{cases}
$$
and therefore
\begin{equation}\label{halfest:2}
\|\pd_te_T[\CR_\bw]\|_{L_p(\BR, H^1_q(\Omega))}
\leq C\|\bw\|_{L_p((0, T), H^2_q(\Omega))}
\leq CM.
\end{equation}
To estimate $\nabla(\tilde e_T[\varphi])$, we use the following lemma.
\begin{lem}\label{lem:5.2} Let $1 < p, q < \infty$. Assume that 
$\Omega$ is a uniform $C^2$ domain.  Then
$$H^1_p(\BR, L_q(\Omega)) \cap L_p(\BR, H^2_q(\Omega))
\subset H^{1/2}_p(\BR, H^1_q(\Omega)), $$
and 
$$\|\nabla u\|_{H^{1/2}_p(\BR, L_q(\Omega))}
\leq C(\|u\|_{L_p(\BR, H^2_q(\Omega))} 
+ \|\pd_tu\|_{L_p(\BR, L_q(\Omega))}).
$$
\end{lem}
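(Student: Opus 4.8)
The plan is to reduce the lemma to a single complex interpolation inequality, isolating the only step with genuine content — the ``mixed derivative'' identification of interpolation spaces between the two anisotropic parabolic scales involved. First I would recall that for any interpolation couple $(X_0,X_1)$ with $X_1\hookrightarrow X_0$ continuously and densely one has the identity
\[
\bigl[\,H^1_p(\BR,X_0),\ L_p(\BR,X_1)\,\bigr]_{1/2}\;=\;H^{1/2}_p\bigl(\BR,\,[X_0,X_1]_{1/2}\bigr),
\]
where $[\cdot,\cdot]_\theta$ is the complex interpolation functor and $H^s_p(\BR,\cdot)$ the Bessel potential scale used throughout the paper. Applying this with $X_0=L_q(\Omega)$ and $X_1=H^2_q(\Omega)$, and using that $[L_q(\Omega),H^2_q(\Omega)]_{1/2}=H^1_q(\Omega)$ for a uniform $C^2$ domain (which follows by transferring the classical $\BR^N$ identity through a bounded extension operator), one obtains
\[
\bigl[\,H^1_p(\BR,L_q(\Omega)),\ L_p(\BR,H^2_q(\Omega))\,\bigr]_{1/2}\;=\;H^{1/2}_p(\BR,H^1_q(\Omega)).
\]

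Granting this, the rest is routine. For any couple $(Y_0,Y_1)$ one has $Y_0\cap Y_1\hookrightarrow[Y_0,Y_1]_{1/2}$ with $\|w\|_{[Y_0,Y_1]_{1/2}}\le C\|w\|_{Y_0}^{1/2}\|w\|_{Y_1}^{1/2}\le C(\|w\|_{Y_0}+\|w\|_{Y_1})$; taking $Y_0=H^1_p(\BR,L_q(\Omega))$ and $Y_1=L_p(\BR,H^2_q(\Omega))$ gives the inclusion $H^1_p(\BR,L_q(\Omega))\cap L_p(\BR,H^2_q(\Omega))\subset H^{1/2}_p(\BR,H^1_q(\Omega))$ together with $\|u\|_{H^{1/2}_p(\BR,H^1_q(\Omega))}\le C(\|u\|_{H^1_p(\BR,L_q(\Omega))}+\|u\|_{L_p(\BR,H^2_q(\Omega))})$. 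Since $\nabla\colon H^1_q(\Omega)\to L_q(\Omega)$ is bounded and acts only in $x$, it commutes with the time Bessel potential operator $\CF^{-1}[(1+\tau^2)^{1/4}\CF[\cdot]]$, hence $\nabla\colon H^{1/2}_p(\BR,H^1_q(\Omega))\to H^{1/2}_p(\BR,L_q(\Omega))$ is bounded, and therefore $\|\nabla u\|_{H^{1/2}_p(\BR,L_q(\Omega))}\le C\|u\|_{H^{1/2}_p(\BR,H^1_q(\Omega))}$. The right-hand side is cleaned up using $H^2_q(\Omega)\hookrightarrow L_q(\Omega)$, so that $\|u\|_{H^1_p(\BR,L_q(\Omega))}=\|u\|_{L_p(\BR,L_q(\Omega))}+\|\pd_tu\|_{L_p(\BR,L_q(\Omega))}\le C(\|u\|_{L_p(\BR,H^2_q(\Omega))}+\|\pd_tu\|_{L_p(\BR,L_q(\Omega))})$, which is exactly the asserted estimate.

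The main obstacle is the mixed derivative interpolation identity: it is classical (it may be quoted from the monographs of Amann or Pr\"uss--Simonett, where the time-smoothness index and the target Banach space index are interpolated jointly), but it deserves justification rather than being treated as obvious. A self-contained route is to extend $u$ to $\BR^N$ using, since $\Omega$ is uniform $C^2$, a single bounded extension operator acting simultaneously on $L_q$, $H^1_q$, $H^2_q$ and commuting with $\pd_t$; then, after Fourier transform in $t$, one writes $\nabla u$ with the extra time weight as an operator-valued Fourier multiplier in $\tau\in\BR$, with values in $\CL(L_q(\BR^N))$, applied to $(1-\Delta)u+\pd_tu\in L_p(\BR,L_q(\BR^N))$, and invokes Weis' operator-valued multiplier theorem together with the $\CR$-sectoriality and bounded $H^\infty$-calculus of $A:=1-\Delta$ on $L_q(\BR^N)$ and the subordination $\|\nabla v\|_{L_q}\le C\|A^{1/2}v\|_{L_q}$. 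One caveat to flag in the write-up: the estimate is parabolic in nature — one time derivative balanced against two spatial derivatives — so the scalar Mikhlin multiplier theorem on $\BR^{1+N}$ does \emph{not} apply directly; it is essential to work with operator-valued multipliers in $\tau$ alone, equivalently with the joint functional calculus of $\pd_t$ and $1-\Delta$, which is available because $L_q(\BR^N)$ is a UMD space for $1<q<\infty$.
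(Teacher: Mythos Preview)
Your proposal is correct, but it follows a different route from the paper's. The paper does not give a self-contained proof of Lemma~\ref{lem:5.2}; instead it remarks that in the case $\Omega=\BR^N$ the estimate follows from Weis's operator-valued Fourier multiplier theorem (as in Shibata--Shimizu \cite{SS2}), and that the general uniform $C^2$ case is then obtained by localization and partition of unity, with a reference to \cite{S17} for details. Your primary argument is instead interpolation-theoretic: you invoke the mixed-derivative identity $[H^1_p(\BR,X_0),L_p(\BR,X_1)]_{1/2}=H^{1/2}_p(\BR,[X_0,X_1]_{1/2})$ as a black box (from Amann or Pr\"uss--Simonett), combine it with $[L_q(\Omega),H^2_q(\Omega)]_{1/2}=H^1_q(\Omega)$, and then read off the claim from the trivial embedding $Y_0\cap Y_1\hookrightarrow[Y_0,Y_1]_{1/2}$. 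This is cleaner and more structural once one accepts the mixed-derivative theorem, and it handles the domain in one stroke via a universal extension operator rather than by patching local charts. Your secondary ``self-contained'' sketch --- extend to $\BR^N$, write the half-time-derivative of $\nabla u$ as an operator-valued multiplier in $\tau$, and apply Weis's theorem using the bounded $H^\infty$-calculus of $1-\Delta$ on $L_q(\BR^N)$ --- is essentially the paper's $\BR^N$ argument, but again you reduce to $\BR^N$ by a single extension rather than by the paper's localization. Either reduction is legitimate for a uniform $C^2$ domain; the paper's localization is more in keeping with the rest of Section~\ref{sec:7}, while your extension-operator reduction is shorter here.
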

\begin{remark}
As was mentioned in Shibata and Shimizu \cite{SS2}, 
in the case that $\Omega= \BR^N$, Lemma \ref{lem:5.2} can
be proved by Weis's operator valued Fourier multiplier
theorem. In the uniformly $C^2$ domain case,
localizing the estimate, using the uniformity
of the domain and the partition of unity, we can prove
Lemma \ref{lem:5.2}.  The detailed proof was given in
Shibata \cite{S17}.  In the case that
$p=q$ and $\Omega$ is bounded, Lemma \ref{lem:5.2}
was proved by M.~Meyries and R.~Schnaubelt
\cite{MS12}. 
\end{remark}
Applying Lemma \ref{lem:5.2} and using \eqref{heat:1},
 we have
\eq{
&\|e^{-\gamma t}\nabla\tilde e_T[\varphi]\|_{H^{1/2}_p(\BR, L_q(\Omega))}
+
\|e^{-\gamma t}\nabla\tilde e_T[\varphi]\|_{L_p(\BR, H^1_q(\Omega))}\\
&\quad\leq C(\|e^{-\gamma t}\tilde e_T[\varphi]\|_{H^1_p(\BR, L_q(\Omega))}
+\|e^{-\gamma t}\tilde e_T[\varphi]\|_{L_p(\BR, H^2_q(\Omega))} \\
&\quad \leq C(\|\varphi\|_{L_p((0, T), H^2_q(\Omega))}
+ \|\varphi\|_{H^1_p((0, T), L_q(\Omega))}
+ e^{2\gamma}L + M) 
\leq C(e^{2\gamma }L + M) \label{halfest:3}
}
for any $\gamma > 0$. Since 
$e_T[\CR_\bw] = 0$ for $t \not\in (0, 2T)$,
 applying Lemma \ref{lem:5.1} to $\tilde R_4$
and using two estimates \eqref{halfest:1},
\eqref{halfest:2} and \eqref{halfest:3},  we have
\begin{equation}\label{lagest:4}
\|e^{-\gamma t}\tilde R_4\|_{L_p(\BR, H^1_q(\Omega))}
+ \|e^{-\gamma t}\tilde R_4\|_{H^{1/2}_p(\BR, L_q(\Omega))}
\leq C(T^{1/{p'}}M + T^{(q-3)/(pq)}M)(e^{2\gamma}L + M)
\end{equation}
for any $\gamma > 0$.

Applying Theorem \ref{thm:linear:1} to Eq. \eqref{neweq:3},
using \eqref{lagest:1}, \eqref{lagest:2},
\eqref{lagest:3}, and \eqref{lagest:4}, noting that
$0 < T \leq 1$, 
and fixing $\gamma>0$ a large positive number,  we see that 
there exists three positive constants $C$ and $C_{M,L, \gamma}$ and $\tau$
for which 
\begin{equation}\label{est:5.1}
[\zeta, \bv, \vartheta]_T \leq Ce^{2\gamma T}(L+T^\tau
C_{M,L,\gamma}).
\end{equation}
Here, $C_{M,L,\gamma}$ is a constant depending on 
$L$, $M$, and $\gamma$. Letting  $M = 2Ce^{2\gamma}L$ and choosing
$T>0$ so small that $T^\tau C_{M,L,\gamma} \leq L$, we have 
\begin{equation}\label{fix:1}
[\zeta, \bv, \vartheta]_T \leq M.
\end{equation}
Let $\bS$ be a map acting on $U = (\omega, \bw, \varphi)
\in \CH_{T, M}$ defined by $\bS U =V$, where  $V = (\zeta, \bv, \vartheta)$
 is a unique solution of Eq. \eqref{linear:1}, 
and then by \eqref{fix:1}
we see that $\bS$ maps $\CH_{T,M}$ into itself. Let 
$U_1$, $U_2 \in \CH_{T, M}$, and then applying the 
same argument as that in the proof of \eqref{est:5.1}
to $V_1-V_2$ with $V_i = \bS U_i$, 
we see that there exists a constant $K$ depending on
$M$ and $L$  for which
\begin{equation}\label{est:5.2}
[\bS U_1-\bS U_2]_T \leq KT^\tau[U_1-U_2]_T.
\end{equation}
Here,  $(U_1-U_2)|_{t=0} = 0$, and so constructing the extension 
of the term corresponding to $R_4$ in the previous argument
we can use $e_T[\varphi_1-\varphi_2]$ instead of $\tilde e_T[\varphi_1
-\varphi_2]$.  Namely, we de not need to use the operator $T(\cdot)$,
and so $\gamma$ does not appear in the estimate, because
 $e_T[\varphi_1-\varphi_2]$ vanishes for $t\not\in (0, 2T)$. 

From \eqref{est:5.2} we see that $\bS$ is a contraction map from
$\CH_{T,M}$ into itself, and so by the Banach fixed point theorem
there exists a unique $V = (\zeta, \bv, \vartheta) \in
\CH_{T,M}$ with $M = 2CL$ such that $V = \bS V$.  This $V$
is a unique solution of Eq. \eqref{neweq:3}, which completes the proof
of Theorem \ref{thm:main1}. 

Employing the same argument as that in the proof of Theorem \ref{thm:main1}
we can prove the following theorem, which is so called almost global
existence theorem and used to prove the global well-posedness.
\begin{thm}\label{cor:1}
Let $2 < p < \infty$, $3 < q < \infty$ and $T > 0$. Assume that 
$2/p + 3/q < 1$ and that $\Omega$ is a uniform $C^3$ domain in
$\BR^N$ ($N \geq 2$).  Let $\rho_{10}(x)$, $\rho_{20}(x)$, and 
$\bu_0(x)$ be initial data for Eq. \eqref{mf:1}. Assume that there exist
positive numbers $a_1$ and $a_2$ for which
\begin{equation}\label{cor-initial:0}
a_1 \leq \rho_{10}(x), \enskip \rho_{20}(x) \leq a_2
\quad\text{for any $x \in \overline{\Omega}$}.
\end{equation}
Let $(h_0(x), \rho_0(x)) = \Psi(\rho_{10}(x), \rho_{20}(x))$. 
 Then, there exists a small constant $\epsilon_0 >0$ 
depending on $a_1$, $a_2$ and $T$ such that if 
$\rho_{10}$, $\rho_{20}$, $\bu_0$ and $h_0$ satisfy the condition:
\begin{equation}\label{cor-initial:1}
\|\nabla(\rho_{10}, \rho_{20})\|_{L_q(\Omega)}
+ \|\bu_0\|_{B^{2(1-1/p)}_{q,p}(\Omega)} 
+ \|h_0\|_{B^{2(1-1/p)}_{q,p}(\Omega)}
\leq \epsilon_0
\end{equation}
and the compatibility condition:
\begin{equation}\label{cor-initial:3}
\bu_0|_\Gamma=0, \quad (\nabla h_0)\cdot\bn|_\Gamma = 0,
\end{equation}
then problem \eqref{neweq:2} admits a unique solution 
$(\eta, \bv, \vartheta)$ with
\begin{gather*}
\eta - \rho_0 \in H^1_p((0, T), H^1_q(\Omega)),
\quad \bv \in H^1_p((0, T), L_q(\Omega)^3) \cap L_p((0, T), H^2_q(\Omega)^3),\\
\vartheta \in H^1_p((0, T), L_q(\Omega)) \cap L_p((0, T), H^2_q(\Omega))
\end{gather*}
possessing the estimates:
\begin{gather*}
\|\eta-\rho_0\|_{H^1_p((0, T), H^1_q(\Omega))}
+ \|\pd_t(\bv, \vartheta)\|_{L_p((0, T), L_q(\Omega))}
+ \|(\bv, \vartheta)\|_{L_p((0, T), H^2_q(\Omega))}
\leq C\epsilon_0, \\
a_1 \leq \rho(x,t) \leq 2a_2+a_1
\quad\text{for $(x, t) \in \Omega\times(0, T)$}, \quad 
 \int^T_0\|\nabla\bv(\cdot, s)\|_{L_\infty(\Omega)}
\leq \delta. 
\end{gather*}
Here, $C$ is some constant independent of $\epsilon_0$. 
\end{thm}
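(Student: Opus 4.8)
The plan is to run exactly the Banach fixed point scheme of the proof of Theorem \ref{thm:main1}, but with the roles of the length of the time interval and the size of the data exchanged: since $T$ is now prescribed, I would exploit the smallness of $\epsilon_0$ in place of the smallness of $T$. Because $T$ is fixed, I first fix the weight $\gamma=\gamma_0$ in Theorem \ref{thm:linear:1} once and for all, so that $C_\gamma e^{\gamma T}$ becomes a single constant depending only on $a_1$, $a_2$ and $T$. I keep the solution space $\CH_{T,M}$ from \eqref{5.1} with initial data $(0,\bu_0,h_0)$ and the same map $\bS$ that sends $U=(\omega,\bw,\varphi)\in\CH_{T,M}$ to the solution $(\zeta,\bv,\vartheta)$ of the linear system \eqref{linear:1} with $\zeta_0=0$, $\bv_0=\bu_0$, $\vartheta_0=h_0$ and right-hand sides $f_1(U)$, $\bff_2(U)$, $f_3(U)$, $g(U)$ from \eqref{neweq:4}. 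The parameters $M$ and $\epsilon_0$ are chosen at the end, with $M\le 1$ so that the diffeomorphism constants $a_4,a_5,a_6$, and hence all generic constants, depend only on $a_1$, $a_2$, $T$.

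All the preliminary bounds \eqref{5.4}, \eqref{5.4*}, \eqref{5.5}, \eqref{5.7}, \eqref{5.13}, \eqref{5.14} and the nonlinear estimates \eqref{lagest:1}--\eqref{lagest:4} remain valid with $L$ replaced by $\epsilon_0$; one only rereads in them the dependence on the now fixed $T$. The crucial observation is that, inspecting those estimates, the bound for $\|(f_1(U),\bff_2(U),f_3(U))\|_{L_p((0,T),\cdot)}+\|e^{-\gamma t}g(U)\|_{L_p(\BR,H^1_q)}+\|e^{-\gamma t}g(U)\|_{H^{1/2}_p(\BR,L_q)}$ splits into two parts: a part coming from the $U$-independent ``data'' terms (those built from $\nabla\rho_0$ alone, such as $\frac{\rho_0}{\Sigma_\rho^0}\nabla\rho_0$), which is $\le C_1\epsilon_0$ because $\|\nabla(\rho_{10},\rho_{20})\|_{L_q}\le\epsilon_0$, and a part that is superlinear in $(M+\epsilon_0)$, i.e.\ $\le C_2(M+\epsilon_0)^{1+\sigma}$ for some $\sigma>0$ (every such term carries either a factor $(M+\epsilon_0)$ to a power $\ge 2$, or a factor $(M+\epsilon_0)$ together with a positive power of the fixed $T$ and an extra $\epsilon_0$ produced by $\nabla\rho_0$). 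Together with $\|\bu_0\|_{B^{2(1-1/p)}_{q,p}}+\|h_0\|_{B^{2(1-1/p)}_{q,p}}\le\epsilon_0$, Theorem \ref{thm:linear:1} then gives
\begin{equation*}
[\zeta,\bv,\vartheta]_T\le C_1\epsilon_0+C_2(M+\epsilon_0)^{1+\sigma},
\end{equation*}
with $C_1,C_2$ depending only on $a_1,a_2,T$ — the analogue of \eqref{est:5.1} with the smallness now carried by $\epsilon_0$.

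Now I would set $M=2C_1\epsilon_0$; the linear term is then absorbed, and the remaining term equals $C_2(2C_1+1)^{1+\sigma}\epsilon_0^{1+\sigma}\le C_1\epsilon_0=M/2$ once $\epsilon_0$ is small (depending on $a_1,a_2,T$). The smallness requirements $M\le 1$, $CT^{1/p'}M\le\delta$ (needed for \eqref{assump:1}) and $CT^{1/p'}M\le a_1$ (needed for \eqref{5.5} and \eqref{5.7}) are likewise met by taking $\epsilon_0$ small. Hence $\bS$ maps $\CH_{T,M}$ into itself. Contraction is proved exactly as in Theorem \ref{thm:main1}: for $U_1,U_2\in\CH_{T,M}$ one applies Theorem \ref{thm:linear:1} to the difference; every term of $f_i(U_1)-f_i(U_2)$ and $g(U_1)-g(U_2)$ either loses its $U$-independent part by cancellation or picks up a factor $(M+\epsilon_0)$ from the Lipschitz dependence of $\bV^0$, $\Phi$ and the coefficients combined with the $T^{1/p'}$ gained by integrating $\pd_t(U_1-U_2)$ in time. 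Since $(U_1-U_2)|_{t=0}=0$, the boundary term can be extended by $e_T[\varphi_1-\varphi_2]$, which is supported in $(0,2T)$, so Lemma \ref{lem:5.1} applies without the heat extension and no $\gamma$ re-enters. This yields $[\bS U_1-\bS U_2]_T\le C_0(M+\epsilon_0)[U_1-U_2]_T$, a strict contraction for $\epsilon_0$ small. The Banach fixed point theorem then produces the unique $(\zeta,\bv,\vartheta)\in\CH_{T,M}$ with $M=2C_1\epsilon_0$; setting $\eta=\rho_0+\zeta$ gives the asserted solution, the estimate with constant $C$ independent of $\epsilon_0$ follows from $[\zeta,\bv,\vartheta]_T\le 2C_1\epsilon_0$, and $a_1\le\rho\le 2a_2+a_1$, $\int_0^T\|\nabla\bv\|_{L_\infty}\le\delta$ are precisely \eqref{5.5} and the consequence of $CT^{1/p'}M\le\delta$.

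The only delicate point is the bookkeeping underlying the displayed estimate: after fixing $T$ one must check that the right-hand-side norms genuinely decompose into a part linear in $\epsilon_0$ (from the fixed terms involving $\nabla\rho_0$ only) plus a part superlinear in $(M+\epsilon_0)$. This is somewhat obscured by the crude factors $(M+L)$ appearing in \eqref{lagest:1}--\eqref{lagest:4}, but it can be read off by tracking each term through \eqref{5.15} and the estimates that follow it. Once this is done, the remainder is the same soft fixed point argument as for Theorem \ref{thm:main1}, with $\epsilon_0$ playing the role that the small time $T$ played there.
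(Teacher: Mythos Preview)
Your proposal is correct and is exactly what the paper intends: the paper's own proof consists of the single sentence ``Employing the same argument as that in the proof of Theorem \ref{thm:main1}'', and you have correctly supplied the adaptation, namely exchanging the roles of $T$ and the data size so that the smallness is carried by $\epsilon_0$ rather than by $T^\tau$.

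Your identification of the ``delicate point'' is also on target. Rereading \eqref{5.15} and \eqref{lagest:1}--\eqref{lagest:4} with $T$ fixed, every term is either (i) at least quadratic in $(M,L)$, or (ii) linear in $L=\epsilon_0$ because it carries a factor $\nabla\rho_0$ (the term $\tfrac{\rho_0+\omega}{\Sigma_\rho^0}\nabla\rho_0$ is the only one of this type; its $L_\infty$ prefactor is bounded by a constant depending on $a_1,a_2$, not by $M+L$, so the estimate is genuinely $C_T\epsilon_0$). Hence the inequality $[\zeta,\bv,\vartheta]_T\le C_1\epsilon_0+C_2(M+\epsilon_0)^2$ holds with $C_1,C_2$ depending only on $a_1,a_2,T$, and the choice $M=2C_1\epsilon_0$ closes the fixed point exactly as you describe. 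The contraction step and the verification of \eqref{assump:1} and \eqref{5.5} via $T^{1/p'}M=O(\epsilon_0)$ are likewise correct.
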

\section{Global well-posedness -- proof of Theorem \ref{thm:main2}}
\label{sec:6} 

In this section, $\Omega$ is a bounded domain whose boundary
$\Gamma$ is a compact hypersurface of $C^3$ class. 
Let $\rho_{1*}$ and $\rho_{2*}$ be any positive numbers and set
$(h_*, \rho_*) = \Psi(\rho_{1*}, \rho_{2*}) \in \BR\times\BR_+$. 
Let $T>0$ and let $(\eta, \bv, \vartheta)$ be a solution of 
Eq. \eqref{neweq:2} such that 
\begin{gather}
\eta \in H^1_p((0, T), H^1_q(\Omega)), \quad
\bv \in H^1_p((0, T), L_q(\Omega)^3) \cap L_p((0, T), H^2_q(\Omega)^3), 
\nonumber \\
\vartheta  \in H^1_p((0, T), L_q(\Omega)) \cap L_p((0, T), H^2_q(\Omega)),
\quad 
\int^T_0\|\nabla\bv(\cdot, s)\|_{L_\infty(\Omega)}\,ds \leq \delta
\nonumber \\
\rho_*/4 \leq \eta(x, t) \leq
4\rho_*, \quad |\vartheta(x, t)|\leq 4|h_*|
\quad\text{for $(x, t) \in \Omega\times(0, T)$}.
\label{eq:6.1}
\end{gather}
To prove the global well-posedness, we prolong $(\eta, \bv, \vartheta)$ 
to any time interval beyond $T$. 
Let $\zeta = \eta-\rho_*$ and $h= \vartheta-h_*$, and let 
\begin{align*}
\CI &= \|\rho_0-\rho_*\|_{H^1_q(\Omega)}
+ \|(\bu_0, h_0-h_*)\|_{B^{2(1-1/p)}_{q,p}}, \\
<e^{\gamma t}V>_T & 
= \|e^{\gamma t}\nabla\zeta\|_{L_p((0, T), L_q(\Omega))}
+ \|e^{\gamma t}\pd_t\zeta\|_{L_p((0, T), H^1_q(\Omega))}
+ \|e^{\gamma t}\bv\|_{L_p((0, T), H^2_q(\Omega)}\\
&
\quad+ \|e^{\gamma t}\nabla h\|_{L_p((0, T), H^1_q(\Omega))}
+\|e^{\gamma t}\pd_t(\bv, h)\|_{L_p((0, T), L_q(\Omega))}.
\end{align*}
Here, $\gamma$ is a positive constant appearing in Theorem \ref{thm:decay1}
below. 
The key step is to prove the estimate:
\begin{equation}\label{eq:6.2}
<e^{\gamma t}V>_T \leq C(\CI + <e^{\gamma t}V>_T^2)
\end{equation}
for some constant $C > 0$.

To prove \eqref{eq:6.2},  we linearize
Eq. \eqref{neweq:2} at $(\rho_1,\rho_2) = (\rho_{1*},\rho_{2*})$, $\eta=\rho_*$,
$\bv=0$ and $\vartheta=h_*$. Namely, $\eta = \rho_*+\zeta$, 
$\bv$ and $\vartheta = h_*+h$ satisfy the following equations:
\begin{equation}\label{neweq:5}\left\{\begin{aligned}
\pd_t\zeta + a_{0*}\dv \bv = \tilde f_1(U)
\qquad&\text{in $\Omega\times(0, T)$}, \\
a_{0*}\pd_t\bv - \mu\Delta\bv - \nu\nabla\dv\bv
+a_{1*}\nabla \zeta + a_{2*}\nabla h = \tilde\bff_2(U)\qquad
&\text{in $\Omega\times(0, T)$}, \\
a_{3*}\pd_t h + a_{2*} \dv\bv - a_{4*}\Delta h = \tilde f_3(U)\qquad
&\text{in $\Omega\times(0, T)$}, \\
\bv=0, \quad (\nabla h)\cdot\bn=g(U)\qquad
&\text{on $\Gamma\times(0, T)$}, \\
(\zeta, \bv, h)|_{t=0}=(\rho_0-\rho_*, \bu_0, h_0-h_*)
\qquad&\text{in $\Omega$}.
\end{aligned}\right.\end{equation}
Here, we have set
\begin{align*}
a_{0*} &= \rho_*, \quad a_{1*} = \frac{a_0}{\Sigma_{\rho_*}}, \quad
a_{2*} = \frac{(m_1-m_2)\rho_{1*}\rho_{2*}}{\Sigma_{\rho_*}}, \quad
a_{3*} = \frac{m_1m_2\rho_{1*}\rho_{2*}}{\Sigma_{\rho_*}}, \quad 
a_{4*} = \frac{\rho_{1*}\rho_{2*}}{\fp_*\rho_*}, \\
\Sigma_{\rho_*} &= m_1\rho_{1*} + m_2\rho_{2*},
\quad \fp_* = \frac{\rho_{1*}}{m_1} + \frac{\rho_{2*}}{m_2}, 
\quad U = (\eta, \bv, \vartheta)=(\rho_*+\zeta, \bv, h_*+h), \\
\tilde f_1(U) &= R_1(U) - \zeta\dv\bv, \\
\tilde\bff_2(U) & = R_2(U)- \zeta\pd_t\bv 
- \Bigl(\frac{\eta}{\Sigma_\rho}
 - \frac{\rho_*}{\Sigma_{\rho_*}}\Bigr)\nabla\zeta
-(m_1-m_2)\Bigl(\frac{\rho_1\rho_2}{\Sigma_\rho}
-\frac{\rho_{1*}\rho_{2*}}{\Sigma_{\rho_*}}\Bigr)\nabla h, \\
\tilde f_3(U)& = R_3(U) - m_1m_2\Bigl(\frac{\rho_1\rho_2}{\Sigma_\rho}
-\frac{\rho_{1*}\rho_{2*}}{\Sigma_{\rho_*}}\Bigr)\nabla h
-(m_1-m_2)\Bigl(\frac{\rho_1\rho_2}{\Sigma_\rho}
-\frac{\rho_{1*}\rho_{2*}}{\Sigma_{\rho_*}}\Bigr)\dv\bv \\
&\quad + \dv\Bigl(\Bigl(\frac{\rho_1\rho_2}{\fp\rho}-
\frac{\rho_{1*}\rho_{2*}}{\fp_*\rho_*}\Bigr)\nabla h\Bigr), \\
g(U) &= R_4(U).
\end{align*}
Notice that $a_{0*}$, $a_{1*}$, $a_{3*}$ and $a_{4*}$
are positive constants, while $a_{2*}$ is a real number. 
We consider the system of linear equations:
\begin{equation}\label{newlinear:1}\left\{\begin{aligned}
\pd_t\zeta + a_{0*}\dv \bv = g_1
\qquad&\text{in $\Omega\times(0, T)$}, \\
a_{0*}\pd_t\bv - \mu\Delta\bv - \nu\nabla\dv\bv
+a_{1*}\nabla \zeta + a_{2*}\nabla \vartheta = \bg_2\qquad
&\text{in $\Omega\times(0, T)$}, \\
a_{3*}\pd_t \vartheta + a_{2*} \dv\bv - a_{4*}\Delta\vartheta = g_3\qquad
&\text{in $\Omega\times(0, T)$}, \\
\bv=0, \quad (\nabla \vartheta)\cdot\bn=g_4\qquad
&\text{on $\Gamma\times(0, T)$}, \\
(\zeta, \bv, \vartheta)|_{t=0}=(\zeta_0, \bv_0, \vartheta_0)
\qquad&\text{in $\Omega$}.
\end{aligned}\right.\end{equation}
For Eq. \eqref{newlinear:1}, we have the following decay 
theorem.
\begin{thm}\label{thm:decay1}
Let $1 < p, q < \infty$, $2/p+1/q \not=1$ and $2/p + 1/q \not=2$.
Assume that $\Omega$ is a bounded domain whose boundary
$\Gamma$ is a compact hypersurface of $C^3$ class. 
Let 
\begin{align*}
\rho_0 &\in H^1_q(\Omega), \quad
\bv_0 \in B^{2(1-1/p)}_{q,p}(\Omega)^3, \quad 
\vartheta_0 \in B^{2(1-1/p)}_{q,p}(\Omega), \\
g_1 &\in L_p((0, T), H^1_q(\Omega)), \quad
\bg_2 \in L_p((0, T), L_q(\Omega)^3)
\cap H^1_p((0, T), L_q(\Omega)^3), \\
g_3 & \in L_p((0, T), L_q(\Omega)), \quad
E[e^{\gamma_1 t}g_4] \in H^{1/2}_p(\BR, L_q(\Omega))
\cap L_p(\BR, H^1_q(\Omega))
\end{align*}
for some $\gamma_1 > 0$. Here, $E[e^{\gamma_1 t}g_4]$ denotes some
extension of $e^{\gamma_1 t}g_4$ to the whole time interval $\BR$.
  Assume that $\bv_0$, $\vartheta_0$
and $g_4$ 
satisfy the compatibility conditions:
$$\bv_0=0  \enskip \text{on $\Gamma$ for $2/p + 1/q < 2$}, \quad
(\nabla\vartheta_0)\cdot\bn = g_4|_{t=0}
\enskip \text{on $\Gamma$ for $2/p + 1/q < 1$}.$$
Then, problem \eqref{newlinear:1} admits unique solutions
$\eta$, $\bv$, and $\vartheta$ with 
\begin{gather*}
\eta \in H^1_p((0, T), H^1_q(\Omega)), \quad
\bv \in L_p((0, T), H^2_q(\Omega)^3)
\cap H^1_p((0, T), L_q(\Omega)^3), \\
\vartheta \in L_p((0, T), H^2_q(\Omega))
\cap H^1_p((0, T), L_q(\Omega))
\end{gather*}
possessing the estimate:
\begin{align*}
&\|e^{\gamma t}\nabla\eta\|_{L_p((0, T), L_q(\Omega))}
+ \|e^{\gamma t}\pd_t\eta\|_{L_p((0, T), H^1_q(\Omega))}
+ \|e^{\gamma t}\bv\|_{L_p((0, T), H^2_q(\Omega)} \\
&\quad + \|e^{\gamma t}\nabla \vartheta\|_{L_p((0, T), H^1_q(\Omega))}
+\|e^{\gamma t}\pd_t(\bv, \vartheta)\|_{L_p((0, T), L_q(\Omega))} \\
&\leq C(\|\zeta_0\|_{H^1_q(\Omega)} 
+ \|(\bv_0, \vartheta_0)\|_{B^{2(1-1/p)}_{q,p}(\Omega)}
+ \|e^{\gamma t}(g_1, \bg_2, g_3)\|_{L_p((0, T), L_q(\Omega))} \\
&\quad + \|E[e^{\gamma_{_1} t}g_4]\|_{H^{1/2}_p(\BR, L_q(\Omega))}
+ \|E[e^{\gamma_{_1} t}g_4]\|_{L_p(\BR, H^1_q(\Omega))})\end{align*}
for some constants $\gamma \in (0, \gamma_1]$ and $C > 0$. 
\end{thm}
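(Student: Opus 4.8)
The plan is to recast \eqref{newlinear:1} as an abstract Cauchy problem for a constant-coefficient operator on a bounded domain and to combine a spectral analysis of that operator with the $\CR$-bounded resolvent estimates underlying Theorem~\ref{thm:linear:1}. As a preliminary step I would reduce to the case $g_4 = 0$: taking a standard parabolic lift $w$ of the boundary datum, with $e^{\gamma_1 t}w \in H^1_p(\BR,L_q(\Omega)) \cap L_p(\BR,H^2_q(\Omega))$ and $(\nabla w)\cdot\bn = g_4$ on $\Gamma$ (such a lift exists because $E[e^{\gamma_1 t}g_4]$ lies in the conormal trace space of $H^1_p(\BR,L_q(\Omega)) \cap L_p(\BR,H^2_q(\Omega))$; cf.\ the lifts built in Section~\ref{sec:5}), and replacing $\vartheta$ by $\vartheta-w$, the system turns into one with homogeneous boundary condition $(\nabla\vartheta)\cdot\bn=0$ and with $g_1,\bg_2,g_3$ modified by terms whose $e^{\gamma t}$-weighted $L_p((0,T),L_q(\Omega))$ norms are dominated by the boundary terms on the right-hand side. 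Writing $U=(\zeta,\bv,\vartheta)$, the reduced system reads $\partial_t U = AU + F$, $U|_{t=0}=U_0$, on the phase space $\CH_q(\Omega)$, where $A$ is built from the last two lines of \eqref{newlinear:1} with domain encoding $\bv|_\Gamma=0$, $(\nabla\vartheta)\cdot\bn|_\Gamma=0$, and $F=(g_1,\bg_2,g_3)$. The $\CR$-sectoriality established while proving Theorem~\ref{thm:linear:1} (constant coefficients being a special case) furnishes, for large $|\lambda|$, a uniform $\CR$-bounded resolvent $(\lambda - A)^{-1}$ on a shifted sector together with the maximal $L_p$--$L_q$ regularity of $A$.

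The heart of the matter is the spectral analysis of $A$ for bounded $|\lambda|$. Since $\Omega$ is bounded with $C^3$ boundary, the embeddings $H^2_q(\Omega)\hookrightarrow H^1_q(\Omega)\hookrightarrow L_q(\Omega)$ are compact, so $(\lambda - A)^{-1}$, where it exists, is compact and $\sigma(A)$ consists of isolated eigenvalues of finite algebraic multiplicity. Testing the eigenvalue relation $\lambda U - AU = 0$ against $\bv$ and $\vartheta$ with the weights $a_{1*},a_{2*},a_{3*}$ dictated by the entropic structure, and using the first equation to eliminate $\dv\bv$ in favour of $\lambda\zeta$, I would arrive at an identity of the shape $\Re\lambda\,\Xi(U) + \mu\|\nabla\bv\|_{L_2(\Omega)}^2 + \nu\|\dv\bv\|_{L_2(\Omega)}^2 + a_{4*}\|\nabla\vartheta\|_{L_2(\Omega)}^2 = 0$, where $\Xi(U)\geq 0$ is a positive-definite quadratic form; hence $\Re\lambda \leq 0$, and $\Re\lambda = 0$ forces $\nabla\bv = \nabla\vartheta = 0$, then $\bv = 0$ by the boundary condition and $\vartheta$ constant, and finally $\lambda = 0$. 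The same computation identifies $N(A) = \{(c_1,0,c_2) \mid c_1,c_2 \in \BR\}$. Semisimplicity of the eigenvalue $0$ follows by checking that $AU \in N(A)$ implies $AU = 0$: if the first component of $AU$ equals a constant $c_1$, then $\dv\bv = -c_1/a_{0*}$, and integrating over $\Omega$ with $\bv|_\Gamma = 0$ gives $c_1 = 0$ and $\dv\bv = 0$; then $a_{4*}\Delta\vartheta$ equals a constant, and integrating with $(\nabla\vartheta)\cdot\bn|_\Gamma = 0$ forces that constant to vanish. Consequently there is $\gamma_0 > 0$ with $\sigma(A)\setminus\{0\} \subset \{\Re\lambda \leq -2\gamma_0\}$; the Riesz projection $P_0$ onto the two-dimensional null space is well defined, and on $R(I - P_0)$ the resolvent $(\lambda - A)^{-1}$ is holomorphic and, combined with the large-$|\lambda|$ estimate, $\CR$-bounded on $\{\Re\lambda \geq -\gamma_0\}$.

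It remains to deduce the weighted estimate. Decompose $U = P_0 U + (I - P_0)U$. Because $\Omega$ is bounded, $P_0 U(t)$ is a pair of spatial constants governed by scalar ordinary differential equations for the averages of $\zeta$ and $a_{3*}\vartheta$, driven by $\int_\Omega g_1$, $\int_\Omega g_3$ and the boundary trace of $g_4$; each norm occurring on the left of the claimed inequality either annihilates $P_0 U$ (all terms containing $\nabla$) or, thanks to $|\Omega| < \infty$ and H\"older's inequality, is directly dominated by the right-hand side (the $\partial_t(P_0 U)$ contributions in $L_q(\Omega)$). For $W := (I - P_0)U$ one has $\partial_t W = A_0 W + (I - P_0)F$ with $A_0 := A|_{R(I-P_0)}$ of spectral bound $\leq -2\gamma_0$; multiplying by $e^{\gamma t}$ with any $\gamma \in (0,\min\{\gamma_0,\gamma_1\}]$ gives $\partial_t(e^{\gamma t}W) = (A_0 + \gamma)(e^{\gamma t}W) + e^{\gamma t}(I - P_0)F$, and since $A_0 + \gamma$ still has $\CR$-bounded resolvent on $\{\Re\lambda < 0\}$, inverting the Laplace transform (which is where $\gamma < \gamma_0$ enters) yields the bound for $\partial_t(\bv,\vartheta)$ and $\nabla^2\bv$, $\Delta\vartheta$ in the weighted $L_p((0,T),L_q(\Omega))$ norm with constants independent of $T$. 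The hyperbolic component is recovered by integrating $\partial_t\zeta = g_1 - a_{0*}\dv\bv$ in time, which controls $\nabla\zeta$ and $\nabla\partial_t\zeta$ from $\bv \in L_p((0,T),H^2_q(\Omega))$ and $g_1$; the hypothesis $\bg_2 \in H^1_p((0,T),L_q(\Omega))$ is used qualitatively here to justify the relevant integration by parts in time. Adding the $P_0 U$ contribution and undoing the boundary reduction completes the proof.

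The step I expect to be the main obstacle is the spectral analysis, for two reasons. First, the continuity equation for $\zeta$ is of transport type, so $A$ is not parabolic on all of $\CH_q(\Omega)$ and the analytic-semigroup machinery cannot be invoked naively; one must rely on the precise $\CR$-bounded resolvent construction for the coupled hyperbolic--parabolic block carried out in proving Theorem~\ref{thm:linear:1}. Second, establishing that $0$ is a semisimple eigenvalue with the explicit null space and that a genuine spectral gap $\gamma_0$ exists is exactly where both the symmetric entropic structure and the boundedness of $\Omega$ are indispensable (on an unbounded domain this step breaks down and only polynomial decay survives). Propagating the $H^{1/2}_p(\BR,L_q(\Omega))$-in-time regularity of $g_4$ through the exponential weight, handled via $E[\cdot]$ and Lemma~\ref{lem:5.2}, and the bookkeeping of the lower-order terms produced by the reduction are routine by comparison.
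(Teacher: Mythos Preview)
Your plan is correct and leads to the same destination as the paper, but the route differs in packaging. The paper does not first lift $g_4$ to zero; instead it solves the \emph{shifted} problem $\partial_t U_1+\lambda_1 U_1-PU_1=G$, $BU_1=(0,g_4)$ directly via the $\CR$-bounded solution operators of Theorem~\ref{thm:7.1} (which already accept boundary data), obtaining the weighted estimate~\eqref{8.27} for $U_1$. It then compensates the artificial shift by solving $\partial_t V-PV=-\lambda_1 U_1$ with zero data; here it subtracts the spatial means of $\zeta_1,\vartheta_1$ so that the right-hand side lies in $\hat\CH_q(\Omega)=\{(f_1,\bff_2,f_3):\int f_1=\int f_3=0\}$, applies Duhamel with the exponentially stable semigroup $\dot T(t)$ on $\hat\CH_q(\Omega)$ furnished by Theorem~\ref{thm:8.1}, and finally re-applies the shifted maximal regularity to upgrade the $\CH_q$-bound for $\tilde V$ to a $D_q$-bound. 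The subtracted means are restored by a time integral, which contributes nothing to the gradient norms on the left-hand side. Your Riesz-projection scheme is the abstract reformulation of exactly this: your $R(I-P_0)$ coincides with $\hat\CH_q(\Omega)$ and your $A_0$ with the paper's restriction of $P$ to it, while your $P_0U$ is the pair of running means that the paper tracks explicitly.

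Two remarks on your write-up. First, the passage ``the hyperbolic component is recovered by integrating $\partial_t\zeta=g_1-a_{0*}\dv\bv$ in time'' is misleading: on $R(I-P_0)$ the $\zeta$-component lives in $H^1_q$ in both $\CH_q$ and $D_q$, so the weighted $H^1_q$-bound on $\zeta$ comes for free from maximal regularity of $A_0+\gamma$ and no separate time integration is needed (indeed, integrating against $e^{\gamma t}$ would not close). Second, the step ``combined with the large-$|\lambda|$ estimate, $\CR$-bounded on $\{\Re\lambda\geq-\gamma_0\}$'' hides a nontrivial fact: on the bounded remainder the $\CR$-boundedness of $(\lambda-A_0)^{-1}$ is not automatic from mere norm-boundedness, but follows because holomorphic operator families are $\CR$-bounded on compact subsets of their domain (via power-series and Kahane's contraction principle). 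The paper sidesteps this entirely by the shift-plus-Duhamel trick, which uses only the large-$\lambda$ $\CR$-bounds and the ordinary semigroup decay; that is the concrete advantage of its argument.
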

Postponing the proof of Theorem \ref{thm:decay1}, we prove \eqref{eq:6.2}. 
Let $(\rho_1(x), \rho_2(x)) = \Phi(\vartheta, \eta)$. 
Following the ideas from \cite{SSchade},
we first prove that 
\begin{equation}\label{eq:6.3}\begin{split}
\|\eta-\rho_0\|_{L_\infty((0, T), H^1_q(\Omega))} 
&\leq C<e^{\gamma t}V>_T, \\
\|\vartheta-h_0\|_{L_\infty((0, T), B^{2(1-1/p)}_{q,p}(\Omega))}& 
\leq C(\CI + <e^{\gamma t}V>_T).
\end{split}\end{equation}
where $(h_0, \rho_0) = (\vartheta, \eta)|_{t=0}$ (cf. \eqref{neweq:2}
in Introduction).
In fact, by H\"older's inequality we have 
\begin{align*}
\|\eta(\cdot, t)-\eta(\cdot, 0)\|_{H^1_q(\Omega)}
&\leq \int^T_0\|\pd_t\eta(\cdot, t)\|_{H^1_q(\Omega)}\,dt \\
&\leq \Bigl(\int^T_0e^{-p'\gamma t}\,dt\Bigr)^{1/{p'}}
\Bigl(\int^T_0(e^{\gamma t}\|\pd_t\eta(\cdot, t)\|_{H^1_q(\Omega)})^p\,dt
\Bigr)^{1/p} 
\leq C<e^{\gamma t}V>_T.
\end{align*}
Recalling that $\vartheta-h_* = h$ and $\vartheta_0-h_*
=h_0-h_*$,  we have 
\begin{align*}
\|\vartheta(\cdot, t) - \vartheta_0\|_{L_q(\Omega)}
\leq \int^T_0\|\pd_sh(\cdot,s)\|_{L_q(\Omega)}\,ds
+ \|h_0-h_*\|_{L_q(\Omega)}
\leq C(<e^{\gamma t}V>_T + \CI). 
\end{align*}
Let $H(x,t) = h(x, t) - |\Omega|^{-1}\int_\Omega h(x, t)\,dx$.
Since $\int_\Omega H(x, t)\,dx = 0$, by Poincar\'e's inequality we have
$$\|H(\cdot, t)\|_{H^2_q(\Omega)} \leq C\|\nabla H(\cdot, t)\|_{H^1_q(\Omega)}
= C\|\nabla h(\cdot, t)\|_{H^1_q(\Omega)}.
$$
Moreover, noting that $2(1-1/p) > 1$, we have 
\begin{align*}
\|H|_{t=0}\|_{B^{2(1-1/p)}_{q,p}(\Omega)}
&
\leq \|H|_{t=0}\|_{L_q(\Omega)} + \|\nabla H|_{t=0}\|_{B^{1-2/p}_{q,p}(\Omega)}
\leq C(\|h_0-h_*\|_{L_q(\Omega)} + \|\nabla h_0\|_{B^{1-2/p}_{q,p}(\Omega)}) \\
&= C\|h_0-h_*\|_{B^{2(1-1/p)}_{q,p}(\Omega)}.
\end{align*}
On the other hand, employing the same argument as that in the proof of 
\eqref{5.4}, by real interpolation theory, we have
\begin{align*}
\sup_{t \in (0, T)} \|H(\cdot, t)\|_{B^{2(1-1/p)}_{q,p}(\Omega)}
&\leq \sup_{t \in (0, T)} \|\tilde e_T[H](\cdot, t)
\|_{B^{2(1-1/p)}_{q,p}(\Omega)} \\
&\leq C(\|H\|_{L_p((0, T), H^2_q(\Omega))} + \|\pd_tH\|_{L_p((0, T), L_q(\Omega))}+\|H|_{t=0}\|_{B^{2(1-1/p)}_{q,p}(\Omega)}).
\end{align*}
Therefore, since
$\|\pd_tH\|_{L_q(\Omega)} \leq C\|\pd_t h\|_{L_q(\Omega)},$
we obtain
$$\sup_{t \in (0, T)} \|H(\cdot, t)\|_{B^{2(1-1/p)}_{q,p}(\Omega)}
\leq C(\|\nabla h\|_{L_p((0, T), H^1_q(\Omega))}
+ \|\pd_th\|_{L_p((0, T), L_q(\Omega))}
+ \|h_0-h_*\|_{B^{2(1-1/p)}_{q,p}(\Omega)}).
$$
Since
$$\sup_{t\in(0, T)}\|h(\cdot, t)\|_{L_q(\Omega)}
\leq \|h_0-h_*\|_{L_q(\Omega)}
+ \int^T_0\|\pd_th(\cdot, t)\|_{L_q(\Omega)}\,dt
\leq C(\|h_0-h_*\|_{L_q(\Omega)} + C<e^{\gamma t}V>_T), 
$$
we have 
\begin{align*}
\sup_{t\in (0, T)}\|h\|_{B^{2(1-1/p)}_{q,p}(\Omega)}
&\leq \sup_{t \in (0, T)}\|H\|_{B^{2(1-1/p)}_{q,p}(\Omega)}
+ \sup_{t\in(0, T)}\|h(\cdot, t)\|_{L_q(\Omega)}
\nonumber \\ 
&\leq C(\|h_0-h_*\|_{B^{2(1-1/p)}_{q,p}(\Omega)}
+ <e^{\gamma t}V>_T)
\leq C(\CI + <e^{\gamma t}V>_T),
\end{align*}
which shows the second inequality in \eqref{eq:6.3}.
Next we show that 
\begin{equation}\label{eq:6.4}
\|(\rho_1, \rho_2) - (\rho_{1*}, \rho_{2*})\|_{L_\infty((0, T),
H^1_q(\Omega))} \leq C(\CI + <e^{\gamma t}V>_T).
\end{equation}
In fact, by the Taylor formula, we have
\begin{equation*}
(\rho_1, \rho_2)- (\rho_{10}, \rho_{20})
=\Phi(\vartheta, \eta) - \Phi(h_0, \rho_0) 
\leq \int^1_0\Phi'((h_0, \rho_0)
+\theta(\vartheta-\vartheta_0, \eta-\eta_0))\,d\theta
(\vartheta-h_0, \eta-\rho_0),
\end{equation*}
where $(h_0, \rho_0) = (\vartheta, \eta)|_{t=0}$. 
Set 
$$D = \{(\zeta, \eta) \in \BR^2\mid |\zeta| \leq |h_*|/4,
\quad \rho_*/4 \leq \eta \leq 4\rho_*\},$$
and then by \eqref{eq:6.1}, $(\vartheta,\eta) \in D$ for 
any $(x, t) \in \Omega\times(0,T)$.  Let $C_0$ be a positive 
constant for which 
$$\sup_{(\vartheta, \eta)\in D}
|\Phi'(\vartheta, \eta)|\leq C_0,\quad
\sup_{(\vartheta, \eta)\in D}
|\Phi''(\vartheta, \eta)(\vartheta, \eta)| \leq C_0.
$$ 
We then have 
$$\|(\rho_1, \rho_2)- (\rho_{10}, \rho_{20})\|_{L_\infty((0, T), 
H^1_q(\Omega))} \leq 3C_0
\|(\vartheta-h_0, \eta-\rho_0)\|_{L_\infty((0, T),
H^1_q(\Omega))},
$$
which, combined with \eqref{eq:6.3}, leads to  \eqref{eq:6.4},
because $\|(\rho_{10}, \rho_{20})-(\rho_{1*}, \rho_{2*})\|_{H^1_q(\Omega)}
\leq \CI$. 

By \eqref{eq:6.1} we may assume that there exist
two positive constants $a_1$ and $a_2$ depending on 
$\rho_*$ and $h_*$ for which 
\begin{equation}\label{eq:6.5}
a_1 \leq \rho_1(x, t), \rho_2(x, t) \leq a_2
\quad\text{for any $(x, t) \in \Omega\times(0, T)$}.
\end{equation}
By \eqref{eq:6.4} and \eqref{eq:6.5}, we have
the following estimates:
\begin{align}
&\Bigl\|e^{\gamma t}\Bigl(\frac{\eta}{\Sigma_\rho} 
- \frac{\rho_*}{\Sigma_{\rho_*}}
\Bigr)\nabla\zeta\Bigr\|_{L_\infty((0, T), L_q(\Omega))}
+\Bigl\|e^{\gamma t}\Bigl(\frac{\rho_1\rho_2}{\Sigma_\rho}
-\frac{\rho_{1*}\rho_{2*}}{\Sigma_{\rho_*}}\Bigr)\nabla h
\Bigr\|_{L_\infty((0, T), L_q(\Omega))}\nonumber\\
&+\Bigl\|e^{\gamma t}\dv\Bigl(\Bigl(\frac{\rho_1\rho_2}{\fp\rho}
-\frac{\rho_{1*}\rho_{2*}}{\fp_*\rho_*}
\Bigr)\nabla h\Bigr)\Bigr\|_{L_\infty((0, T), L_q(\Omega))}
\leq C(\CI + <e^{\gamma t}V>_T)<e^{\gamma t}V>_T.
\label{non:6.1}
\end{align}
By Sobolev's inequality and \eqref{eq:6.3}, we have
$$\|\zeta\|_{H^1_q(\Omega)} 
\leq C\|\eta-\rho_0\|_{H^1_q(\Omega)}
+ \|\rho_0-\rho_*\|_{H^1_q(\Omega)}
\leq C(\CI + <e^{\gamma t}V>_T),
$$
and so  
\begin{align*}
\|e^{\gamma t}\zeta \dv\bv\|_{L_p((0, T), 
H^1_q(\Omega))}
&\leq C\|\zeta\|_{L_\infty((0, T), H^1_q(\Omega))}
\|e^{\gamma t}\nabla\bv\|_{L_p((0, T), H^1_q(\Omega))}\\
&\leq C(\CI + <e^{\gamma t}V>_T)<e^{\gamma t}V>_T.
\end{align*}
By \eqref{heat:1} and real interpolation theory, we have
\begin{equation}\label{eq:6.6}
\|\bv\|_{L_\infty((0, T), B^{2(1-1/p)}_{q,p}(\Omega))}
+\|\bv\|_{L_\infty((0, T), H^1_\infty(\Omega))}
\leq C(\CI + <e^{\gamma t}V>_T).
\end{equation}
In fact, 
\begin{align*}
&\sup_{t \in (0, T)}\|\bv(\cdot, t)\|_{B^{2(1-1/p)}_{q,p}(\Omega)}
\leq \sup_{t \in (0, T)}\|\tilde e_T[\bv]
(\cdot, t)\|_{B^{2(1-1/p)}_{q,p}(\Omega)} \\
&\leq C(\|\bv\|_{L_p((0, T), H^2_q(\Omega))}
+ \|\pd_t\bv\|_{L_p((0, T), L_q(\Omega))}
+ 
\|T(\cdot)\tilde\bv_0\|_{L_p((0, \infty), H^2_q(\Omega))}
+ \|\pd_tT(\cdot)\tilde \bv_0\|_{L_p((0, \infty), L_q(\Omega))}),
\end{align*}
where $\tilde \bv_0 \in B^{2(1-1/p)}_{q,p}(\BR^3)$ equals
to $\bv_0$ in $\Omega$ and 
$\|\tilde \bv_0\|_{B^{2(1-1/p)}_{q,p}(\BR^3)}
\leq C\|\bv_0\|_{B^{2(1-1/p)}_{q,p}(\Omega)}$.
Thus, by \eqref{heat:1}, we have the estimate of the first term in \eqref{eq:6.6}. 
Since $2/p + 3/q < 1$, we have $\|\bv\|_{H^1_\infty(\Omega)}
\leq C\|\bv\|_{B^{2(1-1/p)}_{q,p}(\Omega)}$, 
which completes the proof of \eqref{eq:6.6}.

Now and we shall estimate
$R_i(U)$. 
By Sobolev's inequality and H\"older's inequality,
we have
\begin{align*}
&\Bigl\|\int^t_0\nabla\bv(\cdot, s)\,ds
\nabla^2 f\Bigr\|_{L_q(\Omega)}
\leq \Bigl(\int^T_0e^{-\gamma p's}\,ds\Bigr)^{1/{p'}}
\Bigl(\int^T_0(e^{\gamma s}\|\nabla\bv(\cdot, s)\|_{L_\infty(\Omega)}
)^p\,ds\Bigr)^{1/p}\|\nabla^2 f(\cdot, t)\|_{L_q(\Omega)}\\
&\quad 
\leq C<e^{\gamma t}V>_T\|f(\cdot, t)\|_{H^2_q(\Omega)},
\end{align*}
and therefore
\begin{equation}\label{lag:6.1}
\Bigl\|e^{\gamma t}\int^t_0\nabla\bv(\cdot, s)\,ds
\nabla^2f\Bigr\|_{L_p((0, T), L_q(\Omega))}
\leq C<e^{\gamma t}V>_T
\|e^{\gamma t}f\|_{L_p((0, T), H^2_q(\Omega))}.
\end{equation}
A similar estimate of $\Bigl\|\int^t_0\nabla^2\bv(\cdot, s)\,ds \nabla f\Bigr\|_{L_q(\Omega)}$ yields 
\begin{equation}\label{lag:6.2}
\Bigl\|e^{\gamma t}\int^t_0\nabla^2\bv(\cdot, s)\,ds
\nabla f\Bigr\|_{L_p((0, T), L_q(\Omega))}
\leq C<e^{\gamma t}V>_T
\|e^{\gamma t}f\|_{L_p((0, T), H^2_q(\Omega))}.
\end{equation}
By \eqref{eq:6.1} and \eqref{lag:6.1}, we have
\begin{align*}
\|e^{\gamma t}R_1(U)\|_{L_p((0, T), L_q(\Omega))}
&\leq C\|\int^t_0\nabla\bv(\cdot, s)\,ds \nabla\bv\|_{L_p((0, T),
L_q(\Omega))} \\
&\leq C<e^{\gamma t}V>_T\|\nabla \bv(\cdot, t)\|_{L_p((0, T),
L_q(\Omega))} 
\leq 
C<e^{\gamma t}V>_T^2.
\end{align*}
Noting that $\nabla\eta = \nabla\zeta$, we have
\begin{align*}
\nabla R_1(U) = -\sum_{i,j=1}^3\Big(\nabla\zeta
V^0_{ij}(\bk_\bv)\frac{\pd v_i}{\pd x_j} +
\eta (\nabla_\bk V^{0}_{ij})(\bk_\bv) \int^t_0\nabla^2\bv
(\cdot, s)\,ds\,\frac{\pd v_i}{\pd x_j}
+ \eta V^0_{ij}(\bk_\bv)\nabla\frac{\pd v_i}{\pd x_j}\Big).
\end{align*}
Noting that $|\bk_\bv| \leq \delta$, we have 
\begin{align*}
&\|e^{\gamma t}\nabla R_1(U)\|_{L_p((0, T), L_q(\Omega))}\\
&\quad
\leq C(\|\nabla\zeta\|_{L_\infty((0, T), L_q(\Omega))}
\|e^{\gamma t}\nabla\bv\|_{L_p((0, T), L_q(\Omega))}
+ <e^{\gamma t}V>_T\|e^{\gamma t}\nabla\bv\|_{L_p((0, T), H^1_q(\Omega))}). 
\end{align*}
Since 
$$\|\nabla \zeta\|_{L_q(\Omega)} 
\leq \|\nabla(\eta -\rho_0)\|_{L_q(\Omega)}
+ \|\nabla\rho_0\|_{L_q(\Omega)},
$$
by \eqref{eq:6.3} we have
$$\|\nabla\zeta\|_{L_\infty((0, T), L_q(\Omega))}
\leq C(\CI + <e^{\gamma t}V>_T),
$$
and so, \eqref{lag:6.1} and \eqref{lag:6.2} imply
$$
\|e^{\gamma t}\nabla R_1(U)\|_{L_p((0, T), L_q(\Omega))}
\leq  C(\CI + <e^{\gamma t}V>_T) <e^{\gamma t}V>_T.
$$
Summing up, we have obtained 
\begin{equation}\label{non:6.2}
\|e^{\gamma t}R_1(U)\|_{L_p((0, T), H^1_q(\Omega))}
\leq C(\CI + <e^{\gamma t}V>_T) <e^{\gamma t}V>_T.
\end{equation}
We next consider $R_2(U)$ given in \eqref{lag:7}.  By \eqref{lag:6.1}
and \eqref{lag:6.2}, we have
$$\|e^{\gamma t}(A_{2\Delta}(\bk_\bv)\nabla^2\bv,
A_{1D}(\bk_\bv)\nabla\bv,
A_{2\dv}(\bk_\bv)\nabla^2\bv, 
A_{1\dv}(\bk_\bv)\nabla\bv)
\|_{L_p((0, T), L_q(\Omega))}
\leq C<e^{\gamma t}V>_T^2.
$$
By \eqref{eq:6.1} and \eqref{lag:6.1} we have
$$\|e^{\gamma t}(\frac{\eta}{\Sigma_\rho}
\bV^0(\bk_\bv)\nabla\zeta,
\frac{\rho_1\rho_2}{\Sigma_\rho}
\bV^0(\bk_\bv)\nabla h)\|_{L_p((0, T), L_q(\Omega))}
\leq C<e^{\gamma t}V>_T^2.
$$
Summing up, we have obtained 
\begin{equation}\label{non:6.3}
\|e^{\gamma t}R_2(U)\|_{L_p((0, T), H^1_q(\Omega))}
\leq C(\CI + <e^{\gamma t}V>_T) <e^{\gamma t}V>_T.
\end{equation}
We next consider $R_3(U)$ given in \eqref{lag:8}.
By \eqref{eq:6.1} and \eqref{eq:6.4}, we have
\begin{gather*}
a_1 \leq \rho_1(x, t), \rho_2(x, t) \leq 
a_2 \quad\text{for $(x, t) \in \Omega\times(0,T)$}, \\
\|\nabla\rho_i\|_{L_\infty((0, T), L_q(\Omega))}
\leq C(\CI + <e^{\gamma t}V>_T) \quad(i = 1,2), 
\end{gather*}
where $a_1$ and $a_2$ are some positive constants
depending on $\rho_{1*}$ and $\rho_{2*}$,
and therefore
$$\Bigl\|\nabla\Bigl(\frac{\rho_1\rho_2}{\fp\rho}\Bigr)
\Bigr\|_{L_\infty((0, T), L_q(\Omega))} \leq 
 C(\CI + <e^{\gamma t}V>_T).
$$
Thus, by \eqref{eq:6.1}, \eqref{lag:6.1}, and \eqref{lag:6.2},
we have 
\begin{equation}\label{non:6.4}
\|e^{\gamma t}R_3(U)\|_{L_p((0, T), H^1_q(\Omega))}
\leq C(\CI + <e^{\gamma t}V>_T) <e^{\gamma t}V>_T.
\end{equation}
Finally, we estimate $R_4(U)$ given  in \eqref{lag:9}. 
Similarly to Sect. \ref{sec:5}, we set 
$$\CR_\bv = -\{\bn(y+\int^t_0\bv(y, s)\,ds)\bV^0(\bk_\bv)
+ \int^1_0(\nabla\bn)(y + \tau\int^t_0\bv(y, s)\,ds)\,d\tau
\int^t_0\bv(y, s)\,ds\}.$$
Let $H(x, t) = h(x, t) - |\Omega|^{-1}\int_\Omega h(x, t)\,dx$.
Obviously, $\nabla H = \nabla h$.  Moreover, by Poincar\'e's
inequality, we have
\begin{equation}\label{poincare:1}\begin{split}
\|e^{\gamma t}H\|_{L_p((0, T), H^2_q(\Omega))} 
&+ \|e^{\gamma t}\pd_tH\|_{L_p((0, T), L_q(\Omega))}\\
&\leq C(\|e^{\gamma t}\nabla h\|_{L_p((0, T), H^1_q(\Omega))}
+ \|e^{\gamma t}\pd_th\|_{L_p((0, T), L_q(\Omega))}).
\end{split}\end{equation}
In particular,  we can write $R_4(U)$ as
$R_4(U) = \CR_\bv\nabla H$. We define the extension of
$e^{\gamma t}R_4(U)$ by 
$$E[e^{\gamma t} R_4(U)] = e_T[\CR_\bv](\nabla
\tilde e_T[e^{\gamma t}H]).
$$
To estimate $E[e^{\gamma t} R_4(U)] $, we use the following lemma.
\begin{lem}\label{lem:6.1} Let $1 < p < \infty$ and $3 < q < \infty$.
Then, the following two assertions hold.
\begin{itemize}
\item[\thetag1]~
If $f \in H^1_\infty(\BR, L_\infty(\Omega))$ and 
$g \in H^{1/2}_p(\BR, L_q(\Omega))$, then
$$\|fg\|_{H^{1/2}_p(\BR, L_q(\Omega))}
\leq C\|f\|_{H^1_\infty(\BR, L_\infty(\Omega))}
\|g\|_{H^{1/2}_p(\BR, L_q(\Omega))}.
$$
\item[\thetag2]~
If $f \in L_\infty(\BR, H^1_q(\Omega))$ and 
$g \in L_p(\BR, H^1_q(\Omega))$, then
$$\|fg\|_{L_p(\BR, H^1_q(\Omega))}
\leq C\|f\|_{L_\infty(\BR, H^1_q(\Omega))}
\|g\|_{L_p(\BR, H^1_q(\Omega))}.
$$
\end{itemize}
\end{lem}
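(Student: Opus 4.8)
The plan is to handle the two assertions separately: (2) is a pointwise-in-time product estimate, while (1) is obtained by complex interpolation of a single multiplication operator between an $L_p$-based and an $H^1_p$-based space.

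For assertion (2), the key input is that $H^1_q(\Omega)$ is a Banach algebra under pointwise multiplication when $q>N$: by Sobolev's imbedding theorem $H^1_q(\Omega)\hookrightarrow L_\infty(\Omega)$, so $\|fg\|_{H^1_q(\Omega)}\le\|f\|_{L_\infty(\Omega)}\|g\|_{L_q(\Omega)}+\|f\|_{L_\infty(\Omega)}\|\nabla g\|_{L_q(\Omega)}+\|g\|_{L_\infty(\Omega)}\|\nabla f\|_{L_q(\Omega)}\le C\|f\|_{H^1_q(\Omega)}\|g\|_{H^1_q(\Omega)}$. Applying this for a.e. $t$, bounding the $f$-factor by $\|f\|_{L_\infty(\BR,H^1_q(\Omega))}$, and then taking the $L_p$-norm in $t$ of the remaining factor $\|g(\cdot,t)\|_{H^1_q(\Omega)}$ gives the claim. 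This step is routine and I would dispatch it in a couple of lines.

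For assertion (1), fix $f\in H^1_\infty(\BR,L_\infty(\Omega))$ and consider the linear operator $M_f:g\mapsto fg$. First I would record the two endpoint bounds. H\"older's inequality in $x$ gives, for a.e. $t$, $\|f(\cdot,t)g(\cdot,t)\|_{L_q(\Omega)}\le\|f(\cdot,t)\|_{L_\infty(\Omega)}\|g(\cdot,t)\|_{L_q(\Omega)}$, hence $M_f$ is bounded on $L_p(\BR,L_q(\Omega))$ with norm $\le\|f\|_{L_\infty(\BR,L_\infty(\Omega))}\le\|f\|_{H^1_\infty(\BR,L_\infty(\Omega))}$. Next, the Leibniz rule $\pd_t(fg)=(\pd_tf)g+f\pd_tg$ together with the same H\"older estimate shows $\|fg\|_{H^1_p(\BR,L_q(\Omega))}\le 2\|f\|_{H^1_\infty(\BR,L_\infty(\Omega))}\|g\|_{H^1_p(\BR,L_q(\Omega))}$, using that both $\|f\|_{L_\infty(\BR,L_\infty(\Omega))}$ and $\|\pd_tf\|_{L_\infty(\BR,L_\infty(\Omega))}$ are controlled by the $H^1_\infty$-norm. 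Since the Bessel potential space $H^{1/2}_p(\BR,L_q(\Omega))$, defined via the Fourier multiplier $(1+\tau^2)^{1/4}$ as recalled above, coincides with the complex interpolation space $[L_p(\BR,L_q(\Omega)),H^1_p(\BR,L_q(\Omega))]_{1/2}$, the complex interpolation theorem applied to $M_f$ yields $\|M_f\|_{\CL(H^{1/2}_p(\BR,L_q(\Omega)))}\le\|M_f\|_{\CL(L_p(\BR,L_q(\Omega)))}^{1/2}\|M_f\|_{\CL(H^1_p(\BR,L_q(\Omega)))}^{1/2}\le C\|f\|_{H^1_\infty(\BR,L_\infty(\Omega))}$, which is precisely (1).

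The only non-elementary ingredient is the identification $H^{1/2}_p(\BR,X)=[L_p(\BR,X),H^1_p(\BR,X)]_{1/2}$ for the Banach-space-valued Bessel potential scale; this is standard for $X=L_q(\Omega)$ (it can be derived from the Fourier-analytic definition, since $z\mapsto\CF^{-1}(1+\tau^2)^{z/2}\CF$ is an admissible analytic family of operators on $L_p(\BR,L_q(\Omega))$ on the strip $0\le\Re z\le1$, using that $L_q(\Omega)$ is a UMD space), and everything else is H\"older's inequality, the Leibniz rule, and Sobolev's imbedding. I expect this to be the only point requiring a pointer to the literature. If one prefers to keep both factors variable, an equivalent route is to apply Calder\'on's bilinear complex interpolation theorem to $(f,g)\mapsto fg$ between the same two couples and then use the embedding $H^1_\infty(\BR,L_\infty(\Omega))\hookrightarrow H^{1/2}_\infty(\BR,L_\infty(\Omega))$; this gives the same conclusion with essentially the same work.
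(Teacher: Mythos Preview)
Your proposal is correct and follows essentially the same approach as the paper: for \thetag1 the paper also fixes $f$, records the two endpoint bounds for $M_f$ on $L_p(\BR,L_q(\Omega))$ and $H^1_p(\BR,L_q(\Omega))$ via H\"older and the Leibniz rule, and then invokes the complex interpolation identity $H^{1/2}_p(\BR,L_q(\Omega))=(L_p(\BR,L_q(\Omega)),H^1_p(\BR,L_q(\Omega)))_{[1/2]}$; for \thetag2 the paper likewise appeals to the Banach algebra property of $H^1_q(\Omega)$ for $q>3$. Your write-up is simply a more detailed version of the same argument.
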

\begin{proof}
To prove the first assertion, we use the fact that 
\begin{equation}\label{complex:1}
H^{1/2}_p(\BR, L_q(\Omega))=(L_p(\BR, L_q(\Omega)),
H^1_p(\BR, L_q(\Omega)))_{[1/2]},
\end{equation}
where $(\cdot, \cdot)_{[\theta]}$ denotes a complex interpolation
functor for $\theta\in(0,1)$.  Since
\begin{align*}
\|\pd_t(fg)\|_{L_p(\BR, L_q(\Omega))}
&\leq \|f\|_{H^1_\infty(\BR, L_\infty(\Omega))}
\|g\|_{H^1_p(\BR, L_q(\Omega))}, \\
\|fg\|_{L_p(\BR, L_q(\Omega))}
&\leq \|f\|_{L_\infty(\BR, L_\infty(\Omega))}
\|g\|_{L_p(\BR, L_q(\Omega))}, 
\end{align*}
by \eqref{complex:1} we have the first assertion. 
The second assertion follows immediately from the 
Banach algebra property of $H^1_q(\Omega)$ for $3 < q< \infty$.
\end{proof}
Recalling that $\bn$ is defined on $\BR^3$ and 
$\|\bn\|_{H^2_\infty(\BR^3)} < \infty$, 
by \eqref{eq:6.6} we have 
\begin{align*}
\|\pd_te_T[\CR_\bv]\|_{L_\infty(\BR, L_\infty(\Omega))}
\leq C\|\bv\|_{L_\infty((0, T), H^1_\infty(\Omega))}
\leq C(\CI + <e^{\gamma t}V>_T).
\end{align*}
By Sobolev's inequality and H\"older's inequality, we have
\begin{equation*}
\|e_T[\CR_\bv]\|_{L_\infty(\BR, L_\infty(\Omega))}
\leq C\int^T_0\|\bv(\cdot, s)\|_{H^2_q(\Omega)}\,ds 
\leq C\Bigl(\int^T_0(e^{\gamma t}
\|\bv(\cdot, s)\|_{H^2_q(\Omega)})^p\,ds\Bigr)^{1/p}
\leq C<e^{\gamma t}V>_T.
\end{equation*}
Noting that $|\bk_\bv| \leq\delta$, we also have
$$\|e_T[\CR_\bv]\|_{L_\infty(\BR, H^1_q(\Omega))}
\leq C<e^{\gamma t}V>_T.
$$
Thus, applying Lemma \ref{lem:6.1} and Lemma \ref{lem:5.2}, 
we obtain
\begin{align*}
&\|E[e^{\gamma t} R_4(U)]\|_{L_p(\BR, H^1_q(\Omega))}
+ \|E[e^{\gamma t} R_4(U)]\|_{H^{1/2}_p(\BR, L_q(\Omega))}\\
&\quad \leq C(\CI + <e^{\gamma t}V>_T)
(\|\tilde e_T[e^{\gamma t}H]\|_{L_p(\BR, H^2_q(\Omega))}
+ \|\pd_t\tilde e_T[e^{\gamma t}H]\|_{L_p(\BR, L_q(\Omega))}).
\end{align*}
Since $e^{\gamma t}H|_{t=0} = H|_{t=0}$, we have
$$\tilde e_T[e^{\gamma t}H] = e_T[e^{\gamma t}H-
T(t)\tilde H_0] +\psi(t)T(|t|)\tilde H_0,$$
where $\tilde H_0$ is a function in $B^{2(1-1/p)}_{q,p}(\BR^N)$ 
such that 
$$\tilde H_0 = H|_{t=0} \quad\text{ in $\Omega$},
\quad \|\tilde H_0\|_{B^{2(1-1/p)}_{q,p}(\BR^N)}
\leq C\|H|_{t=0}\|_{B^{2(1-1/p)}_{q,p}(\Omega)}.
$$ 
Thus, using \eqref{heat:1} and \eqref{poincare:1}, we get
\begin{align*}
&\|\tilde e_T[e^{\gamma t}H]\|_{L_p(\BR, H^2_q(\Omega))}
+ \|\pd_t\tilde e_T[e^{\gamma t}H]\|_{L_p(\BR, L_q(\Omega))}\\
&\quad
\leq C(\|e^{\gamma t}\nabla h\|_{L_p((0, T), H^1_q(\Omega))}
+ \|e^{\gamma t}\pd_th\|_{L_p((0, T), L_q(\Omega))}
+ \|H|_{t=0}\|_{B^{2(1-1/p)}_{q,p}(\Omega)}).
\end{align*}
Finally, by Poincar\'e's inequality, we have
\begin{align*}
\|H|_{t=0}\|_{B^{2(1-1/p)}_{q,p}(\Omega)}
&= \|H|_{t=0}\|_{L_q(\Omega)} + \|\nabla (H|_{t=0})
\|_{B^{1-2/p}_{q,p}(\Omega)}
\leq C\|\nabla h_0\|_{B^{1-2/p}_{q,p}(\Omega)}.
\end{align*}
Summing up, we have obtained
\begin{equation}\label{non:6.5}
\|E[e^{\gamma t}R_4(U)]\|_{H^{1/2}_p(\BR, L_q(\Omega))}
+ \|E[e^{\gamma t}R_4(U)]\|_{L_p(\BR, H^1_q(\Omega))}
\leq C(\CI + <e^{\gamma t}V>_T)^2.
\end{equation}
Applying Theorem \ref{thm:decay1} to Eq. \eqref{neweq:5}
and using the estimates \eqref{non:6.1},
\eqref{non:6.2}, \eqref{non:6.3}, \eqref{non:6.4},
and \eqref{non:6.5}, we have
$$<e^{\gamma t}V>_T \leq C(\CI + 
(\CI + <e^{\gamma t}V>_T)^2).
$$
We assume that $\CI \leq \epsilon < 1$, and so 
$(\CI+<e^{\gamma t}V>_T)^2 \leq 2(\CI + <e^{\gamma t}V>_T^2)$,
which completes the proof of \eqref{eq:6.2}. 

We now prolong a local solution to $(0, \infty)$. Let $T > 0$
and $\eta$, $\bv$ and $\vartheta$ be solutions of Eq. 
\eqref{neweq:2} satisfying \eqref{eq:6.1}.  Then, by \eqref{eq:6.2}
we have
\begin{equation}\label{prolong:1}
<e^{\gamma s}V>_t \leq C(\CI + <e^{\gamma s}V>_t^2)
\end{equation}
for any $t \in (0, T)$, where $C$ is independent of 
$t \in (0, T)$ and $T > 0$. 
Let $r_\pm(\epsilon)$ be two roots of the quadratic equation:
$C^{-1}x = \epsilon + x^2$, that is $r_\pm(\epsilon)
= (2C)^{-1}\pm\sqrt{(2C)^{-2} - \epsilon}$. We find a small
positive number $\epsilon_1 > 0$ such that 
$$0 < r_-(\epsilon) \leq 2C\epsilon < 2C^{-1} < r_+(\epsilon)$$
for $0 < \epsilon < \epsilon_1$. 
Since $<e^{\gamma s}V>_t$ satisfies the inequality \eqref{prolong:1},
we have $<e^{\gamma s}V>_t \leq r_-(\epsilon)$ or 
$<e^{\gamma s}V>_t \geq r_+(\epsilon)$. 
 Since 
$$<e^{\gamma s}V>_t \to 0 \quad\text{as $t\to 0$},
$$
for small $t \in (0, T)$, we have $<e^{\gamma s}V>_t \leq r_-(\epsilon)$.
But, $<e^{\gamma s}V>_t$ is continuous with respect to
$t \in (0, T)$, and so 
$<e^{\gamma s}V>_t \leq r_-(\epsilon)$ for any $t \in (0, T)$.
Thus, we have 
\begin{equation}\label{prolong:2}
<e^{\gamma s}V>_T \leq 2C\epsilon.
\end{equation}
By \eqref{eq:6.3}, \eqref{eq:6.4}, and 
\eqref{eq:6.6}, we see that there exists a
constant $M > 0$ for which 
\begin{gather}
\|\eta -\rho_0\|_{L_\infty((0, T), H^1_q(\Omega))}
\leq M\epsilon, 
\quad
\|(\bv, \vartheta-h_0)\|_{L_\infty((0, T), B^{2(1-1/p)}_{q,p}(\Omega))}
\leq M\epsilon, \nonumber \\
\|(\rho_1, \rho_2) - (\rho_{1*}, \rho_{2*})
\|_{L_\infty((0, T), H^1_q(\Omega))}
 \leq M\epsilon. \label{prolong:3}
\end{gather}
Let $\eta'$, $\bv'$ and $\vartheta'$ be solutions of the 
following equations:
\begin{equation}\label{neweq:6.1}\left\{
\begin{aligned}
\pd_t\eta' + \eta\dv\bv' = R_1'(U)
\quad&\text{in $\Omega\times(T, T+T_1)$}, \\
\eta\pd_t\bv' - \mu\Delta\bv' - \nu\nabla\dv\bv'
+\frac{\eta}{\Sigma_{\rho'}}\nabla\eta' + 
\frac{(m_1-m_2)\rho_1'\rho_2'}{\Sigma_{\rho'}}\nabla \vartheta' 
= R_2'(U)
\quad&\text{in $\Omega\times(T, T+T_1)$}, \\
\frac{m_1m_2\rho_1'\rho_2'}{\Sigma_{\rho'}}\pd_t \vartheta' + 
\frac{(m_1-m_2)\rho_1'\rho_2'}{\Sigma_{\rho'}}\dv\bv'
-\dv\Bigl(\frac{\rho_1'\rho_2'}{\fp'\rho'}\nabla \vartheta'\Bigr)
=R_3'(U)
\quad&\text{in $\Omega\times(T, T+T_1)$}, \\
\bv'=0, \quad(\nabla \vartheta)\cdot\bn  = R_4'(U)
\quad&\text{on $\Gamma \times(T, T+T_1)$}, \\
(\eta', \bv', \vartheta')|_{t=T}  =(\eta(\cdot, T), 
\bv(\cdot, T), \vartheta(\cdot, T))
\quad&\text{in $\Omega$}.
\end{aligned}\right. \end{equation}
Here, $\Sigma_{\rho'} = m_1\rho_1' + m_2\rho_2'$, 
$\fp' = \rho_1'/m_1 + \rho_2'/m_2$, and $R_i(U)$ are 
defined by replacing $\int^t_0\nabla\bv(\cdot, s)\,ds$,
$\eta$, $\rho_1$, $\rho_2$, $\rho$
$\bv$, and $\vartheta$  
 by $\int^T_0\nabla\bv(\cdot, s)\,ds 
+ \int^t_T \nabla\bv'(\cdot, s)\,ds$,
$\eta'$, $\rho_1'$, $\rho_2'$, $\rho'$
$\bv'$, and $\vartheta'$.
Employing the same argument as that in the proof of
Theorem \ref{thm:main1}, we can show that
there exists a $T_1$ depending on $\epsilon > 0$ such that
problem \eqref{neweq:6.1} admits unique solutions
$\eta'$, $\bv'$ and $\vartheta'$
with
\begin{gather}
\eta'\in H^1_p((T, T+T_1), H^1_q(\Omega)), \quad
\bv' \in H^1_p((T, T+T_1), L_q(\Omega)^3) 
\cap L_p((T, T+T_1), H^2_q(\Omega)^3), 
\nonumber \\
\vartheta'  \in H^1_p((T, T+T_1), L_q(\Omega)) 
\cap L_p((T, T+T_1), H^2_q(\Omega)),
\quad 
\int^{T+T_1}_T\|\nabla\bv'(\cdot, s)\|_{L_\infty(\Omega)}\,ds
\leq \delta,
\nonumber \\
\rho_*/4 \leq \eta'(x, t) \leq
4\rho_*, \quad |\vartheta'(x, t)|\leq 4|h_*|
\quad\text{for $(x, t) \in \Omega\times(T, T+T_1)$}.
\label{eq:6.7}
\end{gather}
Because, choosing $\epsilon > 0$ small enough, 
in view of \eqref{prolong:3} we may 
assume that 
$$\rho_{i*}/2 \leq \rho_i(x,T)  \leq 2\rho_{i*}
\quad\text{in $x \in \Omega$ for $i=1,2$}.
$$
Thus, setting 
$$f'' = \begin{cases} f&\quad\text{for $t \in (0, T)$}, \\
f' &\quad\text{for $t \in (T, T+T_1)$},
\end{cases}
$$
for $f \in \{\eta, \bv, \vartheta\}$, 
$\eta''$, $\bv''$, and $\vartheta''$ are solutions of
Eq. \eqref{neweq:2} satisfying \eqref{eq:6.1}, where $T$
is replaced by $T+T_1$.  The repeated use of this argument
implies the existence of solutions $\eta$, $\bv$, $\vartheta$
of Eq. \eqref{neweq:2} with $T = \infty$, which satisfies 
the estimate: $<e^{\gamma t}V>_\infty \leq C\epsilon$.
This completes the proof of Theorem \ref{thm:main2}. 

\section{Maximal $L_p$-$L_q$ regularity -- proof of 
Theorem \ref{thm:linear:1}}\label{sec:7} 

In this section, we consider the linear problem \eqref{linear:1}
in a uniformly  $C^2$ domain in the $N$-dimensional Euclidean
space $\BR^N$ ($N \geq 2$).  To prove Theorem \ref{thm:linear:1},
we use the $\CR$-bounded solution operators for the generalized
resolvent problem corresponding to Eq. \eqref{linear:1}. 
We first make a definition. 
\begin{dfn}\label{dfn:7.1} Let $X$ and $Y$ be two Banach spaces, and 
$\|\cdot\|_X$ and $\|\cdot\|_Y$ their norms.  
A family of operators $\CT \subset \CL(X, Y)$ is called 
$\CR$-bounded on $\CL(X, Y)$ if there exist constants $C > 0$
and $p \in [1, \infty)$ such that 
for any $n \in \BN$, $\{T_j\}_{j=1}^n \subset \CT$ and $\{f_j\}_{j=1}^n
\subset X$, the inequality 
$$\int^1_0\|\sum_{j=1}^n r_j(u)T_jf_j\|_Y^p\,du
\leq C\int^1_0\|\sum_{j=1}^nr_j(u)f_j\|_X^p\,du.
$$
Here, the Rademacher functions $r_j: [0, 1] \to \{-1,1\}$, $j \in \BN$, are
given by 
$r_j(t) = {\rm sign}(\sin (2^j\pi t))$.  The smallest such
$C$ is called $\CR$-bound of $\CT$ on 
$\CL(X, Y)$ which is written by $\CR_{\CL(X, Y)}\CT$.
\end{dfn}
The generalized resolvent problem corresponding to
Eq. \eqref{linear:1} is the following system:
\begin{equation}\label{7.1}\left\{\begin{aligned}
\lambda\zeta + \rho_0(x)\dv\bv & = f_1&\quad&\text{in $\Omega$},\\
\rho_0(x)\lambda\bv - \mu\Delta\bv - \nu\nabla\dv\bv 
+ \gamma_1(x)\nabla\zeta + \gamma_2(x)\nabla\vartheta
& = \bff_2&\quad&\text{in $\Omega$}, \\
\gamma_3(x)\lambda\vartheta + \gamma_2(x)\dv\bv - 
\dv(\gamma_4(x)\nabla\vartheta)&=f_3
&\quad&\text{in $\Omega$}, \\
\bv = 0, \quad (\nabla\vartheta)\cdot\bn
&=f_4 &\quad&\text{on $\Gamma$}.
\end{aligned}\right.\end{equation}
We assume that the coefficients $\rho_0(x)$,
$\gamma_i(x), \quad i=1,\ldots,4$ are uniformly continous
on $\overline{\Omega}$ and satisfy the conditions
\eqref{cond:4*}.  The main part of this section is
to prove the following theorem concerning the existence of $\CR$-bounded 
solution operators for Eq. \eqref{7.1}.
\begin{thm}\label{thm:7.1} Let $1 < q < \infty$ and $0 < \epsilon < \pi/2$.
Assume that $\Omega$ is a uniform $C^2$ domain. Let
\begin{align*}
X_q(\Omega) & = \{(f_1, \bff_2, f_3, f_4) \mid 
f_1, f_4 \in H^1_q(\Omega), \enskip \bff_2 \in L_q(\Omega)^N, 
\enskip f_3 \in L_q(\Omega)\}, \\
\CX_q(\Omega) & = \{(F_1, F_2, F_3, F_4, F_5) \mid F_1, F_5 \in H^1_q(\Omega),
\enskip F_3, F_4 \in L_q(\Omega), \enskip F_2 \in L_q(\Omega)^N\}.
\end{align*}
Then, there exist a positive constant $\lambda_0$ and 
operator families $\CA(\lambda) \in {\rm Hol}\,(\Sigma_{\epsilon, \lambda_0},
\CL(\CX_q(\Omega), H^1_q(\Omega)))$,  
$\CB_1(\lambda) \in {\rm Hol}\,(\Sigma_{\epsilon, \lambda_0},
\CL(\CX_q(\Omega), H^2_q(\Omega)^N))$, 
and  $\CB_2(\lambda) \in {\rm Hol}\,(\Sigma_{\epsilon, \lambda_0},
\CL(\CX_q(\Omega), H^2_q(\Omega)))$
such that
for any $(f_1, \bff_2, f_3, f_4) \in X_q(\Omega)$
and $\lambda \in \Sigma_{\epsilon, \lambda_0}$, 
$\zeta = \CA(\lambda)F_\lambda$, 
$\bv= \CB_1(\lambda)F_\lambda$, 
and $\vartheta= \CB_2(\lambda)F_\lambda$ 
are unique solutions of Eq. \eqref{7.1}, where 
$F_\lambda =  (f_1, \bff_2, f_3, \lambda^{1/2}f_4, f_4)$, and 
\begin{align*}
\CR_{\CL(\CX_q(\Omega), H^1_q(\Omega))}(\{(\tau\pd_\tau)^\ell
\CA(\lambda)\mid \lambda \in \Sigma_{\epsilon, \lambda_0}\}) &\leq r_b, \\
\CR_{\CL(\CX_q(\Omega), H^{2-j}_q(\Omega)^N)}(\{(\tau\pd_\tau)^\ell
(\lambda^{j/2}\CB_1(\lambda)) 
\mid \lambda \in \Sigma_{\epsilon, \lambda_0}\}) &
\leq r_b, \\
\CR_{\CL(\CX_q(\Omega), H^{2-j}_q(\Omega))}(\{(\tau\pd_\tau)^\ell
(\lambda^{j/2}\CB_2(\lambda)) 
\mid \lambda \in \Sigma_{\epsilon, \lambda_0}\}) &
\leq r_b
\end{align*}
for $\ell=0,1$, $j=0,1,2$, and for some constant $r_b$. 
\end{thm}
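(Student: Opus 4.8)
The plan is to follow the standard program for constructing $\CR$-bounded solution operators: first eliminate the density-type unknown, then solve a whole-space and a half-space model problem with frozen coefficients, and finally patch these together over the uniform $C^2$ domain by a partition of unity and a Neumann-series perturbation argument.

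First I would use the first line of \eqref{7.1} to write $\zeta = \lambda^{-1}(f_1 - \rho_0\dv\bv)$ and substitute it into the momentum equation, which leaves a system in $(\bv,\vartheta)$ alone,
\[
\rho_0\lambda\bv - \mu\Delta\bv - (\nu + \lambda^{-1}\gamma_1\rho_0)\nabla\dv\bv + \gamma_2\nabla\vartheta = \bff_2 + \lambda^{-1}\gamma_1\nabla f_1, \qquad \gamma_3\lambda\vartheta + \gamma_2\dv\bv - \dv(\gamma_4\nabla\vartheta) = f_3,
\]
together with $\bv|_\Gamma = 0$ and $(\nabla\vartheta)\cdot\bn = f_4$. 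For $|\lambda|$ large the term $\lambda^{-1}\gamma_1\rho_0\nabla\dv\bv$ is a bounded perturbation of the Lam\'e operator, and taking the divergence shows that $(\dv\bv,\vartheta)$ solves a coupled parabolic system while the solenoidal part of $\bv$ solves a genuine parabolic equation; hence the reduced problem has the structure of a parabolic boundary value problem to which the machinery applies. Once $\CB_1,\CB_2$ are built, $\CA$ is recovered by $\CA(\lambda)F_\lambda = \lambda^{-1}(f_1 - \rho_0\dv(\CB_1(\lambda)F_\lambda))$, and its $\CR$-bound in $H^1_q$ follows from that of $\lambda^{j/2}\CB_1(\lambda)$ into $H^{2-j}_q$ together with the factor $\lambda^{-1}$.

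Next I would treat the two constant-coefficient model problems. For $\Omega=\BR^N$ (so $f_4=0$) I would take the full Fourier transform, solve the resulting linear algebraic system for $\hat\bv,\hat\vartheta$, and read off solution formulas whose symbols are rational in $(\lambda^{1/2},\xi)$ and holomorphic on $\Sigma_{\epsilon,\lambda_0}$. The $\CR$-boundedness of the families $(\tau\pd_\tau)^\ell(\lambda^{j/2}\CB_i(\lambda))$, $\ell=0,1$, then follows from the standard multiplier criterion: a symbol $m(\lambda,\xi)$ with $|\pd_\xi^\alpha m(\lambda,\xi)|\le C|\xi|^{-|\alpha|}$, $C$ uniform for $\lambda\in\Sigma_{\epsilon,\lambda_0}$ (and the analogous bound after $\tau\pd_\tau$), generates an $\CR$-bounded family of Fourier multiplier operators on $L_q(\BR^N)$, the decay in $\xi$ coming from the parabolic weight $|\lambda|+|\xi|^2$ in the denominators. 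For $\Omega=\HS$ I would apply the partial Fourier transform in the tangential variables $\xi'\in\BR^{N-1}$, reduce to ODEs in $x_N>0$, and build the bounded-on-$(0,\infty)$ solution from the exponential modes $e^{-A(\xi',\lambda)x_N}$ and $e^{-B(\xi',\lambda)x_N}$ coming from the viscous, acoustic and thermal roots; imposing $\bv=0$ and $(\nabla\vartheta)\cdot\bn=f_4$ at $x_N=0$ yields a boundary matrix whose determinant must be shown to be bounded away from zero uniformly in $(\xi',\lambda)$ after the anisotropic rescaling (the Lopatinskii--Shapiro condition). The pairing $F_\lambda=(f_1,\bff_2,f_3,\lambda^{1/2}f_4,f_4)$ is exactly what is needed here: $\lambda^{1/2}f_4$ carries the scaling weight of a normal derivative of $\vartheta$, so applying the solution operator to $\lambda^{1/2}f_4$ and to $f_4$ separately yields $\CR$-bounded contributions into $H^{2-j}_q$.

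The main obstacle I expect is this half-space model problem: verifying the uniform invertibility of the boundary symbol for the coupled velocity--enthalpy system, and carrying the exponential modes through the Fourier--Laplace inversion so as to extract $\CR$-bounded operator families rather than mere resolvent estimates — this requires representing the solution operators as integrals of $e^{-Ax_N}$-type kernels against the data and invoking the $\CR$-boundedness lemmas for such operator-valued symbols in the style of Shibata--Shimizu. Finally I would globalize: take a partition of unity $\{\zeta_j\}$ subordinate to a cover of $\Omega$ by small balls, with a companion family $\{\tilde\zeta_j\}$, and set $\CA(\lambda)F=\sum_j\zeta_j\,\CA_j(\lambda)(\tilde\zeta_j F)$, likewise for $\CB_1,\CB_2$, where $\CA_j$ etc.\ are the whole-space operators for interior balls and the half-space operators (after flattening $\Gamma$) for boundary balls, with coefficients frozen at the ball centres. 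Inserting this ansatz into \eqref{7.1} produces error terms: commutators of the model operators with $\zeta_j$, and differences of the coefficients from their frozen values. By the uniform $C^2$ regularity and the conditions \eqref{cond:4*}, together with the uniform bounds on the model operators, these errors define a family on $\CX_q(\Omega)$ with $\CR$-bound $\le 1/2$ provided $\lambda_0$ is large and the balls small; a Neumann series then corrects the ansatz to genuine $\CR$-bounded solution operators $\CA,\CB_1,\CB_2$, and uniqueness follows from the a priori resolvent estimate implicit in these bounds, completing the proof.
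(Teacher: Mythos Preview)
Your overall architecture --- eliminate $\zeta$, solve constant-coefficient model problems, localize, close by a Neumann series --- matches the paper, and the proposal would go through. The genuine difference is in how the model problems are handled. You propose to treat the \emph{coupled} $(\bv,\vartheta)$ system directly: solve the full coupled ODE system in the half-space, check the Lopatinskii--Shapiro determinant for the $(N{+}1)\times(N{+}1)$ boundary symbol, and extract $\CR$-bounded kernels for the whole package at once. The paper instead observes that the coupling terms $\gamma_2\nabla\vartheta$ and $\gamma_2\dv\bv$ are \emph{lower order} relative to the principal parts, and therefore first builds $\CR$-bounded solution operators $\CC(\lambda)$ and $\CD(\lambda)$ for the two \emph{decoupled} problems (Theorem~\ref{thm:7.2}): the Lam\'e-type equation for $\bv$ with Dirichlet condition, and the scalar heat equation for $\vartheta$ with Neumann condition. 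The first of these is already available from Enomoto--Shibata, so only the scalar Neumann problem is new. The localization and Neumann-series argument are then carried out separately for each decoupled operator (Subsection~\ref{subsec:7.3}). Only at the very end (Subsection~\ref{subsec:7.4}) is the coupling reinstated: setting $\bv=\CC(\lambda)\bff$, $\varphi=\CD(\lambda)g$ in \eqref{eq:7.1.2} produces error terms $\CE_1=\gamma_3\nabla\CD(\lambda)g$, $\CE_2=\gamma_2\dv\CC(\lambda)\bff$ with $\CR$-bound $O(\lambda_0^{-1/2})$, and a second, outer Neumann series absorbs them.

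What this buys: the paper avoids the coupled half-space analysis entirely --- no $(N{+}1)\times(N{+}1)$ Lopatinskii determinant, no mixed viscous--thermal exponential modes --- at the price of running two Neumann series (one for localization, one for the coupling). Your route is more direct and would yield the result in one pass, but the half-space computation you flag as ``the main obstacle'' is genuinely heavier than anything the paper needs.
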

\begin{remark}
$F_1$, $F_2$, $F_3$, $F_4$ and $F_5$ are variables 
corresponding to $f_1$, $f_2$, $f_3$, $\lambda^{1/2}f_4$ and 
$f_4$. The norm of $\CX_q(\Omega)$ is defined by
$$\|(F_1, F_2, F_3, F_4, F_5)\|_{\CX_q(\Omega)}
= \|(F_1, F_5)\|_{H^1_q(\Omega)} + \|(F_2, F_3, F_4)\|_{L_q(\Omega)}.
$$
\end{remark}
Since we consider the case that $\lambda \in \Sigma_{\epsilon, \lambda_0}$ with $\lambda_0 > 0$, setting $\zeta = \lambda^{-1}(f_1-\rho_0(x)
\dv\bv)$, and inserting this formula into the second equation in
\eqref{7.1}, we rewrite it as
\begin{equation}\label{7.2} 
\rho_0(x) \lambda\bv - \mu\Delta\bv - \nu\nabla\dv\bv
-\gamma_1(x)\lambda^{-1}\nabla(\rho_0(x)\dv\bv)
+\gamma_2(x) \nabla\vartheta = \bff_2 - \lambda^{-1}\gamma_1(x)\nabla f_1.
\end{equation}
Since $\gamma_2(x)\nabla\vartheta$ and $\gamma_2(x)\dv\bv$ are lower
order terms, our main concern is to prove the existence of
$\CR$-bounded solution operators for the following two equations:
\begin{gather}
\rho_0(x)\lambda\bv - \mu\Delta\bv - \nu\nabla\dv\bv
-\gamma_1(x)\lambda^{-1}\nabla(\rho_0(x)\dv\bv)
= \bg \quad\text{in $\Omega$}, \quad
\bv|_\Gamma = 0; \label{7.3} \\
\gamma_3(x)\lambda\vartheta - \dv(\gamma_4(x)\nabla\vartheta)
= h_1 \quad\text{in $\Omega$}, \quad 
(\nabla\vartheta)\cdot\bn|_\Gamma = h_2. \label{7.4}
\end{gather}
Then, we shall prove the following theorem. 
\begin{thm}\label{thm:7.2} Let $1 < q < \infty$ and 
$0 < \epsilon < \pi/2$.  Assume that $\Omega$ is a uniform 
$C^2$ domain in $\BR^N$.  Then, there exists a positive constant
$\lambda_0$ such that the following assertions hold:
\begin{itemize}
\item[\thetag1]~There exists   
an operator family $\CC(\lambda) \in {\rm Hol}\,
(\Sigma_{\epsilon, \lambda_0}, \CL(L_q(\Omega)^N, H^2_q(\Omega)^N))
$
such that for any 
$\lambda \in \Sigma_{\epsilon, \lambda_0}$
and $\bg \in L_q(\Omega)^N$, 
$\bv = \CC(\lambda)\bg$ is a unique 
solution of Eq. \eqref{7.3}, and
$$\CR_{\CL(L_q(\Omega)^N, H^{2-j}_q(\Omega)^N)}
(\{(\tau\pd_\tau)^\ell\CC(\lambda) \mid \lambda \in 
\Sigma_{\epsilon, \lambda_0}\}) \leq r_b$$
for $\ell=0,1$ and $j=0,1,2$.
\item[\thetag2]~Let $Y_q(\Omega)$ and $\CY_q(\Omega)$ be the spaces
defined in  Notation of Sect. 1 with $G = \Omega$. 
Then, there exists 
an operator family $\CD(\lambda) \in {\rm Hol}\,
(\Sigma_{\epsilon, \lambda_0}, \CL(\CY_q(\Omega), H^2_q(\Omega))$
such that for any $\lambda \in \Sigma_{\epsilon, \lambda_0}$
and $(h_1, h_2)\in Y_q(\Omega)$, $\vartheta
= \CD(\lambda)(h_1, \lambda^{1/2}h_2, h_2)$ is a unique 
solution of Eq. \eqref{7.4}, and
$$\CR_{\CL(\CY_q(\Omega), H^{2-j}_q(\Omega))}
(\{(\tau\pd_\tau)^\ell\CD(\lambda) \mid \lambda \in 
\Sigma_{\epsilon, \lambda_0}\}) \leq r_b$$
for $\ell=0,1$ and $j=0,1,2$.
\end{itemize}
\end{thm}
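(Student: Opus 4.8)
The plan is to treat the two systems \eqref{7.3} and \eqref{7.4} separately, since each has the structure of a second-order elliptic resolvent problem with variable coefficients subject to, respectively, a Dirichlet and a Neumann-type boundary condition. For part \thetag1, I would first analyze the model problem in the whole space $\BR^N$ and in the half-space $\HS$ with constant coefficients $\rho_0(x) \equiv \rho_c$, obtaining $\CR$-bounded solution operators by explicit computation via the partial Fourier transform tangential to the boundary (solving the resulting ODE in the normal variable). Here the extra term $-\gamma_1\lambda^{-1}\nabla(\rho_0\dv\bv)$ is of lower order in $\lambda$ on $\Sigma_{\epsilon,\lambda_0}$: precisely, because $\lambda^{-1}\nabla\dv$ carries two derivatives divided by $|\lambda|$, it can be absorbed into the principal part $\rho_c\lambda\bv - \mu\Delta\bv-\nu\nabla\dv\bv$ as a Neumann-series perturbation once $\lambda_0$ is chosen large. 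For part \thetag2 the model problem $\gamma_3\lambda\vartheta - \gamma_4\Delta\vartheta = h_1$ with $(\nabla\vartheta)\cdot\bn = h_2$ is standard; the inhomogeneous Neumann datum forces the weighted formulation with $\lambda^{1/2}h_2$ as an auxiliary variable, which is exactly why $\CY_q(\Omega)$ and the argument $(h_1,\lambda^{1/2}h_2,h_2)$ appear. These model-problem estimates are by now routine in the Shibata school, so I would cite the corresponding references (\cite{DHP}, \cite{Murata}, \cite{SSchade}) for the form of the solution operators and the $\CR$-boundedness of the Fourier multipliers involved, the latter following from Weis's operator-valued multiplier theorem together with the standard symbol estimates.

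The next step is the passage from model problems to the uniform $C^2$ domain $\Omega$ by the usual localization procedure. I would fix a covering of $\overline\Omega$ by balls, each of which is (after a $C^2$ change of coordinates) either contained in $\Omega$ or diffeomorphic to a half-ball, with an associated partition of unity $\{\varphi_j\}$ and cut-offs $\{\psi_j\}$ with $\psi_j \equiv 1$ on $\supp\varphi_j$. Applying the local solution operators to $\varphi_j(f_1,\bff_2,f_3,\dots)$, multiplying the local solutions by $\psi_j$, and summing, one obtains a parametrix $\bv = \sum_j \psi_j \CC_j(\lambda)\varphi_j\bg$ whose error consists of commutator terms in which at least one derivative falls on $\varphi_j$ or $\psi_j$, hence is of lower order; the coefficient freezing $\rho_0(x) \leadsto \rho_0(x_j)$ produces an additional error controlled by the modulus of continuity of $\rho_0$ times second derivatives of $\bv$, which for large $\lambda_0$ is again small. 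Summability of the localized pieces in $L_q$ and $\CR$-boundedness of the sum follow from the uniform $C^2$ structure of $\Omega$ (finite overlap of the covering, uniform bounds on the chart maps) exactly as in \cite{SSchade}; the derivatives $(\tau\pd_\tau)^\ell$ with $\ell=0,1$ in the statement are handled by differentiating the holomorphic families and noting that $\tau\pd_\tau$ preserves the symbol class.

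Once the parametrix is constructed, a Neumann-series argument on $\Sigma_{\epsilon,\lambda_0}$ with $\lambda_0$ large converts it into an exact right inverse $\CC(\lambda)$ (resp. $\CD(\lambda)$), and the $\CR$-bound of this operator family inherits the bound $r_b$ from the parametrix. Uniqueness of solutions in $H^2_q(\Omega)^N$ (resp. $H^2_q(\Omega)$) follows from the a priori estimate $\|\bv\|_{H^2_q} \le C|\lambda|^{-1}\|\bg\|_{L_q} + \dots$ implicit in the $\CR$-bound applied with a single element, together with a duality argument to rule out nontrivial kernel for $q<2$ (or, more simply, the resolvent estimate itself once $\lambda_0$ is large). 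The main obstacle I anticipate is bookkeeping: ensuring that every error term produced by localization and coefficient freezing is genuinely lower order uniformly in $\lambda \in \Sigma_{\epsilon,\lambda_0}$ and uniformly over the (infinitely many, in the unbounded case) charts, so that the Neumann series converges in the $\CR$-bounded operator topology; the div-type perturbation in \eqref{7.3} and the $\lambda^{1/2}$-weighted boundary term in \eqref{7.4} are the two places where one must be careful about the precise powers of $\lambda$, but neither changes the principal symbol, so no new phenomenon arises beyond careful estimation.
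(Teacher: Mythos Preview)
Your plan is correct and coincides with the paper's proof: model problems in $\BR^N$ and $\BR^N_+$ (the paper cites Enomoto--Shibata \cite{ES1} for \thetag1 and gives the partial-Fourier computation for \thetag2), a bent-half-space perturbation step, then localization via a finite-overlap partition of unity on the uniform $C^2$ domain, a parametrix with commutator and coefficient-freezing errors whose $\CR$-bound is made $\leq 1/2$ by choosing the freezing radius small and $\lambda_0$ large, Neumann series, and uniqueness by duality. The only cosmetic difference is that the paper keeps the $\gamma_1\lambda^{-1}\nabla(\rho_0\dv\bv)$ term inside the constant-coefficient model problem (again via \cite{ES1}) rather than treating it as a separate perturbation of the Lam\'e resolvent, but for $\lambda\in\Sigma_{\epsilon,\lambda_0}$ with $\lambda_0$ large the two viewpoints are equivalent.
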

\subsection{The model problems in $\BR^N$ and 
$\BR^N_+$}\label{subsec:7.1}
First, we consider the model problem in $\BR^N$.  In what follows,
let $\rho_{0*}$, $\gamma_{1*}$, $\gamma_{3*}$ and $\gamma_{4*}$ be 
positive constants.  Assume that there exist two positive constants
$b_1$ and $b_2$ for which
\begin{equation}\label{assump:0}
b_1 \leq \rho_{0*}, \gamma_{*1}, \gamma_{3*}, \gamma_{4*} \leq b_2.
\end{equation} 
Let us consider the following problems:
\begin{gather}
\rho_{0*}\lambda\bv - \mu\Delta\bv - \nu\nabla\dv\bv
-\gamma_{1*}\rho_{0*}\lambda^{-1}\nabla\dv\bv
= \bg \quad\text{in $\BR^N$}; \label{7.5} \\
\gamma_{3*}\lambda\vartheta - \gamma_{4*}\Delta\vartheta
= h \quad\text{in $\BR^N$}. \label{7.6}
\end{gather}
\begin{thm}\label{thm:7.3} Let $1 < q < \infty$ and 
$0 < \epsilon < \pi/2$.  Then, we have the following assertions:
\begin{itemize}
\item[\thetag1]~There exist a large constant $\lambda_0 > 0$ and 
an operator family $\CC_1(\lambda)$ with
$$\CC_1(\lambda) \in {\rm Hol}\,
(\Sigma_{\epsilon, \lambda_0}, 
\CL(L_q(\BR^N)^N, H^2_q(\BR^N)^N))$$
such that for any $\bg \in L_q(\BR^N)^N$ and 
$\lambda \in \Sigma_{\epsilon, \lambda_0}$, 
$\bv = \CC_1(\lambda)\bg$ is a unique 
solution of Eq. \eqref{7.5}, and
$$\CR_{\CL(L_q(\BR^N)^N, H^{2-j}_q(\BR^N)^N)}
(\{(\tau\pd_\tau)^\ell\CC_1(\lambda) \mid \lambda \in 
\Sigma_{\epsilon, \lambda_0}\}) \leq r_{b1}$$
for $\ell=0,1$ and $j=0,1,2$. Here, $\lambda_0$ and 
$r_{1b}$ depend solely on $N$, $q$, $\mu$, $\nu$, $b_1$ and $b_2$.
\item[\thetag2]~
Let $\lambda_0 \geq 1$. 
Then, there exists 
an operator family $\CD_1(\lambda) \in {\rm Hol}\,
(\Sigma_{\epsilon, \lambda_0}, \CL(L_q(\BR^N), H^2_q(\BR^N))$
such that for any $h \in Y_q(\BR^N)$
and $\lambda \in \Sigma_{\epsilon, \lambda_0}$,  $\vartheta
= \CD_1(\lambda)h$ is a unique 
solution of Eq. \eqref{7.6}, and
$$\CR_{\CL(L_q(\BR^N), H^{2-j}_q(\BR^N))}
(\{(\tau\pd_\tau)^\ell\CD_1(\lambda) \mid \lambda \in 
\Sigma_{\epsilon, \lambda_0}\}) \leq r_{b2}$$
for $\ell=0,1$ and $j=0,1,2$. 
Here,  $r_{b2}$ depends solely on $N$, $q$, $\lambda_0$, 
$b_1$ and $b_2$.
\end{itemize} 
\end{thm}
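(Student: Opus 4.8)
The plan is to solve the two constant-coefficient, whole-space problems \eqref{7.5} and \eqref{7.6} explicitly by the Fourier transform and then to read off $\CR$-boundedness from the resulting multiplier symbols. The only tools needed are the operator-valued Fourier multiplier theorem of Weis together with the standard fact that a family of Fourier multipliers whose symbols form a bounded subset of the Mikhlin class is $\CR$-bounded on $L_q(\BR^N)$ (see e.g. \cite{DHP}); the whole argument thus reduces to producing uniform symbol bounds.

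I would start with the scalar heat-type equation \eqref{7.6}. Applying $\CF$ gives $(\gamma_{3*}\lambda + \gamma_{4*}|\xi|^2)\CF[\vartheta](\xi) = \CF[h](\xi)$, so I set $\CD_1(\lambda)h = \CF^{-1}[(\gamma_{3*}\lambda + \gamma_{4*}|\xi|^2)^{-1}\CF[h](\xi)]$. For $\lambda \in \Sigma_\epsilon$ the number $\gamma_{3*}\lambda + \gamma_{4*}|\xi|^2$ lies in a sector of half-opening $< \pi$ and, since $\gamma_{3*}, \gamma_{4*} \in [b_1, b_2]$, one has $|\gamma_{3*}\lambda + \gamma_{4*}|\xi|^2| \geq c(|\lambda| + |\xi|^2)$ with $c = c(\epsilon, b_1, b_2)$. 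Each $\xi$-derivative of $(\gamma_{3*}\lambda + \gamma_{4*}|\xi|^2)^{-1}$ gains a factor $|\xi|^{-1}$, and the operation $\tau\pd_\tau$ does not raise the order (since $|\tau|\leq|\lambda|$); hence for $|\alpha| = 2-j$, $\ell \in \{0,1\}$, $j \in \{0,1,2\}$ the symbols
\[
(\tau\pd_\tau)^\ell\Bigl[\frac{\lambda^{j/2}(i\xi)^\alpha}{\gamma_{3*}\lambda + \gamma_{4*}|\xi|^2}\Bigr]
\]
satisfy the Mikhlin bounds $\sup_\xi|\xi|^{|\beta|}|\pd_\xi^\beta(\,\cdot\,)| \leq C$ for all $\beta$, uniformly for $\lambda \in \Sigma_{\epsilon,\lambda_0}$ with $\lambda_0 \geq 1$, $C = C(N,q,\epsilon,b_1,b_2)$. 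Weis's theorem and the quoted criterion then give the $\CR$-boundedness of $\{(\tau\pd_\tau)^\ell \CD_1(\lambda)\}$ (equally of the weighted family $\{(\tau\pd_\tau)^\ell(\lambda^{j/2}\CD_1(\lambda))\}$) in $\CL(L_q(\BR^N), H^{2-j}_q(\BR^N))$, with $r_{b2} = r_{b2}(N,q,\lambda_0,b_1,b_2)$; uniqueness is clear since $\CF[\vartheta]$ must vanish when $h = 0$.

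For the velocity equation \eqref{7.5}, applying $\CF$ turns it into the algebraic system $A(\xi,\lambda)\CF[\bv](\xi) = \CF[\bg](\xi)$ with $A(\xi,\lambda) = (\rho_{0*}\lambda + \mu|\xi|^2)\bI + (\nu + \gamma_{1*}\rho_{0*}\lambda^{-1})\,\xi\otimes\xi$. I would decompose using the orthogonal projections $P_\xi = |\xi|^{-2}\,\xi\otimes\xi$ and $\bI - P_\xi$: on the range of $\bI - P_\xi$ one has $A = (\rho_{0*}\lambda + \mu|\xi|^2)\bI$, while on the range of $P_\xi$ the scalar symbol is
\[
\ell_c(\xi,\lambda) = \rho_{0*}\lambda + (\mu+\nu)|\xi|^2 + \gamma_{1*}\rho_{0*}|\xi|^2\lambda^{-1}.
\]
Since $|\gamma_{1*}\rho_{0*}|\xi|^2\lambda^{-1}| \leq (\gamma_{1*}b_2/\lambda_0)|\xi|^2$, taking $\lambda_0$ large (depending only on $\mu,\nu,b_1,b_2$) makes the last term a lower-order perturbation of the compressible parabolic symbol $\rho_{0*}\lambda + (\mu+\nu)|\xi|^2$, so that $|\ell_c(\xi,\lambda)| \geq c(|\lambda| + |\xi|^2)$ on $\Sigma_{\epsilon,\lambda_0}$. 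Then $A(\xi,\lambda)$ is invertible, $A(\xi,\lambda)^{-1} = (\rho_{0*}\lambda + \mu|\xi|^2)^{-1}(\bI - P_\xi) + \ell_c(\xi,\lambda)^{-1}P_\xi$, and I set $\CC_1(\lambda)\bg = \CF^{-1}[A(\cdot,\lambda)^{-1}\CF[\bg]]$. As in the scalar case the ingredients $(\rho_{0*}\lambda + \mu|\xi|^2)^{-1}$, $\ell_c(\xi,\lambda)^{-1}$ and $|\xi|^{-2}$ (which occurs only inside $P_\xi$, always paired with $\xi\otimes\xi$) all obey the Mikhlin bounds uniformly in $\lambda$, and these persist after multiplying by $\lambda^{j/2}(i\xi)^\alpha$, $|\alpha| = 2-j$, and after applying $(\tau\pd_\tau)^\ell$, $\ell = 0,1$; Weis's theorem then yields the asserted $\CR$-bounds, with $\lambda_0$ and $r_{b1}$ depending only on $N, q, \mu, \nu, b_1, b_2$, and uniqueness again follows from the invertibility of $A(\xi,\lambda)$.

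The only genuine obstacle is the $\lambda^{-1}\nabla\dv\bv$ term in \eqref{7.5}: after projecting onto the potential part it contributes $\gamma_{1*}\rho_{0*}|\xi|^2\lambda^{-1}$ to the symbol, which a priori could destroy the ellipticity for large $|\xi|$. Choosing $\lambda_0$ sufficiently large — precisely the reason $\lambda_0$ must be taken large in the first assertion — turns it into a subordinate perturbation of $\rho_{0*}\lambda + (\mu+\nu)|\xi|^2$; from there everything is routine Mikhlin-multiplier bookkeeping.
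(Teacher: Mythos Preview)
Your proposal is correct and follows exactly the approach underlying the paper's proof: the paper writes down the same Fourier representation $\vartheta = \CF^{-1}[(\gamma_{3*}\lambda + \gamma_{4*}|\xi|^2)^{-1}\CF[h]]$ for assertion~\thetag2 and then defers both the scalar and the vector case to the multiplier results in Enomoto--Shibata \cite{ES1} (Theorem~3.2 for \thetag1, Lemma~3.1 and Theorem~3.3 for \thetag2). Your explicit symbol computation for \thetag1---splitting $A(\xi,\lambda)$ via $P_\xi$ and $\bI-P_\xi$ and absorbing the $\gamma_{1*}\rho_{0*}\lambda^{-1}|\xi|^2$ term by choosing $\lambda_0$ large---is precisely what is carried out in the cited reference, so nothing is different in substance; you have simply unpacked what the paper leaves to citation.
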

\begin{proof}
The assertion \thetag1 was proved in Enomoto-Shibata
\cite[Theorem 3.2]{ES1}, and so we may omit the proof.
To prove \thetag2,  
using the Fourier tranform $\CF$ and its inversion 
formula $\CF^{-1}$, we  define $\vartheta$ by
$$\vartheta = \CF^{-1}\Bigl[\frac{\CF[h](\xi)}
{\gamma_{3*}\lambda + \gamma_{4*}|\xi|^2}\Bigr](x).$$
Thus, by Lemma 3.1 and Theorem 3.3 in \cite{ES1},
we can show the assertion \thetag2.  Thus, we also
may omit the detailed proof. 
\end{proof}
Next we consider the half space problem.  Let 
$$
\BR^N_+ = \{x = (x_1, \ldots, x_N) \in \BR^N \mid x_N > 0\},
\quad 
\BR^N_0 = \{x = (x_1, \ldots, x_N) \in \BR^N \mid x_N = 0\},
$$
and $\bn_0 = {}^\top(0, \ldots, 0, -1)$. We consider the
following problems in $\BR^N_+$: 
\begin{gather}
\rho_{0*}\lambda\bv - \mu\Delta\bv - \nu\nabla\dv\bv
-\gamma_{1*}\rho_{0*}\lambda^{-1}\nabla\dv\bv
= \bg \quad\text{in $\BR^N_+$},
\quad \bv|_{\BR^N_0} = 0;  \label{7.7} \\
\gamma_{3*}\lambda\vartheta - \gamma_{4*}\Delta\vartheta
= h_1 \quad\text{in $\BR^N_+$},
\quad (\nabla \vartheta)\cdot\bn_0 = h_2
\quad\text{on $\BR^N_0$}. \label{7.8}
\end{gather}
\begin{thm}\label{thm:7.4} Let $1 < q < \infty$, 
$0 < \epsilon < \pi/2$, and $\lambda_0 \geq 1$. 
\begin{itemize}
\item[\thetag1]~There exist a large constant $\lambda_0 > 0$ and 
an operator family $\CC_2(\lambda)$
with 
$$\CC_2(\lambda) \in {\rm Hol}\,
(\Sigma_{\epsilon, \lambda_0}, \CL(L_q(\BR^N_+)^N, H^2_q(\BR^N_+)^N)$$
such that for any $\bg \in L_q(\BR^N_+)^N$ and 
$\lambda \in \Sigma_{\epsilon, \lambda_0}$, 
$\bv = \CC_2(\lambda)\bg$ is a unique 
solution of Eq. \eqref{7.7}, and
$$\CR_{\CL(L_q(\BR^N_+)^N, H^{2-j}_q(\BR^N_+)^N)}
(\{(\tau\pd_\tau)^\ell\CC_2(\lambda) \mid \lambda \in 
\Sigma_{\epsilon, \lambda_0}\}) \leq r_b$$
for $\ell=0,1$ and $j=0,1,2$.
\item[\thetag2]
Let $\lambda_0 \geq 1.$ Then, there exists 
an operator family $\CD_2(\lambda) \in {\rm Hol}\,
(\Sigma_{\epsilon, \lambda_0}, \CL(\CY_q(\BR^N_+), H^2_q(\BR^N_+))$
such that for any $(h_1, h_2)\in Y_q(\BR^N_+)$
and $\lambda \in \Sigma_{\epsilon, \lambda_0}$,  $\vartheta
= \CD_2(\lambda)(h_1, \lambda^{1/2}h_2, h_2)$ is a unique 
solution of Eq. \eqref{7.8}, and
$$\CR_{\CL(\CY_q(\BR^N_+), H^{2-j}_q(\BR^N_+))}
(\{(\tau\pd_\tau)^\ell\CD_2(\lambda) \mid \lambda \in 
\Sigma_{\epsilon, \lambda_0}\}) \leq r_b$$
for $\ell=0,1$ and $j=0,1,2$.
\end{itemize}
Here, $Y_q(\BR^N_+)$ and $\CY_q(\BR^N_+)$ are spaces defined in
Sect. 1 with $G = \BR^N_+$, and
 $r_b$ is a constant depending solely on 
$N$, $q$, $\lambda_0$, $b_1$ and $b_2$. 
\end{thm}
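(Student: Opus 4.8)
The plan is to solve both half-space problems by taking the partial Fourier transform $\CF'$ in the tangential variables $x' = (x_1,\dots,x_{N-1})$, which turns each equation into an ordinary differential equation (scalar for \eqref{7.8}, a system for \eqref{7.7}) in $x_N \in (0,\infty)$ depending on the parameters $\xi' \in \BR^{N-1}$ and $\lambda \in \Sigma_{\epsilon,\lambda_0}$. One then solves these ODEs explicitly, reads off the solution as an inverse partial Fourier transform of (symbol)$\times$(data), and checks that the symbols arising, once multiplied by $\lambda^{j/2}$ as in the claimed bounds ($j=0,1,2$), lie in the standard multiplier classes for which the associated Fourier multiplier operators on $L_q(\BR^N_+)$ are $\CR$-bounded; applying the multiplier lemmas already used in \cite{ES1}, \cite{SS1} then produces the asserted $\CR$-bounds. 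Concerning assertion (1): the half-space resolvent problem \eqref{7.7} for the velocity is of exactly the type treated in Enomoto--Shibata \cite{ES1}, the only difference being the $\lambda$-dependent second viscosity $\widetilde\nu(\lambda) := \nu + \gamma_{1*}\rho_{0*}\lambda^{-1}$, which for $|\lambda|\geq\lambda_0$ with $\lambda_0$ large stays in a fixed subsector of the right half-plane bounded away from $0$ and $\infty$; hence I would invoke a minor adaptation of \cite{ES1} for (1) and spell out (2).

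For (2) I would first reduce to $h_1 \equiv 0$: extending $h_1$ to $\BR^N$ by even reflection and subtracting $\CD_1(\lambda)$ (from Theorem \ref{thm:7.3}) restricted to $\BR^N_+$, only a boundary corrector remains. Applying $\CF'$ to this corrector yields $\gamma_{3*}\lambda\,\widehat\vartheta - \gamma_{4*}(\pd_N^2 - |\xi'|^2)\widehat\vartheta = 0$ on $x_N>0$ with $-\pd_N\widehat\vartheta|_{x_N=0} = \widehat{h}_2(\xi')$ (recall $\bn_0 = {}^\top(0,\dots,0,-1)$), whose decaying solution is $\widehat\vartheta(\xi',x_N) = -B_\lambda(\xi')^{-1}\widehat{h}_2(\xi')\,e^{-B_\lambda(\xi')x_N}$ with $B_\lambda(\xi') = (|\xi'|^2 + \gamma_{3*}\gamma_{4*}^{-1}\lambda)^{1/2}$ taken with positive real part. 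Since $\mathrm{Re}\,B_\lambda(\xi') \geq c(|\xi'| + |\lambda|^{1/2})$ uniformly on $\Sigma_{\epsilon,\lambda_0}$, every symbol $\lambda^{j/2}\,(\text{monomial in }\xi',B_\lambda)\,B_\lambda^{-k}e^{-B_\lambda x_N}$ produced by differentiating in $x_N$ (with $j$ plus the number of $\pd_N$'s at most $2$) belongs to the required class, so the multiplier lemma gives the bound for $\CD_2(\lambda)$; assembling $\CD(\lambda)(h_1,\lambda^{1/2}h_2,h_2)$ as the whole-space part (carrying the $h_1$-dependence) plus this corrector makes the $h_2$-dependence enter only through the pair $(\lambda^{1/2}h_2,h_2)$, which is precisely the structure of $\CY_q(\BR^N_+)$, matching \eqref{7.8}.

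If one instead carries out (1) directly, the vector ODE is diagonalized via its two characteristic roots $A_\lambda(\xi') = (|\xi'|^2 + \mu^{-1}\rho_{0*}\lambda)^{1/2}$ and $B_\lambda(\xi') = (|\xi'|^2 + (\mu+\widetilde\nu(\lambda))^{-1}\rho_{0*}\lambda)^{1/2}$; the decaying part is a linear combination of $e^{-A_\lambda x_N}$ and $e^{-B_\lambda x_N}$ modes plus a whole-space particular solution, and imposing $\bv|_{x_N=0}=0$ gives an $N\times N$ linear system for the boundary coefficients whose determinant $L_\lambda(\xi')$ is the Lopatinski--Shapiro determinant. The main obstacle is the uniform lower bound $|L_\lambda(\xi')| \geq c\,(|\xi'|+|A_\lambda|)^{a}(|\xi'|+|B_\lambda|)^{b}$ on $\Sigma_{\epsilon,\lambda_0}$ for $\lambda_0$ large, because $\widetilde\nu(\lambda)$ is complex and $\lambda$-dependent and one must control the arguments of $\mu+\widetilde\nu(\lambda)$ and of both roots simultaneously; granting this bound, Cramer's rule expresses the boundary coefficients as symbols in the multiplier class and the remainder of the argument copies (2).

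Uniqueness for both problems follows from the a priori estimate: a homogeneous $L_q$-solution, after $\CF'$, solves a homogeneous ODE and must vanish by the decay at $x_N=\infty$ together with the boundary condition (or, equivalently, by duality against the adjoint problem). Tracking the constants through this scheme shows that $\lambda_0$ and $r_b$ depend only on $N$, $q$, $\mu$, $\nu$, $b_1$, $b_2$, as asserted.
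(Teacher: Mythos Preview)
Your proposal is essentially correct and follows the same route as the paper. For \thetag1 the paper simply cites \cite[Theorem 4.1]{ES1}, and for \thetag2 it too splits off the $h_1$-part by even extension to $\BR^N$ and treats the boundary corrector via the partial Fourier transform, arriving at exactly your formula $\widehat\vartheta_2=B_\lambda^{-1}e^{-B_\lambda x_N}\widehat h_2(\xi',0)$ (you have a spurious minus sign) with $B_\lambda=(\gamma_{3*}\gamma_{4*}^{-1}\lambda+|\xi'|^2)^{1/2}$, then appeals to the multiplier lemma \cite[Lemma 4.2]{ES1}.

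The one place where the paper is more explicit than you is in converting the boundary trace $\widehat h_2(\xi',0)$ into a volume integral so that the multiplier lemma (which is formulated for operators on $L_q(\BR^N_+)$, not on boundary data) applies directly: one writes
\[
\frac{e^{-B_\lambda x_N}}{B_\lambda}\widehat h_2(\xi',0)
=\int_0^\infty e^{-B_\lambda(x_N+y_N)}\widehat h_2(\xi',y_N)\,dy_N
-\int_0^\infty \frac{e^{-B_\lambda(x_N+y_N)}}{B_\lambda}\pd_N\widehat h_2(\xi',y_N)\,dy_N,
\]
and then decomposes $B_\lambda=\gamma_{3*}\gamma_{4*}^{-1}\lambda B_\lambda^{-1}-\sum_j(i\xi_j)^2 B_\lambda^{-1}$ in the first integral. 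This makes transparent why the operator factors through $(\lambda^{1/2}h_2,\nabla h_2)$, i.e.\ through $\CY_q(\BR^N_+)$, which you assert but do not derive. Your discussion of the Lopatinski--Shapiro determinant for \thetag1 is more than the paper needs, since that analysis is already subsumed in the cited result.
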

\begin{proof}
The first assertion has been proved in \cite[Theorem 4.1]{ES1}.
To prove the second one we divide a solution of \eqref{7.8} 
into two parts: $\vartheta = \vartheta_1 + \vartheta_2$,
where $\vartheta_1$ and $\vartheta_2$ are solutions of the 
following problems:
\begin{alignat}4
\gamma_{3*}\lambda\vartheta_1 - \gamma_{4*}\Delta\vartheta_1
&= h_1 &\quad&\text{in $\BR^N_+$},
&\quad (\nabla \vartheta_1)\cdot\bn_0 &= 0
&\quad&\text{on $\BR^N_0$}; \label{7.9}\\
\gamma_{3*}\lambda\vartheta_2 - \gamma_{4*}\Delta\vartheta_2
&= 0 &\quad&\text{in $\BR^N_+$},
&\quad (\nabla \vartheta_2)\cdot\bn_0 &= h_2
&\quad&\text{on $\BR^N_0$}. 
\label{7.10}
\end{alignat}
Given function $F_1$ defined on $\BR^N_+$, 
let $F^e_1$ be the even extension of $F_1$ to 
$x_N < 0$, that is $F^e_1(x) = F_1(x)$ for
$x_N > 0$ and $F^e_1(x) = F_1(x', -x_N)$ for 
$x_N < 0$, where $x' = (x_1, \ldots, x_{N-1})$.
We then define an $\CR$ bounded
solution operator $\CD_{21}(\lambda)$ 
acting on $F_1 \in L_q(\BR^N_+)$ by
$$\CD_{21}(\lambda)[F_1] = \CF^{-1}\Bigl[\frac{\CF[F^e_1](\xi)}
{\gamma_{3*}\lambda + \gamma_{4*}|\xi|^2}\Bigr].
$$
Obviously, $\vartheta_1 = \CD_1(\lambda)[h_1]$ is a
unique solution of  Eq. \eqref{7.9}. 

To construct an $\CR$ bounded solution operator for 
Eq. \eqref{7.10}, we introduce the partial Fourier
transform $\CF'$ and its inversion formula $\CF^{-1}_{\xi'}$,
which are defined by
\begin{align*}
&\hat f(\xi', x_N) = \CF'[f](\xi', x_N)
= \int_{\BR^{N-1}}e^{-ix'\cdot\xi'}f(x', x_N)\,dx',
\\ 
&\CF^{-1}_{\xi'}[g(\xi', x_N)](x')
= \frac{1}{(2\pi)^{N-1}}
\int_{\BR^{N-1}}e^{ix'\cdot\xi'}g(\xi', x_N)\,d\xi',
\end{align*}
where $\xi' = (\xi_1, \ldots,\xi_{N-1}) \in \BR^{N-1}$
and $x'\cdot\xi' = \sum_{j=1}^{N-1}x_j\xi_j$. Applying the 
partial Fourier transform to \eqref{7.10},
we have
$$(\gamma_{3*}\lambda + \gamma_{4*}|\xi'|^2)\hat \vartheta
 - \gamma_{*4}\pd_N^2 \hat\vartheta = 0
\quad\text{for $x_N > 0$}, \quad 
\pd_N\vartheta|_{x_N=0} = -\hat h_2(\xi', 0),
$$
where $|\xi'|^2 = \sum_{j=1}^{N-1}\xi_j^2$ and 
$\pd_N = \pd/\pd _N$.  Thus, $\vartheta_2$ is given by
\begin{align*}
\vartheta_2 &= \CF^{-1}\Bigl[\frac{e^{-\sqrt{\gamma_{3*}
\gamma_{4*}^{-1}\lambda + |\xi'|^2}\,x_N}}
{\sqrt{\gamma_{3*}\gamma_{4*}^{-1}\lambda + |\xi'|^2}}
\hat h_2(\xi', 0)\Bigr](x') \\
&= \int^\infty_0\CF^{-1}_{\xi'}
\Bigl[\frac{e^{-\sqrt{\gamma_{3*}
\gamma_{4*}^{-1}\lambda + |\xi'|^2}\,(x_N+y_N)}}
{\sqrt{\gamma_{3*}\gamma_{4*}^{-1}\lambda + |\xi'|^2}}
\sqrt{\gamma_{3*}
\gamma_{4*}^{-1}\lambda + |\xi'|^2}
\hat h_2(\xi', y_N)\Bigr](x') \\
&- \int^\infty_0\CF^{-1}_{\xi'}
\Bigl[\frac{e^{-\sqrt{\gamma_{3*}
\gamma_{4*}^{-1}\lambda + |\xi'|^2}\,(x_N+y_N)}}
{\sqrt{\gamma_{3*}\gamma_{4*}^{-1}\lambda + |\xi'|^2}}
\pd_N\hat h_2(\xi', y_N)\Bigr](x').
\end{align*}
Writing 
$$\sqrt{\gamma_{3*}\gamma_{4*}^{-1}\lambda + |\xi'|^2}
= \frac{\gamma_{3*}\gamma_{4*}^{-1}\lambda}
{\sqrt{\gamma_{3*}\gamma_{4*}^{-1}\lambda + |\xi'|^2}}
-\sum_{j=1}^{N-1}
\frac{i\xi_ji\xi_j}
{\sqrt{\gamma_{3*}\gamma_{4*}^{-1}\lambda + |\xi'|^2}}
$$
we have
\begin{align*}
&\int^\infty_0\CF^{-1}_{\xi'}
\Bigl[\frac{e^{-\sqrt{\gamma_{3*}
\gamma_{4*}^{-1}\lambda + |\xi'|^2}\,(x_N+y_N)}}
{\sqrt{\gamma_{3*}\gamma_{4*}^{-1}\lambda + |\xi'|^2}}
\sqrt{\gamma_{3*}
\gamma_{4*}^{-1}\lambda + |\xi'|^2}
\hat h_2(\xi', y_N)\Bigr](x')\\
&=
\int^\infty_0\CF^{-1}_{\xi'}
\Bigl[\lambda^{1/2}
e^{-\sqrt{\gamma_{3*}\gamma_{4*}^{-1}\lambda + |\xi'|^2}\,(x_N+y_N)}
\frac{\gamma_{3*}\gamma_{4*}^{-1}\lambda^{1/2}}
{\sqrt{\gamma_{3*}\gamma_{4*}^{-1}\lambda + |\xi'|^2}}
\hat h_2(\xi', y_N)\Bigr](x') \\
& -\sum_{j=1}^{N-1}
\int^\infty_0\CF^{-1}_{\xi'}
\Bigl[|\xi'|e^{-\sqrt{\gamma_{3*}
\gamma_{4*}^{-1}\lambda + |\xi'|^2}\,(x_N+y_N)}
\frac{i\xi_j}
{|\xi'|\sqrt{\gamma_{3*}\gamma_{4*}^{-1}\lambda + |\xi'|^2}}
\CF'[\pd_j h_2](\xi', y_N)\Bigr](x').
\end{align*}
We then define an operator $\CD_{22}(\lambda)$ acting on
$(F_2, F_3) \in L_q(\BR^N_+\times H^1_q(\BR^N_+)$ by 
\begin{align*}
\CD_{22}(\lambda)(F_2, F_3)
& = \int^\infty_0\CF^{-1}_{\xi'}
\Bigl[\lambda^{1/2}
e^{-\sqrt{\gamma_{3*}\gamma_{4*}^{-1}\lambda + |\xi'|^2}\,(x_N+y_N)}
\frac{\gamma_{3*}\gamma_{4*}^{-1}}
{\sqrt{\gamma_{3*}\gamma_{4*}^{-1}\lambda + |\xi'|^2}}
\CF'[F_2](\xi', y_N)\Bigr](x') \\
& -\sum_{j=1}^{N-1}
\int^\infty_0\CF^{-1}_{\xi'}
\Bigl[|\xi'|e^{-\sqrt{\gamma_{3*}
\gamma_{4*}^{-1}\lambda + |\xi'|^2}\,(x_N+y_N)}
\frac{i\xi_j}
{|\xi'|\sqrt{\gamma_{3*}\gamma_{4*}^{-1}\lambda + |\xi'|^2}}
\CF'[\pd_j F_3](\xi', y_N)\Bigr](x')\\
&- \int^\infty_0\CF^{-1}_{\xi'}
\Bigl[\frac{e^{-\sqrt{\gamma_{3*}
\gamma_{4*}^{-1}\lambda + |\xi'|^2}\,(x_N+y_N)}}
{\sqrt{\gamma_{3*}\gamma_{4*}^{-1}\lambda + |\xi'|^2}}
\CF'[\pd_NF_3](\xi', y_N)\Bigr](x').
\end{align*}
Obviously, $\vartheta_2 = \CD_2(\lambda)(\lambda^{1/2}h_2, h_2)$.
Moreover, the $\CR$ boundedness of the operator
$\CD_{22}(\lambda)$ follows from Lemma 4.2 
in \cite{ES1}.  This completes the proof of the assertion
\thetag2.
\end{proof}
\subsection{Problem in a bent half space}\label{sec:7.2}
Let $\Phi: \BR^N\to\BR^N$ be a bijection of $C^2$ class and let 
$\Phi^{-1}$ be its inverse map.  Writing $\nabla\Phi=
\CA + B(x)$ and $\nabla\Phi^{-1} = \CA_{-} + B_{-}(x)$,
we assume that $\CA$ and $\CA_{-}$ are orthogonal matrices
with constant coefficients and $B(x)$ and $B_-(x)$ are matrices
of functions in $C^1(\BR^N)$ with $N < r < \infty$ such that
\begin{equation}\label{7.11}
\|(B, B_-)\|_{L_\infty(\BR^N)} \leq M_1, \quad
\|\nabla(B, B_-)\|_{L_\infty(\BR^N)} \leq M_2.
\end{equation}
We will choose $M_1$ small enough eventually, and so we may assume that
$0 < M_1 \leq 1 \leq M_2$ in the following. Set 
$\Omega_+ = \Phi(\BR^N_+)$ and $\Gamma_+ = \Phi(\BR^N_0)$.
Let $\bn_+$ be the unit outer normal to $\Gamma_+$.  
Since $\Gamma_+$ is represented by $\Phi_{-1, N}(y) = 0$, where
$\Phi^{-1} = {}^\top(\Phi_{-1,1}, \ldots, \Phi_{-1,N})$, 
$\bn_+$ is given by 
\begin{equation}\label{7.12}
\bn_+ = -\frac{\nabla\Phi_{-1,N}}{|\nabla\Phi_{-1,N}|}
= -\frac{{}^\top(\CA_{N1}+B_{N1}, \ldots, \CA_{NN} + B_{NN})}
{\sqrt{\sum_{j=1}^N(A_{Nj} + B_{Nj})^2}}.
\end{equation}
Choosing $M_1 > 0$ small enough, by \eqref{7.11} we have
\begin{equation}\label{7.12*}
\bn_+ = -{}^\top(\CA_{N1}, \ldots, \CA_{NN}) + \tilde\bn_+
\end{equation}
where $\tilde\bn_+$ has the estimates:
\begin{equation}\label{7.13}
\|\tilde\bn_+\|_{L_\infty(\BR^N)} \leq C_NM_1,
\quad \|\nabla\tilde\bn_+\|_{L_\infty(\BR^N)}
\leq C_{M_2}.
\end{equation}
We consider the following two equations:
\begin{gather}
\rho_{0*}\lambda\bv - \mu\Delta\bv - \nu\nabla\dv\bv
-\gamma_{1*}\rho_{0*}\lambda^{-1}\nabla\dv\bv
= \bg \quad\text{in $\Omega_+$},
\quad \bv|_{\Gamma_+} = 0;  \label{7.14} \\
\gamma_{3*}\lambda\vartheta - \gamma_{4*}\Delta\vartheta
= h_1 \quad\text{in $\Omega_+$},
\quad (\nabla \vartheta)\cdot\bn_0 = h_2
\quad\text{on $\Gamma_+$}. \label{7.15}
\end{gather}
\begin{thm}\label{thm:7.5} Let $1 < q < \infty$ and 
$0 < \epsilon < \pi/2$.  Then, we have the following assertions:
\begin{itemize}
\item[\thetag1]~There exist a large constant $\lambda_0 > 0$ and 
an operator family $\CC_3(\lambda)$
with 
$$\CC_3(\lambda) \in {\rm Hol}\,
(\Sigma_{\epsilon, \lambda_0}, \CL(L_q(\Omega_+)^N, H^2_q(\Omega_+)^N)$$
such that for any $\bg \in L_q(\Omega_+)^N$ and 
$\lambda \in \Sigma_{\epsilon, \lambda_0}$, 
$\bv = \CC_3(\lambda)\bg$ is a unique 
solution of Eq. \eqref{7.14}, and
$$\CR_{\CL(L_q(\Omega_+)^N, H^{2-j}_q(\Omega_+)^N)}
(\{(\tau\pd_\tau)^\ell\CC_3(\lambda) \mid \lambda \in 
\Sigma_{\epsilon, \lambda_0}\}) \leq r_{b1}$$
for $\ell=0,1$ and $j=0,1,2$. Here, $r_{b1}$ is a constant
depending solely on $N$, $q$, $\mu$, $\nu$, $b_1$ and $b_2$. 
\item[\thetag2]~Let 
$Y_q(\Omega_+)$ and $\CY_q(\Omega_+)$ be spaces defined by
replacing $\Omega$ by $\Omega_+$ in Theorem \ref{thm:7.2}. 
Then, there exist a positive constant $\lambda_0$ and 
an operator family $\CD_3(\lambda) \in {\rm Hol}\,
(\Sigma_{\epsilon, \lambda_0}, \CL(\CY_q(\Omega_+), H^2_q(\Omega_+))$
such that for any $(h_1, h_2)\in Y_q(\Omega_+)$
and $\lambda \in \Sigma_{\epsilon, \lambda_0}$,  $\vartheta
= \CD_3(\lambda)(h_1, \lambda^{1/2}h_2, h_2)$ is a unique 
solution of Eq. \eqref{7.15}, and
$$\CR_{\CL(L_q(\Omega_+), H^{2-j}_q(\Omega_+))}
(\{(\tau\pd_\tau)^\ell\CD_2(\lambda) \mid \lambda \in 
\Sigma_{\epsilon, \lambda_0}\}) \leq r_{b2}$$
for $\ell=0,1$ and $j=0,1,2$. Here, $r_{b2}$ is a constant
depending solely on $N$, $q$, $b_1$ and $b_2$. 
\end{itemize}
\end{thm}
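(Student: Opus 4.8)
The plan is to prove Theorem~\ref{thm:7.5} by transferring both boundary value problems from the bent half-space $\Omega_+ = \Phi(\BR^N_+)$ to the flat half-space $\BR^N_+$ along $\Phi$, treating the variable-coefficient corrections created by the change of variables as a small perturbation, and inverting $\bI + (\text{perturbation})$ by a Neumann series inside the class of $\CR$-bounded operators, with Theorem~\ref{thm:7.4} supplying the unperturbed solution operators. Pulling the unknowns back by $\Phi$ and using $\nabla_x = (\CA_- + B_-)^\top\nabla_y$ together with \eqref{7.11}, equation \eqref{7.14} takes the form $\rho_{0*}\lambda\bv - \mu\Delta\bv - \nu\nabla\dv\bv - \gamma_{1*}\rho_{0*}\lambda^{-1}\nabla\dv\bv + \CP_1(\cdot,\lambda)\bv = \bg\circ\Phi$ in $\BR^N_+$ with $\bv|_{\BR^N_0} = 0$, while \eqref{7.15} becomes $\gamma_{3*}\lambda\vartheta - \gamma_{4*}\Delta\vartheta + \CP_2\vartheta = h_1\circ\Phi$ in $\BR^N_+$ with $(\nabla\vartheta)\cdot\bn_0 - \CQ\nabla\vartheta = h_2\circ\Phi$ on $\BR^N_0$ (the transformed unit normal being handled through \eqref{7.12*}, \eqref{7.13}). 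Here each of $\CP_1, \CP_2$ has the schematic form $b_1(x)\nabla^2 + b_2(x)\nabla$ with $\|b_1\|_{L_\infty(\BR^N)} \le CM_1$ and $\|b_2\|_{L_\infty(\BR^N)} \le CM_2$ (plus, in $\CP_1$, an extra term carrying a factor $\lambda^{-1}$), and $\CQ$ satisfies $\|\CQ\|_{L_\infty(\BR^N)} \le CM_1$, $\|\CQ\|_{H^1_\infty(\BR^N)} \le CM_2$.

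For assertion (1), I would write the transformed vector equation as $\CL_0(\lambda)\bv + \CP_1(\cdot,\lambda)\bv = \bg\circ\Phi$ with $\CL_0(\lambda)$ the flat operator of \eqref{7.7}, and apply the solution operator $\CC_2(\lambda)$ of Theorem~\ref{thm:7.4}(1) to get $(\bI + \CP_1(\cdot,\lambda)\CC_2(\lambda))\bv = \CC_2(\lambda)[\bg\circ\Phi]$. The estimate to verify is
$$\CR_{\CL(L_q(\BR^N_+)^N)}\bigl(\{(\tau\pd_\tau)^\ell\CP_1(\cdot,\lambda)\CC_2(\lambda) \mid \lambda \in \Sigma_{\epsilon,\lambda_0}\}\bigr) \le C\bigl(M_1 + \lambda_0^{-1/2}M_2\bigr) \qquad (\ell = 0, 1),$$
with $C = C(N, q, \mu, \nu, b_1, b_2)$. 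This follows from the product and composition rules for $\CR$-bounded families: multiplication by an $L_\infty$ coefficient is $\CR$-bounded on $L_q$ with $\CR$-bound its operator norm, $\nabla^2\CC_2(\lambda)$ is $\CR$-bounded on $L_q(\BR^N_+)^N$ by Theorem~\ref{thm:7.4}(1), and its first derivatives carry the standard resolvent gain $|\lambda|^{-1/2} \le \lambda_0^{-1/2}$ (so the $b_2\nabla$ part of $\CP_1$, and the $\lambda^{-1}$ term, become small for $\lambda_0$ large). Choosing first $M_1$ small and then $\lambda_0$ large so that the right side is $\le 1/2$, the operator $\bI + \CP_1(\cdot,\lambda)\CC_2(\lambda)$ has an $\CR$-bounded inverse (Neumann series, the $\ell = 1$ bound surviving by Leibniz' rule, exactly as for the bent half-space Stokes system in \cite{ES1}); then $\CC_3(\lambda)$ is obtained by composing $\CC_2(\lambda)(\bI + \CP_1(\cdot,\lambda)\CC_2(\lambda))^{-1}$ with the $\lambda$-independent pull-back and push-forward by $\Phi$, which preserves the required $\CR$-bounds.

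For assertion (2) I would run the same scheme in $H^2_q(\BR^N_+)$ with $\CD_2(\lambda)$ of Theorem~\ref{thm:7.4}(2): feeding the data tuple $(h_1\circ\Phi + \CP_2\vartheta,\ \lambda^{1/2}(h_2\circ\Phi + \CQ\nabla\vartheta),\ h_2\circ\Phi + \CQ\nabla\vartheta)$ into $\CD_2(\lambda)$ yields $(\bI - \CM(\lambda))\vartheta = \CD_2(\lambda)(h_1\circ\Phi, \lambda^{1/2}h_2\circ\Phi, h_2\circ\Phi)$ with $\CM(\lambda)\vartheta = \CD_2(\lambda)(\CP_2\vartheta, \lambda^{1/2}\CQ\nabla\vartheta, \CQ\nabla\vartheta)$. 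Because the $\lambda^{1/2}$-weight is already built into the space $\CY_q$, the boundary correction $\CQ\nabla\vartheta$ enters $\CY_q(\BR^N_+)$ with norm $\le C(M_1 + \lambda_0^{-1/2}M_2)\|\vartheta\|_{H^2_q(\BR^N_+)}$, which is precisely the kind of quantity controlled by the $\CR$-bounds of $\CD_2(\lambda)$; hence $\CR_{\CL(H^2_q(\BR^N_+))}(\{(\tau\pd_\tau)^\ell\CM(\lambda)\}) \le C(M_1 + \lambda_0^{-1/2}M_2)$, the Neumann series closes, and transporting back by $\Phi$ gives $\CD_3(\lambda)$.

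Uniqueness for both problems is automatic from this construction, since the Neumann series also furnishes a left inverse, so the corresponding homogeneous problems have only the trivial solution (alternatively one invokes the a priori estimate contained in the $\ell = 0$ $\CR$-bound). The step I expect to be the main obstacle is the bookkeeping in assertion (2): one must package the perturbed Neumann condition into $\CY_q(\BR^N_+)$ with the correct $\lambda^{1/2}$-weight on the normal-trace slot so that $\CD_2(\lambda)$ is actually applicable, and, more importantly, one must check that the $M_2$-dependent first-order corrections --- which do not become small as $M_1 \to 0$ --- are absorbed purely through the resolvent gain $|\lambda|^{-1/2}$ by enlarging $\lambda_0$. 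Keeping careful track of which corrections carry a factor $M_1$ and which carry a compensating negative power of $\lambda$ is exactly what makes the smallness of $\bI + (\text{perturbation})$ go through; once that is in place the rest is the standard $\CR$-bounded perturbation argument of \cite{ES1}.
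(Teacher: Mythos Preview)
Your overall strategy --- pull back to $\BR^N_+$, treat the variable-coefficient terms as perturbations, and close by a Neumann series in the $\CR$-bounded calculus with Theorem~\ref{thm:7.4} as input --- is the same as the paper's, and for assertion~(1) you are essentially reproducing the argument of \cite[Theorem~5.1]{ES1} (modulo the slip that the right-hand side after substituting $\bv=\CC_2(\lambda)\bw$ should be $\bg\circ\Phi$, not $\CC_2(\lambda)[\bg\circ\Phi]$; your $\CR$-estimate for $\CP_1\CC_2(\lambda)$ on $L_q$ is the correct data-side iteration).

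For assertion~(2), however, the iteration you set up does not close as written. You propose to iterate $\CM(\lambda)\vartheta = \CD_2(\lambda)(\CP_2\vartheta,\lambda^{1/2}\CQ\nabla\vartheta,\CQ\nabla\vartheta)$ on $H^2_q(\BR^N_+)$ and claim that the perturbation data has $\CY_q$-norm $\le C(M_1+\lambda_0^{-1/2}M_2)\|\vartheta\|_{H^2_q}$. That bound is false: the $F_2$-slot contributes $\|\lambda^{1/2}\CQ\nabla\vartheta\|_{L_q}\sim M_1|\lambda|^{1/2}\|\nabla\vartheta\|_{L_q}$, which is unbounded in $\lambda$ for a generic $\vartheta\in H^2_q$. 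Likewise the $M_2$-dependent first-order pieces of $\CP_2$ and $\CQ$ pick up no $\lambda_0^{-1/2}$ gain when acting on an arbitrary $H^2_q$-function; the resolvent gain you invoke lives in the solver $\CD_2(\lambda)$, not in the perturbation map, and is invisible if you measure the input to $\CD_2$ in the plain $\CY_q$-norm.

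The paper avoids this by iterating on the \emph{data} space $\CY_q(\BR^N_+)$ rather than on $H^2_q$. Writing $\psi=\CD_2(\lambda)F_\lambda(H_1,H_2)$ and computing the residuals $R_1,R_2$ in \eqref{7.18}--\eqref{7.20}, the remainder operator is $F_\lambda\CR(\lambda):\CY_q\to\CY_q$, and here the troublesome $\lambda^{1/2}$ is harmless: the $F_2$-slot of $F_\lambda\CR(\lambda)F$ is $\lambda^{1/2}[\nabla\CD_2(\lambda)F]\cdot\bn_1$, and $\lambda^{1/2}\nabla\CD_2(\lambda)$ is $\CR$-bounded $\CY_q\to L_q$ by Theorem~\ref{thm:7.4}~(2). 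Similarly the first-order interior term $A_2\nabla\CD_2(\lambda)F$ acquires the factor $\lambda_0^{-1/2}$ by writing $\nabla\CD_2(\lambda)=\lambda^{-1/2}\cdot\lambda^{1/2}\nabla\CD_2(\lambda)$. This yields \eqref{7.22} and hence \eqref{7.23}, after which the Neumann series and the identity $(\bI+F_\lambda\CR(\lambda))^{-1}F_\lambda=F_\lambda(\bI+R(\lambda))^{-1}$ produce $\CD_3(\lambda)$. Your scheme becomes equivalent to this only if you equip $H^2_q$ with the $\lambda$-weighted norm $|\lambda|\|\cdot\|_{L_q}+|\lambda|^{1/2}\|\cdot\|_{H^1_q}+\|\cdot\|_{H^2_q}$, which you do not mention; without it the contraction fails.
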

\begin{proof}
The first assertion was proved in Enomoto-Shibata
\cite[Theorem 5.1]{ES1}, and so we may omit the proof.
Thus, we prove the assertion \thetag2 below.  For this purpose,
we shall transform \eqref{7.15} into the equations in $\BR^N_+$
by the change of variables: $x = \Phi^{-1}(y)$ with
 $x \in \BR^N_+$ and $y \in \Omega_+$. We have
\begin{equation}\label{7.16}
\frac{\pd}{\pd y_j} = \sum_{k=1}^N (\CA_{kj} + B_{kj}(x))
\frac{\pd}{\pd x_k},
\end{equation}
where $\CA_{kj}$ is the $(k,j)^{\rm th}$ component of $\CA_{-}$ and 
$B_{kj}(x)$ is the $(k,j)^{\rm th}$ component of 
$B_{-}(\Phi(x))$.  Let $\varphi(x) = \vartheta(\Phi(x))$ in 
\eqref{7.15}, and then by \eqref{7.12*} and  \eqref{7.16}
we have
\begin{equation}\label{7.17}
\gamma_{3*}\lambda\varphi - \gamma_{4*}
[\Delta\varphi + A_1\nabla^2\varphi + A_2\nabla\varphi]
= H_1 \quad\text{in $\BR^N_+$}, \quad 
(\nabla\varphi)\cdot\bn_0 + (\nabla\varphi)\cdot\bn_1
= H_2
\quad\text{on $\BR^N_0$}.
\end{equation}
Here, we have set 
\begin{align*}
A_1\nabla^2\varphi& = \sum_{j,k,\ell=1}^N(\CA_{kj}B_{\ell j}(x)
+ \CA_{\ell j}B_{kj}(x) + B_{kj}(x)B_{\ell j}(x))
\frac{\pd^2\varphi}{\pd x_k\pd _\ell}, \\
A_2\nabla\varphi & = \sum_{j,k,\ell=1}^N(\CA_{kj} + B_{kj}(x))
\Bigl(\frac{\pd}{\pd x_k}B_{\ell j}(x)\Bigr)\frac{\pd \varphi}{\pd x_\ell},
\\
(\nabla\varphi)\cdot\bn_1 & = 
\sum_{j,k=1}^N(\CA_{Nj}B_{kj}(x) + 
\tilde n_j(x)\CA_{kj} + \tilde n_j(x)B_{kj}(x))
\frac{\pd \varphi}{\pd x_k}.
\end{align*}
Notice that
\begin{equation}\label{7.19*}\begin{split}
\|A_1\nabla^2\varphi\|_{L_q(\BR^N_+)} &
\leq CM_1\|\nabla^2\varphi\|_{L_q(\BR^N_+)}, \\
\|A_2\nabla\varphi\|_{L_q(\BR^N_+)} &
\leq C_{M_2}\|\nabla\varphi\|_{L_q(\BR^N_+)}, \\
\|(\nabla\varphi)\cdot\bn_1\|_{L_q(\BR^N_+)}
& \leq CM_1\|\nabla\varphi\|_{L_q(\BR^N_+)}, \\
\|(\nabla\varphi)\cdot\bn_1\|_{H^1_q(\BR^N_+)}
& \leq C(M_1\|\nabla^2\varphi\|_{L_q(\BR^N_+)}
+ C_{M_2}\|\nabla\varphi\|_{L_q(\BR^N_+)}).
\end{split}\end{equation}
Let $\CC_2(\lambda)$ be an $\CR$-bounded solution operator given in Theorem \ref{thm:7.4} and set $\psi = \CC_2(\lambda)F_\lambda(H_1, H_2)$.  Here
and in the following, $F_\lambda$ is an operator acting on 
$(H_1, H_2) \in Y_q(\BR^N_+)$ defined by
$F_\lambda(H_1, H_2) = (H_1, \lambda^{1/2}H_2, H_2) \in \CY_q(\BR^N_+)$.
We then have 
\begin{equation}\label{7.18}\begin{split}
\gamma_{3*}\lambda\psi - \gamma_{4*}(\Delta\psi + A_1\nabla^2\psi
+ A_2\nabla\psi)  = H_1 +R_1(\lambda)(H_1, H_2)\quad&\text{in
$\BR^N_+$}, \\
(\nabla\psi)\cdot\bn_0 + (\nabla\psi)\cdot\bn_1
= H_2 + R_2(\lambda)(H_1, H_2)\quad&\text{on $\BR^N_0$}, 
\end{split}\end{equation}
where
\begin{equation}\label{7.20}\begin{split}
R_1(\lambda)(H_1, H_2) & = \gamma_{4*}(A_1\nabla^2\CC_2(\lambda)
F_\lambda(H_1, H_2) + A_2\nabla\CC_2(\lambda)F_\lambda(H_1, H_2)),
\\
R_2(\lambda)(H_1, H_2) & = (\nabla\CC_2(\lambda)F_\lambda(H_1, H_2))\cdot\bn_1.
\end{split}\end{equation}
For $F = (F_1, F_2, F_3) \in \CY_q(\BR^N_+)$, let 
\begin{equation*}
\CR_1(\lambda)F = \gamma_{4*}(A_1\nabla^2\CC_2(\lambda)F)
+ A_2\nabla\CC_2(\lambda)F, \quad
\CR_2(\lambda)F = [\nabla\CC_2(\lambda)F]\cdot\bn_1,
\end{equation*}
and let $\CR(\lambda)F = (\CR_1(\lambda)F, \CR_2(\lambda)F)
\in Y_q(\BR^N_+)$ and $R(\lambda)(H_1, H_2)
= (R_1(\lambda)(H_1, H_2), R_2(\lambda)(H_1, H_2))$. 
We then have 
\begin{equation}\label{7.21}
\CR(\lambda)F_\lambda(H_1, H_2) = R(\lambda)(H_1, H_2).
\end{equation}
We now use the following two lemmas to calculate the 
$\CR$-norm. 
\begin{lem}\label{lem:7.3.4}
\thetag1~Let $X$ and $Y$ be Banach spaces,
and let $\CT$ and $\CS$ be
$\CR$-bounded families in $\CL(X,Y)$.  
Then, $\CT+ \CS = \{T + S \mid 
T \in \CT, \enskip S \in \CS\}$ is also an $\CR$-bounded 
family in 
$\CL(X,Y)$ and 
$$\CR_{\CL(X,Y)}(\CT + \CS) \leq 
\CR_{\CL(X,Y)}(\CT) + \CR_{\CL(X,Y)}(\CS).$$
\thetag2~Let $X$, $Y$ and $Z$ be Banach spaces,  and let 
$\CT$ and $\CS$ be $\CR$-bounded families in $\CL(X, Y)$ and 
$\CL(Y, Z)$, respectively.  Then, $\CS\CT = \{ST \mid 
T \in \CT, \enskip S \in \CS\}$ is also an $\CR$-bounded 
family in $\CL(X, Z)$ and 
$$\CR_{\CL(X, Z)}(\CS\CT) \leq \CR_{\CL(X,Y)}(\CT)\CR_{\CL(Y, Z)}(\CS).$$
\end{lem}
\begin{lem}\label{lem:7.3.5}
Let $1 < p, \, q < \infty$ and let $D$ be a domain in $\BR^N$. \\
\thetag1~Let $m(\lambda)$ be a bounded function defined on a subset 
$\Lambda$ in a complex plane $\BC$ and let $M_m(\lambda)$ be a
multiplication operator with $m(\lambda)$ defined by 
$M_m(\lambda)f = m(\lambda)f$ for any $f \in L_q(D)$.  Then, 
$$\CR_{\CL(L_q(D))}(\{M_m(\lambda) \mid \lambda \in \Lambda\}) 
\leq C_{N,q,D}\|m\|_{L_\infty(\Lambda)}.$$
\thetag2~Let $n(\tau)$ be a $C^1$ function defined on 
$\BR\setminus\{0\}$ that satisfies the conditions:  
$|n(\tau)| \leq\gamma$ and $|\tau n'(\tau)| \leq \gamma$ 
with some constant $\gamma > 0$ for any $\tau \in \BR\setminus\{0\}$.
Let $T_n$ be an operator valued Fourier multiplier defined by 
$T_nf = \CF^{-1}[n\CF[f]]$ for any $f \in \CS(\BR, X)$ 
with $\CF[f] \in \CD(\BR, X)$. 
Then, $T_n$ is extended
to a bounded linear operator from $L_p(\BR, L_q(D))$ into itself.
Moreover, denoting this extension also by $T_n$, we have 
$$\|T_n\|_{\CL(L_p(\BR, L_q(D)))} \leq C_{p,q,D}\gamma.$$
\end{lem}
\begin{remark}\label{rem:1}
For  proofs of Lemma \ref{lem:7.3.4} and Lemma \ref{lem:7.3.5}, we refer 
to \cite[p.28, 3.4.Proposition and p.27, 3.2.Remarks \thetag4]{DHP}
(cf. also Bourgain \cite{Bourgain}), respectively.
\end{remark}
By  Lemma \ref{lem:7.3.4},
Lemma \ref{lem:7.3.5}, \eqref{7.19*}, and Theorem \ref{thm:7.4}
\thetag2 we have
\begin{equation}\label{7.22}
\CR_{\CL(\CY_q(\BR^N_+))}(\{(\tau\pd_\tau)^\ell
F_\lambda \CR(\lambda) \mid \lambda \in \Sigma_{\epsilon, \lambda_1}
\}) \leq r_b(CM_1 +  C_{M_2}\tilde\lambda_0^{-1/2})
\end{equation}
for any $\tilde\lambda_0\geq \lambda_0$.  In fact, by \eqref{7.19*}, we have
\begin{align*}
&\int^1_0\|\sum_{j-1}^nr_j(u)A_2\nabla\CC_2(\lambda_j)
F_j\|_{L_q(\BR^N_+)}^q\,du \leq C_{M_2}^q\int^1_0\|\sum_{j=1}^n
r_j(u)\nabla\CC_2(\lambda_j)F_j\|_{L_q(\BR^N_+)}^q\,du.
\end{align*}
By Lemma \ref{lem:7.3.5}, we have
\begin{align*}
\int^1_0\|\sum_{j=1}^n
r_j(u)\nabla\CC_2(\lambda_j)F_j\|_{L_q(\BR^N_+)}^q\,du
&=\int^1_0\|\sum_{j=1}^nr_j(u)\lambda_j^{-1/2}
\lambda_j^{1/2}\nabla\CC_2(\lambda_j)F_j\|_{L_q(\BR^N_+)}^q\,du\\
& \leq \tilde\lambda_0^{-q/2}
\int^1_0\|\sum_{j=1}^nr_j(u)
\lambda_j^{1/2}\nabla\CC_2(\lambda_j)F_j\|_{L_q(\BR^N_+)}^q\,du
\end{align*}
for any $\lambda_j \in \Sigma_{\epsilon, \tilde\lambda_0}$ and 
$\tilde\lambda_0 \geq \lambda_0$. Thus, 
by Theorem \ref{thm:7.2} \thetag2, we have
$$
\int^1_0\|\sum_{j-1}^nr_j(u)A_2\nabla\CC_2(\lambda_j)
F_j\|_{L_q(\BR^N_+)}^q\,du 
\leq \tilde\lambda_0^{-q/2}r_b^q\int^1_0\|\sum_{j=1}^nr_j(u)F_j
\|_{L_q(\BR^N_+)}^q\,du.
$$
Analogously, we can estimate 
$\CR_{\CL(\CY_q(\BR^N_+), L_q(\BR^N_+))}$ norm of 
$B_1$ and $B_2$ and $\CR_{\CL(\CY_q(\BR^N_+), H^1_q(\BR^N_+))}$
norm of $B_3$, where 
$B_1= A_1\nabla^2\CC_2(\lambda)F$, 
$B_2= \lambda^{1/2}[\nabla\CC_2(\lambda)F]\cdot\bn$, 
$B_3=[\nabla\CC_2(\lambda)F]\cdot\bn$, and so we have \eqref{7.22}. 

Choosing $M_1$ so small that $r_bCM_1 \leq 1/4$ and 
choosing $\tilde\lambda_0$ so large that 
$r_bC_{M_2}\tilde\lambda_0^{-1/2} \leq 1/4$ in \eqref{7.22},
we have 
\begin{equation}\label{7.23}
\CR_{\CL(\CY_q(\BR^N_+))}(\{(\tau\pd_\tau)^\ell
F_\lambda \CR(\lambda) \mid \lambda \in \Sigma_{\epsilon, \tilde\lambda_0}
\}) \leq 1/2.
\end{equation}
Since $\CR$-boundedness implies the usual boundedness, we have
$$\|F_\lambda \CR(\lambda)F_\lambda(H_1, H_2)\|_{\CY_q(\BR^N_+)}
\leq (1/2)\|F_\lambda(H_1, H_2)\|_{L_q(\CY_q(\BR^N_+)}.
$$
Here and in the following, the norm of  $\CY_q(\BR^N_+)$
is given by
$$\|(F_1, F_2, F_3)\|_{\CY_q(\BR^N_+)}
= \|(F_1, F_2)\|_{L_q(\BR^N_+)} + \|F_3\|_{H^1_q(\BR^N_+)}.
$$
Thus, $\|F_\lambda(H_1, H_2)\|_{\CY_q(\BR^N_+)}$ gives the 
equivalent norm of $Y_q(\BR^N_+)$. By  \eqref{7.21} and \eqref{7.23} 
we see that $(\bI + R(\lambda))^{-1} = \sum_{j=0}^\infty
(-R(\lambda))^j$ exists as an operator from 
$Y_q(\BR^N_+)$ into itself and its operator norm does not
exceed $2$. Thus, in view of \eqref{7.18}, 
$\varphi= \CC_2(\lambda)F_\lambda(\bI + R(\lambda))^{-1}(H_1, H_2)$
is a solution of Eq. \eqref{7.17}.

On the other hand, by \eqref{7.23} and lemma \ref{lem:7.3.4},
we see that 
$(\bI + F_\lambda\CR(\lambda))^{-1} 
= \sum_{j=1}^\infty (F_\lambda \CR(\lambda))^j$
exists and 
\begin{equation}\label{7.24}
\CR_{\CL(\CY_q(\BR^N_+))}(\{(\tau\pd_\tau)^\ell
(\bI + F_\lambda \CR(\lambda))^{-1} \mid \lambda \in 
\Sigma_{\epsilon, \tilde\lambda_0}\}) \leq 4.
\end{equation}
Set $\tilde\CC_3(\lambda) = \CC_2(\lambda)(\bI + F_\lambda\CR(\lambda))^{-1}
$.  Since $\CR(\lambda)F_\lambda = R(\lambda)$ as follows from 
\eqref{7.21}, we have
\begin{align*}
(\bI + F_\lambda\CR(\lambda))^{-1}F_\lambda 
&= \sum_{j=1}^\infty(-1)^j(F_\lambda \CR(\lambda))^jF_\lambda
= \sum_{j=0}^\infty(-1)^jF_\lambda(\CR(\lambda)F_\lambda)^j \\
&= F_\lambda\sum_{j=0}^\infty (-R(\lambda))^j 
= F_\lambda(\bI+R(\lambda))^{-1},
\end{align*}
which leads to  $\varphi = \tilde \CC_3(\lambda)F_\lambda(H_1, H_2)$.
Thus, $\tilde\CC_3(\lambda)$ is an $\CR$-bounded solution operator
for Eq. \eqref{7.17}.  Set 
$$\CC_3(\lambda)F = [\tilde\CC_3(\lambda)(F\circ \Phi^{-1})]\circ\Phi,
$$
and then $\CC_3(\lambda)$ is an $\CR$-bounded solution operator
of Eq. \eqref{7.15}.  This completes the proof of
the assertion \thetag2. 
\end{proof}
\subsection{Proof of Theorem \ref{thm:7.2}}\label{subsec:7.3}

To prove Theorem \ref{thm:7.2}, we need to use several properties of
uniform $C^2$ domain, which are stated in the following proposition.
\begin{prop}\label{prop:6.1} \mdseries Let $\Omega$ be a uniform 
$C^2$-domain in $\mathbb{R}^N$ with boundary $\Gamma$. 
Then,  for any positive constant $M_1$, there exist constants
 $M_2 > 0$, $d \in (0, 1)$,  at most countably 
many functions $\Phi_j \in C^2(\mathbb{R}^N )$,  and points $x^1_j \in
\Omega$ and $x^2_j \in \Gamma$ $( j \in \BN)$   
such that the following assertions hold:
\begin{enumerate}
\item[\thetag1]~For every $j \in \BN$, the map $\BR^N \ni 
x \rightarrow \Phi_j(x) \in \BR^N$ is bijective.
\item[\thetag2]~$\Omega = (\bigcup_{j=1}^\infty B_d(x^1_j))
\cup (\bigcup_{j=1}^\infty (\Phi_j(\BR_+^N) \cap B_d(x^2_j)))$, 
$B_d(x^1_j) \subset \Omega$, 
$\Phi_j(\BR_+^N) \cap B_d(x^2_j) = \Omega \cap B_d(x^2_j)$, 
and $\Phi_j(\BR_0^N) \cap B_d(x^2_j) = \Gamma \cap B_d(x^2_j)$.
\item[\thetag3]~There exist $C^\infty$ functions $\zeta^i_j (i =  1, 2, j \in \BN)$ 
such that ${\rm supp}\, \zeta^i_j$, ${\rm supp}\,
\tilde{\zeta}^i_j \subset B_d(x^i_j)$, 
$\|\zeta^i_j\|_{H^2_\infty(\mathbb{R}^N)} 
\leq c_0$, $\| \tilde{\zeta}^i_j\|_{H^2_\infty(\mathbb{R}^N)} \leq c_0$, 
$\tilde{\zeta}^i_j = 1$  on ${\rm supp}\, \zeta^i_j$, 
$\sum_{i=1,2}\sum_{j=1}^\infty \zeta^i_j = 1$ on $\overline{\Omega}$, 
$\sum_{j=1}^\infty \zeta^1_j = 1$ on $\Gamma$. 
Here, $c_0$ is a constant which depends on $M_2, N, q, q'$ and $r$, 
but is independent of $j \in \BN$.
\item[\thetag4]~ 
$\nabla \Phi_j = \mathcal{R}_j + R_i, \nabla( \Phi_j)^{-} 
= \mathcal{R}_j^{-} + R_j^{-} $, 
where $\mathcal{R}_j$ and $\mathcal{R}_j^{-}$ are 
$ N \times N$ constant orthogonal matrices, and $R_j$ and $R_j^{-}$ 
are $N\times N$ matrices of $H_\infty^1$ functions defined on $\BR^N$ 
which satisfies the conditions: $\|R_j\|_{L_\infty(\BR^N)} \leq M_1$, 
$\|R_j^{-}\|_{L_\infty(\BR^N)} \leq M_1$, 
$\|\nabla R_j\|_{L_\infty(\BR^N)} \leq M_2$ and 
$\|\nabla R_j^-\|_{L_\infty(\mathbb{R}^N)} \leq M_2$ for any 
$j \BN$.
\item[\thetag5]~There exist a natural number $L>2$ such that
 any $L+1$ distinct sets of $\{B_d(x^i_j) \mid  i = 1, 2, 
\enskip j \in \BN\}$ 
have an empty intersection.
\end{enumerate}
\end{prop}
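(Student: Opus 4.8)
The plan is to prove Proposition~\ref{prop:6.1} by building a boundary-adapted atlas and a locally finite partition of unity in the classical way, the whole point being to keep every quantitative constant ($d$, $M_2$, $c_0$, $L$) independent of~$j$ — which is precisely what the uniform $C^2$ hypothesis is for. Recall that, by definition of a uniform $C^2$ domain, there are constants $\alpha,\beta,K\in(0,\infty)$, uniform over $\Gamma$, such that for every $z\in\Gamma$, after a rigid motion $O_z$ sending $z$ to the origin and the outward unit normal at $z$ to $-e_N$, one has $\Omega\cap B_\alpha(z)=\{x:x_N>h_z(x')\}\cap B_\alpha(z)$ with $h_z\in C^2(\BR^{N-1})$, $h_z(0)=0$, $\nabla'h_z(0)=0$ and $\|h_z\|_{C^2(\BR^{N-1})}\le K$, where $x'=(x_1,\dots,x_{N-1})$. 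Since $\nabla'h_z$ is then $K$-Lipschitz and vanishes at $0$, one has $|\nabla'h_z(x')|\le K|x'|$, so I would first fix $d\in(0,1)$ with $d\le c(N)\min\{\alpha,\beta,M_1/K\}$; this simultaneously forces $|\nabla'h_z|\le M_1$ on $B_d(0)$ and leaves room for all later cutoffs to fit inside one coordinate patch. Then I would take $\{x^2_j\}_j$ to be a maximal $(d/4)$-separated subset of $\Gamma$ (countable, since $\Gamma$ is $\sigma$-compact), write $h_j=h_{x^2_j}$, $O_j=O_{x^2_j}$, and replace $h_j$ by a global $\tilde h_j\in C^2(\BR^{N-1})$ that coincides with $h_j$ on a ball of radius comparable to $d$ and satisfies $\|\nabla'\tilde h_j\|_{L_\infty}\le M_1$ and $\|(\nabla')^2\tilde h_j\|_{L_\infty}\le M_2:=C(N)K$; this extension exists because on $B_d(0)$ we have $|h_j|\le\tfrac{1}{2}Kd^2$, $|\nabla'h_j|\le Kd$, while a cutoff of scale $d\sim M_1/K$ contributes at most $C/d\sim CK/M_1$ and $C/d^2\sim CK^2/M_1^2$ to first and second derivatives, and these products balance.

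Setting $\Phi_j(x)=x^2_j+O_j^{-1}\big(z',\,z_N+\tilde h_j(z')\big)\big|_{z=O_j(x-x^2_j)}$ gives, for each~$j$, a $C^2$ bijection of $\BR^N$ (a shear conjugated by a rigid motion), which is assertion~\thetag1; moreover $\nabla\Phi_j=\bI+R_j$ and $\nabla\Phi_j^{-1}=\bI+R_j^-$ with $R_j,R_j^-$ built linearly from $\nabla'\tilde h_j$ and $(\nabla')^2\tilde h_j$, so $\|R_j\|_{L_\infty}\le M_1$ and $\|\nabla R_j\|_{L_\infty}\le M_2$ (shrink $M_1$ and enlarge $M_2$ from the outset to absorb dimensional constants), which is assertion~\thetag4 with $\CR_j=\CR_j^-=\bI$. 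Since $\tilde h_j=h_j$ on $B_d(x^2_j)$ and $\Omega$ agrees with the graph side there, $\Phi_j(\HS)\cap B_d(x^2_j)=\Omega\cap B_d(x^2_j)$ and $\Phi_j(\BR^N_0)\cap B_d(x^2_j)=\Gamma\cap B_d(x^2_j)$, the boundary half of assertion~\thetag2. For the interior I would take $\{x^1_j\}_j$ a maximal $(d/4)$-separated subset of $\{x\in\Omega:{\rm dist}\,(x,\Gamma)\ge 2d\}$ (again countable), so that $B_d(x^1_j)\subset\Omega$. A short geometric check — every point of $\Omega$ within $d/4$ of $\Gamma$ lies in some $B_{d/2}(x^2_j)$, and every point with ${\rm dist}\,(\cdot,\Gamma)\ge d/4$ is within $d/4$ of a type-$1$ net point after a bounded push-in along an inward direction — shows that $\{B_d(x^1_j)\}\cup\{B_d(x^2_j)\}$ covers $\overline\Omega$ and yields the matching property of assertion~\thetag2; and a volume-packing estimate for $(d/4)$-separated centers in $\BR^N$ bounds by some $L=L(N)$ the number of the balls $B_d(x^i_j)$ meeting at a point, so, replacing $L$ by $\max\{L,3\}$, assertion~\thetag5 holds.

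Assertion~\thetag3 is then routine: fix $\chi\in C^\infty_0(B_{3/4}(0))$ with $\chi\equiv1$ on $B_{1/2}(0)$ and $\tilde\chi\in C^\infty_0(B_1(0))$ with $\tilde\chi\equiv1$ on $B_{3/4}(0)$, put $\zeta^i_{j,0}(x)=\chi\big((x-x^i_j)/d\big)$, $\tzeta^i_j(x)=\tilde\chi\big((x-x^i_j)/d\big)$ and $\zeta^i_j=\zeta^i_{j,0}\big/\sum_{k,\ell}\zeta^k_{\ell,0}$; the denominator is $\ge1$ on $\overline\Omega$ by the covering and, by~\thetag5, locally a sum of at most $L$ terms each bounded in $H^2_\infty(\BR^N)$ by $C(N)d^{-2}$, so $\zeta^i_j\in C^\infty$, ${\rm supp}\,\zeta^i_j\subset B_{3d/4}(x^i_j)\subset B_d(x^i_j)$, $\tzeta^i_j\equiv1$ on ${\rm supp}\,\zeta^i_j$, $\sum_{i,j}\zeta^i_j\equiv1$ on $\overline\Omega$, and $\|\zeta^i_j\|_{H^2_\infty(\BR^N)}+\|\tzeta^i_j\|_{H^2_\infty(\BR^N)}\le c_0$ uniformly in~$j$, with $c_0$ depending only on $M_2$, $N$ and the other parameters; restricting to $\Gamma$, where the supports $B_{3d/4}(x^1_j)$ stay at distance $\ge d/4$ from $\Gamma$, only the boundary bumps survive and give $\sum_j\zeta^2_j\equiv1$ on $\Gamma$. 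The main obstacle is not any single step but the uniformity bookkeeping: $d$ must be chosen small enough to enforce both $\|R_j\|_{L_\infty}\le M_1$ and the exact identity $\Phi_j(\HS)\cap B_d(x^2_j)=\Omega\cap B_d(x^2_j)$, yet $d$, $M_2$, $L$ and $c_0\sim d^{-2}$ must all stay independent of the patch — this is exactly where the \emph{uniform} $C^2$ regularity (uniform $\alpha,\beta,K$) is indispensable — together with the quantitative global extension from $h_j$ to $\tilde h_j$ that preserves the $C^2$ bounds.
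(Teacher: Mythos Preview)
The paper does not supply a proof of Proposition~\ref{prop:6.1}; it is quoted as a known structural fact about uniform $C^2$ domains (such atlases are built, e.g., in Enomoto--Shibata~\cite{ES1}), so there is no argument in the paper to compare with. Your outline is the standard construction and the overall strategy---local graph coordinates, a globalized shear as the straightening map, separated nets for the centres, volume packing for the finite-overlap bound, and a normalized bump partition---is correct. Two details, however, do not work as written.

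First, the map $\Phi_j$ and the claim $\CR_j=\CR_j^-=\bI$. With $z=O_j(x-x^2_j)$ substituted, your $\Phi_j$ is the \emph{conjugate} of the shear $\Psi_j(z)=(z',z_N+\tilde h_j(z'))$ by the rigid motion, and indeed $\nabla\Phi_j=\bI+O_j^{-1}S_jO_j$; but then in the flat case $\tilde h_j\equiv0$ one gets $\Phi_j=\mathrm{id}$, hence $\Phi_j(\HS)=\HS$, which is not $\Omega\cap B_d(x^2_j)$ unless the tangent hyperplane at $x^2_j$ happens to be $\{x_N=0\}$. The map that actually straightens the boundary is $\Phi_j(z)=x^2_j+O_j^{-1}\bigl(z',\,z_N+\tilde h_j(z')\bigr)$ with $z$ the input variable, for which $\nabla\Phi_j=O_j^{-1}+O_j^{-1}S_j$ and hence $\CR_j=O_j^{-1}$---still orthogonal, as \thetag4 requires---not $\bI$; the estimates on $R_j=O_j^{-1}S_j$ are unchanged.

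Second, the covering has a parameter gap. With interior centres in $\{\mathrm{dist}(\cdot,\Gamma)\ge2d\}$ and a maximal $d/4$-net on $\Gamma$, the boundary balls $B_d(x^2_j)$ reach only $\{\mathrm{dist}<3d/4\}$ while the interior $d/4$-net covers only $\{\mathrm{dist}\ge2d\}$; a point at distance $d$ from $\Gamma$ lies in neither family, and your ``push-in'' can have length up to $7d/4$, too long to land within $d$ of a centre. The repair is routine bookkeeping: place the interior net in $\{\mathrm{dist}(\cdot,\Gamma)>d\}$ instead (this still gives $B_d(x^1_j)\subset\Omega$); then any $x$ with $\mathrm{dist}(x,\Gamma)\ge3d/4$ can be pushed inward along the normal by at most $d/4+\varepsilon$ into $\{\mathrm{dist}>d\}$ (legitimate for $d$ below the uniform reach $\sim1/K$), where it lies within $d/4$ of a net point and hence within $d/2+\varepsilon<d$ of that centre.
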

In what follows, we write 
$\Omega_\ell = \Phi_\ell(\BR^N_+)$ 
and $\Gamma_\ell = \Phi_\ell(\BR^N_0)$
for $\ell \in \BN$. Moreover, we write $B_d(x^i_j)$ 
simply by $B^i_j$.  
Since $\rho_0(x)$ and $\gamma_k(x)$ ($k=1, 3, 4$) are uniformly continuous
functions on $\overline{\Omega}$, choosing $d$ smaller
if necessary, we may assume that 
\begin{equation}\label{unif:1}
|\rho_0(x) - \rho_0(x^i_j)| \leq M_1, \quad |\gamma_k(x)-\gamma_k(x^i_j)|
\leq M_1
\quad \text{for $x \in B^i_J\cap\overline{\Omega}$,
\enskip $k=1, 3, 4$}.
\end{equation}
By the finite intersection property stated in
Proposition \ref{prop:6.1} \thetag5, we have 
\begin{equation}\label{7.25}
\Bigl(\sum_{i=1, 2}
\sum_{j=1}^\infty \|f\|^q_{L_q(B^i_j\cap\Omega)}\Bigr)^{1/q} 
\leq C_q\|f\|_{L_q(\Omega)}
\end{equation}
for any $f \in L_q(\Omega)$ and $1 \leq q < \infty$. In particular, 
 by \eqref{7.25} we have
\begin{lem}\label{lem:cal} Let $i=1, 2$ and $1 < q < \infty$.
 Let $\{f_j\}_{j=0}^\infty$ be a sequence of functions
in $L_q(\Omega)$ such that $\sum_{j=0}^\infty \|f_j\|_{L_q(\Omega)}^q
< \infty$, and ${\rm supp}\, f_j \subset B^i_j$ 
$(j \in \BN)$.
Then, $\sum_{j=0}^\infty f_j \in L_q(\Omega)$ and 
$\|\sum_{j=1}^\infty f_j\|_{L_q(\Omega)} \leq (\sum_{j=1}^\infty
\|f_j\|_{L_q(\Omega)}^q)^{1/q}$.
\end{lem}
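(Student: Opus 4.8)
The plan is to derive the estimate from the uniformly bounded overlap of the balls $B^i_j$, which is exactly the content of Proposition~\ref{prop:6.1}~\thetag5: for the fixed index $i$, every point of $\overline{\Omega}$ belongs to at most $L$ of the sets $B^i_j$, hence for a.e.\ $x\in\Omega$ at most $L$ of the numbers $f_j(x)$ are nonzero. (Note that the plain triangle inequality only gives $\|\sum_j f_j\|_{L_q(\Omega)}\le\sum_j\|f_j\|_{L_q(\Omega)}$, which is weaker than the claimed $\ell^q$-type bound, so the overlap control is essential.) First I would record the pointwise inequality: setting $S_M=\sum_{j=0}^M f_j$, for a.e.\ $x$ the value $S_M(x)$ is a sum of at most $L$ nonzero terms, so by H\"older's inequality applied to the counting measure on those indices (with exponents $q$ and $q'$),
\[
|S_M(x)|^q \le L^{q-1}\sum_{j=0}^M|f_j(x)|^q,
\]
and integration over $\Omega$ yields $\|S_M\|_{L_q(\Omega)}^q\le L^{q-1}\sum_{j=0}^M\|f_j\|_{L_q(\Omega)}^q$, uniformly in $M$.

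Next I would verify convergence. Applying the same pointwise bound to $S_{M'}-S_M=\sum_{j=M+1}^{M'}f_j$ for $M<M'$ gives
\[
\|S_{M'}-S_M\|_{L_q(\Omega)} \le L^{(q-1)/q}\Bigl(\sum_{j=M+1}^{M'}\|f_j\|_{L_q(\Omega)}^q\Bigr)^{1/q},
\]
and the right-hand side tends to $0$ as $M\to\infty$ since $\sum_j\|f_j\|_{L_q(\Omega)}^q<\infty$. Thus $(S_M)$ is Cauchy in $L_q(\Omega)$ and converges to some $f\in L_q(\Omega)$, which is by definition $\sum_{j=0}^\infty f_j$. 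Passing to the limit $M\to\infty$ in the uniform bound of the previous paragraph and using continuity of the norm gives
\[
\Bigl\|\sum_{j=0}^\infty f_j\Bigr\|_{L_q(\Omega)} \le L^{(q-1)/q}\Bigl(\sum_{j=0}^\infty\|f_j\|_{L_q(\Omega)}^q\Bigr)^{1/q},
\]
i.e.\ the asserted estimate, with the overlap constant coming from Proposition~\ref{prop:6.1}~\thetag5 (absorbed into the generic constant $C$).

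I do not expect a genuine obstacle here; the only point needing a line of justification is that the finite-intersection bound, stated in Proposition~\ref{prop:6.1}~\thetag5 for the full family $\{B^i_j\mid i=1,2,\ j\in\BN\}$, applies \emph{a fortiori} to the single subfamily $\{B^i_j\}_{j\in\BN}$ for the fixed $i$. An equivalent route, which some readers may find cleaner, is to split $\BN$ into $L$ subsets along each of which the supports $B^i_j$ are pairwise disjoint, use the exact additivity of $\|\cdot\|_{L_q(\Omega)}^q$ over disjoint supports on each subset, and then combine the $L$ pieces by H\"older; this produces the same estimate with a constant depending only on $L$.
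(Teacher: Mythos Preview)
Your proof is correct and is essentially what the paper intends. The paper does not give a detailed argument for this lemma; it simply writes ``In particular, by \eqref{7.25} we have'' and states the result, meaning that the same finite intersection property (Proposition~\ref{prop:6.1}~\thetag5) underlying \eqref{7.25} yields the lemma. Your pointwise H\"older argument is precisely the standard way to convert that overlap bound into the desired $\ell^q$-type estimate, and you correctly observe that the inequality comes with a constant $L^{(q-1)/q}$ depending on the overlap number $L$; the paper's statement omits this constant, but in every application it is immediately absorbed into a generic $C_q$, so this is a harmless imprecision.
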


We first prove the assertion  \thetag1 in Theorem \ref{thm:7.2}.  
We construct a parametrix.  Let 
$\bv^1_j \in H^2_q(\BR^N)^N$ be solutions of the equations:
\begin{equation}\label{7.26} 
\rho_0(x^1_j)\lambda \bv^1_j - \mu\Delta\bv^1_j - \nu
\nabla\dv\bv^1_j - \gamma_1(x^1_j)\rho_0(x^1_j)\lambda^{-1}
\nabla\dv\bv^1_j = \zeta^1_j\bg
\quad\text{in $\BR^N$}; 
\end{equation}
and $\bv^2_j \in H^2_q(\Omega_j)^N$ solutions of the equations:
\begin{equation}\label{7.27} 
\rho_0(x^2_j)\lambda \bv^2_j - \mu\Delta\bv^2_j - \nu
\nabla\dv\bv^2_j - \gamma_1(x^2_j)\rho_0(x^2_j)\lambda^{-1}
\nabla\dv\bv^2_j = \zeta^2_j\bg
\quad\text{in $\Omega_j$}, \quad \bv^2_j|_{\Gamma_j} = 0.
\end{equation}
By Theorem \ref{thm:7.3} \thetag1 and Theorem \ref{thm:7.5}
\thetag1, there are $\CR$-bounded solution operators $\CC^i_j(\lambda)$
with 
$$\CC^i_j(\lambda) \in {\rm Hol}\,(\Sigma_{\epsilon, \lambda_0},
\CL(L_q(\Omega^i_j)^N, H^2_q(\Omega^i_j)^N))$$
such that
for any $\bg \in L_q(\Omega)$ and $\lambda \in \Sigma_{\epsilon, \lambda_0}$, 
$\bv^1_j = \CC^1_j(\lambda)\zeta^1_j\bg$ are solutions of Eq. 
\eqref{7.26} and $\bv^2_j = \CC^2_j(\lambda)\zeta^2_j\bg$ solutions of Eq. 
\eqref{7.27},  where we have set $\Omega^1_j = \BR^N$ and 
$\Omega^2_j = \Omega_j$.  Moreover, we have
\begin{equation}\label{7.29}
\CR_{\CL(L_q(\Omega^i_j)^N, H^{2-k}_q(\Omega^i_j)^N)}
(\{(\tau\pd_\tau)^\ell(\lambda^{k/2}\CC^i_j(\lambda))\mid
\lambda \in \Sigma_{\epsilon, \lambda_0}\})
\leq r_b
\end{equation}
for $\ell=0,1$ and $k=0,1,2$. Notice that $\lambda_0$ and 
$r_b$ are independent of $i=1,2$ and $j \in \BN$. 
Let 
\begin{align*}
\CU_1(\lambda)\bg  = \sum_{i=1,2}\sum_{j=1}^\infty
\tilde\zeta^i_j\CC^i_j(\lambda)\zeta^i_j\bg
\end{align*}
for $\bg \in L_q(\Omega)^N$.  By Lemma \ref{lem:cal}, 
we have
\begin{equation}
\CR_{\CL(L_q(\Omega)^N, H^{2-k}_q(\Omega)^N)}
(\{(\tau\pd_\tau)^\ell(\lambda^{k/2} \CU(\lambda)) \mid 
\lambda \in \Sigma_{\epsilon, \lambda_0}\}) \leq C_{N,q}r_b.
\label{7.33}
\end{equation}
In fact, by \eqref{7.29} and \eqref{7.25} we have 
\begin{align*}
&\sum_{i=1,2}\sum_{j=1}^\infty\int^1_0\int_\Omega
 |\sum_{k=1}^nr_k(u)\tilde\zeta^i_j\CC^i_j(\lambda_k)\zeta^i_j\bg_k|^q\,dx\,du
\leq c_0^q\sum_{i=1,2}\sum_{j=1}^\infty\int^1_0\int_{\Omega^i_j}
 |\sum_{k=1}^nr_k(u)\CC^i_j(\lambda_k)\zeta^i_j\bg_k|^q\,dx\,du \\
&\leq (c_0r_b)^q\sum_{i=1,2}\sum_{j=1}^\infty\int^1_0\int_{\Omega^i_j}
 |\sum_{k=1}^nr_k(u)\zeta^i_j\bg_k|^q\,dx\,du 
\leq (c_0^2r_b)^q\sum_{i=0,1}\sum_{j=1}^\infty
\int^1_0\int_{\Omega\cap B^i_j}
 |\sum_{k=1}^nr_k(u)\bg_k|^q\,dx\,du \\
&= (c_0^2r_b)^q\int^1_0 \Bigl(\sum_{i=0,1}\sum_{j-1}^\infty
\int_{\Omega\cap B^i_j}
 |\sum_{k=1}^nr_k(u)\bg_k|^q\,dx\Bigr)\,du 
\leq (C_qc_0^2r_b)^q \int^1_0\|\sum_{k=1}^nr_k(u)\bg_k\|_{L_q(\Omega)}^q\,du, 
\end{align*}
and so by Lemma \ref{lem:cal} we have 
$$\|\sum_{k=1}^nr_k(u)\CU_1(\lambda_k)\bg_k\|_{L_q(\Omega\times(0,1))}
\leq C_qc_0^2r_b \|\sum_{k=1}^nr_k(u)\bg_k\|_{L_q(\Omega\times(0,1))}.
$$
In this way, we can show \eqref{7.33}.  
Next, since
\begin{align*}
\Delta(\tilde\zeta^i_j\bv^i_j) &= \tilde\zeta^i_j\Delta\bv^i_j
+ 2(\nabla\tilde\zeta^i_j)\nabla\bv^i_j+ (\Delta\tilde\zeta^i_j)\bv^i_j, \\
\nabla\dv(\tilde\zeta^i_j\bv^i_j) &= \tilde\zeta^i_j\nabla\dv\bv^i_j
+(\nabla\zeta^i_j)\dv\bv^i_j + \nabla((\nabla\zeta^i_j)\cdot\bv^i_j),
\\
\nabla(\rho_0(x)\dv(\tilde\zeta^i_j\bv^i_j)) & = 
\tilde\zeta^i_j\rho_0(x)\nabla\dv\bv^i_j 
+\nabla(\rho_0(x)(\nabla\zeta^i_j)\cdot\bv^i_j)
+ (\nabla\zeta^i_j)\rho_0(x)\dv\bv^i_j,
\end{align*}
with $\bv^i_j = \CC^i_j(\lambda)\zeta^i_j\bg$, 
setting 
\begin{equation}\label{residue:1}\begin{split}
&\CV_1(\lambda)\bg = -\sum_{i=1,2}\sum_{j=1}^\infty
\{\mu(
 2(\nabla\tilde\zeta^i_j)\nabla\CC^i_j(\lambda)\zeta^i_j\bg
+ (\Delta\tilde\zeta^i_j)\CC^i_j(\lambda)\zeta^i_j\bg
+\nu(
(\nabla\zeta^i_j)\dv\CC^i_j(\lambda)\zeta^i_j\bg
 + \nabla((\nabla\zeta^i_j)\CC^i_j(\lambda)\zeta^i_j\bg)\\
&\quad +\lambda^{-1}\gamma_1(x)(\nabla(\rho_0(x)(\nabla\zeta^i_j)\cdot
\CC^i_j(\lambda)\zeta^i_j\bg)
+ (\nabla\zeta^i_j)\rho_0(x)\dv\CC^i_j(\lambda)\zeta^i_j\bg)\} \\
&\quad+\sum_{i=1,2}\sum_{j=1}^\infty\tilde\zeta^i_j(
(\rho_0(x)-\rho_0(x^i_j))\lambda\CC^i_j(\lambda)\zeta^i_j\bg
-(\gamma_1(x)\rho_0(x)-\gamma_1(x^i_j)\rho_0(x^i_j))
\lambda^{-1}\nabla\CC^i_j(\lambda)\zeta^i_j\bg)
\end{split}\end{equation}
and setting $\bv = \CU_1(\lambda)\bg$, we have
\begin{equation}\label{7.31}
\rho_0(x)\lambda\bv - \mu\Delta\bv - \nu\nabla\dv\bv
-\gamma_1(x)\lambda^{-1}\nabla(\rho_0(x)\dv\bv)
= \bg + \CV_1(\lambda)\bg \quad\text{in $\Omega$},\quad
\bv|_\Gamma = 0
\end{equation}
because $\Gamma \cap B^1_j = \Gamma_j$ and $\tilde\zeta^i_j
\zeta^i_j = \zeta^i_j$. 
Using \eqref{cond:4*}, 
\eqref{unif:1}, Lemma \ref{lem:cal}, \eqref{7.29},
and  \eqref{7.33}, we obtain 
\begin{equation}
\CR_{\CL(L_q(\Omega)^N)}(\{(\tau\pd_\tau)^\ell 
\CR(\lambda) \mid\lambda \in \Sigma_{\epsilon, \tilde\lambda_0}\})
\leq c_0^2C_q\{(1+2\alpha_4)M_1 + \alpha_4r_b\tilde\lambda_0^{-1/2}\}
\label{residu:3}
\end{equation}
for any $\tilde\lambda_0 \geq \lambda_0$,
where we have assumed that $\tilde\lambda_0\geq1$.  To prove \eqref{residu:3}, 
we have to estimate
$(\nabla\rho_0)(\nabla\zeta^i_j)\cdot\CC^i_j(\lambda)\zeta^i_j\bg$.  
For this purpose, we use the following lemma 
can be proved easily with the help of 
Sobolev's imbedding theorem.
\begin{lem}\label{lem:7.4} Let $1 < q  \leq r < \infty$ and 
$N < r< \infty$.  Then, the following two inequalities hold:
\begin{itemize}
\item[\thetag1]~There exists a constant 
$C$ depending only on $N$, $q$, and $r$ for which 
$$\|ab\|_{L_q(\Omega)} \leq C\|a\|_{L_r(\Omega)}\|b\|_{H^1_q(\Omega)}.
$$
\item[\thetag2]~For any $\sigma>0$, there exists a constant 
$C=C_{\sigma, \|a\|_{L_r(\Omega)}}$ for which
$$\|ab\|_{L_q(\Omega)} \leq \sigma\|b\|_{H^1_q(\Omega)}
+ C\|b\|_{L_q(\Omega)}.
$$
\end{itemize}
\end{lem}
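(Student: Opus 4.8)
Both inequalities are obtained by combining H\"older's inequality with the Sobolev imbedding theorem on the uniform $C^2$ domain $\Omega$ (on such a domain the relevant imbeddings hold with constants depending only on $N$, $q$, $r$ and the geometric parameters of the domain). For \thetag1, given $1<q\le r<\infty$ with $N<r<\infty$, define $s$ by $1/q=1/r+1/s$, so that $s=qr/(r-q)\in(q,\infty]$ (and $s=\infty$ only when $q=r$, which then forces $q=r>N$). A short arithmetic check shows that $s$ does not exceed the critical Sobolev exponent: when $q<N$, the inequality $s\le Nq/(N-q)$ is equivalent to $r\ge N$, which holds; when $q\ge N$, one has $H^1_q(\Omega)\subset L_t(\Omega)$ for every finite $t$ and, if $q>N$, also for $t=\infty$. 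Hence $H^1_q(\Omega)\subset L_s(\Omega)$ with a bounded imbedding, and H\"older's inequality gives
\[
\|ab\|_{L_q(\Omega)}\le\|a\|_{L_r(\Omega)}\|b\|_{L_s(\Omega)}\le C\|a\|_{L_r(\Omega)}\|b\|_{H^1_q(\Omega)},
\]
which is \thetag1 with $C=C_{N,q,r}$.

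For \thetag2 I would keep the H\"older step $\|ab\|_{L_q}\le\|a\|_{L_r}\|b\|_{L_s}$ but interpolate $\|b\|_{L_s}$ rather than bound it directly. Let $t$ be the critical Sobolev exponent ($t=Nq/(N-q)$ if $q<N$, any fixed finite number larger than $s$ if $q=N$, and $t=\infty$ if $q>N$), so that $q<s\le t$ and $H^1_q(\Omega)\subset L_t(\Omega)$. By log-convexity of the $L_p$-norms (i.e.\ H\"older again) there is $\lambda\in(0,1]$ with $1/s=(1-\lambda)/q+\lambda/t$ and
\[
\|b\|_{L_s(\Omega)}\le\|b\|_{L_q(\Omega)}^{1-\lambda}\|b\|_{L_t(\Omega)}^{\lambda}\le C\|b\|_{L_q(\Omega)}^{1-\lambda}\|b\|_{H^1_q(\Omega)}^{\lambda}.
\]
Applying Young's inequality with exponents $1/\lambda$ and $1/(1-\lambda)$ (absorbing $\|a\|_{L_r}$ into the constants) yields, for every $\sigma>0$,
\[
\|ab\|_{L_q(\Omega)}\le\|a\|_{L_r(\Omega)}\|b\|_{L_s(\Omega)}\le\sigma\|b\|_{H^1_q(\Omega)}+C_{\sigma,\|a\|_{L_r(\Omega)}}\|b\|_{L_q(\Omega)},
\]
which is \thetag2.

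I do not expect a genuine obstacle: the content is H\"older plus Sobolev, exactly as the statement indicates. The only points deserving a line of care are the elementary arithmetic $q<s\le Nq/(N-q)$ (equivalent to $r\ge N$), the degenerate case $q=r>N$ where $s=\infty$ and one invokes $H^1_q(\Omega)\subset L_\infty(\Omega)$ together with $\|b\|_{L_s}\le\|b\|_{L_q}^{q/s}\|b\|_{L_\infty}^{1-q/s}$, and the remark that on a uniform $C^2$ domain the Sobolev imbeddings used above hold with constants independent of the boundary chart.
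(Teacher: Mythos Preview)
Your proposal is correct and is exactly the argument the paper has in mind: the paper gives no detailed proof, stating only that the lemma ``can be proved easily with the help of Sobolev's imbedding theorem,'' and your H\"older-plus-Sobolev argument (with interpolation and Young's inequality for \thetag2) is the standard way to unpack that remark. One small point worth tightening: since the hypothesis is $r>N$ strictly, the exponent $s=qr/(r-q)$ is strictly below the critical Sobolev exponent, so your interpolation parameter $\lambda$ in \thetag2 lies in $(0,1)$ rather than $(0,1]$, and the Young-inequality step goes through without a degenerate endpoint.
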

For any $\lambda_k \in \Sigma_{\epsilon, \tilde\lambda_0}$
 and 
$\bg_k \in L_q(\Omega)^N$ ($k=1, \ldots, n$), 
by Lemma \ref{lem:7.4}, \eqref{7.29} and \eqref{cond:4*}, 
\begin{align*}
&\sum_{i=1,2}\sum_{j=1}^\infty\int^1_0\int_\Omega
|\sum_{k=1}^n r_k(u)(\nabla\rho_0)(\nabla\zeta^i_j)\cdot
\CC^i_j(\lambda_k)\zeta^i_j\bg_k|^q\,dx\,du \\
&\leq (c_0\alpha_4)^q
\sum_{i=1,2}\sum_{j=1}^\infty
\int^1_0\|\sum_{k=1}^nr_k(u)\CC^i_j(\lambda_k)\zeta^i_j\bg_k
\|_{H^1_q(\Omega^i_j)}^q\,du \\
&\leq (c_0\alpha_4)^q
\sum_{i=1,2}\sum_{j=1}^\infty
\int^1_0\|\sum_{k=1}^nr_k(u)\lambda_k^{-1/2}
\lambda_k^{1/2}\CC^i_j(\lambda_k)\zeta^i_j\bg_k
\|_{H^1_q(\Omega^i_j)}^q\,du \\
&\leq (c_0\alpha_4\tilde\lambda_0^{-1/2})^q
\sum_{i=1,2}\sum_{j=1}^\infty
\int^1_0\|\sum_{k=1}^nr_k(u)
\lambda_k^{1/2}\CC^i_j(\lambda_k)\zeta^i_j\bg_k
\|_{H^1_q(\Omega^i_j)}^q\,du \\
&\leq (c_0\alpha_4\tilde\lambda_0^{-1/2}r_b)^q
\sum_{i=1,2}\sum_{j=1}^\infty
\int^1_0\|\sum_{k=1}^nr_k(u) \zeta^i_j\bg_k
\|_{L_q(\Omega^i_j)}^q\,du \\
&\leq (c_0^2\alpha_4\tilde\lambda_0^{-1/2}r_b)^q
\sum_{i=1,2}\sum_{j=1}^\infty
\int^1_0\|\sum_{k=1}^nr_k(u)\bg_k
\|_{L_q(\Omega \cap B^i_j)}^q\,du\\
& =(c_0^2\alpha_4\tilde\lambda_0^{-1/2}r_b)^q \int^1_0
(\sum_{i=1,2}\sum_{j=1}^\infty
\|\sum_{k=1}^nr_k(u)\bg_k
\|_{L_q(\Omega \cap B^i_j)}^q)\,du \\
& = (C_qc_0^2\alpha_4\tilde\lambda_0^{-1/2}r_b)^q \int^1_0
\|\sum_{k=1}^nr_k(u)\bg_k\|^q\,du.
\end{align*}
 Other terms can be estimated
similarly, and so by Lemma \ref{lem:cal} and 
\eqref{7.25} we have \eqref{residu:3}.  
Choosing $M_1 > 0$ so small that $c_0^2C_q(1+2\alpha_4)M_1 \leq 1/4$
and choosing $\tilde\lambda_0\geq \max(\lambda_0, 1)$ so large that 
$c_0^2C_q\alpha_4r_b\tilde\lambda_0^{-1/2} \leq 1/4$, by \eqref{residu:3}
 we have 
\begin{equation}
\CR_{\CL(L_q(\Omega)^N)}(\{(\tau\pd_\tau)^\ell 
\CV_1(\lambda) \mid\lambda \in \Sigma_{\epsilon, \tilde\lambda_0}\})
\leq 1/2.
\label{residu:5}
\end{equation}
Thus,  $(\bI + \CV_1(\lambda))^{-1} = \sum_{j=1}^\infty
(-\CV_1(\lambda))^j$ exists  and satisfies the 
estimate:
$$\CR_{\CL(L_q(\Omega)^N)}(\{(\tau\pd_\tau)^\ell
(\bI + \CV_1(\lambda))^{-1}  \mid
\lambda \in \Sigma_{\epsilon, \tilde\lambda_0}\}) \leq 4.
$$
Let $\CC(\lambda) = \CU_1(\lambda)(\bI + \CV_1(\lambda))^{-1}$, 
and then in view of  \eqref{7.31} we see that $\bu = \CC(\lambda)\bg$
is a solution of Eq. \eqref{7.3}. The uniqueness of
solutions follows from the existence of solutions of the 
dual problem.  Moreover, by \eqref{7.33}
and \eqref{residu:5} we see that $\CC(\lambda)$ satisfies
the estimate:
$$\CR_{\CL(L_q(\Omega)^N, H^{2-k}_q(\Omega)^N)}
(\{(\tau\pd_\tau)^\ell(\lambda^{k/2} \CC(\lambda)) \mid 
\lambda \in \Sigma_{\epsilon, \lambda_0}\}) \leq 4C_{N,q}r_b.
$$
This completes the proof of assertion \thetag1 of Theorem \ref{thm:7.2}.

We next prove the assertion \thetag2. By Theorem \ref{thm:7.3} \thetag2 and 
Theorem \ref{thm:7.5} \thetag2, there are $\CR$-bounded solution operators
$\CD^i_j(\lambda)$ with 
\begin{equation}\label{6.7.3}\begin{split}
\CD^1_j(\lambda) \in {\rm Hol}\,(\Sigma_{\epsilon, \lambda_0}, 
\CL(L_q(\BR^N), H^2_q(\BR^N))), \quad
\CD^2_j(\lambda) \in {\rm Hol}\,(\Sigma_{\epsilon, \lambda_0}, 
\CL(\CY_q(\Omega_1), H^2_q(\BR^N)))
\end{split}\end{equation}
such that for any $(h_1, h_2) \in Y_q(\Omega)$ and $\lambda \in 
\Sigma_{\epsilon, \lambda_0}$, $\vartheta^1_j = \CD^1_j(\lambda)\zeta^1_jh_1$
are solutions of the equations:
\begin{equation}\label{6.7.1}
\gamma_3(x^1_j)\lambda\vartheta^1_j - \gamma_4(x^1_j)\Delta\vartheta^1_j 
= \zeta^1_jh_1
\quad\text{in $\BR^N$}, 
\end{equation}
and $\vartheta^2_j = \CD^2_j(\lambda)\zeta^2_j(h_1, \lambda^{1/2}h_2, h_2)$
are solutions of the equations:
\begin{equation}\label{6.7.2}
\gamma_3(x^2_j)\lambda\vartheta^2_j - \gamma_4(x^2_j)\Delta\vartheta^2_j 
= \zeta^2_jh_1
\quad\text{in $\Omega_j$}, \quad (\nabla\vartheta^2_j)\cdot\bn_j|_{\Gamma_j}=0
\end{equation}
where $\bn_j$ is the unit outer normal to $\Gamma_j$. Notice that 
$\bn_j = \bn$ on $\Gamma_j \cap B^2_j = \Gamma \cap B^2_j$. 
In particular, by \eqref{6.7.3} we have
\begin{equation}\label{6.7.4}
\sum_{k=0}^2 |\lambda|^{k/2}\|\vartheta^i_j\|_{H^{2-k}_q(\Omega^i_j}
\leq r_b\{\|\zeta^i_jh_1\|_{L_q(\Omega^i_j)}
+ \sigma^i(\|\lambda^{1/2}h_2\|_{L_q(\Omega^2_j)}
+ \|h_2\|_{H^1_q(\Omega^2_j)})\} \quad(i=1,2), 
\end{equation}
where $\sigma^1=0$ and $\sigma^2=1$. Let 
\begin{align*}
\bU_2(\lambda)(h_1, h_2)  = \sum_{i=1,2}\sum_{j=1}^\infty
\tilde\zeta^i_j\vartheta^i_j, \quad
\CU_2(\lambda)F = \sum_{j=1}^\infty\tilde\zeta^1_j\CD^1_j(\lambda)
\zeta^1_jF_1
+ \sum_{j=1}^\infty \tilde\zeta^2_j\CD^2_j(\lambda)\zeta^2_jF
\end{align*}
for $(h_1, h_2) \in Y_q(\Omega)$ and $F=(F_1, F_2, F_3) \in \CY_q(\Omega)$. 
By Lemma \ref{lem:cal} and \eqref{7.25}, we have 
\begin{equation}
\sum_{k=0}^2|\lambda|^{k/2}\|\bU_2(\lambda)(h_1, h_2)\|_{H^{2-k}_q(\Omega)}
\leq C_{N,q}r_b(\|h_1\|_{L_q(\Omega)}
+ |\lambda|^{1/2}\|h_2\|_{L_q(\Omega)} +
\|h_2\|_{H^1_q(\Omega)})
\label{6.7.4.1}
\end{equation}
for any $\lambda \in \Sigma_{\epsilon, \lambda_0}$
and $(h_1, h_2) \in Y_q(\Omega)$, and 
\begin{equation}\label{6.7.4.2}
\CR_{\CL(\CY_q(\Omega), H^{2-k}_q(\Omega))}
(\{(\tau\pd_\tau)^\ell(\lambda^{k/2}\CU_2(\lambda)) \mid
\lambda \in \Sigma_{\epsilon, \lambda_0}\}) \leq C_{N,q}r_b
\end{equation}
for $k = 0,1,2$ and $\ell=0,1$. For $F = (F_1, F_2, F_3) \in \CY_q(\Omega)$,
let 
\begin{align*}
\CV_{21}(\lambda)F & = -\sum_{i=1,2}\sum_{j=1}^\infty
\{\dv(\gamma_4(x)(\nabla\zeta^i_j)\CD^i_j(\lambda)\zeta^i_jF
+ \nabla(\gamma_4\zeta^i_j)\cdot\nabla(\CD^i_j(\lambda)\zeta^i_jF)\}\\
& + \sum_{i=1,2}\sum_{j=1}^\infty\zeta^i_j\{(\gamma_3(x)-\gamma_3(x^i_j))
\lambda \CD^i_j(\lambda)\zeta^i_jF - \dv((\gamma_4(x)-\rho(x^i_j))
\nabla\CD^i_j(\lambda)\zeta^i_jF)\},\\
\CV_{22}(\lambda)F & = -\sum_{j=1}^\infty(\nabla\zeta^2_j)\cdot\bn_j
\CD^2_j(\lambda)\zeta^i_jF,
\end{align*}
where we have set $\CD^1_j(\lambda)\zeta^i_jF = \CD^1_j(\lambda)\zeta^i_jF_1$.
We then have
\begin{equation}\label{6.7.5}\begin{split}
\gamma_3(x)\lambda\bU_2(\lambda)(h_1, h_2)
-\dv(\gamma_4(x)\nabla\bU_2(\lambda)(h_1, h_2))
= h_1 + \CV_{21}(\lambda)F_\lambda(h_1, h_2)
&\quad\text{in $\Omega$}, \\
(\nabla\bU_2(\lambda)(h_1, h_2))\cdot\bn
= h_2 + \CV_{22}(\lambda)F_\lambda(h_1, h_2)
&\quad 
\text{on $\Gamma$}
\end{split}\end{equation}
for any $(h_1, h_2) \in Y_q(\Omega)$, where we have
set $F_\lambda(h_1, h_2)
= (h_1, \lambda^{1/2}h_2, h_2) \in \CY_q(\Omega)$. 
Since 
$$\|(\nabla\gamma_4)\cdot\nabla\CD^i_j(\lambda)\zeta^i_jF\|_{L_q(\Omega)}
\leq \sigma\|\nabla\CD^i_j(\lambda)\zeta^i_jF\|_{H^1_q(\Omega)}
+ C_{\sigma, \alpha_4}\|\nabla\CD^i_j(\lambda)\zeta^i_jF\|_{L_q(\Omega)}
$$
as follows from Lemma \ref{lem:7.4} \thetag2, 
by \eqref{6.7.3}, Lemma \ref{lem:cal}, \eqref{unif:1},
\eqref{7.25} and Lemma \ref{lem:7.4}, we have
$$\CR_{\CL(\CY_q(\Omega))}
(\{(\tau\pd_\tau)^\ell(F_\lambda(\CV_{21}(\lambda), \CV_{22}(\lambda)))
\mid \lambda \in \Sigma_{\epsilon, \tilde\lambda_0}\})
\leq \{2M_1+\sigma + c_0^2C_qC_{\sigma, \alpha_4}\tilde\lambda_0^{-1/2}\}r_b
$$
for any $\tilde\lambda_0 \geq \max(\lambda_0, 1)$.
Choosing $M_1$ and $\sigma>0$ so small that $2M_1r_b < 1/8$, 
$\sigma r_b < 1/8$ and choosing $\tilde\lambda_0$ so large
that $c_0^2C_qC_{\sigma, \alpha_4}r_b\tilde\lambda_0^{-1/2}
\leq 1/4$, we have
\begin{equation}\label{6.7.6}
\CR_{\CL(\CY_q(\Omega))}(\{(\tau\pd_\tau)^\ell(
F_\lambda(\CV_{21}(\lambda), \CV_{22}(\lambda)))
\mid \lambda \in \Sigma_{\epsilon, \tilde\lambda_0}\})
\leq 1/2,
\end{equation}
and so $(\bI + F_\lambda(\CV_{21}(\lambda), \CV_{22}(\lambda)))^{-1}
= \sum_{j=0}^\infty(-F_\lambda(\CV_{21}(\lambda), \CV_{22}(\lambda)))^j$
exists and 
\begin{equation}\label{6.7.7}
 \CR_{\CL(\CY_q(\Omega))}(\{(\tau\pd_\tau)^\ell
(\bI + F_\lambda(\CV_{21}(\lambda), \CV_{22}(\lambda)))^{-1}
\mid \lambda \in \Sigma_{\epsilon, \tilde\lambda_0}\})
\leq 4.
\end{equation}
On the other hand, by \eqref{6.7.6} we have
$$\|F_\lambda(\CV_{21}(\lambda)F_\lambda(h_1, h_2), 
\CV_{22}(\lambda)F_\lambda(h_1, h_2))\|_{\CY_q(\Omega)}
\leq (1/2)\|F_\lambda(h_1, h_2)\|_{L_q(\Omega)}
$$
for any $\lambda \in \Sigma_{\epsilon, \tilde\lambda_0}$.  Since
$\|F_\lambda(h_1, h_2)\|_{\CY_q(\Omega)} =
\|h_1\|_{L_q(\Omega)} + |\lambda|^{1/2}\|h_2\|_{L_q(\Omega)}
+ \|h_2\|_{H^1_q(\Omega)}$ gives equivalent norms in
$Y_q(\Omega)$, we see that 
for each $\lambda \in \Sigma_{\epsilon, \tilde\lambda_0}$, $\bI 
+ (\CV_{21}(\lambda),\CV_{22}(\lambda))F_\lambda)^{-1} = \sum_{j=0}^\infty
(-(\CV_{21}(\lambda),\CV_{22}(\lambda))F_\lambda)^j$ exists as an operator
in $\CL(Y_q(\Omega))$ whose operator norm does not exceed $2$. 
Thus, in view of \eqref{6.7.5}, $\vartheta
= \bU_2(\lambda)(\bI + (\CV_{21}(\lambda), 
\CV_{22}(\lambda)))^{-1}(h_1, h_2)$ is a solution of 
Eq. \eqref{7.4}.  The uniqueness of solution follows from
the existence of solutions for the dual problem.  
Notice that $\CU_2(\lambda)F_\lambda(h_1, h_2) = \bU_2(\lambda)(h_1, h_2)$.
We then define an operator $\CD(\lambda)$ by 
$$\CD(\lambda)F = \CU_2(\lambda)(\bI + F_\lambda(\CV_{21}(\lambda),
\CV_{22}(\lambda)))^{-1}
$$
for $F = (F_1, F_2, F_3) \in \CY_q(\Omega)$.  Since
\begin{align*}
(\bI + F_\lambda(\CV_{21}(\lambda), \CV_{22}(\lambda)))^{-1}F_\lambda
&= \sum_{j=0}^\infty(-F_\lambda(\CV_{21}(\lambda), \CV_{22}(\lambda)))^j
F_\lambda
= F_\lambda\sum_{j=0}^\infty
(-(\CV_{21}(\lambda), \CV_{22}(\lambda))F_\lambda)^j \\
& = F_\lambda(\bI + (\CV_{21}(\lambda), \CV_{22}(\lambda))F_\lambda)^{-1},
\end{align*}
we have
\begin{align*}
\vartheta & = \bU_2(\lambda)
(\bI + (\CV_{21}(\lambda), \CV_{22}(\lambda))F_\lambda)^{-1}(h_1, h_2)
= \CU_2(\lambda)F_\lambda(\bI + (\CV_{21}(\lambda), \CV_{22}(\lambda))
F_\lambda)^{-1}(h_1, h_2) \\
& = \CU_2(\lambda)(\bI + F_\lambda(\CV_{21}(\lambda), 
\CV_{22}(\lambda)))^{-1}F_\lambda(h_1, h_2)
= \CD(\lambda)F_\lambda(h_1, h_2)
= \CD(\lambda)(h_1, \lambda^{1/2}h_2, h_2).
\end{align*}
By \eqref{6.7.4.2} and \eqref{6.7.7}, we have
$$\CR_{\CL(\CY_q(\Omega), H^{2-k}_q(\Omega))}
(\{(\tau\pd_\tau)^\ell(\lambda^{k/2}\CD(\lambda)) \mid
\lambda \in \Sigma_{\epsilon, \tilde\lambda_0}\})
\leq 4C_{N,q}r_b.
$$
This completes the proof of the assertion \thetag2 of Theorem \ref{thm:7.2}.
\subsection{Proof of Theorem \ref{thm:7.1}}\label{subsec:7.4}

Let $\CC(\lambda)$ and $\CD(\lambda)$ be the operators given in
Theorem \ref{thm:7.2}.  Let $\vartheta_0 
= \CD(\lambda)(0, \lambda^{1/2}h_2, h_2)$, and then the third equation of
Eq. \eqref{7.1} and the boundary condition for $\vartheta$ are reduced 
to the equations:
\begin{equation}\label{eq:7.1.1}
\gamma_3(x)\lambda\varphi + \gamma_2(x)\dv\bv - \dv(\gamma_4(x)\nabla\varphi)
= f_3 \quad\text{in $\Omega$}, \quad 
(\nabla\varphi)\cdot\bn|_\Gamma =0.
\end{equation}
Thus, in view of  \eqref{7.2} and \eqref{eq:7.1.1}, instead of 
\eqref{7.1} we consider the equations:
\begin{equation}\label{eq:7.1.2}\left\{\begin{aligned}
\rho_0(x)\lambda\bv - \mu\Delta\bv-\nu\nabla\dv\bv
-\gamma_1(x)\lambda^{-1}\nabla(\rho_0(x)\dv\bv) + \gamma_2(x)\nabla\varphi
&=\bff &\quad&\text{in $\Omega$}, \\
\gamma_3(x)\lambda\varphi + \gamma_2(x)\dv\bv - \dv(\gamma_4(x)
\nabla\varphi) & = g&\quad&\text{in $\Omega$}, \\
\bv=0, \quad (\nabla\varphi)\cdot\bn& = 0 &\quad&\text{on $\Gamma$}.
\end{aligned}\right.
\end{equation}
In the following, we write $\CD(\lambda)(g, 0, 0)$ simply by 
$\CD(\lambda)g$. 
Let 
$$\bv = \CC(\lambda)\bff, \quad \varphi = \CD(\lambda)g
$$
in \eqref{eq:7.1.2}, and then we have
\begin{equation}\label{eq:7.1.2*}\left\{\begin{aligned}
\rho_0(x)\lambda\bv - \mu\Delta\bv-\nu\nabla\dv\bv
-\gamma_1(x)\lambda^{-1}\nabla(\rho_0(x)\dv\bv) + \gamma_2(x)\nabla\varphi
&=\bff + \CE_1(\lambda)(\bff, g)
&\quad&\text{in $\Omega$}, \\
\gamma_3(x)\lambda\varphi + \gamma_2(x)\dv\bv - \dv(\gamma_4(x)
\nabla\varphi) & = g + \CE_2(\lambda)(\bff, g)
&\quad&\text{in $\Omega$}, \\
\bv=0, \quad (\nabla\varphi)\cdot\bn& = 0 &\quad&\text{on $\Gamma$},
\end{aligned}\right.
\end{equation}
where we have set 
$$\CE_1(\lambda)(\bff, g) = \gamma_3(x)\nabla\CD(\lambda)g,\quad
\CE_2(\lambda)(\bff, g) = \gamma_2(x)\dv\CC(\lambda)\bff.
$$
Let $\CE(\lambda)(\bff, g) 
= (\CE_1(\lambda)(\bff, g), \CE_2(\lambda)(\bff, g))$,
and then by Theorem \ref{thm:7.2} we have 
\begin{equation}\label{eq:7.1.4}
\CR_{\CL(L_q(\Omega)^{N+1})}(\{(\tau\pd_\tau)^\ell\CE(\lambda)
\mid \lambda \in \Sigma_{\epsilon, \lambda_1}\}) 
\leq r_b\lambda_1^{-1/2}
\end{equation}
for any $\lambda_1 \geq \lambda_0$.  Thus, choosing $\lambda_1 > 0$
so large that $r_b\lambda_1^{-1/2} \leq 1/2$, by
\eqref{eq:7.1.4} and Lemma \ref{lem:7.3.4} we see that
$(\bI + \CE(\lambda))^{-1} = \sum_{j=0}^\infty
(-\CE(\lambda))^j$ exists and satisfies the estimate:
\begin{equation}\label{eq:7.1.5}
\CR_{\CL(L_q(\Omega)^{NL+1})}(\{(\tau\pd_\tau)^\ell
(\bI + \CE(\lambda))^{-1} \mid
\lambda \in \Sigma_{\epsilon, \lambda_1}\}) \leq 4.
\end{equation}
Let $\tilde\CB_1(\lambda) = \CC(\lambda)(\bI + \CE(\lambda))^{-1}$
and $\tilde\CB_2(\lambda) = \CD(\lambda)(\bI + \CE(\lambda))^{-1}$, and then
by Theorem \ref{thm:7.2}, \eqref{eq:7.1.5}, and Lemma \ref{lem:7.3.4}, 
we see that for any $\lambda \in \Sigma_{\epsilon, \lambda_1}$
and $(\bff, g) \in L_q(\Omega)^{N+1}$, $\bv = \tilde\CB_1(\lambda)(\bff, g)$ 
and $\varphi = \tilde\CB_2(\lambda)(\bff, g)$ are solutions of 
\eqref{eq:7.1.2*} and 
\begin{equation}\label{eq:7.1.6}
\CR_{\CL(L_q(\Omega)^{N+1}, H^{2-k}_q(\Omega)^{N+1})}
(\{(\tau\pd_\tau)^\ell(\lambda^{k/2}(\tilde\CB_1(\lambda), 
\tilde\CB_2(\lambda)))\mid \lambda \in \Sigma_{\epsilon, \lambda_1}\})
\leq 4r_b.
\end{equation}
Finally, setting
\begin{align*}
\bv & = \tilde\CB_1(\lambda)(\bff_2-\lambda^{-1}\gamma_1(x)\nabla f_1, f_3), \\
\zeta & = \lambda^{-1}(f_1 - \rho_0(x)\dv\bv), \\
\vartheta & = \tilde\CB_2(\lambda)(\bff_2-\lambda^{-1}\gamma_1(x)\nabla f_1,
f_3) + \CD(\lambda)(0, \lambda^{1/2}f_4, f_4)
\end{align*}
we see that $\zeta$, $\bv$, and $\vartheta$ are solutions of Eq. \eqref{7.1}.
The uniqueness of solutions follows from the existence of solutions 
for the dual problem.  For $F = (F_1, F_2, F_3, F_4, F_5) \in \CX_q(\Omega)$, 
we set 
\begin{align*}
\CB_1(\lambda)F& = \tilde\CB_1(\lambda)(F_2, F_3)
-\lambda^{-1}\tilde\CB_1(\lambda)(\gamma_1(x)\nabla F_1, 0),  \\
\CA(\lambda)F & = \lambda^{-1}F_1 - \lambda^{-1}\rho_0(x)\dv
\CB_1(\lambda)F, 
\\
\CB_2(\lambda)F & = \tilde\CB_2(\lambda)(F_2, F_3) 
- \lambda^{-1}\tilde\CB_2(\lambda)(\gamma_1(x)\nabla F_1, 0)
+ \CD(\lambda)(0, F_4, F_5),
\end{align*}
and then, we have 
$\zeta = \CA(\lambda)\bF_\lambda$, $\bv= \CB_1(\lambda)\bF_\lambda$,
and $\vartheta = \CB_2(\lambda)\bF_\lambda$, where $\bF_\lambda
= (f_1, \bff_2, f_3, \lambda^{1/2}f_4, f_4)$. Moreover, 
by Lemma \ref{lem:7.3.5}, 
\eqref{cond:4*} and \eqref{eq:7.1.6} we have
\begin{align*}
\CR_{\CL(\CX_q(\Omega), H^{2-k}_q(\Omega)^N)}
(\{(\tau\pd_\tau)^\ell(\lambda^{k/2}\CB_1(\lambda)) \mid
\lambda \in \Sigma_{\epsilon, \lambda_1}\}) & \leq (4+ \lambda_1^{-1}
\alpha_4)r_b, \\
\CR_{\CL(\CX_q(\Omega), H^{2-k}_q(\Omega))}
(\{(\tau\pd_\tau)^\ell(\lambda^{k/2}\CB_2(\lambda)) \mid
\lambda \in \Sigma_{\epsilon, \lambda_1}\}) & \leq (4+ \lambda_1^{-1}
\alpha_4)r_b, \\
\CR_{\CL(\CX_q(\Omega), H^1_q(\Omega))}
(\{(\tau\pd_\tau)^\ell(\lambda \CA(\lambda)) \mid
\lambda \in \Sigma_{\epsilon, \lambda_1}\}) & \leq 1 
+(4+ \lambda_1^{-1}\alpha_4)r_b.
\end{align*}
This completes the proof of Theorem \ref{thm:7.1}.
\subsection{Proof of Theorem \ref{thm:linear:1}} \label{subsec:7.5}

We first prove the generation of a $C_0$ analytic semigroup associated
with Eq. \eqref{linear:1}.  Let 
\begin{equation}\label{p.7.0}\begin{split}
&\CD_q(\Omega)  = \{(\zeta, \bv, \vartheta) \in D_q(\Omega)
\mid \bv|_\Gamma = 0, \quad (\nabla\vartheta)\cdot\bn|_{\Gamma} = 0\}, \\
&A(\zeta, \bv, \vartheta) 
= \left(\begin{matrix} 
-\rho_0(x)\dv\bv \\
\rho_0(x)^{-1}(\mu\Delta\bv+ \nu\nabla\dv\bv-\gamma_1(x)\nabla\zeta
-\gamma_2(x)\nabla\vartheta) \\
\gamma_3(x)^{-1}(-\gamma_2(x)\dv\bv + \dv(\gamma_4(x)\nabla\vartheta))
\end{matrix}\right), \\
&\CA_q(\zeta, \bv, \vartheta) = A(\zeta, \bv, \vartheta)
\quad\text{for $(\zeta, \bv, \vartheta) \in \CD_q(\Omega)$}.
\end{split}\end{equation} 
And then, Eq. \eqref{linear:1} with$f_1=\bff_2=f_3=g=0$ is 
formally written as 
\begin{equation}\label{p.7.1} \pd_t U - \CA_q U = 0
\quad\text{for $t > 0$}, \quad 
U|_{t=0} = U_0,
\end{equation}
where $U_0 = (\zeta_0, \bv_0, \vartheta_0) \in \CH_q(\Omega)$
and $U$ with
$$U \in C^0[0, \infty, \CH_q(\Omega))
\cap C^0((0, \infty), \CD_q(\Omega) \cap C^1((0, \infty), \CH_q(\Omega)).
$$ 
The resolvent equation 
corresponding to \eqref{p.7.1} is 
\begin{equation}\label{p.7.2}
\lambda V- \CA_q V = F \quad \text{in $\Omega$},
\end{equation}
where $F = (f_1, \bff_2, f_3) \in \CH_q(\Omega)$ 
and $V \in \CD_q(\Omega)$. By Theorem \ref{thm:7.1}, we 
see that the resolvent set $\rho(\CA_q)$ of $\CA_q$ contains 
$\Sigma_{\epsilon, \lambda_0}$ and 
for any $F \in \CH_q(\Omega)$ and $\lambda \in \Sigma_{\epsilon, \lambda_0}$, 
$V = (\lambda\bI - \CA_q)^{-1}F
\in \CD_q(\Omega)$ satisfies the 
estimate:
\begin{equation}\label{res.est:1}
|\lambda|\|V\|_{\CH_q(\Omega)} + \|V\|_{\CD_q(\Omega)}
\leq r_b\|F\|_{\CH_q(\Omega)}
\end{equation}
where 
$$\|F\|_{\CH_q(\Omega)} = \|f_1\|_{H^1_q(\Omega)} + 
\|(\bff_2, f_3)\|_{L_q(\Omega)},\quad
\|V\|_{\CD_q(\Omega)} = \|\zeta\|_{H^1_q(\Omega)}
+ \|(\bv, \vartheta)\|_{H^2_q(\Omega)}
$$
for $F = (f_1, \bff_2, f_3) \in \CH_q(\Omega)$ and 
$V = (\zeta, \bv, \vartheta) \in \CD_q(\Omega)$.
Since $0 < \epsilon < \pi/2$,  
the operator $\CA_q$ generates a $C_0$ analytic semigroup
$\{T(t)\}_{t\geq0}$ on $\CH_q(\Omega)$ possessing the estimate
$$\|T(t)F\|_{\CH_q(\Omega)} \leq Ce^{\gamma t}\|F\|_{\CH_q(\Omega)}
\quad(t > 0)
$$
for some constants $C$ and $\gamma$. 

We now consider the maximal $L_p$-$L_q$ regularity for
Eq. \eqref{linear:1} in the case that  $f_1=\bff_2=f_3=g=0$.
Let 
\begin{equation}\label{p.7.3}
\CE_{p,q}(\Omega) = (\CH_q(\Omega), \CD_q(\Omega))_{1-1/p, p}.
\end{equation}
Notice that $\CE_{p,q}(\Omega) \subset D_{p,q}(\Omega)$,
and that for $(\zeta, \bv, \vartheta) \in \CE_{p,q}(\Omega)$ we have 
\begin{equation}\label{p.7.4} 
\bv|_\Gamma=0 \quad\text{for $2/p + 1/q < 2$}, \quad
(\nabla\vartheta)\cdot\bn|_{\Gamma} = 0
\quad\text{for $2/p + 1/q < 1$}. 
\end{equation}
By real interpolation theory, we have 
\begin{thm}\label{thm:p.1} Let $1 < p, q < \infty$.  Assume that
$\Omega$ is a uniformly $C^2$ domain.  Then, 
for $(\zeta_0, \bv_0, \vartheta_0) \in \CE_{p,q}(\Omega)$, 
$(\zeta, \bv, \vartheta) = 
T(t)(\zeta_0, \bv_0, \vartheta_0)$ satisfies Eq. \eqref{linear:1} 
with $f_1=\bff_2=f_3=g= 0$ and 
possesses the estimate:
\begin{align*}
&\|e^{-\gamma t}(\zeta, \bv, \vartheta)\|_{H^1_p((0, \infty), \CH_q(\Omega))}
+ \|e^{-\gamma t}(\bv, \vartheta)\|_{L_p((0, \infty), H^2_q(\Omega))} 
\leq C\|(\zeta_0, \bv_0, \vartheta_0)\|_{D_{p,q}(\Omega)}.
\end{align*}
\end{thm}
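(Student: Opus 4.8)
The plan is to derive Theorem~\ref{thm:p.1} from the $\CR$-boundedness in Theorem~\ref{thm:7.1} via the Weis operator-valued Fourier multiplier theorem together with the trace characterisation of maximal $L_p$-regularity. All of this is carried out for the shifted operator $\CA_q-\gamma$, where $\gamma>0$ is chosen so large that $e^{-\gamma t}T(t)$ is exponentially stable (possible since $\|T(t)F\|_{\CH_q(\Omega)}\le Ce^{\gamma_0t}\|F\|_{\CH_q(\Omega)}$ was already established), so that $0\in\rho(\CA_q-\gamma)$ and one may work on the infinite interval $(0,\infty)$.

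First I would record that $\{(\tau\pd_\tau)^\ell(\lambda(\lambda\bI-\CA_q)^{-1})\mid\lambda\in\Sigma_{\epsilon,\lambda_0}\}$ is $\CR$-bounded on $\CL(\CH_q(\Omega))$ for $\ell=0,1$. Indeed, for $F=(f_1,\bff_2,f_3)\in\CH_q(\Omega)$ the solution of the resolvent problem \eqref{p.7.2} is $V=(\CA(\lambda)F_\lambda,\CB_1(\lambda)F_\lambda,\CB_2(\lambda)F_\lambda)$ with $F_\lambda=(f_1,\bff_2,f_3,0,0)\in\CX_q(\Omega)$, and $F\mapsto F_\lambda$ is a fixed bounded (hence $\CR$-bounded) operator from $\CH_q(\Omega)$ into $\CX_q(\Omega)$. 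Composing with the $\CR$-bounds of Theorem~\ref{thm:7.1} for $\lambda\CA(\lambda)$ (valued in $H^1_q(\Omega)$, as obtained at the end of Subsection~\ref{subsec:7.4}) and for $\lambda\CB_1(\lambda)$, $\lambda\CB_2(\lambda)$ (the case $j=2$, valued in $L_q(\Omega)$), and using Lemma~\ref{lem:7.3.4}, yields the claim. Since $0<\epsilon<\pi/2$ the sector $\Sigma_\epsilon$ has half-angle exceeding $\pi/2$, so after the shift by $-\gamma$ (a short geometric argument shows $\mu+\gamma\in\Sigma_{\epsilon,\lambda_0}$ whenever $\mu$ lies in a sector slightly larger than the right half-plane and $\gamma$ is large) the hypotheses of the Weis theorem in the form of \cite{DHP} are met, and $\CA_q-\gamma$ has maximal $L_p$-$L_q$ regularity on $(0,\infty)$: for every $F\in L_p((0,\infty),\CH_q(\Omega))$ the solution of $\pd_tU-(\CA_q-\gamma)U=F$, $U|_{t=0}=0$, belongs to $H^1_p((0,\infty),\CH_q(\Omega))\cap L_p((0,\infty),\CD_q(\Omega))$ with the corresponding a priori bound.

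Next I would invoke the standard trace theorem for operators with maximal $L_p$-regularity (cf. \cite[p.10]{Tanabe}, \cite{DHP}): since $\CA_q-\gamma$ has maximal regularity and $0\in\rho(\CA_q-\gamma)$, the solution $e^{-\gamma t}T(t)U_0$ of the homogeneous Cauchy problem $\pd_tU-(\CA_q-\gamma)U=0$, $U|_{t=0}=U_0$, lies in $H^1_p((0,\infty),\CH_q(\Omega))\cap L_p((0,\infty),\CD_q(\Omega))$ precisely when $U_0\in(\CH_q(\Omega),\CD_q(\Omega))_{1-1/p,p}=\CE_{p,q}(\Omega)$, and then $\|e^{-\gamma t}T(\cdot)U_0\|_{H^1_p((0,\infty),\CH_q(\Omega))}+\|e^{-\gamma t}T(\cdot)U_0\|_{L_p((0,\infty),\CD_q(\Omega))}\le C\|U_0\|_{\CE_{p,q}(\Omega)}$. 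Restricting the $\CD_q(\Omega)$-part to the $(\bv,\vartheta)$-components and recalling the definition of the $\CD_q(\Omega)$-norm produces the $L_p((0,\infty),H^2_q(\Omega))$-term of the asserted estimate; that $(\zeta,\bv,\vartheta)=T(t)U_0$ actually solves \eqref{linear:1} with vanishing data is built into the fact that $\CA_q$ is the realisation \eqref{p.7.0} of the differential operator on the domain $\CD_q(\Omega)$, which encodes $\bv|_\Gamma=0$ and $(\nabla\vartheta)\cdot\bn|_\Gamma=0$.

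It remains to replace $\|U_0\|_{\CE_{p,q}(\Omega)}$ by $\|U_0\|_{D_{p,q}(\Omega)}$, which is where the only genuine technical point lies. Because $\CH_q(\Omega)$ and $\CD_q(\Omega)$ share the first component $H^1_q(\Omega)$, the real interpolation space splits componentwise, so $\CE_{p,q}(\Omega)$ is $H^1_q(\Omega)$ times the interpolation of $L_q$ with the subspace of $H^2_q$ having vanishing Dirichlet trace, times the interpolation of $L_q$ with the subspace of $H^2_q$ satisfying the Neumann-type condition $(\nabla\,\cdot\,)\cdot\bn|_\Gamma=0$. For the ranges $2/p+1/q\neq1,2$, real interpolation commutes with imposing these boundary conditions (a retraction--coretraction argument based on the boundary-trace theory for Besov spaces), so $\CE_{p,q}(\Omega)$ coincides, with equivalent norms, with the subspace of $D_{p,q}(\Omega)$ cut out by the compatibility conditions \eqref{p.7.4}; this gives $\|U_0\|_{\CE_{p,q}(\Omega)}\le C\|U_0\|_{D_{p,q}(\Omega)}$ for such $U_0$ and completes the proof. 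The main obstacle is exactly this last identification---checking that real interpolation is compatible with the boundary conditions built into $\CD_q(\Omega)$---the remainder being a routine application of the $\CR$-calculus and the abstract maximal-regularity/trace machinery.
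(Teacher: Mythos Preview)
Your proposal is correct. The paper itself does not give a detailed proof of Theorem~\ref{thm:p.1}; it simply remarks that the argument is the same as in Theorem~3.9 of Shibata--Shimizu \cite{SS2} and omits it. Your route---establish $\CR$-boundedness of $\lambda(\lambda\bI-\CA_q)^{-1}$ from Theorem~\ref{thm:7.1}, apply Weis' theorem to obtain maximal $L_p$-regularity for the shifted operator, and then invoke the trace characterisation of the space of admissible initial data---is a valid and standard path.

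The argument presumably referenced in \cite{SS2} is more direct: once one knows that $\CA_q$ generates a $C_0$ analytic semigroup with $\Sigma_{\epsilon,\lambda_0}$ in its resolvent set (which the paper has already deduced from the resolvent estimate \eqref{res.est:1}), the identity
\[
(\CH_q(\Omega),\CD_q(\Omega))_{1-1/p,p}
=\bigl\{U_0: e^{-\gamma t}T(\cdot)U_0\in H^1_p((0,\infty),\CH_q(\Omega))\cap L_p((0,\infty),\CD_q(\Omega))\bigr\}
\]
with equivalence of norms is a general real-interpolation fact for analytic semigroups (this is exactly the content behind \eqref{real-int:1}--\eqref{real-int:2}, cf.\ \cite{Tanabe}). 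Thus one obtains the estimate with the $\CE_{p,q}$-norm on the right without first passing through the inhomogeneous maximal-regularity machinery. Your approach buys a self-contained derivation from the $\CR$-bounded solution operators already constructed in Theorem~\ref{thm:7.1}; the direct route is shorter but relies on the abstract interpolation characterisation of semigroup orbits. Your final step---identifying the $\CE_{p,q}$-norm with the $D_{p,q}$-norm via the retraction argument away from the critical lines $2/p+1/q\in\{1,2\}$---is the right way to close, and matches the restriction that appears in the application (Theorem~\ref{thm:linear:1}).
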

\begin{remark} Theorem \ref{thm:p.1} can be shown employing the same argument as that in the proof of
Theorem 3.9 in Shibata-Shimizu \cite{SS2}, so we may omit the proof. 
\end{remark}

We next consider Eq. \eqref{linear:1} in the case that 
$(\zeta_0, \bv_0, \vartheta_0) = 0$.  Notice that
$g$ is defined on $\BR$ with respect to $t$. We extend
$f_1$, $\bff_2$,  and $f_3$ to $\BR$ as follows:
$$f_{10}(\cdot, t) = \begin{cases} f_1(\cdot, t) \enskip&t \in (0, T), \\
0\enskip&t \not\in(0,T),\end{cases} \quad
\bff_{20}(\cdot, t) = \begin{cases} \bff_2(\cdot, t) \enskip&t \in (0, T), \\
0\enskip&t \not\in(0,T),\end{cases}
\quad
f_{30}(\cdot, t) = \begin{cases} f_3(\cdot, t) \enskip&t \in (0, T), \\
0\enskip&t \not\in(0,T).\end{cases}
$$
We then consider the following equations:
\begin{equation}\label{p.7.5}\pd_tV_1 - A V_1 = (f_{10}, \bff_{20}, f_{30})
\quad\text{in $\Omega\times\BR$}, \quad 
\bv_1=0, \quad (\nabla\vartheta_1)\cdot\bn=g \quad
\text{on $\Gamma \times\BR$},
\end{equation}
where $V_1 = (\zeta_1, \bv_1, \vartheta_1)$. We use the Laplace transform
$\CL$  with respect to $t$ and its inversion formula $\CL^{-1}$, which
are defined by
\begin{align*}
\CL[f](\cdot, \lambda) &= \int^\infty_{-\infty}e^{-\lambda t}f(\cdot, t)\,dt
= \CF[e^{-\gamma t}f](\tau) \quad(\lambda = \gamma + i\tau \in \BC), \\
\CL^{-1}[g](\cdot,t) & =\frac{1}{2\pi}\int^\infty_{-\infty} e^{\lambda t}
g(\cdot, \tau)\,d\tau = e^{\gamma t}\CF^{-1}[g](\cdot, t).
\end{align*}
where $\CF$ and $\CF^{-1}$ denote the Fourier transform with respect to
$t$ and its inverse. Applying the Laplace transform
to Eq. \eqref{p.7.5}, we have 
\begin{equation}\label{p.7.6} \lambda\hat V_1 - A\hat V_1
=(\CL[f_{10}], \CL[\bff_{20}], \CL[f_{30}]) \quad\text{in $\Omega$},
\quad \hat \bv_1=0,\quad (\nabla\hat\vartheta_1)\cdot\bn=
\CL[g] \quad\text{on $\Gamma$}.
\end{equation}
Let $\CS(\lambda) = (\CA(\lambda), \CB_1(\lambda), \CB_2(\lambda))$
be  $\CR$ bounded solution operators given in Theorem \ref{thm:7.1}.
We then have $\hat V_1(\lambda) = \CS(\lambda)\bF_\lambda$, where
we have set 
$$\bF_\lambda = (\CL[f_{10}](\lambda), \CL[\bff_{20}](\lambda),
\CL[f_{30}](\lambda), 
\lambda^{1/2}\CL[g](\lambda), \CL[g](\lambda)).
$$
We now introduce an operator $\Lambda^{1/2}_\gamma$ by 
$$\Lambda_\gamma^{1/2}g = \CL^{-1}[\lambda^{1/2}\CL[g](\lambda)].
$$
Since
$$|(\tau\pd_\tau)^\ell(\lambda^{1/2}/(1+\tau^2)^{1/4})|
\leq C_\gamma
$$
for any $\lambda = \gamma + i\tau \in \BC$ with some constant
$C_\gamma$ depending solely on $\gamma \in \BR$, 
by Bourgain theorem (cf. Lemma \ref{lem:7.3.5}), we have 
\begin{equation}\label{p.7.7}
\|e^{-\gamma t}\Lambda_\gamma^{1/2}g\|_{L_p(\BR, L_q(\Omega))}
\leq C_\gamma\|e^{-\gamma t }g\|_{H^{1/2}_p(\BR, L_q(\Omega))}
\end{equation}
for any $\gamma > 0$. Since $\lambda^{1/2}\CL[g](\lambda)
= \CL[\Lambda_\gamma^{1/2}g](\lambda)$, using Theorem \ref{thm:7.1}
and Weis' operator valued Fourier multiplier theorem \cite{Weis},
we have
\begin{multline*}
\|e^{-\gamma t}(\zeta_1, \bv_1, \vartheta_1)\|_{H^1_p(\BR, \CH_q(\Omega))}
+ \|e^{-\gamma t}(\bv_1 \vartheta_1)\|_{L_p(\BR, H^2_q(\Omega))} \\
\leq r_b(\|e^{-\gamma t}(f_{10}, \bff_{20}, f_{30})
\|_{L_p(\BR, \CH_q(\Omega))} 
+ \|e^{-\gamma t}\Lambda_\gamma^{1/2}g\|_{L_p(\BR, L_q(\Omega))}
+ \|e^{-\gamma t}g\|_{L_p(\BR, H^1_q(\Omega))}),
\end{multline*}
which, combined with \eqref{p.7.7}, leads to 
\begin{equation}\label{p.7.8} \begin{split}
&\|(\zeta_1, \bv_1, \vartheta_1)\|_{H^1_p((0, T), \CH_q(\Omega))}
+ \|(\bv_1, \vartheta_1)\|_{L_p((0, T), H^2_q(\Omega))} \\
&\quad\leq r_b e^{\gamma T}(
\|(f_1, \bff_2, f_3)\|_{L_p((0, T),\CH_q(\Omega))} 
+ C_\gamma \|e^{-\gamma t}g\|_{H^{1/2}_p(\BR, L_q(\Omega))} 
+ \|e^{-\gamma t}g\|_{L_p(\BR, H^1_q(\Omega))}).
\end{split}\end{equation}
Finally, let $V_2$ be a solution of the system:
$$\left\{\begin{aligned}
\pd_t V_2 - AV_2 &= 0 &\qquad&\text{in $\Omega\times(0, \infty)$}, \\
\bv_2=0, \quad(\nabla\vartheta_2)\cdot\bn &= 0
&\qquad&\text{on $\Gamma\times(0, \infty)$}, \\
V_2 &= V_0-V_1|_{t=0}&\qquad&\text{in $\Omega$}.
\end{aligned}\right.$$
By the compatibility condition, $V_0-V_1|_{t=0} \in 
\CD_{p,q}(\Omega)$ provided that $2/p + 1/q \not=1$ and 
$2/p + 1/q \not=1$.  Thus, by Theorem \ref{thm:p.1}
we see that $V_2=(\zeta_2, \bv_2, \vartheta_2)$ 
exists and satisfies the following estimate:
\begin{align*}
&\|e^{-\gamma t}(\zeta_2, \bv_2, \vartheta_2)
\|_{H^1_p((0, \infty), \CH_q(\Omega))} 
+ \|e^{-\gamma t}(\bv_2, \vartheta_2)\|_{L_p((0, \infty), 
H^2_q(\Omega))}\\
&\quad  \leq 
C(\|(\zeta_0-\zeta_1|_{t=0}, 
\bv_0-\bv_1|_{t=0}, \vartheta_0-\vartheta_1|_{t=0})
\|_{D_{p,q}(\Omega)}).
\end{align*}
By real interpolation theorem, we have
$$\|(\bv_1|_{t=0}, \vartheta_1|_{t=0})\|_{B^{2(1-1/p)}_{q,p}(\Omega)}
\leq C(\|e^{-\gamma t}\pd_t(\bv_1, \vartheta_1)
\|_{L_p((0, \infty), L_q(\Omega))}
+ \|e^{-\gamma t}(\bv_1, \vartheta_1)\|_{L_p((0, \infty), H^2_q(\Omega))})
$$
because $e^{-\gamma t}(\bv_1, \vartheta_1)|_{t=0}
= (\bv_1|_{t=0}, \vartheta_1|_{t=0})$.  Putting
$\zeta = \zeta_1 + \zeta_2$, $\bv = \bv_1 + \bv_2$ and 
$\vartheta = \vartheta_1 + \vartheta_2$, we see that
$\zeta$, $\bv$ and $\vartheta$ are required solutios of Eq. \eqref{linear:1}.
The uniqueness follows from the existence of solutions for the dual problem
(cf. Shibata-Shimizu \cite[Proof of Theorem 4.3]{SS2}). 
This completes the proof of Theorem \ref{thm:linear:1}. 
\section{Decay Estimate -- proof of Theorem \ref{thm:decay1}}
\label{sec:8}
To prove Theorem \ref{thm:decay1}, we first prove the existence of a 
$C^0$ analytic semigroup associated with Eq. \eqref{newlinear:1} that
is exponentially stable.  For this purpose, we consider the resolvent
problem:
\begin{equation}\label{8.1} \left\{\begin{aligned}
\lambda\zeta + a_{0*}\dv\bv &= f_1&\quad&\text{in $\Omega$}, \\
\lambda\bv - a_{0*}^{-1}(\mu\Delta\bv + \nu\nabla\dv\bv
-a_{1*}\nabla\zeta-a_{2*}\nabla \vartheta) & = \bff_2
&\quad&\text{in $\Omega$}, \\
\lambda\vartheta + a_{3*}^{-1}
(a_{2*}\dv\bv - a_{4*}\Delta\vartheta) & = f_3
&\quad&\text{in $\Omega$}, \\
\bv|_{\Gamma}=0, \quad (\nabla\vartheta)\cdot\bn|_\Gamma
& = 0.
\end{aligned}\right.\end{equation}
We shall prove 
\begin{thm}\label{thm:8.1} Let $1 < q <\infty$ and $0 < \epsilon < \pi/2$. 
Assume that $\Omega$ is a bounded domain in $\BR^N$ $(N \geq 2)$ whose
boundary $\Gamma$ is a compact hypersurface of $C^2$ class.
Assume that $a_{0*}$, $a_{1*}$, $\mu$, $\nu$, $a_{3*}$, and 
$a_{4*}$ are positive constans and that $a_{2*}$ is a 
non-zero constant. Let 
\begin{equation}\label{space:8.1}
\hat \CH_q(\Omega) = \{(f_1, \bff_2, f_3) \in \CH_q(\Omega) \mid 
\int_\Omega f_1\,dx = \int_\Omega f_3\,dx = 0\}.
\end{equation}
Set $\bC_+ = \{\lambda \in \BC \mid {\rm Re}\,\lambda \geq 0$\}.
Then, for any $\lambda \in \bC_+$ and $(f_1, \bff_2, f_3) \in 
\hat H_q(\Omega)$, problem \eqref{8.1} admits a unique solution
$U = (\zeta, \bv, \vartheta) \in D_q(\Omega)\cap \hat \CH_q(\Omega)$
possessing the esitmate:
\begin{equation}\label{8.2} (|\lambda|+1)\|(\zeta, 
\bv, \vartheta)\|_{\CH_q(\Omega)}
+ \|(\bv, \vartheta)\|_{H^2_q(\Omega)} \leq 
C\|(f_1, \bff_2, f_3)\|_{\CH_q(\Omega)}.
\end{equation}
\end{thm}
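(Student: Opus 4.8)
The plan is to establish the resolvent bound \eqref{8.2} on $\bC_+$ in two frequency regimes and then combine them with a uniqueness argument to also obtain existence on all of $\bC_+$. First observe that the mean–zero constraints are compatible with the equations: integrating the first equation of \eqref{8.1} over $\Omega$ and using $\bv|_\Gamma=0$ gives $\lambda\int_\Omega\zeta\,dx=\int_\Omega f_1\,dx=0$, while integrating the third equation and using $(\nabla\vartheta)\cdot\bn|_\Gamma=0$ gives $\lambda a_{3*}\int_\Omega\vartheta\,dx=\int_\Omega f_3\,dx=0$, so for $\lambda\ne0$ any solution automatically lies in $\hat\CH_q(\Omega)$, and at $\lambda=0$ the mean–zero condition is part of the solution class. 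For $|\lambda|\ge\lambda_0$ with $\lambda_0$ large everything is already contained in Theorem \ref{thm:7.1}: since $0<\epsilon<\pi/2$ we have $\bC_+\setminus\{0\}\subset\Sigma_\epsilon$, hence $\{\lambda\in\bC_+:|\lambda|\ge\lambda_0\}\subset\Sigma_{\epsilon,\lambda_0}$, and the $\CR$–bounded solution operators produce the unique solution together with $|\lambda|\|(\zeta,\bv,\vartheta)\|_{\CH_q(\Omega)}+\|(\bv,\vartheta)\|_{H^2_q(\Omega)}\le C\|(f_1,\bff_2,f_3)\|_{\CH_q(\Omega)}$, which is \eqref{8.2} for $|\lambda|\ge\lambda_0\ge1$.

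Next I would prove that for every $\lambda\in\bC_+$ the homogeneous problem ($f_1=\bff_2=f_3=0$) has only the trivial solution in $D_q(\Omega)\cap\hat\CH_q(\Omega)$. One first reduces to $q=2$ (automatic when $q\ge2$ on the bounded domain $\Omega$; for $1<q<2$ a homogeneous solution is, by a standard elliptic bootstrap at fixed $\lambda$, in $D_2(\Omega)$). Then, taking the $L_2$ inner product (with complex conjugate) of the momentum equation with $\bv$, of the continuity equation with $(a_{1*}/a_{0*})\zeta$, and of the temperature equation with $\vartheta$, and integrating by parts using the boundary conditions, the $a_{1*}$– and $a_{2*}$–cross terms cancel after taking real parts, leaving
\[
({\rm Re}\,\lambda)\Big(\tfrac{a_{1*}}{a_{0*}}\|\zeta\|_{L_2}^2+a_{0*}\|\bv\|_{L_2}^2+a_{3*}\|\vartheta\|_{L_2}^2\Big)+\mu\|\nabla\bv\|_{L_2}^2+\nu\|\dv\bv\|_{L_2}^2+a_{4*}\|\nabla\vartheta\|_{L_2}^2=0 .
\]
Since ${\rm Re}\,\lambda\ge0$ and $\mu,\nu,a_{4*}>0$, this forces $\nabla\bv=0$, hence $\bv=0$ by the Dirichlet condition, $\nabla\vartheta=0$, hence $\vartheta=0$ by mean–zero, and then the momentum equation gives $a_{1*}\nabla\zeta=0$, so $\zeta=0$ again by mean–zero. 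For $1<q<2$ uniqueness then also follows from existence for the dual problem. Thus $\lambda\bI-\CA_{q*}$ is injective on its domain for every $\lambda\in\bC_+$, where $\CA_{q*}$ denotes the operator on $\hat\CH_q(\Omega)$ attached to \eqref{8.1}.

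It remains to treat bounded $\lambda$, i.e.\ $\lambda\in\bC_+$ with $|\lambda|\le\lambda_0$; this is the main obstacle. I would argue by contradiction and compactness, using two auxiliary facts. The first is an a priori elliptic bound: applying $L_q$–regularity for the Lam\'e system (Dirichlet data) to the momentum equation and for the Laplacian (conormal data) to the temperature equation, treating all remaining terms as $L_q$ right–hand sides and absorbing the lower–order norms of $\bv,\vartheta$ by interpolation, one gets, for $|\lambda|\le\lambda_0$,
\[
\|(\zeta,\bv,\vartheta)\|_{D_q(\Omega)}\le C_{\lambda_0}\big(\|(\zeta,\bv,\vartheta)\|_{\CH_q(\Omega)}+\|(f_1,\bff_2,f_3)\|_{\CH_q(\Omega)}\big).
\]
The second, and decisive, fact is a reduced scalar identity for the density, obtained by taking the divergence of the momentum equation and eliminating $\dv\bv$ through the continuity equation:
\[
\Big(a_{1*}+\tfrac{(\mu+\nu)\lambda}{a_{0*}}\Big)\Delta\zeta+a_{2*}\Delta\vartheta-\lambda^2\zeta
= a_{0*}\dv\bff_2-\lambda f_1+\tfrac{\mu+\nu}{a_{0*}}\Delta f_1 ,
\]
whose leading coefficient stays in a compact subset of $\{{\rm Re}\,z\ge a_{1*}\}$ for $\lambda\in\bC_+$, $|\lambda|\le\lambda_0$; combined with the Laplace equation satisfied by $\vartheta$ this lets one recover $\nabla\zeta$ from data that converges strongly along a contradiction sequence. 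Concretely, if \eqref{8.2} fails for bounded $\lambda$ there are $\lambda_n\in\bC_+$, $|\lambda_n|\le\lambda_0$, data $F_n\to0$ in $\hat\CH_q(\Omega)$ and solutions $U_n=(\zeta_n,\bv_n,\vartheta_n)\in D_q(\Omega)\cap\hat\CH_q(\Omega)$ with $\|U_n\|_{\CH_q(\Omega)}=1$; by the first fact $\|U_n\|_{D_q(\Omega)}$ is bounded, so after passing to a subsequence $\lambda_n\to\lambda_\infty\in\bC_+$, $\bv_n,\vartheta_n$ converge strongly in $H^1_q(\Omega)$ and $\zeta_n$ strongly in $L_q(\Omega)$ (Rellich) and weakly in $H^1_q(\Omega)$; the limit solves the homogeneous problem in $D_q(\Omega)\cap\hat\CH_q(\Omega)$ and hence vanishes. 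The contradiction is closed by showing $\nabla\zeta_n\to0$ in $L_q(\Omega)$: using $\nabla\zeta_n=\lambda_n^{-1}(\nabla f_{1,n}-a_{0*}\nabla\dv\bv_n)$ together with the Lam\'e estimate for $\bv_n$ and the reduced identity above (whose right–hand side tends to $0$), the weak convergence of $\zeta_n$ is upgraded to strong convergence in $H^1_q(\Omega)$, contradicting $\|\zeta_n\|_{H^1_q(\Omega)}\to1$.

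Finally, existence for bounded $\lambda$ follows by a continuation argument: $\rho(\CA_{q*})$ is open and, by the large–$\lambda$ step, contains $\{|\lambda|\ge\lambda_0\}\cap\bC_+$; thanks to the now uniform a priori bound \eqref{8.2} together with the elliptic $D_q$–estimate it is also relatively closed in the connected set $\bC_+$, so $\bC_+\subset\rho(\CA_{q*})$, and assembling the two regimes yields \eqref{8.2} as stated (the factor $|\lambda|+1$ rather than $|\lambda|$ accounts for the behaviour near $\lambda=0$). Throughout, the genuinely hard point is the low–frequency analysis near $\lambda=0$, where the continuity equation no longer determines $\zeta$ from $\bv$ and one must exploit both the reduced scalar equation for $\zeta$ and the mean–zero/Poincar\'e structure to break the apparent circularity between the $H^2_q$–bound on $\bv$ and the $H^1_q$–bound on $\zeta$.
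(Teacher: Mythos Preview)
Your overall strategy---large $\lambda$ via Theorem \ref{thm:7.1}, uniqueness by an energy identity (and duality for $1<q<2$), and bounded $\lambda$ by a compactness--contradiction argument plus continuation---is reasonable and genuinely different from the paper's Fredholm-alternative route. The energy identity and the elliptic a~priori bound $\|U\|_{D_q}\le C(\|U\|_{\CH_q}+\|F\|_{\CH_q})$ are correct.

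The gap is in closing the contradiction when $\lambda_n\to 0$. You propose to force $\nabla\zeta_n\to 0$ in $L_q$ via $\nabla\zeta_n=\lambda_n^{-1}(\nabla f_{1,n}-a_{0*}\nabla\dv\bv_n)$ together with the reduced scalar identity for $\Delta\zeta_n$. The first formula is useless as $\lambda_n\to 0$: the factor $\lambda_n^{-1}$ blows up, and you have not shown $\bv_n\to 0$ \emph{strongly} in $H^2_q$ (the Lam\'e reduction you need has second coefficient $\nu+a_{1*}a_{0*}\lambda_n^{-1}\to\infty$, so its estimate degenerates). The reduced identity, after using $\Delta\vartheta_n\to 0$ in $L_q$ from the Neumann equation, gives only $\Delta\zeta_n\to 0$ in $H^{-1}_q$; but $\zeta$ carries \emph{no} boundary condition, so this together with $\zeta_n\to 0$ in $L_q$ does not imply $\nabla\zeta_n\to 0$ in $L_q$ (on the disk, $\zeta_n={\rm Re}\,z^n$ is harmonic with $\|\zeta_n\|_{L_2}\sim n^{-1/2}$ yet $\|\nabla\zeta_n\|_{L_2}\sim n^{1/2}$). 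The ``mean--zero/Poincar\'e structure'' you invoke bounds $\|\zeta\|_{L_q}$ by $\|\nabla\zeta\|_{L_q}$, which is the wrong direction. What is actually needed near $\lambda=0$ is the Stokes/Cattabriga estimate for the coupled pair $(\bv,\zeta)$: writing the first two equations as $-\mu\Delta\bv+a_{1*}\nabla\zeta=G$, $\dv\bv=g$, $\bv|_\Gamma=0$, $\int_\Omega\zeta=0$, one gets $\|\bv\|_{H^2_q}+\|\zeta\|_{H^1_q}\le C(\|G\|_{L_q}+\|g\|_{H^1_q})$, and for small $|\lambda|$ the $\lambda$-terms are a small bounded perturbation of \emph{this} system, which is precisely what breaks the circularity you identify. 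The paper uses exactly this Cattabriga problem at $\lambda=0$ and, for $\lambda\ne 0$, bypasses the whole issue by writing \eqref{8.5} as $(\bI+(\lambda-\tau)(\tau\bI-\CA_\lambda)^{-1})$ acting on $L_q$, a compact perturbation of the identity, and invoking the Fredholm alternative together with the energy/duality uniqueness.
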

\begin{proof}
Employing the same argument as that in the proof of Theorem
\ref{thm:7.1}, we can prove the existence of $\CR$-bounded solution 
operators corresponding to Eq. \eqref{8.1}, and so there exists 
$\lambda_0 \geq 1$ such that for any 
$\lambda \in \Sigma_{\epsilon, \lambda_0}$ and $(f_1, \bff_2, f_3) 
\in \CH_q(\Omega)$, problem \eqref{8.1} admits a unique solution 
$(\zeta, \bv, \vartheta) \in D_q(\Omega)$ possessing the estimate
\eqref{8.2}.  Moreover, if $f_1$ and $f_3$ satisfy zero average condition, 
then $\zeta$ and $\vartheta$ also satisfy this
condition in the case that $\lambda\not=0$, what can be easily observed intergating \eqref{8.1}$_1$ and \eqref{8.1}$_3$
and applying the boundary conditions.
Thus,  for $\lambda \in \Sigma_{\epsilon, \lambda_0}$
the solutions obtained above belong to $\hat \CH_q(\Omega)$.

Let $\CB_{\lambda_0} = \{\lambda \in \BC \mid {\rm Re}\,\lambda \geq0,
\enskip |\lambda| \leq \lambda_0\}$.  Our task now is to prove
the unique existence theorem  for $\lambda \in \CB_{\lambda_0}$. We first
consider the case where $\lambda\not=0$.  Inserting the formula
$\zeta = \lambda^{-1}(f_1 - a_{*0}\dv\bv)$ into the second equation
in Eq. \eqref{8.1}, it becomes
$$\lambda\bv - a_{0*}^{-1}\{\mu\Delta \bv 
+ (\nu + \lambda^{-1}a_{1*}a_{0*})\nabla\dv\bv 
-a_{2*}\nabla\vartheta\} = \bff_2-a_{0*}^{-1}a_{1*}\lambda^{-1}
\nabla f_1.
$$
Thus, we consider the following equations:
\begin{equation}\label{8.5}\left\{\begin{aligned}
\lambda\bv - a_{0*}^{-1}\{\mu\Delta \bv 
+ (\nu + \lambda^{-1}a_{1*}a_{0*})\nabla\dv\bv\}
+a_{0*}^{-1}a_{2*}\nabla\vartheta &= \bff_2 &\quad&\text{in $\Omega$}, \\
\lambda\vartheta + a_{2*}a_{3*}^{-1}\dv\bv - 
a_{3*}^{-1}a_{4*}\Delta\vartheta & = f_3
&\quad&\text{in $\Omega$}, \\
\bv|_\Gamma = 0, \quad (\nabla\vartheta)\cdot\bn|_\Gamma
&=0.
\end{aligned}\right.\end{equation}
To solve Eq. \eqref{8.5}, we introduce a new resolvent 
parameter $\tau > 0$ and we consider auxiliary problem:
\begin{equation}\label{8.6}
\tau(\bv, \vartheta)  - \CA_\lambda(\bv, \vartheta)
=(\bg_1, g_2) 
\quad\text{in $\Omega$},
\end{equation}
where we have set 
\begin{align*}
\CA_\lambda(\bv, \vartheta) & 
= (A_{1\lambda}\bv - a_{0*}^{-1}a_{2*}\nabla\vartheta, \,
a_{3*}^{-1}a_{4*}\Delta\vartheta-a_{2*}a_{3*}^{-1}\dv\bv)
\quad \text{for $(\bv, \vartheta) \in 
\CD^1_q(\Omega)\times \CD^2_q(\Omega)$},\\
\CD^1_q(\Omega) &= \{\bv \in H^2_q(\Omega)^N \mid \bv|_\Gamma=0\}, 
\quad \CD_q^2(\Omega) = \{ \vartheta \in H^2_q(\Omega) \mid
(\nabla\vartheta)\cdot\bn|_\Gamma=0\}, \\
A_{1\lambda}\bv & = a_{0*}^{-1}(\mu\Delta\bv+ (\nu+\lambda^{-1}a_{1*}a_{0*})
\nabla\dv\bv) \quad\text{for $\bv \in H^2_q(\Omega)$}. 
\end{align*}
Let 
\begin{align*}
\CA_{1\lambda}\bv &= a_{0*}^{-1}(\mu\Delta\bv + (\nu+\lambda^{-1}a_{1*}a_{0*})
\nabla\dv\bv) \quad\text{for $\bv \in \CD^1_q(\Omega)$}, \\
\CA_2 \vartheta &= a_{3*}^{-1}a_{4*}\Delta\vartheta
\quad\text{for $\vartheta \in \CD^2_q(\Omega)$}.
\end{align*} 
Shibata and Tanaka \cite{ST1} proved that there exists a $\tau_0 > 0$ such that
$(\tau\bI - \CA_{1\lambda})^{-1}$ 
exists as a bounded linear operator from $L_q(\Omega)^N$ into
$\CD_q^1(\Omega)$ for  $\tau \geq \tau_0$ possessing the estimate:
\begin{equation}\label{aux:1}
\tau\|\bw\|_{L_q(\Omega)} + \tau^{1/2}\|\bw\|_{H^1_q(\Omega)}
+\|\bw\|_{H^2_q(\Omega)}
\leq C\|\bg_1\|_{L_q(\Omega)}
\end{equation} 
for any $\tau \geq \tau_0$ and $\bg_1 \in L_q(\Omega)^N$, where 
we have set $\bw = (\tau\bI-\CA_{1\lambda})^{-1}\bg_1$.
And, by Theorem \ref{thm:7.2}
we see that there exists a $\tau_0 > 0$ such that 
$(\tau\bI-\CA_2)^{-1}$ exists as a bounded linear operator from 
$L_q(\Omega)$ into $\CD^2_q(\Omega)$ for  $\tau \geq \tau_0$ possessing
the estimate:
\begin{equation}\label{aux:2}
\tau\|\varphi\|_{L_q(\Omega)} + \tau^{1/2}\|\varphi\|_{H^1_q(\Omega)}
+\|\varphi\|_{H^2_q(\Omega)}
\leq C\|g_2\|_{L_q(\Omega)}
\end{equation}
for any $\tau \geq \tau_0$ and $g_2 \in L_q(\Omega)$, where 
we have set $\varphi = (\tau\bI-A_{2})^{-1}\bg_1$. To solve \eqref{8.6},
we set $(\bv, \vartheta) = ((\tau\bI-\CA_{1\lambda})^{-1}\bg_1, 
(\tau\bI-\CA_2)^{-1}g_2)$.  We then have
\begin{equation}\label{aux:3}
\tau(\bv, \vartheta) - \CA_\lambda(\bv, \vartheta) = (\bg_1, g_2) 
+ \CR_\tau(\bg_1, g_2) 
\end{equation}
where we have set
$$\CR_\tau(\bg_1, g_2)  = (a_{0*}^{-1}a_{2*}\nabla(\tau\bI-\CA_2)^{-1}g_2,\,
a_{2*}a_{3*}^{-1}\dv(\tau\bI - \CA_1)^{-1}\bg_1). 
$$
By \eqref{aux:1} and \eqref{aux:2}, we have 
$$\|\CR_\tau(\bg_1, g_2)\|_{L_q(\Omega)} \leq C\tau^{-1/2}
\|(\bg_1, g_2)\|_{L_q(\Omega)},
$$
and so for large $\tau > 0$, $(\bI - \CR_\tau)^{-1}$ exists as an element in 
$\CL(L_q(\Omega)^{N+1})$ and $\|(\bI + \CR_\tau)^{-1}\|_{\CL(L_q(\Omega)^{N+1})} \leq 2$. Let $(\bI+\CR_\tau)^{-1}(\bg_1, g_2)
= (\bh_{1\tau}, h_{2\tau})$, and then $\bv_\tau = (\tau\bI-\CA_1)\bh_{\tau1} \in 
\CD^1_q(\Omega)$ and $\vartheta_\tau = (\tau-\CA_2)^{-1}h_{2\tau} \in 
\CD_q^2(\Omega)$ are unique solutions of Eq. \eqref{8.6} possessing the 
estimate: 
\begin{equation}\label{aux:4}
\tau\|(\bv_\tau, \vartheta_\tau)\|_{L_q(\Omega)}
+ \|\bv_\tau, \vartheta_\tau)\|_{H^2_q(\Omega)}
\leq C\|(\bg_1, g_2)\|_{L_q(\Omega)}
\end{equation} 
for any large $\tau > 0$. Namely, the resolvent set $\rho(\CA_\lambda)$ of 
$\CA_\lambda$ contains $(\tau_1, \infty)$ for some $\tau_1 > 0$. 
We then write the resolvent operator by $(\tau\bI - \CA_\lambda)^{-1}$
as usual. If we set $(\bv_\tau, \vartheta_\tau) = (\bI-\CA_\lambda)^{-1}
(\bg_1, g_2)$, then $(\bv_\tau, \vartheta_\tau)$ satisfies the estimate
\eqref{aux:4}. Using $(\tau\bI-\CA_\tau)^{-1}$, we write Eq. \eqref{8.5}
as 
\begin{equation}\label{8.10}
(\bv, \vartheta) + (\lambda-\tau)(\tau\bI - \CA_\lambda)^{-1}(\bv, \vartheta)
= (\tau\bI - \CA_\lambda)^{-1}(\bg_1, g_2).
\end{equation}
Since $(\lambda-\tau)(\tau\bI - \CA_\lambda)^{-1}$ is a compact operator
on $L_q(\Omega)^{N+1}$, in view of Riesz-Schauder theory, in particular
Fredholm alternative principle, it is sufficient to prove that
the kernel of $\bI + (\lambda-\tau)(\tau\bI
- \CA_\lambda)^{-1}$ is trivial in order to prove the existence of
$(\bI + (\lambda-\tau)(\tau\bI - \CA_\lambda)^{-1})^{-1}
\in \CL(L_q(\Omega)^{N+1})$. Thus, let $(\bg_1, g_2)$ be an element in
$L_q(\Omega)^{N+1}$ for which 
$$(\bI + (\lambda-\tau)(\tau\bI - \CA_\lambda)^{-1})(\bg_1, g_2) = (0, 0).
$$
Since $(\bg_1, g_2) = (\tau-\lambda)(\tau\bI - 
\CA_\lambda)^{-1}(\bg_1, g_2) \in 
\CD^1_q(\Omega)\times \CD^2_q(\Omega)$, setting $(\bv, \vartheta) 
= (\tau\bI - \CA_\lambda)(\bg_1, g_2)$, we have 
$$(0, 0) = (\tau-\CA_\lambda)(\bv, \vartheta) + (\lambda-\tau)(\bv, \vartheta)=
(\lambda\bI - \CA_\lambda)(\bv, \vartheta),
$$
that is, $(\bv, \vartheta) \in H^2_q(\Omega)^{N+1}$ satisfies the 
homogeneous equations:
\begin{equation}\label{8.11}\left\{\begin{aligned}
a_{0*}\lambda\bv - \mu\Delta \bv 
- (\nu + \lambda^{-1}a_{1*}a_{0*})\nabla\dv\bv 
+a_{2*}\nabla\vartheta &= 0 &\quad&\text{in $\Omega$}, \\
a_{3*}\lambda\vartheta + a_{2*}\dv\bv - a_{4*}\Delta\vartheta & = 0
&\quad&\text{in $\Omega$}, \\
\bv|_\Gamma = 0, \quad (\nabla\vartheta)\cdot\bn|_\Gamma
&=0.
\end{aligned}\right.\end{equation}
To prove $(\bv, \vartheta)=(0,0)$, we first consider the case where
$2 \leq q < \infty$.  Since $(\bv, \vartheta) \in H^2_q(\Omega)^{N+1}
\subset H^2_2(\Omega)^{N+1}$, by \eqref{8.11}
and the divergence theorem of Gau\ss\, we have

\eqh{
0=&a_{0*}\lambda\|\bv\|_{L_2(\Omega)}^2 + \mu\|\nabla\bv\|_{L_2(\Omega)}^2
+(\nu+\lambda^{-1}a_{1*}a_{0*})\|\dv\bv\|_{L_2(\Omega)}^2 \\
&+a_{3*}\lambda\|\vartheta\|_{L_2(\Omega)}^2
 + a_{4*}\|\nabla\vartheta\|_{L_2(\Omega)}^2 
+a_{2*}\{(\nabla\vartheta, \bv)_\Omega- (\bv, \nabla\vartheta)_\Omega
\}.
}
Taking the real part, we have
\begin{align*}
0=&a_{0*}{\rm Re}\,\lambda\|\bv\|_{L_2(\Omega)}^2 
+ \mu\|\nabla\bv\|_{L_2(\Omega)}^2
+(\nu+a_{1*}a_{0*}{\rm Re}\,\lambda^{-1})\|\dv\bv\|_{L_2(\Omega)}^2 \\
&+a_{3*}{\rm Re}\,\lambda\|\vartheta\|_{L_2(\Omega)}^2
 + a_{4*}\|\nabla\vartheta\|_{L_2(\Omega)}^2. 
\end{align*}
Since ${\rm Re}\,\lambda \geq 0$, we have $\nabla(\bv, \vartheta)=(0, 0)$ 
in $\Omega$, that is $\bv$ and $\vartheta$ are constants.  But, 
$\bv|_\Gamma=0$, and so $\bv=0$.  Thus, by the second equation
and boundary condition $(\nabla\vartheta)\cdot\bn|_\Gamma=0$ in 
\eqref{8.11}, we have
$$0= a_{3*}\lambda \int_\Omega \vartheta\,dx - a_{4*}
\int_\Omega\Delta\vartheta\,dx = a_{3*}\lambda \int_\Omega \vartheta\,dx,
$$
and so $\vartheta=0$. Thus, in the case that $2\leq q < \infty$, we see that
Eq. \eqref{8.5} admits a unique solution $(\bv, \vartheta) \in 
\CD^1_q(\Omega)\times \CD^2_q(\Omega)$ possessing the estimate:
\begin{equation}\label{8.13}\|(\bv, \vartheta)\|_{H^2_q(\Omega)} 
\leq C_\lambda\|(\bff_2, f_3)\|_{L_q(\Omega)}
\end{equation}
for some constant $C_\lambda$ depending on $\lambda$.  

We next consider the case $1 < q < 2$.  Let $q^* = q/(q-1)
\in (2, \infty)$.  For any $(\bg_1, g_2) \in L_q(\Omega)^{N+1}$,
let $(\bw, \varphi) \in \CD^1_{q*}(\Omega)\times \CD^2_{q*}(\Omega)$
be a solution of the equation:
\begin{equation}\label{8.12}\left\{\begin{aligned}
\bar\lambda\bw - a_{0*}^{-1}\{\mu\Delta \bw 
+ (\nu + \bar\lambda^{-1}a_{1*}a_{0*})\nabla\dv\bw\}
-a_{0*}^{-1}a_{2*}\nabla\varphi &= \bg_1 &\quad&\text{in $\Omega$}, \\
\lambda\varphi - a_{2*}a_{3*}^{-1}\dv\bw - 
a_{3*}^{-1}a_{4*}\Delta\varphi & =g_2
&\quad&\text{in $\Omega$}, \\
\bv|_\Gamma = 0, \quad (\nabla\vartheta)\cdot\bn|_\Gamma
&=0.
\end{aligned}\right.\end{equation}
Replacing $\lambda$ and $a_{2*}$ by $\bar\lambda$ and 
$-a_{2*}$ in \eqref{8.5}, we can prove
the unique existence of solutions $(\bw, \varphi)
\in \CD^1_q(\Omega)\times\CD^2_q(\Omega)$ of Eq. \eqref{8.12}.
By the divergence theorem of Gau\ss\, we have
\begin{align*}
0& = (a_{0*}\lambda\bv - \mu\Delta\bv-(\nu+\lambda^{-1}a_{1*}a_{0*})
\nabla\dv\bv + a_{2*}\nabla\vartheta, \bw)_\Omega 
+(a_{3*}\lambda\vartheta + a_{2*}\dv\bv
- a_{4*}\Delta\vartheta, \varphi)_\Omega\\
& = (\bv, a_{0*}\bar\lambda\bw - \mu\bw - (\nu+\bar\lambda^{-1}a_{1*}a_{0*})
\nabla\dv) - (\vartheta, a_{2*}\dv\bw)_\Omega 
+(\vartheta, a_{3*}\bar\lambda\varphi- a_{4*}\Delta\varphi)_\Omega
-(\bv, a_{2*}\nabla\varphi)
\\
& = a_{0*}(\bv, \bg_1)_\Omega + a_{3*}(\vartheta, g_2)_\Omega.
\end{align*}
Thus, the arbitrariness of $(\bg_1, g_2) \in L_{q*}(\Omega)^{N+1}$
yields $(\bv, \vartheta)=(0, 0)$, which leads to the unique existence of 
solutions $(\bv, \vartheta) \in \CD^1_q(\Omega)\times
\CD^2_q(\Omega)$ of Eq. \eqref{8.5} possessing the estimate 
\eqref{8.13}. Thus, we have proved that for 
any $\lambda \in \CB_{\lambda_0}\setminus\{0\}$
and $(f_1, \bff_2, f_3) \in \CH_q(\Omega)$  Eq. \eqref{8.1}
admits a unique solution $(\zeta, \bv, \vartheta)\in \CD_q(\Omega)$
possessing the estimate:
\begin{equation}\label{proof:1}
\|\zeta, \bv, \vartheta)\|_{D_q(\Omega)}
\leq C_\lambda\|(f_1, \bff_2, f_3)\|_{\CH_q(\Omega)}.
\end{equation}

We now consider the case that $\lambda=0$. 
Inserting the relation: $\dv\bv = a_{0*}^{-1}f_1$, we rewrite
\eqref{8.1} as 
\begin{equation}\label{8.14} \left\{\begin{aligned}
\dv\bv &=a_{0*}^{-1} f_1&\quad&\text{in $\Omega$}, \\
 - \mu\Delta\bv +a_{1*}\nabla\zeta & = \bff_2
+\nu a_{0*}^{-1} \nabla f_1-a_{2*}\nabla \vartheta
&\quad&\text{in $\Omega$}, \\
 - a_{4*}\Delta\vartheta & = f_3 - a_{0*}^{-1}a_{2*}f_1
&\quad&\text{in $\Omega$}, \\
\bv|_{\Gamma}=0, \quad (\nabla\vartheta)\cdot\bn|_\Gamma
& = 0.
\end{aligned}\right.\end{equation}
We first consider the Laplace equation:
\begin{equation}
\label{8.16}
-a_{4*}\Delta\vartheta = g_2\quad\text{in $\Omega$}, 
\quad(\nabla\vartheta)\cdot\bn|_{\Gamma}=0,
\end{equation}
and then, for any $g_2 \in L_q(\Omega)$ with $\int_\Omega g_2\,dx=0$, 
problem \eqref{8.16} admits a unique solution $\vartheta \in H^2_q(\Omega)$
with $\int_\Omega \vartheta\,dx = 0$ possessing the estimate:
$\|\vartheta\|_{H^2_q(\Omega)} \leq C\|g_2\|_{L_q(\Omega)}$. 
Therefore the third equation of Eq. \eqref{8.14} admits a unique 
solution $\vartheta \in H^2_q(\Omega)$ satisfying the estimate:
$\|\vartheta\|_{H^2_q(\Omega)} \leq C\|(f_1, f_3)\|_{L_q(\Omega)}$
and 
$\int_\Omega \vartheta\,dx=0$. 

Finally, setting $g_1 = a_{0*}^{-1}f_1$ and $\bg_2 = 
\bff_2 - \nu a_{0*}^{-1}\nabla f_1 -a_{2*}\nabla\vartheta$, we consider the
Cattabriga problem: 
\begin{equation}\label{8.17} 
- \mu\Delta\bv 
+a_{1*}\nabla\zeta  = \bg_2, 
\quad \dv\bv =g_1
\quad\text{in $\Omega$}, \quad 
\bv|_{\Gamma}=0.
\end{equation}
By Farwig and Sohr \cite{FS}, there exists a $\lambda_0 > 0$ for which
the equation:
$$
\lambda_0\bv- \mu\Delta\bv 
+a_{1*}\nabla\zeta  = \bg_2, 
\quad \dv\bv =g_1
\quad\text{in $\Omega$}, \quad 
\bv|_{\Gamma}=0,
$$
admits a unique solution $(\zeta, \bv) 
\in H^1_q(\Omega)\times H^2_q(\Omega)^N$ with $\int_\Omega \zeta\,dx=0$
for any $(g_1, \bg_2) \in H^1_q(\Omega)\times L_q(\Omega)^N$ with
$\int_\Omega g_2\,dx = 0$.  Thus, by the Fredholm alternative
principle, the uniqueness of solutions of Eq. \eqref{8.17}
yields the unique existence theorem, that is
for any $(g_1, \bg_2) \in H^1_q(\Omega)\times L_q(\Omega)^N$ with
$\int_\Omega g_2\,dx = 0$, problem \eqref{8.17}
admits a unique solution 
$(\zeta, \bv) 
\in H^1_q(\Omega)\times H^2_q(\Omega)^N$ with $\int_\Omega \zeta\,dx=0$
possessing the estimate:
$$\|\zeta\|_{H^1_q(\Omega)} + \|\bv\|_{H^2_q(\Omega)}
\leq C(\|g_1\|_{H^1_q(\Omega)} + \|\bg_2\|_{L_q(\Omega)}).
$$
Therefore the problem of existence for \eqref{8.17} is reduced to showing uniqueness for
the homogeneous problem which is an immediate consequence of the divergence theorem.  

Summing up, we have proved that
for any $(f_1, \bff_2, f_3) \in \hat \CH_q(\Omega)$, 
problem \eqref{8.14} admits a unique solution
$(\zeta, \bv, \vartheta) \in \CD_q(\Omega) \cap \hat \CH_q(\Omega)$
possessing the estimate:
$$\|\zeta\|_{H^1_q(\Omega)} + \|(\bv, \vartheta)\|_{H^2_q(\Omega)}
\leq C(\|f_1\|_{H^1_q(\Omega)} + \|(\bff_2, f_3)\|_{L_q(\Omega)}).
$$
Since the resolvent operator is continuous and 
the set $\CB_{\lambda_0}$ is compact, we can take the constants
$C_\lambda$ in the estimate \eqref{proof:1} independent of $\lambda
\in \CB_{\lambda_0}$.  This completes the proof of Theorem \ref{thm:8.1}.
\end{proof}

We now give a \vskip0.5pc\noindent
{\bf Proof of Theorem \ref{thm:decay1}}.  Let 
\begin{align*}
PU &= \left(\begin{matrix}
-\rho_{0*}\dv\bv \\
a_{0*}^{-1}(\mu\Delta\bv + \nu\nabla\dv\bv- a_{1*}\nabla\zeta
-a_2\nabla\vartheta) \\
-a_{3*}^{-1}(a_{2*}\dv\bv - a_{4*}\Delta\vartheta)
\end{matrix}\right)
\quad \text{for $U = (\zeta, \bv, \vartheta) \in D_q(\Omega)$}, \\
\CP U & = PU \quad\text{for $U = (\zeta, \bv, \vartheta) \in \CD_q(\Omega)
\cap \hat \CH_q(\Omega)$}.
\end{align*}
Here, $\hat\CH_q(\Omega)$ and $\CD_q(\Omega)$ are the spaces given in
\eqref{space:8.1} and \eqref{p.7.0}, respectively. 
Let us consider the Cauchy problem:
\begin{equation}\label{8.19}
\pd_tU- \CP U = 0 \quad\text{for $t>0$}, \quad
U|_{t=0} = U_0=(\zeta_0, \bv_0, \vartheta_0) \in \hat\CH_q(\Omega).
\end{equation}
The resolvent problem corresponding to  \eqref{8.19} is Eq. \eqref{8.1}.  Thus,
by Theorem \ref{thm:8.1}, we see that $\CP$ generates a $C_0$ analytic
semigroup $\{\dot T(t)\}_{t\geq 0}$ that is exponentially stable on
$\hat \CH_q(\Omega)$, that is 
\begin{equation}\label{8.20}
\|\dot T(t)U_0\|_{\CH_q(\Omega)} \leq Ce^{-\gamma_1t}\|U_0\|_{\CH_q(\Omega)}
\end{equation}
for any $U_0 \in \hat \CH_q(\Omega)$ and $t > 0$ with some positive 
constants $C$ and $\gamma_1$. 

Let $\lambda_1 > 0$ be a sufficiently large number and 
let $\gamma > 0$ be a small positive number determined later.
We assume that 
\begin{equation}\label{choice:1}
0 <\gamma < \gamma_1.
\end{equation}
We consider the time-shifted equations:
\begin{equation}\label{8.21}\left\{\begin{aligned}
 \pd_t U_1 + \lambda_1U_1 -PU_1 &= G
&\quad&\text{in $\Omega\times (0, T)$}, \\
BU_1&=(0, g_4)&\quad&\text{on $\Gamma\times(0,T)$}, \\
U_1|_{t=0} &= U_0 &\quad&\text{in $\Omega$}.
\end{aligned}\right.\end{equation}
where $G = (g_1, \bg_2, g_3)$ and
$BU = (\bv, (\nabla\vartheta)\cdot\bn)$. 
Multiplying Eq. \eqref{8.21} by $e^{\gamma t}$, we have 
\begin{equation}\label{8.22}\left\{\begin{aligned}
 \pd_t(e^{\gamma t} U_1) + (\lambda_1-\gamma)
e^{\gamma t}U_1 -P(e^{\gamma t}U_1) &= e^{\gamma t}G
&\quad&\text{in $\Omega\times (0, T)$}, \\
B(e^{\gamma t}U_1) &=(0, e^{\gamma t}g_4)
&\quad&\text{on $\Gamma\times(0,T)$}, \\
e^{\gamma t}U_1|_{t=0} &= U_0 &\quad&\text{in $\Omega$}.
\end{aligned}\right.\end{equation}
Let $G_0$ be the zero extension of $G$ to $\BR$ with respect to $t$, that is 
$G_0(\cdot, t) = G(\cdot, t)$ for $t \in (0, T)$ and
$G_0(\cdot, t) =0$ for $t \not\in (0, T)$. To estimate
$e^{\gamma t}U_1$,  we consider
the equations: 
\begin{equation}\label{8.23}\left\{\begin{aligned}
 \pd_t U_2 + (\lambda_1-\gamma)U_2 -PU_2 &= e^{\gamma t}G_0
&\quad&\text{in $\Omega\times \BR$}, \\
BU_2 &=(0, e^{\gamma t}g_4)
&\quad&\text{on $\Gamma\times\BR$}.
\end{aligned}\right.\end{equation}
Applying the Fourier transform with respect to $t$ to Eq. 
\eqref{8.23},  we have
\begin{equation}\label{8.23*}\left\{\begin{aligned}
(\lambda_1-\gamma+ i\tau)\CF[U_2](\cdot, \tau)
 -P\CF[U_2](\cdot, \tau) &= 
\CF[e^{\gamma t}G_0](\cdot, \tau)
&\quad&\text{in $\Omega$}, \\
B\CF[U_2](\cdot, \tau) &=(0, \CF[e^{\gamma t}g_4](\cdot, \tau))
&\quad&\text{on $\Gamma$}.
\end{aligned}\right.\end{equation}
Let $\CS(\lambda) = (\CA(\lambda), \CB_1(\lambda), \CB_2(\lambda))$ be
the $\CR$-bounded solution operators given in Theorem \ref{thm:7.1}.
If we choose $\lambda_1>0$ so large that $\lambda_1-\gamma \geq \lambda_0$,
then we have $\CF[\hat U_2](\cdot, \tau) = \CS(\lambda_1-\gamma + i\tau)
\bF_{\lambda_1-\gamma+i\tau}$, where 
$$\bF_{\lambda_1-\gamma+i\tau} 
= (\CF[e^{\gamma t}G_0](\cdot, \tau), 
(\lambda_1-\gamma+i\tau)^{1/2}\CF[e^{\gamma t}g_4](\cdot, \tau),
\CF[e^{\gamma t}g_4](\cdot, \tau)).$$
Since $$(\tau\pd_\tau)^\ell(i\tau/\lambda_1-\gamma+i\tau)|
\leq C_{\lambda_1}, \quad
|(\tau\pd_\tau)^\ell((\lambda_1-\gamma+i\tau)^{1/2}/(1+\tau^2)^{1/4})|
\leq C_{\lambda_1}
$$
for $\ell=0,1$ and $\tau \in \BR\setminus\{0\}$, applying Weis' operator
valued Fourier multiplier theorem and Bourgain's theorem (cf.
Lemma \ref{lem:7.3.5}) to 
$$U_1= \CF^{-1}[\CF[U_1](\cdot, \tau)] =
\CF^{-1}[\CS(\lambda_1-\gamma + i\tau)
\bF_{\lambda_1-\gamma+i\tau}],$$
 we have 
\begin{equation}\label{8.24}\begin{split}
&\|\pd_tU_2\|_{L_p(\BR, \CH_q(\Omega))}
+ \|U_2\|_{L_p(\BR, D_q(\Omega))} \\
&\quad \leq C(\|e^{\gamma t}G_0\|_{L_p(\BR, \CH_q(\Omega))}
+ \|e^{\gamma t}g_4\|_{H^{1/2}_p(\BR, L_q(\Omega))}
+ \|e^{\gamma t}g_4\|_{L_p(\BR, H^1_q(\Omega))}) \\
&\quad \leq C(\|e^{\gamma t}G\|_{L_p((0, T), \CH_q(\Omega))}
+ \|e^{\gamma t}g_4\|_{H^{1/2}_p(\BR, L_q(\Omega))}
+ \|e^{\gamma t}g_4\|_{L_p(\BR, H^1_q(\Omega))}).
\end{split}\end{equation}
We next consider the Cauchy problem:
\begin{equation}\label{8.25}\left\{\begin{aligned}
 \pd_t U_3 + (\lambda_1-\gamma)
U_3 -PU_3 &= 0
&\quad&\text{in $\Omega\times (0, \infty)$}, \\
BU_3 &=(0, 0)
&\quad&\text{on $\Gamma\times(0,\infty)$}, \\
U_3|_{t=0} &= U_0-U_2|_{t=0}&\quad&\text{in $\Omega$}.
\end{aligned}\right.\end{equation}
If we choose $\lambda_1>0$ sufficiently large,
by Theorem \ref{thm:7.1} we see that 
there exists a $C^0$ analytic semigroup
$\{T_1(t)\}_{t\geq 0}$ associated with
Eq. \eqref{8.22}, which is exponentially 
stable. Setting $U_3 = T_1(t)(U_0-U_2|_{t=0})$, 
we then see that $U_2$ satisfies Eq. \eqref{8.22}
and the estimate:
\begin{equation}\label{8.26}
\|e^{\gamma t}\pd_tU_3\|_{L_p((0, \infty), \CH_q(\Omega))}
+ \|e^{\gamma t}U_3\|_{L_p((0, \infty), D_q(\Omega))}
\leq C\|U_0-U_2|_{t=0}\|_{D_{p,q}(\Omega)}.
\end{equation}
By the uniqueness of solutions, we have $e^{\gamma t}U_1 
= U_2 + U_3$, and so by \eqref{8.24}, \eqref{8.26},
and real interpolation theorem 
\eqref{real-int:1}
and \eqref{real-int:2}, 
we have
\begin{equation}\label{8.27}\begin{split}
&\|e^{\gamma t}\pd_tU_1\|_{L_p((0, T),  \CH_q(\Omega))}
+ \|e^{\gamma t}U_1\|_{L_p((0, T),  D_q(\Omega))} \\
& \leq C(\|(\zeta_0, \bv_0, \vartheta_0)\|_{D_{p,q}(\Omega)}
+ \|e^{\gamma t}G\|_{L_p((0, T), \CH_q(\Omega))}
+ \|e^{\gamma t}g_4\|_{H^{1/2}_p(\BR, L_q(\Omega))}
+ \|e^{\gamma t}g_4\|_{L_p(\BR, H^1_q(\Omega))}).
\end{split}\end{equation}
We next consider the equations:
\begin{equation}\label{8.29} \pd_tV - PV = -\lambda_0U_1 
\quad\text{in $\Omega\times(0, T)$}, \quad
BV|_\Gamma=0, \quad V|_{t=0} =0.\ \quad\text{in $\Omega$}.
\end{equation}
Let $U_1 = (\zeta_1, \bv_1, \vartheta_1)$ and set 
\begin{equation}\label{8.33}\tilde U_1(x, t) = (\zeta_1(x, t) 
- \frac{1}{|\Omega|}\int_\Omega\zeta_1(y, t)\,dy, \bv_1(x, t),
\vartheta_1(x, t)-\frac{1}{|\Omega|}\int_\Omega\vartheta_1(y, t)\,dy).
\end{equation}
Then $\tilde U(\cdot, t) \in \hat \CH_q(\Omega)$ for any $t \in (0, T)$.
We consider the equations: 
\begin{equation}\label{8.30} \pd_t\tilde V - P\tilde V = -\lambda_0\tilde U_1 
\quad\text{in $\Omega\times(0, T)$}, \quad
B\tilde V|_\Gamma=0, \quad \tilde V|_{t=0} =0.\ \quad\text{in $\Omega$}.
\end{equation}
In view of \eqref{8.19}, by the Duhamel principle we have
$$\tilde  V = \int^t_0 \dot T(t-s) \tilde U_1(\cdot, s)\,ds. $$
Moreover, by \eqref{8.20} we have
\begin{equation}\label{8.31}
\|e^{\gamma t}\tilde V\|_{L_p((0, T), \CH_q(\Omega))}
\leq C(\gamma_1-\gamma)^{-1/p}
\|e^{\gamma t}\tilde U_1\|_{L_p((0, T), \CH_q(\Omega))}.
\end{equation}
Here, we choose $\gamma$ as \eqref{choice:1}. In fact, by \eqref{8.20} 
and H\"older's inequality with exponent $p' = p/(p-1)$ we have
\begin{align*}
e^{\gamma t}\|\tilde V(\cdot, t)\|_{L_q(\Omega)}
& \leq C\int^t_0e^{\gamma t}e^{-\gamma_1(t-s)}
\|\tilde U_1(\cdot, s)\|_{\CH_q(\Omega)}\,ds
= C\int^t_0e^{-\gamma_1(t-s)}e^{\gamma s}
\|\tilde U_1(\cdot, s)\|_{\CH_q(\Omega)}\,ds\\
& \leq \Bigl(\int^t_0e^{-(\gamma_1-\gamma)(t-s)}\,ds\Bigr)^{1/{p'}}
\Bigl(\int^t_0 e^{-(\gamma_1-\gamma)(t-s)}
(e^{\gamma s}\|\tilde U(\cdot, s)\|_{\CH_q(\Omega)})^p\,ds
\Bigr)^{1/p},
\end{align*}
and so by the change of integration order we have 
\begin{align*}
\int^T_0(e^{\gamma t}\|\tilde V(\cdot, t)\|_{L_q(\Omega)})^p\,dt
&\leq C^p(\gamma_1-\gamma)^{-p/{p'}}\int^T_0
(e^{\gamma s}\|\tilde U(\cdot, s)\|_{\CH_q(\Omega)})^p\,ds
\int^T_se^{-(\gamma_1-\gamma)(t-s)}\,dt
\\
&= C^p(\gamma_1-\gamma)^{-p}\int^T_0
(e^{\gamma s}\|\tilde U(\cdot, s)\|_{\CH_q(\Omega)})^p\,ds.
\end{align*}
Thus, we have \eqref{8.31}.

Since $\tilde V$ satisfies the shifted equations: 
$$\pd_t\tilde V + \lambda_0\tilde V-P\tilde V = -\lambda\tilde U_1
+\lambda \tilde V \quad\text{in $\Omega\times(0, T)$}, 
\quad B\tilde V|_\Gamma = 0, \quad \tilde V|_{t=0} = 0,
$$
we have
$$
\|e^{\gamma t}\pd_t\tilde V\|_{L_p((0, T), \CH_q(\Omega))}
+ \|e^{\gamma t}\tilde V\|_{L_p((0, T), D_q(\Omega))}
\leq C(\|e^{\gamma t}\tilde U_1\|_{L_p((0, T), \CH_q(\Omega))}
+ \|e^{\gamma t}\tilde V\|_{L_p((0, T), \CH_q(\Omega))}),
$$
which, combined with \eqref{8.27} and \eqref{8.31}, 
leads to 
\begin{equation}\label{8.32}\begin{split}
&\|e^{\gamma t}\pd_t\tilde V\|_{L_p((0, T),  \CH_q(\Omega))}
+ \|e^{\gamma t}\tilde V\|_{L_p((0, T),  D_q(\Omega))} \\
& \leq C(\|(\zeta_0, \bv_0, \vartheta_0)\|_{D_{p,q}(\Omega)}
+ \|e^{\gamma t}G\|_{L_p((0, T), \CH_q(\Omega))}
+ \|e^{\gamma t}g_4\|_{H^{1/2}_p(\BR, L_q(\Omega))}
+ \|e^{\gamma t}g_4\|_{L_p(\BR, H^1_q(\Omega))}).
\end{split}\end{equation}
In view of \eqref{8.33}, we define $V$ by  
$$V = \tilde V - (\frac{1}{|\Omega|}\int^t_0\int_\Omega\zeta_1(x, s)\,dx, 0,
\frac{1}{|\Omega|}\int^t_0\int_\Omega \vartheta_1(x, s)\,dxds),$$
and then, $V$ satisfies Eq.\eqref{8.29}.  Moreover, 
setting $V = (\zeta_2, \bv_2, \vartheta_2)$, by \eqref{8.32}
and \eqref{8.27} we have 
\begin{equation}\label{8.34}
\begin{split}
&\|e^{\gamma t}\pd_t(\zeta_2, \bv_2, \vartheta_2)
\|_{L_p((0, T),  \CH_q(\Omega))}
+ \|e^{\gamma t}\nabla\zeta_2\|_{L_p((0, T),  H^1_q(\Omega))} 
+ \|e^{\gamma t}\bv_2\|_{L_p((0, T), H^2_q(\Omega))} \\
&\quad + \|e^{\gamma t}\nabla \vartheta_2\|_{L_p((0, T), H^1_q(\Omega))}
\leq C(\|(\zeta_0, \bv_0, \vartheta_0)\|_{D_{p,q}(\Omega)}
+ \|e^{\gamma t}G\|_{L_p((0, T), \CH_q(\Omega))} \\
&\qquad + \|e^{\gamma t}g_4\|_{H^{1/2}_p(\BR, L_q(\Omega))}
+ \|e^{\gamma t}g_4\|_{L_p(\BR, H^1_q(\Omega))}).
\end{split}\end{equation}
Let $(\zeta, \bv, \vartheta) = U_1 + V$, and then 
$(\zeta, \bv, \vartheta)$ is a unique solution of \eqref{newlinear:1}.
Moreover, by \eqref{8.34} and \eqref{8.27}  $(\zeta, \bv, \vartheta)$
satisfies the decay estimate:
\begin{align*}
&\|e^{\gamma t}\pd_t(\zeta, \bv, \vartheta)
\|_{L_p((0, T),  \CH_q(\Omega))}
+ \|e^{\gamma t}\nabla\zeta\|_{L_p((0, T),  H^1_q(\Omega))} 
+ \|e^{\gamma t}\bv\|_{L_p((0, T), H^2_q(\Omega))} 
 + \|e^{\gamma t}\nabla \vartheta\|_{L_p((0, T), H^1_q(\Omega))}\\
&\leq C(\|(\zeta_0, \bv_0, \vartheta_0)\|_{D_{p,q}(\Omega)}
+\|e^{\gamma t}(g_1, \bg_2, g_3)\|_{L_p((0, T), \CH_q(\Omega))} 
+ \|e^{\gamma t}g_4\|_{H^{1/2}_p(\BR, L_q(\Omega))}
+ \|e^{\gamma t}g_4\|_{L_p(\BR, H^1_q(\Omega))}).
\end{align*}
This completes the proof of Theorem \ref{thm:decay1}.

\footnotesize

\end{document}